\documentclass[11pt]{article}
\usepackage[utf8]{inputenc}
\usepackage[a4paper, margin=0.8 in]{geometry}
\usepackage{amsmath}
\usepackage{array}
\newcolumntype{P}[1]{>{\centering\arraybackslash}p{#1}}
\newcolumntype{M}[1]{>{\centering\arraybackslash}m{#1}}
\usepackage{xcolor}
\usepackage{amsfonts, amssymb}
\usepackage{amsthm}
\usepackage{esvect}
\usepackage[english]{babel}
\usepackage[ruled,linesnumbered]{algorithm2e}
\usepackage[T1]{fontenc}
\usepackage{multirow}
\usepackage{graphicx}
\usepackage{cite}
\usepackage[numbers,sort]{natbib}
 \usepackage{titlesec}
\titleformat{\section}
  {\normalfont\fontfamily{ptm}\fontsize{11}{11}\bfseries}{\thesection}{1em}{}
\titleformat{\subsection}
  {\normalfont\fontfamily{ptm}\fontsize{10}{11}\bfseries}{\thesubsection}{1em}{}
\titleformat{\subsubsection}
  {\normalfont\fontfamily{ptm}\fontsize{10}{11}\selectfont}{\thesubsubsection}{1em}{}
\newtheorem{theorem}{Theorem}[section]
\newtheorem{corollary}{Corollary}[theorem]
\newtheorem{assumption}[theorem]{Assumption}
\newtheorem{lemma}[theorem]{Lemma}
\newtheorem{proposition}[theorem]{Proposition}

\newtheorem{remark}{Remark}
\newtheorem{example}{Example}

\providecommand{\customgenericname}{}
\newcommand{\newcustomtheorem}[2]{%
  \newenvironment{#1}[1]
  {%
   \renewcommand\customgenericname{#2}%
   \renewcommand\theinnercustomgeneric{##1}%
   \innercustomgeneric
  }
  {\endinnercustomgeneric}
}

\newcustomtheorem{customcoro}{Corollary}
\usepackage{hyperref}
\hypersetup{
    colorlinks=true,    
    urlcolor=cyan,
    linkcolor=blue,
    citecolor=blue
}
\usepackage{mathtools}

\setlength\parindent{0pt}
\providecommand{\keywords}[1]
{
  \small	
  \quad \quad \textbf{\textit{Keywords --}} #1
}
\newcommand{\conv}[1]{\text{conv(}{#1}\text{)}}

\newcommand{\exclude}[1]{}

\title{\large Strong valid inequalities for a class of concave submodular minimization problems under cardinality constraints}

\author{ \small Qimeng Yu \quad Simge K\"u\c{c}\"ukyavuz\vspace{0.2cm}  \\ \small Department of Industrial Engineering and Management Sciences \\ \small Northwestern University, Evanston, IL, USA \\ \small \{kim.yu@u.northwestern.edu, simge@northwestern.edu\}}
\date{\small \today} 
\begin{document}
\maketitle

\begin{abstract}
\noindent We study the polyhedral convex hull structure of a mixed-integer set which arises in  a class of  cardinality-constrained  concave submodular minimization problems. This class of problems has an objective function in the form of $f(a^\top x)$, where $f$ is a univariate concave function, $a$ is a non-negative vector, and $x$ is a binary vector of appropriate dimension. Such minimization problems frequently appear in  applications that involve risk-aversion or economies of scale. We propose three classes of strong valid linear inequalities for this convex hull and specify their facet conditions when $a$ has two distinct values. We show how to use these inequalities to obtain valid inequalities for general $a$ that contains multiple values. We further provide a complete linear convex hull description for this mixed-integer set when $a$ contains two distinct values and the cardinality constraint upper bound is two. Our computational experiments on the mean-risk optimization problem demonstrate the effectiveness of the proposed inequalities in a branch-and-cut framework. 
\end{abstract}
\keywords{concave submodular minimization; cardinality constraint; lifting.}

\section{Introduction}
\label{sect:intro}
Submodular set functions have received great interest in integer and combinatorial optimization. Many important combinatorial problems and structures, including the set covering problem, the min-cut problem, and matroids, are closely associated with submodular functions. These functions have also found immense utility in applications such as healthcare \citep{adhikari2019fast}, viral marketing \citep{kempe2015maximizing}, and sensor placement \citep{krause2008efficient}. Next, we formally state the definition of submodular functions. \\

Let $N = \{1,2,\dots,n\}$ be a non-empty finite set. We define the power set of $N$ to be $2^N = \{S:S\subseteq N\}$. A set function $g:2^N \rightarrow \mathbb{R}$ is \textit{submodular} if for any $X,Y\in 2^N$, 
\[g(X) + g(Y) \geq g(X\cap Y) + g(X\cup Y). \] 
For any $X\subseteq N$ and $i\in N\backslash X$, $\rho_i(X) := g(X\cup\{i\}) - g(X)$ represents the marginal return to the function value by adding item $i$ to the set $X$. This notion of marginal return provides an alternative definition of submodularity, namely $g$ is submodular if 
\[\rho_i(X) \geq \rho_i(Y)\] for any $X\subseteq Y\subseteq N$ and $i\in N\backslash Y$. Intuitively, this definition implies that all submodular functions possess a diminishing return property. Many studies have established that unconstrained submodular minimization is solvable in polynomial time \citep{lovasz1983submodular, grotschel1981ellipsoid, iwata2001combinatorial, lee2015faster, orlin2009faster}. However, constrained submodular minimization problems are NP-hard in general \citep{svitkina2011submodular}. There exist exceptions to this general observation---a class of submodular functions can be minimized in polynomial time in the presence of a cardinality constraint. We next describe this class of submodular functions in detail. \\

It is known that composing a non-negative modular function with a concave function yields a submodular function. To be more precise, let any $a\in\mathbb{R}^n_+$ and any concave function $f:\mathbb{R}\rightarrow\mathbb{R}$ be given. The function $F$ defined by $F(S) = f\left(\sum_{i\in S} a_i \right)$ for all $S\subseteq N$,
or equivalently $F(x) = f\left(\sum_{i=1}^n a_ix_i\right)$ for all $x\in\{0,1\}^n$, is submodular. The problem of minimizing such a submodular function with respect to a cardinality constraint assumes the form of \eqref{prob:cc_concave_sub_min}:
 
\begin{equation}
\label{prob:cc_concave_sub_min}
\min \left\{  f\left(\sum_{i=1}^n a_ix_i\right) : x\in\{0,1\}^n, \sum_{i=1}^n x_i \leq k \right\}.
\end{equation}
This problem has the following equivalent mixed-integer nonlinear programming formulation:
\[\min \left\{ w : (w, x)\in\mathcal{P}_k^m\right\},  \]
where
\begin{equation}
\label{eq:P}
\mathcal{P}_k^m = \left\{ (w, x)\in\mathbb{R}\times\{0,1\}^n : w \geq f\left(\sum_{i=1}^n a_ix_i\right), \sum_{i=1}^n x_i \leq k \right\}. 
\end{equation}
The superscript $m\in\{1,2,\dots, n\}$ denotes the number of distinct values in $a$, and the subscript $k$ is the cardinality upper bound. In what follows,  we refer to the values in $a$ as weights. This family of problems usually arises in applications that involve risk aversion or economies of scale, such as mean-risk optimization  \citep{atamturk2008polymatroids, atamturk2019lifted} and concave cost facility location \citep{feldman1966warehouse, hajiaghayi2003facility}. Studies \citep{hassin1989maximizing, onn2003convex} have shown that problem \eqref{prob:cc_concave_sub_min} is polynomial-time solvable. This complexity result suggests that a full characterization of $\conv{\mathcal{P}_k^m}$ may be tractable. \\

Inspired by this implication, we take a polyhedral approach to address problem \eqref{prob:cc_concave_sub_min} in this paper.  In seminal work, for unconstrained submodular minimization, \citet{edmonds1970submodular} proposes extended polymatroid inequalities and establishes an explicit linear convex hull description for the epigraph of any submodular function using these inequalities (see also \citep{edmonds2003submodular}).  Since this early work, polyhedral approaches have commonly been adopted in submodular optimization research. Such approaches have unique advantages especially in the presence of additional complicating constraints or when maximizing $f$ leading to NP-hard problems. In this regard, \citet{wolsey1999integer} take a polyhedral approach to tackle unconstrained submodular maximization problems. The authors introduce a class of valid linear inequalities, called submodular inequalities, for the hypograph of any submodular function. This enables the formulation of the problem as  a mixed-integer linear program. This formulation is later strengthened for constrained submodular maximization in \citep{ahmed2011maximizing,yu2017maximizing,shi2020sequence}. By exploiting hidden submodularity, studies including \citep{gomez2018submodularity, atamturk2020submodularity, atamturk2020supermodularity, kilincc2020conic, atamturk2008polymatroids, atamturk2019lifted} improve the formulations of mixed-binary convex quadratic and conic optimization problems. \citet{yu2022drsubmod} consider mixed-integer extensions of submodularity, known as diminishing returns (DR)-submodularity, and give the convex hull of the epigraph of a DR-submodular function under box and monotonicity constraints. 
\citet{atamturk2021submodular}  extend the polyhedral results to general set function minimization, in which the authors rewrite a set function as the difference between two submodular functions and form the outer approximation of the original set function based on the extended polymatroid inequalities and the submodular inequalities of the pair of submodular functions.  For  another generalization---namely, $k$-submodular optimization---where  the objective function is a set function with $k\geq 2$ arguments that maintain submodular properties, \citet{yu2020polyhedral, yu2021exact} provide polyhedral characterizations.  In another direction, recent works \citep{wu2018two, wu2019probabilistic, wu2020exact, kilincc2019joint, xie2019distributionally, zhang2018ambiguous} successfully adopt a polyhedral approach to submodular optimization in  stochastic settings. \\

The polyhedral study closely related to our work is  \cite{yu2017polyhedral}, in which the authors  consider problem \eqref{prob:cc_concave_sub_min}  and obtain a complete description of $\conv{\mathcal{P}_k^1}$ where the weights $a_i$ are identical across all the items $i\in N$. When $m\geq 2$, \citet{yu2017polyhedral} argue that one class of facet-defining inequalities for $\conv{\mathcal{P}_k^m}$ can be obtained using an $O(n^4)$ extreme point enumeration algorithm \citep{atamturk2009submodular}. This class of facets is not sufficient to fully describe $\conv{\mathcal{P}_k^m}$ when $m\geq 2$, and the explicit form of such inequalities is not provided. Instead, the authors approximate the inequality coefficients to give a weaker class of valid inequalities. Despite  the progress made by \cite{yu2017polyhedral} in understanding $\conv{\mathcal{P}_k^1}$, how to fully characterize $\conv{\mathcal{P}_k^m}$ when $m\geq 2$ remains an open problem. Our paper takes the first step to tackle this open problem by analyzing the structure of $\conv{\mathcal{P}_k^2}$, where the vector $a$ contains two distinct values. We further provide valid inequalities for $\conv{\mathcal{P}_k^m}$ where $m\geq 3$. Next we give a summary of our contributions. 

\subsection{Our contributions}
We propose three classes of strong valid linear inequalities for $\conv{\mathcal{P}^2_k}$. We present the explicit forms of these inequalities and specify the conditions under which they are facet-defining for $\conv{\mathcal{P}^2_k}$. We further show that these inequalities, along with the trivial bounds, the cardinality constraint, and a single additional inequality, fully describe $\conv{\mathcal{P}^2_2}$. Our computational experiments on the mean-risk optimization problem demonstrate the effectiveness of our proposed inequalities in a branch-and-cut framework. Moreover, we delineate how these inequalities can be extended to the cases with more than two distinct weights, and how they may be utilized in mixed-binary conic optimization. We also include remarks on the facets of $\conv{\mathcal{P}_k^m}$ when $k\geq 3$ or $m\geq 3$, which reflect the complexities in obtaining the complete linear description of $\conv{\mathcal{P}_k^m}$.

\subsection{Outline}
We structure this paper as follows. In Section \ref{sect:prelim}, we set forth our notation and review two classes of inequalities, namely the \emph{extended polymatroid inequalities} (EPIs) and the \emph{separation inequalities} (SIs). In Sections \ref{sect:EPI} and \ref{sect:sepa}, we exactly lift the aforementioned inequalities and obtain three classes of strong valid linear inequalities for $\conv{\mathcal{P}^2_k}$, which we call the lifted-EPIs, the lower-SIs, and the higher-SIs. Next, in Section \ref{sect:conv}, we provide a linear description of $\conv{\mathcal{P}^2_2}$ using the proposed inequalities and prove its completeness. We explain how to apply the proposed inequalities to the problem instances with three or more distinct weights, as well as how they can be extended for mixed-binary conic optimization problems in Section \ref{sect:ext}. In Section \ref{sect:comp}, we present a computational study on the mean-risk optimization problem with varying cardinality bounds to test the effectiveness of our proposed inequalities when used in a branch-and-cut algorithm. Lastly, in Section \ref{sect:conclude}, we include examples to illustrate the difficulty in constructing the complete linear description for general $\conv{\mathcal{P}^m_k}$.

\section{Preliminaries}
\label{sect:prelim}
\subsection{Notation}
Throughout this paper, $f:\mathbb{R}\rightarrow\mathbb{R}$ is a concave function, and we assume that $f(0)=0$ without loss of generality. To abbreviate set notations, we represent $\{1,2,\dots, j\}$ by $[j]$ for any integer $j\geq 1$, and we use the convention that $[0] = \emptyset$. In addition, we let $[i,j] = \{i,i+1, \dots, j\}$ for $1\leq i \leq j$; by convention, $[i,i] = \{i\}$. \\

Let $N=[n]$ be a non-empty finite ground set. We let $F:2^N\rightarrow\mathbb{R}$ be the function such that, given $a\in\mathbb{R}^n_+$, $F(S) = f\left(\sum_{i\in S} a_i \right)$ for any $S\subseteq N$. We note that for any $S\subseteq N$, there exists a unique characteristic vector $x^S\in\{0,1\}^n$ such that $x^S_i = 1$ for $i\in S$ and $x^S_i = 0$ otherwise. On the other hand, with any $x\in\{0,1\}^n$, we may recover exactly one $S^x = \{i\in N:x_i=1\}$. Thus $f$ and $F$ are used interchangeably in later discussions. The lemma below summarizes a crucial property of $f$. 
\begin{lemma}
\label{lemma:f_concave} For any $d\in\mathbb{R}_+$ and $y_1,y_2\in\mathbb{R}$ such that $y_1\leq y_2$, 
\[f(y_1+d) - f(y_1) \geq f(y_2+d) - f(y_2)\]
in any concave function $f:\mathbb{R}\rightarrow\mathbb{R}$. 
\end{lemma}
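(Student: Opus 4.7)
The plan is to reduce the inequality to the defining property of concave functions, namely that the graph of $f$ lies above every chord. First I would dispose of the degenerate cases $d=0$ and $y_1=y_2$, in which the inequality holds with equality, and then restrict attention to $y_1<y_2$ together with $d>0$, so that $y_1$, $y_1+d$, $y_2$, $y_2+d$ are four points with $y_1<\min\{y_1+d, y_2\}\leq\max\{y_1+d, y_2\}<y_2+d$.

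The key observation is that both intermediate values $y_1+d$ and $y_2$ lie inside the interval $[y_1,\, y_2+d]$, and therefore each can be written as a convex combination of the two endpoints $y_1$ and $y_2+d$. A direct computation gives
\[
y_1+d=\alpha\, y_1+(1-\alpha)(y_2+d),\qquad y_2=\beta\, y_1+(1-\beta)(y_2+d),
\]
with $\alpha=\tfrac{y_2-y_1}{y_2-y_1+d}$ and $\beta=\tfrac{d}{y_2-y_1+d}$, both in $[0,1]$. The arithmetic miracle that drives the argument is that $\alpha+\beta=1$.

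Applying the concavity inequality to each of these convex combinations yields
\[
f(y_1+d)\geq \alpha f(y_1)+(1-\alpha)f(y_2+d),\qquad f(y_2)\geq \beta f(y_1)+(1-\beta)f(y_2+d).
\]
Summing the two and using $\alpha+\beta=1$ collapses the right-hand side to $f(y_1)+f(y_2+d)$, giving
\[
f(y_1+d)+f(y_2)\geq f(y_1)+f(y_2+d),
\]
which is exactly the claimed inequality after rearrangement.

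There is really no obstacle here; the only thing to be careful about is verifying that $\alpha,\beta\in[0,1]$ and that $\alpha+\beta=1$ so the two applications of concavity combine cleanly. An alternative route would be to invoke the well-known equivalence between concavity and the monotone-decreasing property of the secant slope $y\mapsto \tfrac{f(y+d)-f(y)}{d}$, but the convex-combination argument above is self-contained and requires only the definition of concavity stated in the excerpt's context.
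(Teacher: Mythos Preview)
Your proof is correct and entirely self-contained: the convex-combination identity with $\alpha+\beta=1$ is exactly what is needed, and the degenerate cases are handled cleanly.

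The paper, however, takes a different route. Instead of working directly with the definition of concavity, it defines (for $y_1\geq 0$) the auxiliary set function $g(x)=f(y_1x_1+(y_2-y_1)x_2+dx_3)$ on $\{0,1\}^3$, observes that $g$ is submodular because it is the composition of a concave function with a non-negative modular function, and then reads off the inequality as the diminishing-returns statement $\rho_3(\{1\})\geq\rho_3(\{1,2\})$. The case $y_1<0$ is handled separately by shifting $f$. Your argument is more elementary, avoids the case split on the sign of $y_1$, and does not rely on any external fact about submodular compositions. The paper's argument, by contrast, situates the lemma within the submodular framework that pervades the rest of the work, at the cost of invoking a structural result rather than the bare definition of concavity.
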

\begin{proof}
First we consider the case where $y_1\geq 0$. Let $g:\{0,1\}^3\rightarrow \mathbb{R}$ be a function defined by $g(x) = f(y_1x_1+(y_2-y_1)x_2 + dx_3)$. Since $f$ is concave and $y_1,y_2-y_1,d\geq 0$, we know that $g$, being the composition of a concave function and a non-negative modular function, is submodular. To simplify the notation, we use the alternative form of $g$, namely $G:2^{\{1,2,3\}}\rightarrow\mathbb{R}$. In particular, $G(\{1\}) = g([1,0,0]) = f(y_1)$, $G(\{1,2\}) = g([1,1,0]) = f(y_2)$, $G(\{1,3\})=g([1,0,1]) = f(y_1+d)$ and $G(\{1,2,3\})=g([1,1,1]) = f(y_2+d)$. Then 
\begingroup
\allowdisplaybreaks
\begin{align*}
f(y_1+d) - f(y_1) -[ f(y_2+d) - f(y_2) ] & = G(\{1,3\}) - G(\{1\}) - [G(\{1,2,3\}) - G(\{1,2\})] \\
& = \rho_3(\{1\}) - \rho_3(\{1,2\}) \geq 0.
\end{align*}
\endgroup

If $y_1 < 0$, then we define a function $\hat{f}:\mathbb{R}\rightarrow\mathbb{R}$ such that $\hat{f}(z) = f(z+y_1)$ for all $z\in\mathbb{R}$. This function $\hat{f}$ is $f$ shifted to the right by $|y_1|$, so it is also concave. We notice that $f(y_1)=\hat{f}(0)$, $f(y_2)=\hat{f}(y_2-y_1)$, $f(y_1+d) = \hat{f}(d)$ and $f(y_2+d) = \hat{f}(y_2-y_1+d)$. Thus our goal now is to show that $\hat{f}(d) - \hat{f}(0) \geq \hat{f}(y_2-y_1+d) - \hat{f}(y_2-y_1)$. This relation is true according to the analysis of the previous case, which completes the proof.
\end{proof}

For $\mathcal{P}_k^2$, we denote the two distinct weights in $a$ by $a_L$ and $a_H$, such that $0\leq a_L < a_H$. We let $\mathcal{I}_L = \{i\in N : a_i = a_L\}$ and $\mathcal{I}_H = \{i\in N : a_i = a_H\}$. Suppose the items in $N$ are permuted according to $\delta = (\delta_1,\delta_2,\dots, \delta_n)$. We define $L^t$, for $0\leq t\leq |\mathcal{I}_L|$, to be the set of the first $t$ lower-weighted items according to $\delta$. Similarly, we let $H^s$, for $0\leq s\leq |\mathcal{I}_H|$, be the set of the first $s$ higher-weighted items consistent with $\delta$. By convention, $L^0 = H^0 = \emptyset$. \\

Next, we review two useful classes of inequalities, namely EPIs \cite{edmonds2003submodular} and SIs \cite{yu2017polyhedral}. 

\subsection{Extended polymatroid inequalities (EPIs)}
Let $G:2^N\rightarrow\mathbb{R}$ be any submodular set function defined over the ground set $N=[n]$, with the equivalent form $g:\{0,1\}^n\rightarrow\mathbb{R}$. Without loss of generality, we assume that $G(\emptyset) = g(\mathbf{0}) = 0$. Given any permutation $\delta = (\delta_1, \delta_2,\dots,\delta_n)$ of $N$, the corresponding EPI is  
\begin{equation}
\label{eq:EPI}
w \geq \sum_{i=1}^n \rho_{\delta_i} x_{\delta_i}, 
\end{equation}
where $\rho_{\delta_1}= G(\{\delta_1\})$ and $\rho_{\delta_i} = G(\{\delta_1,\dots,\delta_i\}) - G(\{\delta_1,\dots,\delta_{i-1}\})$ for $i\in[2, n]$. In the \emph{unconstrained} set \[\mathcal{Q} = \left\{ (w, x)\in\mathbb{R}\times\{0,1\}^n : w \geq g(x) \right\},\] EPIs are known to be facet-defining for $\conv{\mathcal{Q}}$. In fact, $\conv{\mathcal{Q}}$ is fully described by the trivial inequalities $0\leq x_i\leq 1$, $i\in[n]$, and all the EPIs \cite{edmonds2003submodular}. \\

In our problem context, the EPIs are facet-defining for $\conv{\mathcal{P}^2_k(S)}$, where 
\begin{equation}
\label{eq:EPI_base_set}
\mathcal{P}^2_k(S) = \left\{ (w, x)\in\mathbb{R}\times\{0,1\}^S : w \geq f\left(\sum_{i\in S} a_ix_i\right), \sum_{i\in S} x_i \leq k \right\}
\end{equation} for any $S\subseteq N$ with $|S| \leq k$. This is because the cardinality constraint trivially holds for such $S$. In Section \ref{sect:EPI}, we lift the EPIs with respect to the variables $x_i$ for all $i\in N\backslash S$.

\subsection{Separation inequalities (SIs)}
SIs are strong valid linear inequalities for $\conv{\mathcal{P}^1_k}$ proposed in \cite{yu2017polyhedral}. In this case, we have $a_i = \alpha$ for all $i\in [n]$ given some $\alpha \in\mathbb{R}_+$, and 
\[\mathcal{P}^1_k = \left\{ (w, x)\in\mathbb{R}\times\{0,1\}^n : w \geq f\left(\alpha \sum_{i=1}^n x_i\right), \sum_{i=1}^n x_i \leq k \right\}. \]  Given any permutation of $N$, $\delta = (\delta_1,\delta_2,\dots, \delta_n)$, and a fixed parameter $i_0\in\{0,1,\dots, k-1\}$, an SI is defined by  
\begin{equation}
\label{eq:sepa_coeff}
w\geq \sum_{i=1}^{i_0} \rho_{\delta_i}x_{\delta_i} + \sum_{i=i_0+1}^{n} \psi x_{\delta_i}. 
\end{equation} Here  \[\psi = \frac{f(k \alpha) - f(i_0 \alpha)}{k-i_0},\]
and $\rho_{\delta_i}$ is the EPI coefficient $f(i \alpha) - f((i-1)\alpha)$. The authors further show that the SIs, together with $\sum_{i=1}^n x_i \leq k$ and $0\leq x_i\leq 1$ for $i\in N$, fully describe $\conv{\mathcal{P}^1_k}$. In our problem context, the same convex hull characterization holds for $\conv{\mathcal{P}^1_k(\mathcal{I}_L)}$ and $\conv{\mathcal{P}^1_k(\mathcal{I}_H)}$, where
\begin{equation}
\label{eq:L_P1k}
\mathcal{P}^1_k(\mathcal{I}_L) = \left\{ (w, x)\in\mathbb{R}\times\{0,1\}^{\mathcal{I}_L} : w \geq f\left(a_L \sum_{i\in\mathcal{I}_L} x_i\right), \sum_{i\in\mathcal{I}_L} x_i \leq k \right\},
\end{equation} and 
\begin{equation}
\label{eq:H_P1k}
\mathcal{P}^1_k(\mathcal{I}_H) = \left\{ (w, x)\in\mathbb{R}\times\{0,1\}^{\mathcal{I}_H} : w \geq f\left(a_H \sum_{i\in\mathcal{I}_H} x_i\right), \sum_{i\in\mathcal{I}_H} x_i \leq k \right\}.
\end{equation}
In Section \ref{sect:sepa}, we lift the SIs of $\conv{\mathcal{P}^1_k(\mathcal{I}_L)}$ and $\conv{\mathcal{P}^1_k(\mathcal{I}_H)}$, to obtain two classes of strong valid linear inequalities for $\conv{\mathcal{P}^2_k}$. \\

As mentioned earlier, \citet{yu2017polyhedral} give an $O(n^4)$ algorithm to exactly lift the EPIs for the multi-weighted case. However, the algorithmic approach does not yield explicit forms of the lifting coefficients, which hinders the effectiveness of this algorithmic approach in a branch-and-cut scheme. Due to this complexity, the authors give approximate coefficients of the lifted EPIs. In contrast, we directly describe the optimal solutions to the lifting problems given both EPIs and SIs as the base inequalities for the problems involving two weights. Such a closed-form description of sequence-dependent lifting coefficients is generally non-trivial. Furthermore, this description paves the path for the effective use of the resulting inequalities in a branch-and-cut framework as evidenced by our computational experiments.

\section{Exact Lifting of Extended Polymatroid Inequalities}
\label{sect:EPI}
The goal of this section is to lift the EPIs \eqref{eq:EPI} and derive a class of strong valid linear inequalities for $\conv{\mathcal{P}^2_k}$. We call this new class of inequalities the \textit{lifted-EPIs}.  \\

For any permutation $\delta$ of $N$, we can re-index $N$ such that $\delta$ is the natural order $(1,2,\dots,n)$. Let $S$ be any subset of $N$ such that $|S| = k$. Without loss of generality, we assume that $S=[k]$. This can also be achieved by re-indexing. Let $d_H = |\mathcal{I}_H\backslash [k-1]|$ and $d_L = |\mathcal{I}_L\cap [k-1]|$. We use $\mathcal{H} = (\mathcal{H}_1, \mathcal{H}_2, \dots, \mathcal{H}_{d_H})$ to denote the permutation of $\mathcal{I}_H\backslash [k-1]$ that is consistent with $\delta$. We let $\mathcal{L} = (\mathcal{L}_1, \mathcal{L}_2, \dots, \mathcal{L}_{d_L})$ be the permutation of $\mathcal{I}_L\cap [k-1]$ that is also consistent with $\delta$. For $q\in [d_H]$, $\mathcal{H}(q) = \{\mathcal{H}_1, \dots, \mathcal{H}_q\}$. If $q\leq 0$, then $\mathcal{H}(q) = \emptyset$. Similarly, we let $\mathcal{L}(q) = \{\mathcal{L}_1, \dots, \mathcal{L}_q\}$ for $q\in [d_L]$; $\mathcal{H}(q) = \emptyset$ when $q\leq 0$. The set $L^t$ is the same as $\mathcal{L}(t)$ for any $t\in [d_L]$. However, $L^t$ is defined for $t > d_L$ as well, while $\mathcal{L}(\cdot)\subseteq [k-1]$. The next example clarifies the new notation.

\begin{example}Suppose $k=3$, and $N = [5]$ such that $\mathcal{I}_L = \{1,3\}$ and $\mathcal{I}_H = \{2,4,5\}$. Given $\delta = (1,2,3,4,5)$, $\mathcal{H}=(4,5)$ and $d_H=2$. Meanwhile $\mathcal{L}=(1)$ and $d_L = 1$. In addition, $\mathcal{H}(2) = \{4,5\}$, $\mathcal{L}(1) = \{1\}$ and $\mathcal{L}(0) = \emptyset$.  
\end{example}

With the specified indexing,
\[\mathcal{P}^2_k(S) = \left\{ (w, x)\in\mathbb{R}\times\{0,1\}^k : w \geq f\left(\sum_{i=1}^k a_ix_i\right), \sum_{i=1}^k x_i \leq k \right\}. \]
This set is essentially $\mathcal{P}^2_k$ with $x_i$ fixed at 0 for all $i\in N\backslash S = [k+1, n]$. Let a base EPI $w \geq \sum_{i=1}^k \rho_i x_i$ associated with the natural ordering of $S$ be given. The coefficient $\rho_i$ is $\rho_i([i-1])$ for any $i\in[k]$ to be precise. Lifting this base inequality with the variables $x_i$, $i\in N\backslash S$, we can construct a valid inequality for $\conv{\mathcal{P}^2_k}$ in the form of 
\begin{equation}
\label{eq:EPI-lifted}
 w\geq \sum_{i=1}^k \rho_i x_i + \sum_{i=k+1}^n \xi_i x_i. 
\end{equation} which is what we call a lifted-EPI.  \\

In an intermediate step of lifting $x_j$ for $j\in[k+1, n]$, we derive a facet-defining inequality $w \geq \sum_{i=1}^k \rho_i x_i + \sum_{i = k+1}^j \xi_i x_i$ for the convex hull of the polyhedron 
\[\mathcal{P}^2_k([j]) = \left\{ (w, x)\in\mathbb{R}\times\{0,1\}^j : w \geq f\left(\sum_{i=1}^j a_ix_i\right), \sum_{i=1}^j x_i \leq k \right\}. \] The coefficient $\xi_j$ is the optimal objective value of the $j$-th lifting problem \eqref{eq:EPI_lifting_prob}.
\begingroup
\allowdisplaybreaks
\begin{equation}
\label{eq:EPI_lifting_prob}
\begin{aligned}
\xi_j := \min \hspace{0.2cm} & w - \sum_{i=1}^{k} \rho_i x_i - \sum_{i = k+1}^{j-1} \xi_i x_i&& \\
\textrm{s.t.} \quad & w\geq f\left(a_j + \sum_{i = 1}^{j-1} a_i x_i\right), &&\\
& \sum_{i=1}^{j-1} x_i \leq k-1, && \\
& x \in \{0,1\}^{j-1}. && 
\end{aligned}
\end{equation}
\endgroup

In fact, every lifted-EPI is identical with $w\geq \sum_{j=1}^n \zeta_jx_j$, in which $\zeta_j$ is the optimal objective of the $j$-th lifting problem 
 \eqref{eq:EPI_lifting_simplified} for $j\in[n]$. This observation is formalized in Lemma \ref{lemma:EPI_alternative}. 
\begingroup
\allowdisplaybreaks
\begin{subequations}
\label{eq:EPI_lifting_simplified}
\begin{alignat}{2}
\zeta_j := \min \hspace{0.2cm} & w - \sum_{i = 1}^{j-1} \zeta_i x_i&& \\
\textrm{s.t.} \quad & w\geq f\left(a_j + \sum_{i = 1}^{j-1} a_i x_i\right), && \label{constr:w}\\
& \sum_{i=1}^{j-1} x_i \leq k-1, && \label{constr:card} \\
& x \in \{0,1\}^{j-1}. && 
\end{alignat}
\end{subequations}
\endgroup

\begin{lemma}
\label{lemma:EPI_alternative}
In the base EPI $w \geq \sum_{i=1}^k \rho_i x_i$, $\rho_j = \zeta_j$ for all $j\in [k]$, where $\zeta_j$ is the optimal objective value of the $j$-th lifting problem \eqref{eq:EPI_lifting_simplified}.
\end{lemma}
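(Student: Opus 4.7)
The plan is to prove the equality $\zeta_j = \rho_j$ by induction on $j \in [k]$. The key observation that makes this tractable is that for any $j \leq k$, the lifting problem \eqref{eq:EPI_lifting_simplified} involves only $j-1$ binary variables, and $j-1 \leq k-1$, so the cardinality constraint \eqref{constr:card} is redundant. Moreover, at optimum the $w$-constraint \eqref{constr:w} is tight, so the problem reduces to the unconstrained set minimization
\begin{equation*}
\zeta_j \;=\; \min_{T \subseteq [j-1]} \left\{ f\!\left(a_j + \sum_{i \in T} a_i\right) - \sum_{i \in T} \zeta_i \right\}.
\end{equation*}

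The base case $j=1$ is immediate: the sums over $[j-1] = \emptyset$ vanish, the $w$-constraint forces $w \geq f(a_1)$, and so $\zeta_1 = f(a_1) = f(a_1) - f(0) = \rho_1$. For the inductive step, suppose $\zeta_i = \rho_i$ for all $i < j$. To obtain the upper bound $\zeta_j \leq \rho_j$, I would evaluate the reduced objective at $T = [j-1]$; telescoping the definition of $\rho_i$ gives $\sum_{i=1}^{j-1}\rho_i = f(\sum_{i=1}^{j-1} a_i)$, so the value attained is $f(\sum_{i=1}^{j} a_i) - f(\sum_{i=1}^{j-1} a_i) = \rho_j$.

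For the matching lower bound $\zeta_j \geq \rho_j$, I would invoke the validity of the base EPI $w \geq \sum_{i=1}^{k} \rho_i x_i$ on $\mathcal{P}^2_k(S)$ with $S=[k]$. Plugging in the characteristic vector of $T \cup \{j\} \subseteq [k]$ (which is legal since $T \subseteq [j-1]$ and $j \leq k$) and the tight choice $w = f(\sum_{i \in T \cup \{j\}} a_i)$ yields
\begin{equation*}
f\!\left(a_j + \sum_{i \in T} a_i\right) \;\geq\; \sum_{i \in T} \rho_i + \rho_j.
\end{equation*}
Rearranging and using the inductive hypothesis $\zeta_i = \rho_i$ for $i < j$ shows that every feasible $T$ yields objective value at least $\rho_j$, completing the induction.

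The argument is essentially bookkeeping, so I do not anticipate a serious obstacle; the only subtlety is recognizing that for $j \leq k$ the cardinality constraint plays no role, which is what allows the all-ones assignment to be feasible and the generic EPI bound to be tight. The content of the lemma is therefore that for indices within the base EPI's support, the recursive lifting scheme \eqref{eq:EPI_lifting_simplified} simply reproduces the original EPI coefficients, justifying the unified form $w \geq \sum_{j=1}^n \zeta_j x_j$ used throughout the rest of Section~\ref{sect:EPI}.
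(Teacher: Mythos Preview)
Your proof is correct. Both your argument and the paper's proceed by induction on $j$ and exploit that the cardinality constraint is slack for $j\le k$; the upper bound via $T=[j-1]$ is identical. The difference lies in the lower bound: the paper works from submodularity of $F$ directly, showing through a chain of marginal-return comparisons that $F(Q\cup\{J\})-\sum_{i\in Q}\rho_i \ge F([J])-\sum_{i\in[J-1]}\rho_i$ for every $Q\subseteq[J-1]$. You instead invoke the validity of the base EPI on $\mathcal{P}^2_k([k])$ at the point $(F(T\cup\{j\}),\chi_{T\cup\{j\}})$, which yields the same inequality $F(T\cup\{j\})\ge\sum_{i\in T}\rho_i+\rho_j$ in one line. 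Your route is shorter and cleaner because it treats Edmonds' EPI validity (already cited in Section~\ref{sect:prelim}) as a black box, whereas the paper effectively re-derives that validity for the special subsets $T\cup\{j\}$. The paper's approach, on the other hand, is self-contained in its use of submodularity and makes explicit which structural property is doing the work.
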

\begin{proof}
We observe that for any $j\in[k]$, constraint \eqref{constr:card} naturally holds. When $j=1$, $w$ is the only decision variable in problem \eqref{eq:EPI_lifting_simplified}. To minimize $w$, constraint \eqref{constr:w} must be tight at the optimal solution. Thus $\zeta_1 = f(a_1) = \rho_1$. If $k=1$ then the proof is complete. Now suppose $k\geq 2$. When $j=2$, $\zeta_2 = \min \{f(a_1+a_2)-f(a_1), f(a_2)\}$, or equivalently, $\min\{F(\{1,2\})-F(\{1\}), F(\{2\})\}$. By submodularity of $F$, $F(\{1\}) + F(\{2\}) \geq F(\{1,2\})$. Therefore, $\zeta_2 = F(\{1,2\})-F(\{1\}) = \rho_2$. Now we have settled two base cases. For a strong induction, our induction hypothesis is that $\rho_j$ is the optimal objective of the $j$-th problem \eqref{eq:EPI_lifting_simplified} for all $j \in [J-1]$, where $J-1\in [k-1]$. Now we characterize the optimal solution to the $J$-th problem \eqref{eq:EPI_lifting_simplified}. The optimal objective value $\zeta_{J}$ is $\min_{Q\subseteq [J-1]} F(Q\cup\{J\}) - \sum_{i\in Q}\zeta_i$. Let $Q$ be an arbitrary subset of $[J-1]$.
\begingroup
\allowdisplaybreaks
\begin{align*}
F(Q\cup\{J\}) + \sum_{i\in [J-1] \backslash Q} \zeta_i 
& = F(Q\cup\{J\}) + \sum_{i\in [J-1] \backslash Q} \rho_i \quad\quad (\text{by induction hypothesis})&&  \\
& = F(Q\cup\{J\}) + \sum_{i\in [J-1] \backslash Q} \rho_i([i-1]) \quad\quad (\text{by definition of $\rho_i$}) &&  \\
& \geq F(Q\cup\{J\}) + \sum_{i\in [J-1] \backslash Q} \rho_i (Q\cup [i-1])  \quad\quad (\text{by submodularity of $F$}) && \\
& = F(Q\cup\{J\}) + \sum_{i\in [J-1] \backslash Q} [F(Q\cup [i]) - F(Q\cup [i-1]) ] && \\
& = F(Q\cup\{J\}) + F(Q\cup [J-1]) - F(Q\cup\emptyset) && \\
& = F(Q\cup\{J\}) + F([J-1]) - F(Q) && \\
& = F([J-1]) + \rho_{J}(Q) && \\
& \geq F([J-1]) + \rho_{J}([J-1])\quad\quad (\text{by submodularity of $F$}) && \\
& = F([J]). && 
\end{align*}
\endgroup

It follows that 
\begingroup
\allowdisplaybreaks
\begin{align*}
F(Q\cup\{J\}) - \sum_{i\in Q} \zeta_i 
& = F(Q\cup\{J\}) + \sum_{i\in [J-1] \backslash Q} \zeta_i - \sum_{i\in[J-1]} \zeta_i  \\
& \geq F([J]) - \sum_{i\in[J-1]} \zeta_i.
\end{align*}
\endgroup

Since the choice of $Q$ is arbitrary, $\zeta_{J} = F([J]) - \sum_{i\in [J-1]} \zeta_i = \rho_{J}$. By strong induction, we conclude that $\rho_j = \zeta_j$ for all $j\in[k]$. 
\end{proof}

Lemma \ref{lemma:EPI_alternative} shows that all the coefficients in a lifted-EPI are the optimal objective values of the corresponding lifting problems \eqref{eq:EPI_lifting_simplified}. This observation enables us to compare $\zeta_j$ across all $j\in [n]$. Lemma \ref{lemma:descending_zeta} captures a descending property of these coefficients. 

\begin{lemma}
\label{lemma:descending_zeta}
Let $w\geq \zeta ^\top x$ be a lifted-EPI associated with $S = [k]$. If $1\leq j_1 < j_2\leq n$ satisfy $a_{j_1} = a_{j_2}$, then $\zeta_{j_1} \geq \zeta_{j_2}$. 
\end{lemma}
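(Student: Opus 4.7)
The plan is to exploit the alternative characterization of the lifted-EPI coefficients given by Lemma \ref{lemma:EPI_alternative}, which shows that every $\zeta_j$ is the optimal objective value of problem \eqref{eq:EPI_lifting_simplified}, uniformly for $j \in [n]$. This lets me compare $\zeta_{j_1}$ and $\zeta_{j_2}$ by comparing the two minimization problems directly, rather than treating the cases $j \le k$ and $j > k$ separately.

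The key observation is that, under the hypothesis $a_{j_1} = a_{j_2}$ with $j_1 < j_2$, the $j_2$-th lifting problem has a strict superset of the decision variables of the $j_1$-th lifting problem (the extra coordinates being $x_{j_1}, x_{j_1+1}, \dots, x_{j_2-1}$), while the cardinality bound $\sum x_i \le k-1$ has the same right-hand side. Moreover, the nonlinear term $f(a_j + \sum_{i=1}^{j-1} a_i x_i)$ depends on $j$ only through the additive constant $a_j$, which is identical for $j_1$ and $j_2$ by assumption.

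So I would take an optimal solution $x^\star \in \{0,1\}^{j_1-1}$ of the $j_1$-th lifting problem and pad it with zeros: define $\hat{x} \in \{0,1\}^{j_2-1}$ by $\hat{x}_i = x^\star_i$ for $i \in [j_1-1]$ and $\hat{x}_i = 0$ for $i \in [j_1, j_2-1]$. Then $\hat{x}$ is feasible for the $j_2$-th lifting problem (the cardinality bound is preserved), and plugging in gives
\[
f\!\left(a_{j_2} + \sum_{i=1}^{j_2-1} a_i \hat{x}_i \right) - \sum_{i=1}^{j_2-1} \zeta_i \hat{x}_i \;=\; f\!\left(a_{j_1} + \sum_{i=1}^{j_1-1} a_i x^\star_i \right) - \sum_{i=1}^{j_1-1} \zeta_i x^\star_i \;=\; \zeta_{j_1},
\]
where the first equality uses $a_{j_1} = a_{j_2}$ and the vanishing of the padded coordinates. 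Since $\zeta_{j_2}$ is the minimum over all feasible solutions, it follows that $\zeta_{j_2} \le \zeta_{j_1}$.

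There is no real obstacle here beyond setting up the right comparison; the content of the lemma is essentially a monotonicity-by-feasibility argument, made possible because Lemma \ref{lemma:EPI_alternative} allows $\zeta_j$ for $j \le k$ to be interpreted as the solution of the same minimization template as for $j > k$. The only point worth being careful about is the degenerate case $j_1 = 1$, in which the feasible set of the $j_1$-th problem is a single point (the empty tuple) and $\zeta_{1} = f(a_1)$; there the padding construction yields $\hat{x} = \mathbf{0} \in \{0,1\}^{j_2-1}$ and the same equality $\zeta_{j_2} \le f(a_{j_2}) = f(a_1) = \zeta_1$ still holds.
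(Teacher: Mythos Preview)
Your argument is correct and follows essentially the same approach as the paper. Both proofs invoke Lemma~\ref{lemma:EPI_alternative} to view every $\zeta_j$ (including $j\le k$) as the optimal value of the $j$-th lifting problem \eqref{eq:EPI_lifting_simplified}, and then establish the monotonicity $\zeta_{j_2}\le\zeta_{j_1}$; the paper does this by citing Proposition~1.3 on page~264 of \cite{wolsey1999integer}, whereas you write out the padding-of-an-optimal-solution argument explicitly, which is precisely the content of that cited proposition in this setting.
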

\begin{proof}
By Lemma \ref{lemma:EPI_alternative}, we can view the variables $x_i$, $i\in S$, as the first $k$ variables to be lifted. Since $a_{j_1} = a_{j_2}$, this result follows from Proposition 1.3 on page 264 of \cite{wolsey1999integer}, which states that the lifting coefficients are non-increasing with respect to the order in which the variables are lifted. 
\end{proof}

Before stating the explicit form of any lifted-EPI, we introduce additional notation and make more observations about the lifting problem \eqref{eq:EPI_lifting_simplified}. In the $j$-th problem \eqref{eq:EPI_lifting_simplified}, suppose $x \in \{0,1\}^{j-1}$ satisfies $\sum_{i=1}^n x_i \leq k-1$. We denote the support of $x$ by $X = \{i\in [j-1] :  x_i = 1\}$. Since the objective is minimized, we attain the lowest objective value given $x$ when constraint \eqref{constr:w} is tight. We represent the corresponding objective value by $\zeta_j^X$. In other words, 
\[\zeta_j^X = f\left(a_j + \sum_{i\in X} a_i\right) - \sum_{i\in X}\zeta_i, \]
for any feasible $x \in \{0,1\}^{j-1}$. Then \[\zeta_j = \min_{X\subseteq[j-1], |X|\leq k-1} \zeta_j^X.\]

We observe that $N=[k-1]\cup (\mathcal{I}_L\backslash [k-1]) \cup (\mathcal{I}_H\backslash [k-1])$, where $[k-1]$, $\mathcal{I}_L\backslash [k-1]$ and $\mathcal{I}_H\backslash [k-1]$ are pairwise disjoint. Recall that $|\mathcal{I}_H\backslash [k-1]| = d_H$, and $\mathcal{I}_H\backslash [k-1] = \mathcal{H}(d_H) = \{\mathcal{H}_1,\dots, \mathcal{H}_{d_H}\}$. Thus every $j\in \mathcal{I}_H\backslash [k-1]$ is $\mathcal{H}_i$ for some unique $i\in[d_H]$. \\

Lemma \ref{lemma:no_k_in_sol} shows that, if we restrict the solutions to the $j$-th lifting problem \eqref{eq:EPI_lifting_simplified} by fixing $x_i=0$ for all $i\in[k,j-1]$, then $\zeta_j^{[k-1]}$ is the lowest attainable objective value. 
\begin{lemma}
\label{lemma:no_k_in_sol}
Let any $j\in[k+1, n]$ be given. {For all $Q\subseteq [k-1]$}, $\zeta_j^Q\geq \zeta_j^{[k-1]}$. 
\end{lemma}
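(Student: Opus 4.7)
The plan is to establish $\zeta_j^Q \geq \zeta_j^{[k-1]}$ by reducing it (via the definitions of $\zeta_j^Q$ and $\zeta_j^{[k-1]}$, together with $\zeta_i = \rho_i$ for $i \in [k-1]$ from Lemma \ref{lemma:EPI_alternative}) to the equivalent inequality
\[
F([k-1]\cup\{j\}) - F(Q\cup\{j\}) \;\leq\; \sum_{i\in[k-1]\setminus Q}\zeta_i,
\]
and then to prove this bound by adding the elements of $[k-1]\setminus Q$ to $Q\cup\{j\}$ one at a time and controlling each marginal return with submodularity.

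First I would enumerate $[k-1]\setminus Q = \{i_1 < i_2 < \cdots < i_r\}$ in ascending order and define the chain $T_0 = Q\cup\{j\} \subset T_1 \subset \cdots \subset T_r = [k-1]\cup\{j\}$, where $T_\ell = T_{\ell-1}\cup\{i_\ell\}$. Telescoping then gives
\[
F(T_r) - F(T_0) \;=\; \sum_{\ell=1}^{r} \rho_{i_\ell}(T_{\ell-1}).
\]
My goal becomes to show $\rho_{i_\ell}(T_{\ell-1}) \leq \zeta_{i_\ell}$ for each $\ell$.

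The key step is to verify that $[i_\ell-1]\subseteq T_{\ell-1}$ so that diminishing returns apply. Because $i_1,\ldots,i_{\ell-1}$ are, by construction, precisely the elements of $[k-1]\setminus Q$ strictly less than $i_\ell$, any index $m\in[i_\ell-1]$ lies either in $Q$ (hence in $T_{\ell-1}$) or in $\{i_1,\ldots,i_{\ell-1}\}\subseteq T_{\ell-1}$. Once this containment is in hand, submodularity of $F$ yields $\rho_{i_\ell}(T_{\ell-1}) \leq \rho_{i_\ell}([i_\ell-1]) = \rho_{i_\ell} = \zeta_{i_\ell}$, where the last equality invokes Lemma \ref{lemma:EPI_alternative} applied to $i_\ell \in [k-1]$.

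Summing over $\ell \in [r]$ produces $F([k-1]\cup\{j\}) - F(Q\cup\{j\}) \leq \sum_{i\in[k-1]\setminus Q}\zeta_i$, which rearranges to the claim. The only nontrivial ingredient is the containment $[i_\ell-1]\subseteq T_{\ell-1}$, which is the main thing to get right; it hinges on processing the elements of $[k-1]\setminus Q$ in ascending order and on $Q\subseteq[k-1]$. Everything else is bookkeeping plus one application of submodular diminishing returns.
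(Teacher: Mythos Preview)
Your argument is correct. The reduction to $F([k-1]\cup\{j\}) - F(Q\cup\{j\}) \leq \sum_{i\in[k-1]\setminus Q}\zeta_i$ is valid, the containment $[i_\ell-1]\subseteq T_{\ell-1}$ holds exactly for the reason you give (the ascending order of the $i_\ell$ together with $Q\subseteq[k-1]$), and diminishing returns then yields each bound $\rho_{i_\ell}(T_{\ell-1})\le \rho_{i_\ell}([i_\ell-1])=\zeta_{i_\ell}$.

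The paper's proof is organized in a mirror-image way: instead of telescoping over $[k-1]\setminus Q$ added to $Q\cup\{j\}$, it telescopes over the elements of $Q$ itself. Concretely, it first uses submodularity to replace each $\rho_i([i-1])$ for $i\in Q$ by $\rho_i(Q\cap[i-1])$, which collapses $\sum_{i\in Q}\rho_i$ to $F(Q)$, yielding $\zeta_j^Q \ge F(Q\cup\{j\})-F(Q)=\rho_j(Q)$; a second application of submodularity then gives $\rho_j(Q)\ge\rho_j([k-1])=\zeta_j^{[k-1]}$. Your route needs $|[k-1]\setminus Q|$ applications of diminishing returns and one telescoping identity; the paper's needs $|Q|$ applications to collapse the sum, plus one more for $\rho_j$. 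Both are equally elementary and equally short; yours has the minor advantage that the target inequality $F([k-1]\cup\{j\}) - F(Q\cup\{j\}) \leq \sum_{i\in[k-1]\setminus Q}\zeta_i$ makes the role of the ``missing'' elements explicit, while the paper's version isolates $\rho_j(Q)$ cleanly, which is a motif it reuses elsewhere.
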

\begin{proof}
We choose an arbitrary $Q\subseteq [k-1]$ such that $|Q|\leq k-1$. Then 
\begingroup
\allowdisplaybreaks
\begin{align*}
\zeta_{j}^Q & =  f\left(a_j + \sum_{i\in Q} a_i\right) - \sum_{i\in Q}\rho_i \\
& =  F(Q\cup\{j\}) - \sum_{i\in Q} \rho_i([i-1]) \\
& \geq F(Q\cup\{j\}) - \sum_{i\in Q} \rho_i(Q\cap [i-1]) \quad\quad \text{($F$ is submodular)}\\
& = F(Q\cup\{j\}) - F(Q) \\
& \geq F([k-1]\cup\{j\}) - F([k-1]) \quad\quad \text{($F$ is submodular)}\\ 
& = f\left(\sum_{i\in [k-1]}a_i + a_{j}\right) - \sum_{i\in [k-1]}\rho_i \\
& = \zeta_{j}^{[k-1]}. 
\end{align*}
\endgroup 
\end{proof}

The next lemma shows that, in any lifting problem \eqref{eq:EPI_lifting_simplified}, among all the feasible supports with exactly $t$ lower-weighted items and $s$ higher-weighted items, $L^t\cup H^s$ has the lowest objective value. 
\begin{lemma}
\label{lemma:general_Q_form}
Let any $j\in[k,n]$ and fixed integers $0\leq t\leq |\mathcal{I}_L|$, $0\leq s\leq |\mathcal{I}_H|$ with $t+s\leq k-1$ be given. For any $Q\subseteq [j-1]$, such that $|Q\cap\mathcal{I}_L| = t$ and $|Q\cap\mathcal{I}_H| = s$, $\zeta_j^Q \geq \zeta_j^{L^t\cup H^s}$. 
\end{lemma}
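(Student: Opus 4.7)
The plan is to reduce the inequality $\zeta_j^Q \geq \zeta_j^{L^t \cup H^s}$ to a purely linear rearrangement claim by observing that the nonlinear term $f\bigl(a_j + \sum_{i \in Q} a_i\bigr)$ depends on $Q$ only through the counts $t = |Q \cap \mathcal{I}_L|$ and $s = |Q \cap \mathcal{I}_H|$, not through the identities of the chosen items. Once that cancellation is made, the residual inequality is controlled entirely by Lemma \ref{lemma:descending_zeta}.

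First, I would expand
\[\zeta_j^Q = f\bigl(a_j + t\, a_L + s\, a_H\bigr) - \sum_{i \in Q} \zeta_i,\]
and write the analogous expression for $\zeta_j^{L^t \cup H^s}$. Since $L^t \cup H^s$ contains exactly $t$ lower-weighted and $s$ higher-weighted items, the $f$-terms cancel in the difference, so it suffices to establish
\[\sum_{i \in L^t} \zeta_i + \sum_{i \in H^s} \zeta_i \;\geq\; \sum_{i \in Q \cap \mathcal{I}_L} \zeta_i + \sum_{i \in Q \cap \mathcal{I}_H} \zeta_i.\]

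Next, I would split this inequality by weight class and verify each piece separately. For the lower-weighted part, both $L^t$ and $Q \cap \mathcal{I}_L$ are $t$-element subsets of $\mathcal{I}_L$. The existence of $Q \subseteq [j-1]$ with $|Q \cap \mathcal{I}_L| = t$ forces $|\mathcal{I}_L \cap [j-1]| \geq t$, which guarantees that $L^t$, being the $t$ smallest indices in $\mathcal{I}_L$ under the natural ordering $\delta = (1, 2, \dots, n)$, is itself contained in $[j-1]$. By Lemma \ref{lemma:descending_zeta}, the coefficients $\zeta_i$ restricted to $\mathcal{I}_L$ are non-increasing in $i$, so a standard exchange argument shows that among all $t$-subsets of $\mathcal{I}_L$, the set $L^t$ attains the largest $\zeta$-sum. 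This gives $\sum_{i \in L^t} \zeta_i \geq \sum_{i \in Q \cap \mathcal{I}_L} \zeta_i$, and the identical argument applied to $\mathcal{I}_H$ yields $\sum_{i \in H^s} \zeta_i \geq \sum_{i \in Q \cap \mathcal{I}_H} \zeta_i$. Summing the two inequalities finishes the proof.

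The only real obstacle is bookkeeping: I must confirm that $L^t \cup H^s$ is a feasible support for the $j$-th lifting problem \eqref{eq:EPI_lifting_simplified} (so that $\zeta_j^{L^t \cup H^s}$ is well-defined), and that the rewriting of $\zeta_j^Q$ above is legitimate for every admissible $Q$. Both points follow immediately from the hypothesis that some valid $Q \subseteq [j-1]$ exists. Beyond this, the argument is a direct combination of the definition of $\zeta_j^X$ with the descending property already established in Lemma \ref{lemma:descending_zeta}.
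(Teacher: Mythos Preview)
Your proof is correct and follows essentially the same approach as the paper: both observe that the $f$-term depends only on the counts $t$ and $s$, then invoke Lemma~\ref{lemma:descending_zeta} separately on each weight class to conclude that $\sum_{i\in L^t}\zeta_i \geq \sum_{i\in Q\cap\mathcal{I}_L}\zeta_i$ and $\sum_{i\in H^s}\zeta_i \geq \sum_{i\in Q\cap\mathcal{I}_H}\zeta_i$. Your additional remark verifying that $L^t\cup H^s\subseteq [j-1]$ is a small elaboration the paper leaves implicit, but the argument is otherwise identical.
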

\begin{proof}
Consider any $Q$ with the stated properties. It satisfies $|Q| = s + t \leq k-1$, so $Q$ corresponds to a feasible solution to the $j$-th lifting problem \eqref{eq:EPI_lifting_simplified}.  Lemma \ref{lemma:descending_zeta} suggests that $\sum_{i\in L^t}\zeta_i$ is the sum of the highest $t$ lifting coefficients for the lower-weighted items. There are $t$ lower-weighted items in $Q\cap\mathcal{I}_L$ as well, so $\sum_{i\in L^t}\zeta_i \geq \sum_{i\in Q\cap\mathcal{I}_L}\zeta_i$. Similarly,  $\sum_{i\in H^s}\zeta_i \geq \sum_{i\in Q\cap\mathcal{I}_H}\zeta_i$. Thus 
\begingroup
\allowdisplaybreaks
\begin{align*}
\zeta_j^Q & = f\left(a_j + ta_L + sa_H\right) - \sum_{i\in Q\cap\mathcal{I}_L}\zeta_i -  \sum_{i\in Q\cap\mathcal{I}_H}\zeta_i \\
& \geq f\left(a_j + ta_L + sa_H\right) - \sum_{i\in L^t}\zeta_i -  \sum_{i\in H^s}\zeta_i \quad \quad \text{(by Lemma \ref{lemma:descending_zeta}, as discussed above)} \\
& = \zeta_j^{L^t\cup H^s}. 
\end{align*}
\endgroup 
\end{proof}
We may infer from this lemma that an optimal support for any lifting problem \eqref{eq:EPI_lifting_simplified} assumes the form $L^t\cup H^s$ for some $t$ and $s$. In Lemma \ref{lemma:same_a}, we provide the optimal solution to the $j$-th lifting problem, when all the items in $[j-1]$ have the same weight.

\begin{lemma}
\label{lemma:same_a}
Let any $j\in[k,n]$ be given. If $[j-1]\subseteq\mathcal{I}_L$, or $[j-1]\subseteq\mathcal{I}_H$, then $\zeta_j = \zeta_j^{[k-1]}$ in the $j$-th lifting problem \eqref{eq:EPI_lifting_simplified}. 
\end{lemma}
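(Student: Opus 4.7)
The plan is to derive this lemma as a direct consequence of the two preceding structural results (Lemmas \ref{lemma:general_Q_form} and \ref{lemma:no_k_in_sol}) by exploiting the homogeneity assumption on $[j-1]$. Starting from the reformulation $\zeta_j = \min_{Q\subseteq [j-1],\,|Q|\le k-1}\zeta_j^Q$, I would invoke Lemma \ref{lemma:general_Q_form} to restrict attention, without loss of generality, to candidate supports of the form $Q = L^t \cup H^s$ with $t + s \le k-1$. The key observation is that feasibility of such a $Q$ in the $j$-th lifting problem requires $Q \subseteq [j-1]$.

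Consider the case $[j-1] \subseteq \mathcal{I}_L$. Then $[j-1] \cap \mathcal{I}_H = \emptyset$, which forces $s = 0$ for any feasible $Q$. Moreover, because $j \ge k$, we have $[k-1] \subseteq [j-1] \subseteq \mathcal{I}_L$, so the first $t$ lower-weighted items in $\delta$ coincide with $[t]$ for every $t \in [0,k-1]$, i.e., $L^t = [t] \subseteq [k-1]$. Applying Lemma \ref{lemma:no_k_in_sol} to $Q = [t]$ immediately gives $\zeta_j^{L^t} = \zeta_j^{[t]} \ge \zeta_j^{[k-1]}$. Combining with Lemma \ref{lemma:general_Q_form} yields $\zeta_j \ge \zeta_j^{[k-1]}$. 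Since $Q = [k-1]$ is itself feasible in the lifting problem, $\zeta_j \le \zeta_j^{[k-1]}$ trivially, so equality holds.

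The case $[j-1] \subseteq \mathcal{I}_H$ is symmetric: the homogeneity forces $t = 0$, and $[k-1] \subseteq \mathcal{I}_H$ gives $H^s = [s] \subseteq [k-1]$ for $s \in [0,k-1]$; Lemma \ref{lemma:no_k_in_sol} again delivers $\zeta_j^{H^s} \ge \zeta_j^{[k-1]}$, hence $\zeta_j = \zeta_j^{[k-1]}$.

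There is no real obstacle to overcome here; the argument is essentially bookkeeping. The only point that needs explicit justification is the identification $L^t = [t]$ (respectively $H^s = [s]$) under the hypothesis, which relies on the fact that $\delta$ is the natural order $(1,2,\dots,n)$ and that the first $k-1$ indices therefore enumerate the first $k-1$ items of whichever weight class contains $[j-1]$. Once that identification is made, the conclusion is a one-line application of Lemma \ref{lemma:no_k_in_sol}.
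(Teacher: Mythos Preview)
Your proposal is correct and follows essentially the same route as the paper: both use Lemma~\ref{lemma:general_Q_form} to reduce to supports of the form $[t]\subseteq[k-1]$, after which the paper inlines the submodularity computation $\zeta_j^{[t]}=F(\{j\}\cup[t])-F([t])\ge F(\{j\}\cup[k-1])-F([k-1])=\zeta_j^{[k-1]}$, whereas you invoke Lemma~\ref{lemma:no_k_in_sol} as a black box for the same conclusion. One bookkeeping caveat: Lemma~\ref{lemma:no_k_in_sol} is stated only for $j\in[k+1,n]$, so strictly speaking you should handle $j=k$ separately (it is immediate, since then every feasible $Q$ lies in $[k-1]$ and $\zeta_k=\rho_k=\zeta_k^{[k-1]}$).
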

\begin{proof}
Without loss of generality, suppose $[j-1]\subseteq\mathcal{I}_L$. Lemma \ref{lemma:general_Q_form} implies that $\arg\min_{Q\subseteq [j-1], |Q|\leq k-1}\zeta_j^Q$ has the form of $[t]$ for some $0\leq t\leq k-1$. We notice that for any such set, 
\begingroup
\allowdisplaybreaks
\begin{align*}
\zeta_{j}^{[t]} & =  f\left(a_j + t a_L\right) - \sum_{i\in [t]}\zeta_i \\
& = F(\{j\}\cup [t]) - \sum_{i\in [t]}\rho_i \\
& = F(\{j\}\cup [t]) - F([t])\\
& \geq F(\{j\}\cup[k-1]) - F([k-1]) \quad \quad \text{(by submodularity of $F$)} \\
& = \zeta_j^{[k-1]}.
\end{align*}
\endgroup Therefore, for any $Q\subseteq [j-1]$ such that $|Q|\leq k-1$, $\zeta_j^Q \geq \zeta_j^{[k-1]}$. We conclude that $\zeta_j = \zeta_j^{[k-1]}$. The case when $[j-1]\subseteq\mathcal{I}_H$ follows similarly.
\end{proof}

In Proposition \ref{prop:EPI_lifted_coeff}, we present the explicit form of any lifted-EPI.

\begin{proposition}
\label{prop:EPI_lifted_coeff}
A lifted-EPI assumes the form $w\geq \sum_{i=1}^n \zeta_i x_i$, where 
\[
\zeta_j = 
\begin{cases}
\rho_j, & \text{ if } j\in[k-1], \\
 & \\
\zeta_j^{[k-1]}, & \text{ if } j\in\mathcal{I}_L\backslash [k-1], \\
 & \\
\min\left\{ \zeta_{\mathcal{H}_{i-1}},  \zeta_j^{\mathcal{H}(\min\{i-1, d_L\}) \cup \mathcal{L}(d_L-i+1)\cup (\mathcal{I}_H\cap [k-1])}\right\}, & \text{ if } j = \mathcal{H}_i, i\in [d_H],  
\end{cases}
\] and $\zeta_{\mathcal{H}_{0}} =   \zeta_j^{\mathcal{H}(\min\{0, d_L\}) \cup \mathcal{L}(d_L-0)\cup (\mathcal{I}_H\cap [k-1])} =  \zeta_j^{[k-1]}$.
\end{proposition}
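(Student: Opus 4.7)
The plan is to proceed by cases on $j$, leveraging the structural lemmas already in place. First, for $j\in[k-1]$ the identity $\zeta_j=\rho_j$ is an immediate restatement of Lemma~\ref{lemma:EPI_alternative}. For $j\notin[k-1]$, Lemma~\ref{lemma:general_Q_form} reduces the minimization defining $\zeta_j$ to supports of the form $L^t\cup H^s$ with $t+s\le k-1$ and $L^t\cup H^s\subseteq[j-1]$, so the task becomes identifying the optimal pair $(t,s)$ in each of the two remaining cases.

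For $j\in\mathcal{I}_L\setminus[k-1]$, I would show that $[k-1]=L^{d_L}\cup H^{k-1-d_L}$ is optimal. When $L^t\cup H^s\subseteq[k-1]$ this is Lemma~\ref{lemma:no_k_in_sol}. Otherwise $t>d_L$, and since $t+s\le k-1$ then $s<k-1-d_L$, so at least one item of $\mathcal{I}_H\cap[k-1]$ is absent from the support. I would swap a late lower-weighted item of $L^t\setminus[k-1]$ for an earlier higher-weighted item of $(\mathcal{I}_H\cap[k-1])\setminus H^s$, using the submodularity of $F$ together with Lemma~\ref{lemma:descending_zeta} to show the swap cannot increase $\zeta_j^{L^t\cup H^s}$, and iterating down to $[k-1]$.

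The most involved step is the third case, $j=\mathcal{H}_i$, which I would handle by strong induction on $i$; write $Q^*:=\mathcal{H}(\min\{i-1,d_L\})\cup\mathcal{L}(d_L-i+1)\cup(\mathcal{I}_H\cap[k-1])$ for the support appearing in the second term of the min. The base $\zeta_{\mathcal{H}_0}:=\zeta_j^{[k-1]}$ already matches the formula. The inequality $\zeta_{\mathcal{H}_i}\le\min\{\zeta_{\mathcal{H}_{i-1}},\zeta_j^{Q^*}\}$ is the easy direction: any support feasible for the $(i-1)$-st lifting problem is feasible for the $i$-th and, since $a_{\mathcal{H}_{i-1}}=a_{\mathcal{H}_i}=a_H$, yields the same objective, while $Q^*$ is itself a feasible support. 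For the reverse inequality I would take an optimal $Q=L^{t^*}\cup H^{s^*}$ and split on whether $Q\subseteq[\mathcal{H}_{i-1}-1]$: in the affirmative case the same observation gives $\zeta_j^Q\ge\zeta_{\mathcal{H}_{i-1}}$; in the negative case $Q$ contains some item of $[\mathcal{H}_{i-1},\mathcal{H}_i-1]$, and I would transform $Q$ into $Q^*$ through local adjustments (filling unused cardinality slots and exchanging lower-weighted for higher-weighted items), certifying via Lemma~\ref{lemma:f_concave} and Lemma~\ref{lemma:descending_zeta} that each adjustment does not increase the objective.

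The hard part will be this last subcase: showing that among supports drawing an item from $[\mathcal{H}_{i-1},\mathcal{H}_i-1]$, none strictly outperforms $Q^*$. This reduces to controlling the sign of the one-step change in $\zeta_j^{L^t\cup H^s}$ under the relevant $(t,s)$ adjustments, which pits the marginal change in $f$ from replacing $a_L$ by $a_H$ in the argument, bounded via the concavity inequality of Lemma~\ref{lemma:f_concave}, against the change in $\sum\zeta$ from replacing a lower-weighted item by a higher-weighted one, controlled by the descending property of Lemma~\ref{lemma:descending_zeta} together with the recursive structure supplied by the inductive hypothesis. Pinning down the correct ordering of adjustments under which the objective is non-increasing is the crux.
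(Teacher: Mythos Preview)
Your plan has a structural gap. You treat the case $j\in\mathcal{I}_L\setminus[k-1]$ independently and only afterward set up an induction on $i$ for $j=\mathcal{H}_i$. But these two cases are entangled: to compute $\zeta_j^{L^t\cup H^s}$ you need the values of $\zeta_p$ for every $p\in L^t\cup H^s$, and when the support reaches outside $[k-1]$ those values come from the very formulas you are trying to establish. Concretely, in your second case your dichotomy ``$L^t\cup H^s\subseteq[k-1]$, otherwise $t>d_L$'' omits the possibility $t\le d_L$ with $s>k-1-d_L$, i.e.\ a support that picks up some $\mathcal{H}_q$ with $\mathcal{H}_q<j$. Handling that subcase requires $\zeta_{\mathcal{H}_q}$, which belongs to your third case; conversely, your third case needs the $\zeta$-values of lower-weighted items beyond $[k-1]$, which belong to your second case. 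Lemma~\ref{lemma:descending_zeta} alone does not break this circularity: for your swap argument you would need a \emph{lower} bound on the removed coefficient, whereas the descending property only yields upper bounds relative to earlier same-weight items.

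The paper avoids this by running a single strong induction on the lifting index $j$ over $[k,n]$, with base cases $j=k$ and $j=k+1$ established directly (Corollary~\ref{coro:EPI_base_cases}). The induction step then splits according to whether $J\in\mathcal{I}_L$ or $J=\mathcal{H}_i$; crucially, the missing subcase above (Case~3 of Lemma~\ref{lemma:induction_low}) is dispatched by invoking the inductive hypothesis on $\zeta_{\mathcal{H}_q}$ for $\mathcal{H}_q<J$, which is legitimate because the induction is on $j$, not separately on the two cases. Your high-level idea for each subcase is close to the paper's, but the induction needs to be reorganized so that at step $J$ you may freely use the proposition's formula for every $\zeta_p$ with $p<J$, regardless of whether $p$ is lower- or higher-weighted.
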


Before we prove Proposition \ref{prop:EPI_lifted_coeff}, a remark is in order. 
\begin{remark}
\label{remark:understand_lifted_EPI}
Proposition \ref{prop:EPI_lifted_coeff} allows us to efficiently derive the lifting coefficients in a sequential fashion. Here we provide some intuition behind the proposed coefficients. Given the base EPI, $\zeta_j = \rho_j$ for $j\in [k]$. Thus the first case in Proposition \ref{prop:EPI_lifted_coeff} when $j\in [k-1]$ naturally follows. Next we verify $\zeta_k = \rho_k$ in the construction of Proposition \ref{prop:EPI_lifted_coeff}. When $k\in\mathcal{I}_L$, $\zeta_k = \zeta_k^{[k-1]} = \rho_k$ in the second case. When $k\in\mathcal{I}_H$, then it falls under the third case, where $\zeta_k = \min\{\zeta_k^{[k-1]}, \zeta_k^{[k-1]}\} = \rho_k$. Therefore, $\zeta_k = \rho_k$ is satisfied by the proposed construction. Such division of cases is designed for a conciser proof by strong induction. \\

Now suppose $j\geq k+1$. The second case in Proposition \ref{prop:EPI_lifted_coeff} states that, when $j$ is a lower-weighted item, the support of the optimal solution to the corresponding lifting problem \eqref{eq:EPI_lifting_simplified} is $[k-1]$. This implies that $\zeta_j$ is a constant for all such $j$. That is, lifting is sequence independent for $j\in\mathcal{I}_L$. On the other hand, if $j\in\mathcal{I}_H$, then $j = \mathcal{H}_i$ for some $i\in[d_H]$. This means that $j$ is the $i$-th higher-weighted item strictly after $k-1$ in the fixed permutation. In this case, $\zeta_j$ is the minimum of two candidates. The first candidate is $\zeta_{j'}$ where $j' = \mathcal{H}_{i-1}$ is the higher-weighted item right before $j$ in the given permutation. The coefficient $\zeta_{j'}$ has already been obtained before computing $\zeta_j$ because $j'$ comes before $j$ in the lifting sequence. The second candidate has a support set $\mathcal{H}(\min\{i-1, d_L\}) \cup \mathcal{L}(d_L-i+1\})\cup (\mathcal{I}_H\cap [k-1])$ which always has cardinality $k-1$. Intuitively, this set is constructed by replacing the last $i-1$ lower-weighted items in $[k-1]$ with the first $i-1$ higher-weighted items strictly after $k-1$. If $i-1\geq d_L$, then this support set is $\mathcal{H}(d_L)\cup (\mathcal{I}_H\cap [k-1])$ which is the set of the first $k-1$ higher-weighted items.  
\end{remark}

Next, we present a proof by strong induction to show that the proposed lifted-EPI coefficients are indeed the optimal objective values in the lifting problems \eqref{eq:EPI_lifting_simplified}. The correctness of case 1, when $j \in [k-1]$, in Proposition \ref{prop:EPI_lifted_coeff} is immediate from the base EPI. It suffices to show that when $j \geq k$, cases 2 and 3 in Proposition \ref{prop:EPI_lifted_coeff} are also correct. For a strong induction, we use $j=k$ and $j=k+1$ as our base cases. Remark \ref{remark:understand_lifted_EPI} has cleared the case of $j=k$. Thus it suffices to examine the case of $j=k+1$. Once we settle the base cases, we show the correctness of $\zeta_J$ for some $J \geq k+2$ given the induction hypothesis that $\zeta_j$'s are correct for all $j \in [k,J-1]$. After that, Proposition \ref{prop:EPI_lifted_coeff} is formally established. \\

Lemma \ref{lemma:l_base_case} examines the base case of $j=k+1$ when $k+1\in\mathcal{I}_L$. 
\begin{lemma}
\label{lemma:l_base_case}
When $k+1\in\mathcal{I}_L$, $\zeta_{k+1} = \zeta_{k+1}^{[k-1]}$. 
\end{lemma}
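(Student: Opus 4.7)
The goal is to show that $\zeta_{k+1}^Q \geq \zeta_{k+1}^{[k-1]}$ for every feasible support $Q \subseteq [k]$ with $|Q| \leq k-1$. Since $[k-1]$ itself is feasible for the $(k+1)$-st lifting problem, this will give $\zeta_{k+1} = \zeta_{k+1}^{[k-1]}$. The plan is a two-step reduction: first pass from an arbitrary $Q$ to a support that avoids $k$, and then invoke Lemma \ref{lemma:no_k_in_sol}.

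If $k \notin Q$, then $Q \subseteq [k-1]$ with $|Q| \leq k-1$, and Lemma \ref{lemma:no_k_in_sol} applies directly. If $k \in Q$, I would define $Q' = Q \setminus \{k\} \subseteq [k-1]$ with $|Q'| \leq k-2$. Using the base-EPI identity $\zeta_k = f\bigl(\sum_{i \in [k-1]} a_i + a_k\bigr) - f\bigl(\sum_{i \in [k-1]} a_i\bigr)$ together with $a_{k+1} = a_L$, a direct computation yields
\[
\zeta_{k+1}^Q - \zeta_{k+1}^{Q'} = \bigl[ f(y_1 + d) - f(y_1) \bigr] - \bigl[ f(y_2 + d) - f(y_2) \bigr],
\]
where $y_1 = \sum_{i \in Q'} a_i + a_L$, $y_2 = \sum_{i \in [k-1]} a_i$, and $d = a_k$. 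By Lemma \ref{lemma:f_concave}, this quantity is non-negative whenever $y_1 \leq y_2$. The bound $|Q'| \leq k-2$ ensures $[k-1] \setminus Q' \neq \emptyset$; since every weight satisfies $a_i \geq a_L$, we obtain $\sum_{i \in [k-1] \setminus Q'} a_i \geq a_L$, which is exactly $y_1 \leq y_2$. Hence $\zeta_{k+1}^Q \geq \zeta_{k+1}^{Q'} \geq \zeta_{k+1}^{[k-1]}$, with the last step being another application of Lemma \ref{lemma:no_k_in_sol}.

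The crux of the argument is recognizing that the difference $\zeta_{k+1}^Q - \zeta_{k+1}^{Q'}$ fits exactly the two-marginal shape appearing in Lemma \ref{lemma:f_concave}; once $y_1$, $y_2$, and $d$ are identified, the cardinality slack $|Q| \leq k-1$ is what supplies the comparison $y_1 \leq y_2$ via the nonempty leftover $[k-1] \setminus Q'$. A pleasant feature of this plan is that no case split on whether $a_k = a_L$ or $a_k = a_H$ is needed: the same choice $d = a_k$ handles both possibilities uniformly.
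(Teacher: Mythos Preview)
Your proof is correct and follows essentially the same approach as the paper: both arguments split on whether $k\in Q$, and in the case $k\in Q$ both reduce to the concavity comparison $f(y_1+d)-f(y_1)\ge f(y_2+d)-f(y_2)$ with $y_1=\sum_{i\in Q'}a_i+a_L$, $y_2=\sum_{i\in[k-1]}a_i$, $d=a_k$, using $|Q'|\le k-2$ to secure $y_1\le y_2$. Your organization is slightly more modular---you show $\zeta_{k+1}^{Q}\ge\zeta_{k+1}^{Q'}$ and then invoke Lemma~\ref{lemma:no_k_in_sol}, whereas the paper inlines the submodularity steps of that lemma into one long chain---but the substance is the same.
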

\begin{proof}
For $i\in [k]$, we know that $\zeta_i = \rho_i$. Consider any $Q\subseteq [k]$ with $|Q| \leq k-1$. Such a set $Q$ is the support of any feasible solution $x$ to the $(k+1)$-th lifting problem \eqref{eq:EPI_lifting_simplified}. If $k\notin Q$, then Lemma \ref{lemma:no_k_in_sol} applies. For all such $Q$, $\zeta_j^Q\geq \zeta_j^{[k-1]}$. On the other hand, suppose $k\in Q$. We denote $Q\backslash \{k\}$ by $Q'$ and note that $Q'\subseteq [k-1]$, $|Q'|\leq k-2$. Thus $\sum_{i\in Q'}a_i \leq \sum_{i\in [k-1]}a_i - a_L$. In this case, 
\begingroup
\allowdisplaybreaks
\begin{align*}
\zeta_{k+1}^Q & =  f\left(\sum_{i\in Q'} a_i + a_k + a_{k+1}\right) - \sum_{i\in Q'}\rho_i - \rho_k \\
& =  F(Q'\cup\{k,k+1\}) - \sum_{i\in Q'}\rho_i([i-1]) - \rho_k  \\
& \geq F(Q'\cup\{k,k+1\}) - \sum_{i\in Q'}\rho_i(Q'\cap [i-1]) - \rho_k  \quad\quad \text{($F$ is submodular)}\\
& = F(Q'\cup\{k,k+1\}) - F(Q') - \rho_k  \\
& = F(Q'\cup\{k,k+1\}) - F(Q'\cup\{k+1\}) - \rho_k +  F(Q'\cup\{k+1\})  - F(Q')\\
& = f\left(\sum_{i\in Q'} a_i + a_k + a_L\right) - f\left(\sum_{i\in Q'} a_i + a_L\right) - \left[ f\left(\sum_{i\in [k-1]} a_i + a_k \right) - f\left(\sum_{i\in [k-1]} a_i \right)\right] \\
& \quad +  F(Q'\cup\{k+1\})  - F(Q')\\
& \geq  F(Q'\cup\{k+1\})  - F(Q') \quad\quad \text{($f$ is concave and $\sum_{i\in Q'} a_i + a_L \leq \sum_{i\in [k-1]} a_i$)} \\
& \geq F([k-1]\cup\{k+1\}) - F([k-1]) \quad\quad \text{($F$ is submodular)}\\
& = f\left(\sum_{i\in [k-1]}a_i + a_{k+1}\right) - \sum_{i\in [k-1]}\rho_i \\
& = \zeta_{k+1}^{[k-1]}. 
\end{align*}
\endgroup 

Therefore, for any $Q\subseteq [k]$ such that $|Q| \leq k-1$, $\zeta_{k+1}^Q \geq \zeta_{k+1}^{[k-1]}$. It follows that $\zeta_{k+1} = \zeta_{k+1}^{[k-1]}$.
\end{proof}

We continue to explore the base case of $j=k+1$ when $k+1\in\mathcal{I}_H$. Three scenarios are possible in this case: \begin{enumerate}
\item[(1)] $k \in \mathcal{I}_L$; 
\item[(2)] $k \in \mathcal{I}_H$ and $d_L = |\mathcal{I}_L\cap [k-1]| \geq 1$;
\item[(3)] $k \in \mathcal{I}_H$ and $d_L = 0$; in other words, $[k]\subseteq \mathcal{I}_H$. 
\end{enumerate}
Lemmas \ref{lemma:h_base_case_special1}, \ref{lemma:h_base_case_special2} and \ref{lemma:h_base_case_special3} address these three scenarios respectively.

\begin{lemma}
\label{lemma:h_base_case_special1}
If $k+1\in\mathcal{I}_H$ and $k\in\mathcal{I}_L$, then $\zeta_{k+1} = \zeta_{k+1}^{[k-1]}$.
\end{lemma}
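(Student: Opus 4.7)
The plan is to mirror the two-case structure of Lemma \ref{lemma:l_base_case} and show that for any feasible support $Q \subseteq [k]$ with $|Q|\leq k-1$, we have $\zeta_{k+1}^Q \geq \zeta_{k+1}^{[k-1]}$. By Lemma \ref{lemma:EPI_alternative} we may use $\zeta_i = \rho_i$ for $i\in[k]$, so the only unknown lifting coefficient involved is $\zeta_{k+1}$ itself.

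I split on whether $k \in Q$. If $k \notin Q$, then $Q \subseteq [k-1]$ and Lemma \ref{lemma:no_k_in_sol} gives $\zeta_{k+1}^Q \geq \zeta_{k+1}^{[k-1]}$ immediately. If $k \in Q$, then let $Q' = Q \setminus \{k\} \subseteq [k-1]$ with $|Q'| \leq k-2$. I would first mimic the chain in Lemma \ref{lemma:l_base_case}: write $\zeta_{k+1}^Q = F(Q'\cup\{k,k+1\}) - \sum_{i\in Q'}\rho_i([i-1]) - \rho_k$, then use submodularity of $F$ to replace $\rho_i([i-1])$ by $\rho_i(Q'\cap[i-1])$, which telescopes to yield $\zeta_{k+1}^Q \geq F(Q'\cup\{k,k+1\}) - F(Q') - \rho_k$.

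At this point the argument diverges from Lemma \ref{lemma:l_base_case} because here $a_k = a_L$ while $a_{k+1} = a_H$, so the single concavity step used in Lemma \ref{lemma:l_base_case} must be replaced by a two-step split. Setting $s' = \sum_{i\in Q'}a_i$ and $S = \sum_{i\in[k-1]}a_i$, I would write
\[ f(s' + a_L + a_H) - f(s') = \bigl[f(s'+a_L+a_H) - f(s'+a_L)\bigr] + \bigl[f(s'+a_L) - f(s')\bigr]. \]
Each bracket is then lower-bounded via Lemma \ref{lemma:f_concave}: the second by $f(S+a_L) - f(S) = \rho_k$ (taking $y_1 = s'$, $y_2 = S$, $d = a_L$); and the first by $f(S+a_H) - f(S)$ (taking $y_1 = s' + a_L$, $y_2 = S$, $d = a_H$). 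The key sub-claim enabling the first application is $s' + a_L \leq S$, which holds because $|Q'|\leq k-2$ guarantees some $i^* \in [k-1]\setminus Q'$ with $a_{i^*} \geq a_L$, so $s' + a_L \leq s' + a_{i^*} \leq S$.

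Adding the two bounds, $F(Q'\cup\{k,k+1\}) - F(Q') - \rho_k \geq f(S+a_H) - f(S) = \zeta_{k+1}^{[k-1]}$, completing case 2. Combined with case 1, every feasible support satisfies $\zeta_{k+1}^Q \geq \zeta_{k+1}^{[k-1]}$, so $\zeta_{k+1} = \zeta_{k+1}^{[k-1]}$. The main obstacle is identifying the correct two-step concavity split and verifying the slack condition $s' + a_L \leq S$ from $|Q'|\leq k-2$; the rest is a straightforward adaptation of Lemma \ref{lemma:l_base_case}.
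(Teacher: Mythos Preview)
Your proof is correct and follows the same overall skeleton as the paper (the two-case split on whether $k\in Q$, the submodularity telescoping to reduce to $F(Q'\cup\{k,k+1\})-F(Q')-\rho_k$), but your treatment of the core inequality in case~2 is genuinely different from the paper's and in fact a bit cleaner. The paper introduces an auxiliary element $u=\arg\min_{i\in[k-1]}a_i$ and the set $T=[k-1]\setminus\{u\}$, then proceeds in three steps: one concavity application with increment $a_H+a_L$ to pass from $Q'$ to $T$, a submodularity step to replace $\rho_k([k-1])$ by $\rho_k(T)$, and a second concavity step to upgrade $a_L$ to $a_u$. Your argument avoids $T$ and $u$ entirely by splitting the increment $a_L+a_H$ into two separate concavity applications directly against $S=\sum_{i\in[k-1]}a_i$: the $a_L$-step yields exactly $\rho_k$ and cancels it, and the $a_H$-step yields $\zeta_{k+1}^{[k-1]}$ outright. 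The only extra ingredient you need is the slack bound $s'+a_L\leq S$, which you justify correctly from $|Q'|\leq k-2$. Both approaches use the same tools (submodularity and Lemma~\ref{lemma:f_concave}), but yours is shorter and sidesteps the intermediate set; the paper's route is perhaps more mechanical in that it mirrors the structure of the other base-case lemmas more closely.
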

\begin{proof}
Consider any $Q\subseteq [k]$ such that $|Q|\leq k-1$. For every such $Q$ that does not contain $k$, $\zeta^Q_{k+1}\geq \zeta_{k+1}^{[k-1]}$ due to Lemma \ref{lemma:no_k_in_sol}. Now we consider any $Q\ni k$ with $|Q|\leq k-1$. Let $Q' = Q\backslash \{k\}$. We know that $Q'\subseteq [k-1]$ and $|Q'|\leq k-2$. Let $u$ be $\arg\min \{a_i : i\in [k-1]\}$, which means that if $\mathcal{I}_L\cap [k-1] = \emptyset$ then $u$ is any $i\in[k-1]$; otherwise, $u$ is any $i\in\mathcal{I}_L\cap [k-1]$. By this choice of $u$, $a_L\leq a_u \leq a_i$ for all $i\in [k-1]$. Let $T = [k-1]\backslash \{u\}$. It follows that $\sum_{i\in Q'}a_i \leq \sum_{i\in[k-1]}a_i - a_u = \sum_{i\in T}a_i$. In this case,
\begingroup
\allowdisplaybreaks
\begin{align*}
\zeta_{k+1}^Q 
& = f\left(a_{k+1} + a_k + \sum_{i\in Q'} a_i\right) - \sum_{i\in Q'} \rho_i -  \rho_k  \\
& = f\left(a_H + a_L + \sum_{i\in Q'} a_i\right) - \sum_{i\in Q'} \rho_i([i-1]) -  \rho_k([k-1])  \\
& \geq f\left(a_H + a_L + \sum_{i\in Q'} a_i\right) - \sum_{i\in Q'} \rho_i(Q'\cap[i-1]) -  \rho_k([k-1]) \quad\quad\text{($F$ is submodular)} \\
& = f\left(a_H + a_L + \sum_{i\in Q'} a_i\right) - f\left(\sum_{i\in Q'} a_i\right) -  \rho_k([k-1]) \\
& \geq f\left(a_H + a_L + \sum_{i\in T} a_i\right) - f\left(\sum_{i\in T} a_i\right) -  \rho_k([k-1])  \quad\quad\text{{($a_H+a_L\geq 0$, $\sum_{i\in Q'}a_i \leq \sum_{i\in T}a_i$, so Lemma \ref{lemma:f_concave} applies)}} \\
& \geq f\left(a_H + a_L + \sum_{i\in T} a_i\right) - f\left(\sum_{i\in T} a_i\right) -  \rho_k(T) \quad\quad\text{($F$ is submodular, $T\subset [k-1]$)} \\
& =  f\left(a_H + a_L + \sum_{i\in T} a_i\right) - f\left(a_L + \sum_{i\in T} a_i \right) \quad\quad\text{($a_k = a_L$)}\\
& \geq f\left(a_H + a_u + \sum_{i\in T} a_i\right) - f\left(a_u + \sum_{i\in T} a_i\right)   \quad\quad\text{{(by Lemma \ref{lemma:f_concave})}}\\
& = f\left(a_H + \sum_{i = 1}^{k-1} a_i\right) -  f\left(\sum_{i = 1}^{k-1} a_i\right) \quad\quad\text{{($T \cup\{u\}=[k-1]$ by construction)}} \\
& = f\left(a_H + \sum_{i = 1}^{k-1} a_i\right) - \sum_{i = 1}^{k-1} \rho_i   \\
& = \zeta_{k+1}^{[k-1]}. 
\end{align*}
\endgroup
Therefore, for every $Q\subseteq [k]$ with $|Q|\leq k-1$, $\zeta^Q_{k+1}\geq \zeta_{k+1}^{[k-1]}$. That is, $\zeta_{k+1} = \zeta_{k+1}^{[k-1]}$.
\end{proof}

\begin{lemma}
\label{lemma:h_base_case_special2}
Suppose $k, k+1 \in \mathcal{I}_H$ and $d_L \geq 1$. Let $l = \mathcal{L}_{d_L}$, which is the largest index in $[k-1]$ such that $a_{l} = a_L$. Then $\zeta_{k+1} = \min\{\zeta_{k+1}^{[k-1]}, \zeta_{k+1}^{[k]\backslash\{l\}} \}$. 
\end{lemma}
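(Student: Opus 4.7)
The inequality $\zeta_{k+1} \leq \min\{\zeta_{k+1}^{[k-1]}, \zeta_{k+1}^{[k]\setminus\{l\}}\}$ is immediate, because both $[k-1]$ and $[k]\setminus\{l\}$ are subsets of $[k]$ of cardinality $k-1$, and therefore correspond to feasible solutions of the $(k+1)$-th lifting problem \eqref{eq:EPI_lifting_simplified}. The substantive direction is the reverse inequality, and I would establish it by first cutting down the search space using Lemma \ref{lemma:general_Q_form} and then splitting into two cases according to whether item $k$ is in the support.

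By Lemma \ref{lemma:general_Q_form}, the optimal support $Q \subseteq [k]$ can be taken of the form $L^t \cup H^s$. Since $k \in \mathcal{I}_H$, the set $[k]$ contains exactly $d_L$ low-weighted items and $k - d_L$ higher-weighted items, so the feasible parameter ranges are $0 \leq t \leq d_L$ and $0 \leq s \leq k - d_L$, subject to $t + s \leq k - 1$. This reduces the problem to comparing $O(k)$ candidate supports.

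I would split into two cases. In the first case, $s \leq k - 1 - d_L$, i.e., the support avoids item $k$. Then $H^s \subseteq \mathcal{I}_H \cap [k-1]$, so $L^t \cup H^s \subseteq [k-1]$, and Lemma \ref{lemma:no_k_in_sol} directly yields $\zeta_{k+1}^{L^t \cup H^s} \geq \zeta_{k+1}^{[k-1]}$. In the second case, $s = k - d_L$ (so $k \in H^s$) and $t \in \{0, 1, \dots, d_L - 1\}$. Here I would argue by monotonicity in $t$: replacing $L^t$ with $L^{t+1} = L^t \cup \{\mathcal{L}_{t+1}\}$ for any $t \leq d_L - 2$ does not increase $\zeta_{k+1}^{L^t \cup H^{k-d_L}}$. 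A direct computation gives
\[
\zeta_{k+1}^{L^{t+1} \cup H^{k-d_L}} - \zeta_{k+1}^{L^t \cup H^{k-d_L}} \;=\; \rho_{\mathcal{L}_{t+1}}\bigl(L^t \cup H^{k-d_L} \cup \{k+1\}\bigr) \;-\; \rho_{\mathcal{L}_{t+1}}\bigl([\mathcal{L}_{t+1}-1]\bigr),
\]
using $\zeta_{\mathcal{L}_{t+1}} = \rho_{\mathcal{L}_{t+1}} = \rho_{\mathcal{L}_{t+1}}([\mathcal{L}_{t+1}-1])$ from Proposition \ref{prop:EPI_lifted_coeff}. I would then observe that $[\mathcal{L}_{t+1}-1] \subseteq L^t \cup H^{k-d_L}$: its low-weighted items are exactly $\{\mathcal{L}_1, \ldots, \mathcal{L}_t\} = L^t$, and its higher-weighted items are contained in $\mathcal{I}_H \cap [k-1] \subseteq H^{k-d_L}$. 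Submodularity of $F$ then forces the displayed difference to be nonpositive, so the minimum over $t \in \{0, \ldots, d_L - 1\}$ is attained at $t = d_L - 1$, which is precisely $[k] \setminus \{l\}$.

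The main obstacle I expect is the bookkeeping in the second case: correctly identifying the set $H^{k-d_L}$ when $k \in \mathcal{I}_H$ and verifying the containment $[\mathcal{L}_{t+1}-1] \subseteq L^t \cup H^{k-d_L}$ that powers the submodularity reduction. Once that is in hand, combining the two cases shows $\zeta_{k+1}^Q \geq \min\{\zeta_{k+1}^{[k-1]}, \zeta_{k+1}^{[k]\setminus\{l\}}\}$ for every admissible support $Q$, which together with the trivial upper bound concludes the proof.
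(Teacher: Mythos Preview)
Your proof is correct, and it takes a genuinely different route from the paper's.

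The paper partitions feasible supports $Q\subseteq[k]$ according to whether $l\in Q$. In its Case~1 ($l\notin Q$) it works with a general such $Q$ and shows $\zeta_{k+1}^Q\ge\zeta_{k+1}^{[k]\setminus\{l\}}$ through a long chain of inequalities that manipulates the pieces $\{i\in Q:i<l\}$ and $\{i\in Q:i>l\}$ separately. In Case~2 ($l\in Q$) it first swaps $l$ for a missing low-weighted item to reduce back to Case~1 when possible, and otherwise restricts to $\mathcal{L}(d_L)\cup H^s$ and uses submodularity to reach $\zeta_{k+1}^{[k-1]}$. By contrast, you invoke Lemma~\ref{lemma:general_Q_form} up front to reduce to canonical supports $L^t\cup H^s$, and then split on whether $k$ (rather than $l$) is in the support. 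Your Case~1 ($s\le k-1-d_L$) is immediate from Lemma~\ref{lemma:no_k_in_sol}, and your Case~2 ($s=k-d_L$) is a clean one-line monotonicity argument in $t$: the containment $[\mathcal{L}_{t+1}-1]\subseteq L^t\cup H^{k-d_L}$ together with submodularity of $F$ makes the sequence $t\mapsto\zeta_{k+1}^{L^t\cup H^{k-d_L}}$ nonincreasing, so its minimum over $t\le d_L-1$ is attained at $L^{d_L-1}\cup H^{k-d_L}=[k]\setminus\{l\}$. Your argument is shorter and more conceptual; the paper's direct treatment of general $Q$ in its Case~1 avoids the preliminary reduction but pays for it with a substantially longer computation.
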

\begin{proof} We partition all the feasible supports $Q$ into two cases. \\

\textit{Case 1.} We first consider all $Q\subseteq [k]$ with $|Q| \leq k-1$, such that $l\notin Q$. Let $q = |Q\backslash [l]|$. We observe that $q \leq |[k]\backslash[l]| = k-l$, and for all $i\in Q\backslash [l]$, $a_i = a_H$. In other words, $q$ is the number of higher-weighted items in $Q$ with indices greater than $l$. In this case, we show that $\zeta_{k+1}^Q \geq \zeta_{k+1}^{[k]\backslash\{l\}}$. 
\begingroup
\allowdisplaybreaks
\begin{align*}
\zeta_{k+1}^Q 
& = f\left(a_H + \sum_{i\in Q, i< l} a_i + \sum_{i\in Q, i > l} a_i\right) - \sum_{i\in Q, i< l} \rho_i - \sum_{i\in Q, i > l} \rho_i \\
& = f\left(a_H + \sum_{i\in Q, i< l} a_i\right) - \sum_{i\in Q, i< l} \rho_i + f\left(a_H + \sum_{i\in Q, i< l} a_i + \sum_{i\in Q, i > l} a_H\right) - f\left(a_H + \sum_{i\in Q, i< l} a_i\right) - \sum_{i\in Q, i > l} \rho_i \\
& \geq f\left(a_H + \sum_{i\in Q, i< l} a_i\right) - \sum_{i\in Q, i< l} \rho_i + f\left(a_H + \sum_{1\leq i< l} a_i + \sum_{i\in Q, i > l} a_H\right) - f\left(a_H + \sum_{1\leq i< l} a_i\right) - \sum_{i\in Q, i > l} \rho_i \\
& \quad\quad\text{($f$ is concave and $\sum_{i\in Q, i< l} a_i\leq \sum_{1\leq i< l} a_i$)} \\
& = f\left(a_H + \sum_{i\in Q, i< l} a_i\right) - \sum_{i\in Q, i< l} \rho_i + \sum_{p = 1}^{q} \left[f\left(\sum_{i=1}^{l-1}a_i + a_H + p\cdot a_H\right)-f\left(\sum_{i=1}^{l-1}a_i + a_H + (p-1)\cdot a_H\right)\right]  \\ 
& \quad \quad - \sum_{i\in Q, i > l} \rho_i  \quad\quad\text{(because $Q\backslash[l]\subseteq \mathcal{I}_H$ and $|Q\backslash[l]|= q$)}\\
& \geq f\left(a_H + \sum_{i\in Q, i< l} a_i\right) - \sum_{i\in Q, i< l} \rho_i + \sum_{p = 1}^{q} \left[f\left(\sum_{i=1}^{l-1}a_i + a_H + p\cdot a_H\right)-f\left(\sum_{i=1}^{l-1}a_i + a_H + (p-1)\cdot a_H\right)\right]  \\ 
& \quad \quad - \sum_{p=1}^q \rho_{l+p}  \quad\quad\text{($\{l+1,\dots,l+q\}\subseteq \mathcal{I}_H$ with size $q$; by Lemma \ref{lemma:descending_zeta}, $\sum_{i=l+1}^{l+q} \rho_i \geq \sum_{i\in Q\backslash [l]} \rho_i$)}\\
&= f\left(a_H + \sum_{i\in Q, i< l} a_i\right) - \sum_{i\in Q, i< l} \rho_i + \sum_{p = 1}^{q} \left[f\left(\sum_{i=1}^{l-1}a_i + a_H + p\cdot a_H\right)-f\left(\sum_{i=1}^{l-1}a_i + a_H + (p-1)\cdot a_H\right)\right]  \\ 
& \quad \quad - \sum_{p=1}^q  \left[f\left(\sum_{i=1}^{l-1}a_i + a_L + p\cdot a_H\right)-f\left(\sum_{i=1}^{l-1}a_i + a_L + (p-1)\cdot a_H\right)\right]  \\
& \geq f\left(a_H + \sum_{i\in Q, i< l} a_i\right) - \sum_{i\in Q, i< l} \rho_i + \sum_{p = 1}^{k-l} \left[f\left(\sum_{i=1}^{l-1}a_i + a_H + p\cdot a_H\right)-f\left(\sum_{i=1}^{l-1}a_i + a_H + (p-1)\cdot a_H\right)\right]  \\ 
& \quad \quad - \sum_{p=1}^{k-l}  \left[f\left(\sum_{i=1}^{l-1}a_i+a_L  + p\cdot a_H\right)-f\left(\sum_{i=1}^{l-1}a_i +a_L+ (p-1)\cdot a_H\right)\right]  \\
& \text{(Let $\varrho_1^p = f\left(\sum_{i=1}^{l-1}a_i + a_H + p\cdot a_H\right)-f\left(\sum_{i=1}^{l-1}a_i + a_H + (p-1)\cdot a_H\right)$, and } \\
& \text{$\varrho_2^p = f\left(\sum_{i=1}^{l-1}a_i+a_L  + p\cdot a_H\right)-f\left(\sum_{i=1}^{l-1}a_i +a_L+ (p-1)\cdot a_H\right)$. By Lemma \ref{lemma:f_concave}, $\varrho_1^p\leq \varrho_2^p$} \\
& \text{for any $1\leq p\leq k-l$.) } \\
& = f\left(a_H + \sum_{i\in Q, i< l} a_i\right) - \sum_{i\in Q, i< l} \rho_i([i-1]) + f\left(\sum_{i=1}^{l-1}a_i + a_H + (k-l)a_H\right)-f\left(\sum_{i=1}^{l-1}a_i + a_H\right) - \sum_{i=l+1}^{k} \rho_i \\
& \geq f\left(a_H + \sum_{i\in Q, i< l} a_i\right) - \sum_{i\in Q, i< l} \rho_i(Q\cap[i-1]) + f\left(\sum_{i=1}^{l-1}a_i + a_H + (k-l)a_H\right)-f\left(\sum_{i=1}^{l-1}a_i + a_H\right) - \sum_{i=l+1}^{k} \rho_i  \\
&  \quad \quad \text{($F$ is submodular)}\\
& =   F(\{k+1\}\cup {(}Q\cap [l-1]{)}) - F(Q\cap[l-1]) + f\left(\sum_{i=1}^{l-1}a_i + a_H + (k-l)a_H\right)-f\left(\sum_{i=1}^{l-1}a_i + a_H\right) - \sum_{i=l+1}^{k} \rho_i\\
& \geq F(\{k+1\}\cup [l-1]) - F([l-1]) + f\left(\sum_{i=1}^{l-1}a_i + a_H + (k-l)a_H\right)-f\left(\sum_{i=1}^{l-1}a_i + a_H\right) - \sum_{i=l+1}^{k} \rho_i \\
&  \quad \quad \text{($F$ is submodular)}\\
&= f\left(a_H + \sum_{i=1}^{l-1} a_i \right) -\sum_{i=1}^{l-1} \rho_i + f\left(\sum_{i=1}^{l-1}a_i + a_H + (k-l)a_H\right)-f\left(\sum_{i=1}^{l-1}a_i + a_H\right) - \sum_{i=l+1}^{k} \rho_i \\
& = f\left(a_H + \sum_{i=1}^{l-1} a_i + \sum_{i=l+1}^k a_i \right) - \sum_{i=1}^{l-1} \rho_i - \sum_{i=l+1}^{k} \rho_i  \\
& = \zeta_{k+1}^{[k]\backslash \{l\}}.
\end{align*}
\endgroup

\textit{Case 2.} Next we consider the remaining $Q\subseteq [k]$ with $|Q| \leq k-1$, which satisfies $l\in Q$. If $Q$ does not contain all the elements in $\mathcal{I}_L\cap [k]$, we let $l'\in\mathcal{I}_L\cap [k]$ be any lower-weighted item that is not included in $Q$. By definition of $l$, $l' < l$. We observe that 
\begingroup
\allowdisplaybreaks
\begin{align*}
\zeta_{k+1}^{Q} & = f\left(a_H + \sum_{i\in Q\backslash\{l\}} a_i + a_L \right) - \sum_{i\in Q\backslash\{l\}} \rho_i -\rho_l \\
& \geq f\left(a_H + \sum_{i\in Q\backslash\{l\}} a_i + a_L \right) - \sum_{i\in Q\backslash\{l\}} \rho_i -\rho_{l'} \quad\quad\text{(because $\rho_{l'}\geq \rho_l$ by Lemma \ref{lemma:descending_zeta})}\\
& = \zeta_{k+1}^{Q\cup\{l'\}\backslash\{l\}} \\
& \geq \zeta_{k+1}^{[k]\backslash \{l\}} \quad\quad\text{(follows from \textit{Case 1}).}
\end{align*} 
\endgroup
Thus it suffices to consider all $Q\subseteq [k]$ with $|Q| \leq k-1$ such that $\mathcal{I}_L\cap [k] \subseteq Q$. Given a fixed $0\leq s \leq k-1 - d_L$, recall that $H^s$ is the set of the first $s$ higher-weighted items in the natural ordering of $N$. For any $Q\subseteq [k]$ that satisfies $|Q| \leq k-1$, $\mathcal{I}_L\cap [k] \subseteq Q$ and $|Q\cap\mathcal{I}_H| = s$,  $\zeta_{k+1}^{Q} \geq \zeta_{k+1}^{\mathcal{L}(d_L)\cup H^s}$ by Lemma \ref{lemma:general_Q_form}. Hence if $Q^* = \arg\min_{Q\subseteq [k],|Q| \leq k-1, \mathcal{I}_L\cap [k] \subseteq Q} \zeta_{k+1}^{Q}$, then $Q^*$ must assume the form of $\mathcal{L}(d_L)\cup H^s$ for some $0\leq s\leq k-1-d_L$. 
\begingroup
\allowdisplaybreaks
\begin{align*}
\zeta_{k+1}^{\mathcal{L}(d_L)\cup H^s} & =  f\left(a_H + \sum_{i\in \mathcal{L}(d_L)\cup H^s} a_i \right) - \sum_{i\in \mathcal{L}(d_L)\cup H^s} \rho_i \\
& = F(\{k+1\}\cup\mathcal{L}(d_L)\cup H^s) - F(\mathcal{L}(d_L)\cup H^s) \\
& \geq F(\{k+1\}\cup\mathcal{L}(d_L)\cup H^{k-1-d_L}) - F(\mathcal{L}(d_L)\cup H^{k-1-d_L}) \quad\quad \text{(because $F$ is submodular)}\\
& = F(\{k+1\}\cup[k-1]) - F([k-1]) \\
& = f\left(a_{k+1} + \sum_{i=1}^{k-1}\right) - \sum_{i=1}^{k-1} \rho_i \\
& = \zeta_{k+1}^{[k-1]}. 
\end{align*}
\endgroup
In summary, given any  $Q\subseteq [k]$ such that $|Q| \leq k-1$, if $Q$ contains all the lower-weighted items before $k$, then $\zeta_{k+1}^Q \geq \zeta_{k+1}^{[k-1]}$; otherwise, $\zeta_{k+1}^Q \geq \zeta_{k+1}^{[k]\backslash \{l\}}$. Therefore, $\zeta_{k+1} = \min\left\{\zeta_{k+1}^{[k]\backslash \{l\}}, \zeta_{k+1}^{[k-1]}\right\}$.
\end{proof}

\begin{lemma}
\label{lemma:h_base_case_special3}
If $[k+1]\subseteq \mathcal{I}_H$, then $\zeta_{k+1} = \zeta_{k+1}^{[k-1]}$.
\end{lemma}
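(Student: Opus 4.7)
The plan is to observe that this statement is an immediate specialization of Lemma \ref{lemma:same_a}, which has already been established. Recall that Lemma \ref{lemma:same_a} says: for any $j \in [k,n]$, if $[j-1] \subseteq \mathcal{I}_L$ or $[j-1] \subseteq \mathcal{I}_H$, then $\zeta_j = \zeta_j^{[k-1]}$. So I would simply take $j = k+1$ and note that the hypothesis $[k+1] \subseteq \mathcal{I}_H$ clearly implies $[j-1] = [k] \subseteq \mathcal{I}_H$, after which the conclusion of Lemma \ref{lemma:same_a} yields exactly $\zeta_{k+1} = \zeta_{k+1}^{[k-1]}$.

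There is no real obstacle here: the hard work was done in Lemma \ref{lemma:same_a}, whose proof used Lemma \ref{lemma:general_Q_form} to reduce the search to supports of the form $[t]$ and then applied submodularity of $F$ to show $F(\{j\} \cup [t]) - F([t]) \geq F(\{j\} \cup [k-1]) - F([k-1])$ for all $0 \leq t \leq k-1$. No additional argument is needed in the present lemma. The statement is isolated as a separate lemma only because it is the third of the three scenarios (alongside Lemmas \ref{lemma:h_base_case_special1} and \ref{lemma:h_base_case_special2}) needed to finish the base case $j = k+1$ when $k+1 \in \mathcal{I}_H$ in the strong induction underlying Proposition \ref{prop:EPI_lifted_coeff}. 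In particular, this scenario corresponds to $d_L = 0$, so the third case of Proposition \ref{prop:EPI_lifted_coeff} reduces to $\min\{\zeta_{\mathcal{H}_0}, \zeta_j^{(\mathcal{I}_H \cap [k-1])}\} = \zeta_j^{[k-1]}$, consistent with what the lemma asserts.

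Thus my proof would be a one-liner: \emph{Since $[k+1] \subseteq \mathcal{I}_H$, we have $[k] \subseteq \mathcal{I}_H$, and the claim follows from Lemma \ref{lemma:same_a} applied with $j = k+1$.}
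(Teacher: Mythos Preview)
Your proposal is correct and matches the paper's proof exactly: the paper also dispatches this lemma in a single line, stating that the result ``immediately follows from Lemma~\ref{lemma:same_a}.'' Your additional commentary about why the statement is isolated as a separate lemma is accurate context, but not part of the proof itself.
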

\begin{proof}
This result immediately follows from Lemma \ref{lemma:same_a}. 
\end{proof}

\begin{corollary}
\label{coro:EPI_base_cases}
The lifted-EPI coefficients $\zeta_j$ for $j = k$ and $k+1$ are given by \[
\zeta_{j} = \begin{cases}
\zeta_{j}^{[k-1]}, & j\in\mathcal{I}_L, \\
 & \\
\min\left\{ \zeta_{\mathcal{H}_{i-1}},  \zeta_{j}^{\mathcal{H}(\min\{i-1, d_L\}) \cup \mathcal{L}(d_L-i+1)\cup (\mathcal{I}_H\cap [k-1])}\right\}, & j = \mathcal{H}_i, 1\leq i \leq d_H,  
\end{cases}
\] where $\zeta_{\mathcal{H}_{0}} = \zeta_{j}^{[k-1]}$.
\end{corollary}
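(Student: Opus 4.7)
The corollary consolidates the results of Lemmas \ref{lemma:l_base_case}, \ref{lemma:h_base_case_special1}, \ref{lemma:h_base_case_special2}, and \ref{lemma:h_base_case_special3} (for $j = k+1$), together with the base EPI identity $\zeta_k = \rho_k$ (for $j=k$). The plan is therefore straightforward case analysis: for each scenario, the corresponding lemma provides the value of $\zeta_j$, and the task reduces to verifying that the set expression $\mathcal{H}(\min\{i-1, d_L\}) \cup \mathcal{L}(d_L - i + 1) \cup (\mathcal{I}_H \cap [k-1])$ in the unified formula collapses to the support used in that lemma.

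For $j = k$, I would first handle $k \in \mathcal{I}_L$ by noting $\zeta_k = \rho_k = F([k]) - F([k-1]) = \zeta_k^{[k-1]}$, which matches the first branch. For $k \in \mathcal{I}_H$, observe that $k = \mathcal{H}_1$ (so $i=1$): the set $\mathcal{H}(\min\{0, d_L\}) \cup \mathcal{L}(d_L) \cup (\mathcal{I}_H \cap [k-1]) = \emptyset \cup (\mathcal{I}_L \cap [k-1]) \cup (\mathcal{I}_H \cap [k-1]) = [k-1]$, so the second argument of the min is $\zeta_k^{[k-1]}$; since $\zeta_{\mathcal{H}_0}$ is defined to equal this same value, both arguments coincide with $\rho_k$.

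For $j = k+1$, I would split into the four mutually exclusive cases covered by the lemmas. The case $k+1 \in \mathcal{I}_L$ is immediate from Lemma \ref{lemma:l_base_case}. For $k+1 \in \mathcal{I}_H$ with $k \in \mathcal{I}_L$, we have $k+1 = \mathcal{H}_1$, so $i=1$ and the set expression again collapses to $[k-1]$, so Lemma \ref{lemma:h_base_case_special1} gives the required equality. For $k, k+1 \in \mathcal{I}_H$ with $d_L \geq 1$, we have $k+1 = \mathcal{H}_2$, so $i = 2$; writing $l = \mathcal{L}_{d_L}$, the set expression becomes $\{k\} \cup (\mathcal{L}(d_L) \setminus \{l\}) \cup (\mathcal{I}_H \cap [k-1]) = [k] \setminus \{l\}$, which is exactly the support appearing in Lemma \ref{lemma:h_base_case_special2}. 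Since $a_k = a_{k+1} = a_H$, the first argument $\zeta_{\mathcal{H}_1} = \zeta_k = \zeta_k^{[k-1]}$ also equals $\zeta_{k+1}^{[k-1]}$, so the formula agrees with Lemma \ref{lemma:h_base_case_special2}. Finally, when $[k+1] \subseteq \mathcal{I}_H$, we have $d_L = 0$ and $i = 2$, so $\mathcal{H}(\min\{1,0\}) = \emptyset$, $\mathcal{L}(-1) = \emptyset$, and $\mathcal{I}_H \cap [k-1] = [k-1]$; thus the second argument is $\zeta_{k+1}^{[k-1]}$ and the first argument again equals this same value, matching Lemma \ref{lemma:h_base_case_special3}.

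The only real work is the bookkeeping in the third sub-case, where one must identify the correct index $i$ of $k+1$ within $\mathcal{H}$ and then simplify the hybrid set expression; the rest of the proof is a direct appeal to the lemmas. I do not anticipate any technical difficulty, as all the inequalities have already been established — the corollary is essentially a uniform restatement designed to serve as the base case of the strong induction that will appear in the proof of Proposition \ref{prop:EPI_lifted_coeff}.
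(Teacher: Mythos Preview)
Your proposal is correct and follows essentially the same approach as the paper's proof: both verify the formula for $j=k$ by reducing to $\zeta_k^{[k-1]}$, and for $j=k+1$ by invoking Lemmas \ref{lemma:l_base_case}, \ref{lemma:h_base_case_special1}, \ref{lemma:h_base_case_special2}, and \ref{lemma:h_base_case_special3} according to whether $k+1=\mathcal{H}_1$ or $k+1=\mathcal{H}_2$. Your set-identity bookkeeping (e.g., that the support collapses to $[k]\setminus\{l\}$ when $i=2$ and $d_L\ge 1$) is spelled out a bit more explicitly than in the paper, but the argument is the same.
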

\begin{proof}
We know that $\zeta_k = \zeta_k^{[k-1]}$ from the EPI coefficients, so when $k\in\mathcal{I}_L$, the proposed assignment is correct. If $k = \mathcal{H}_1$, then $ \zeta_{j}^{\mathcal{H}(\min\{0, d_L\}) \cup \mathcal{L}(d_L)\cup (\mathcal{I}_H\cap [k-1])}= \zeta_k^{[k-1]}$, and $\zeta_k = \min\{\zeta_k^{[k-1]}, \zeta_k^{[k-1]}\}$, which is also correct. \\

The case of $k+1\in\mathcal{I}_L$ follows from Lemma \ref{lemma:l_base_case}. Suppose $k+1\in\mathcal{I}_H$. Then $k+1 = \mathcal{H}_1$ or $\mathcal{H}_2$. When $k+1 = \mathcal{H}_1$, Lemma \ref{lemma:h_base_case_special1} shows that $\zeta_{k+1} = \zeta_{k+1}^{[k-1]}$. When $k+1 = \mathcal{H}_2$, $k= \mathcal{H}_1$ and $\zeta_k = \zeta_k^{[k-1]} = \zeta_{k+1}^{[k-1]}$. Lemmas \ref{lemma:h_base_case_special2} and \ref{lemma:h_base_case_special3} prove that $\zeta_{k+1} = \min\left\{\zeta_{k+1}^{[k-1]} = \zeta_{ \mathcal{H}_1}, \zeta_{k+1}^{[k]\backslash \{l\}}\right\}$. 
\end{proof}

We have now cleared the base cases. For a strong induction, our induction hypothesis is that for all $j \in[k, J-1]$ the following holds: 
\[
\zeta_j = 
\begin{cases}
\zeta_j^{[k-1]}, & \text{ if } j\in\mathcal{I}_L\backslash [k-1], \\
 & \\
\min\left\{ \zeta_{\mathcal{H}_{i-1}},  \zeta_j^{\mathcal{H}(\min\{i-1, d_L\}) \cup \mathcal{L}(d_L-i+1)\cup (\mathcal{I}_H\cap [k-1])}\right\}, & \text{ if } j = \mathcal{H}_i, i \in [d_H],  
\end{cases}
\] where $\zeta_{\mathcal{H}_{0}} =   \zeta_j^{\mathcal{H}(\min\{0, d_L\}) \cup \mathcal{L}(d_L-0)\cup (\mathcal{I}_H\cap [k-1])} =  \zeta_j^{[k-1]}$. We next show that the proposed coefficients are correct for $j = J$, given the induction hypothesis, to complete the induction step.

\begin{lemma}
\label{lemma:induction_low}
Suppose the induction hypothesis holds. If $J\in\mathcal{I}_L$, then $\zeta_J = \zeta_J^{[k-1]}$.
\end{lemma}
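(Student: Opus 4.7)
The plan is to show that $[k-1]$ attains the minimum of $\zeta_J^Q$ over all feasible supports $Q$ of the $J$-th lifting problem \eqref{eq:EPI_lifting_simplified}. By Lemma \ref{lemma:general_Q_form}, only supports of the form $Q = L^t \cup H^s$ with $0 \leq t \leq |\mathcal{I}_L|$, $0 \leq s \leq |\mathcal{I}_H|$, and $t + s \leq k-1$ need be considered. Since $[k-1] = L^{d_L} \cup H^{k-1-d_L}$ is itself such a support, $\zeta_J \leq \zeta_J^{[k-1]}$ follows immediately from feasibility; the content of the lemma is the reverse inequality $\zeta_J^{L^t \cup H^s} \geq \zeta_J^{[k-1]}$ for every feasible $(t, s)$.

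I would split this comparison into three cases. If $t \leq d_L$ and $s \leq k-1-d_L$, then $Q \subseteq [k-1]$ and Lemma \ref{lemma:no_k_in_sol} applies directly. If $t > d_L$, the extra items in $L^t \setminus L^{d_L}$ belong to $\mathcal{I}_L \setminus [k-1]$, so by the induction hypothesis each has coefficient $\zeta_i^{[k-1]}$, which coincides with $\zeta_J^{[k-1]}$ itself because $a_i = a_L = a_J$. Writing $\zeta_J^{Q} = f((t+1)a_L + sa_H) - \sum_{i \in Q \cap [k-1]} \rho_i - (t-d_L)\, \zeta_J^{[k-1]}$, I would invoke the submodular bound $\sum_{i \in Q \cap [k-1]} \rho_i \leq F(Q \cap [k-1])$ and telescope the residual $f$-difference into $t - d_L + 1$ increments of width $a_L$, each bounded below by $\zeta_J^{[k-1]}$ via Lemma \ref{lemma:f_concave} (the running argument stays at most $F([k-1])$ because $t + s \leq k-1$). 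Summing these bounds yields $\zeta_J^{Q} \geq \zeta_J^{[k-1]}$.

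The remaining and principal obstacle is the case $s > k - 1 - d_L$, which forces $t < d_L$. Each extra higher-weighted item $\mathcal{H}_p \in H^s \setminus H^{k-1-d_L}$ has coefficient $\zeta_{\mathcal{H}_p}$ expressed in the induction hypothesis as the minimum of two candidates, so a direct summation is delicate. I would argue in the spirit of Lemmas \ref{lemma:h_base_case_special1} and \ref{lemma:h_base_case_special2}: using Lemma \ref{lemma:descending_zeta}, swap each trailing higher-weighted item in $Q$ either with a higher-weighted item already in $H^{k-1-d_L}$ or with a lower-weighted item drawn from $\mathcal{L}(d_L) \setminus Q$, and then verify through submodularity and concavity (Lemma \ref{lemma:f_concave}) that each swap weakly decreases $\zeta_J^{Q}$. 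Iterating these swaps reduces the case to the first one, where Lemma \ref{lemma:no_k_in_sol} closes the argument. The technical challenge is to keep careful track of which branch of the $\min$ in the induction hypothesis is active at each $\mathcal{H}_p$ so that the chain of swap inequalities flows in the correct direction.
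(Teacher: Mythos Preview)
Your treatment of the first two cases matches the paper almost verbatim: Case~1 ($t\le d_L$, $s\le k-1-d_L$) reduces to Lemma~\ref{lemma:no_k_in_sol}, and Case~2 ($t>d_L$) uses the induction hypothesis for the extra lower-weighted items, telescopes the $a_L$-increments, and bounds each increment by $\zeta_J^{[k-1]}$ via Lemma~\ref{lemma:f_concave}. This part is fine.

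The gap is in Case~3 ($s>k-1-d_L$, hence $t<d_L$). You correctly identify that the coefficients $\zeta_{\mathcal{H}_p}$ for $p>k-1-d_L$ come from a $\min$ in the induction hypothesis, and you propose to resolve this by iterated swaps, explicitly flagging the difficulty of tracking which branch of the $\min$ is active. That difficulty is real, and you have not shown how to overcome it; the proposal in its current form is a sketch rather than an argument.

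The paper sidesteps this entirely with a single algebraic identity. Let $q=s-(k-1-d_L)>0$, $W=(L^t\cup H^s)\cap[k-1]$, and $l=\mathcal{L}_{t+1}\in[k-1]$ (which exists since $t<d_L$). Because $a_J=a_L=a_l$, one has
\[
\zeta_J^{L^t\cup H^s}
= f\Bigl(a_l+\sum_{i\in W}a_i+\sum_{i\in\mathcal{H}(q-1)}a_i+a_H\Bigr)
  -\sum_{i\in W\cup\{l\}}\rho_i-\sum_{i\in\mathcal{H}(q-1)}\zeta_i-\zeta_{\mathcal{H}_q}+\rho_l
= \zeta_{\mathcal{H}_q}^{V}-\zeta_{\mathcal{H}_q}+\rho_l,
\]
where $V=W\cup\{l\}\cup\mathcal{H}(q-1)$ has cardinality at most $k-1$ and lies in $[\mathcal{H}_q-1]$. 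Since $V$ is a feasible support for the $\mathcal{H}_q$-th lifting problem, $\zeta_{\mathcal{H}_q}^{V}-\zeta_{\mathcal{H}_q}\ge 0$ \emph{regardless of which branch of the $\min$ realizes $\zeta_{\mathcal{H}_q}$}. Hence $\zeta_J^{L^t\cup H^s}\ge \rho_l\ge \zeta_J^{[k-1]}$ by Lemma~\ref{lemma:descending_zeta}. The point is that optimality of $\zeta_{\mathcal{H}_q}$ already encodes the comparison you are trying to perform swap by swap; invoking it once replaces the whole iterative scheme.
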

\begin{proof}
As defined earlier in this section, $L^t$ is the set of the first $t$ lower-weighted items in $N$, and $H^s$ is the set of the first $s$ higher-weighted items. Thanks to Lemma \ref{lemma:general_Q_form}, we know that $\arg\min_{Q\subseteq [J-1],|Q|\leq k-1} \zeta_J^Q$ must have the form $L^t\cup H^s$, where $0\leq t\leq |\mathcal{I}_L|$, $0\leq s\leq |\mathcal{I}_H|$ and $s+t\leq k-1$. Recall that $d_L = |\mathcal{I}_L\cap[k-1]|$ and $|\mathcal{I}_H\cap[k-1]| = k-1-d_L$. We prove the stated lemma by cases.\\

\textit{Case 1. } Suppose $t\leq d_L$ and $s\leq k-1-d_L$. This means that both the lower- and the higher-weighted items we include in the candidate set $L^t\cup H^s$ all belong to $[k-1]$. In this case, $L^t\cup H^s \subseteq [k-1]$. By Lemma \ref{lemma:no_k_in_sol}, $\zeta_J^{L^t\cup H^s} \geq \zeta_J^{[k-1]}$.\\

\textit{Case 2. } Suppose $t > d_L$ and $s < k-1-d_L$. This means that the higher-weighted items we include in the candidate set $L^t\cup H^s$ all belong to $[k-1]$; in other words, $H^s\subseteq [k-1]$. Meanwhile some lower-weighted items in $L^t\cup H^s$ are taken from $N\backslash [k-1]$. We let $q = t - d_L$, which is strictly positive by assumption. We construct two sets $W = L^t\cup H^s\cap[k-1]\subseteq [k-1]$ and $U = L^t\cup H^s\backslash [k-1] = L^t\backslash [k-1]$. By design, $|U| = q$ and $W\cup U =L^t\cup H^s$. We also observe that $[k-1]\backslash W \subseteq \mathcal{I}_H$, and 
\begin{equation}
\label{eq:low_induct_property}
\sum_{i\in W} a_i + qa_L\leq \sum_{i\in W} a_i + qa_H \leq \sum_{i\in [k-1]}a_i.
\end{equation} The latter follows from $t+s=|W|+q \leq k-1$, which implies $q\leq k-1-|W|$. For any $L^t\cup H^s$ in this case,
\begingroup
\allowdisplaybreaks
\begin{align*}
\zeta_J^{L^t\cup H^s} & = f\left(a_J + \sum_{i\in W} a_i + \sum_{i\in U} a_i \right) - \sum_{i\in W} \rho_i - \sum_{i\in U} \zeta_i \\
& \geq f\left(a_L + \sum_{i\in W} a_i + q a_L \right) - \sum_{i\in W} \rho_i - q \zeta_J^{[k-1]} \quad\quad\text{(by the induction hypothesis and $U\subseteq L^t$)} \\
& = f\left(a_L + \sum_{i\in W} a_i \right) + \sum_{p=1}^q \left[f\left(a_L + \sum_{i\in W} a_i + pa_L \right) - f\left(a_L + \sum_{i\in W} a_i + (p-1)a_L \right)\right]- \sum_{i\in W} \rho_i - q \zeta_J^{[k-1]} \\ 
& = f\left(a_L + \sum_{i\in W} a_i \right) + \sum_{p=1}^q \left[f\left(a_L + \sum_{i\in W} a_i + pa_L \right) - f\left(\sum_{i\in W} a_i + pa_L \right)\right]- \sum_{i\in W} \rho_i - q \zeta_J^{[k-1]} \\ 
& \geq f\left(a_L + \sum_{i\in W} a_i \right) + \sum_{p=1}^q \left[f\left(a_L + \sum_{i\in [k-1]} a_i \right) - f\left(\sum_{i\in [k-1]} a_i \right)\right]- \sum_{i\in W} \rho_i - q \zeta_J^{[k-1]} \\ 
& \quad\quad\text{(by concavity of $f$ and \eqref{eq:low_induct_property})} \\
& = f\left(a_L + \sum_{i\in W} a_i \right)  + q \zeta_J^{[k-1]} - \sum_{i\in W} \rho_i([i-1])- q \zeta_J^{[k-1]} \\ 
&\geq F(\{J\}\cup W) - \sum_{i\in W} \rho_i(W\cap [i-1]) \quad\quad\text{(by submodularity of $F$)} \\
& = F(\{J\}\cup W) - F(W) \\
& \geq F(\{J\}\cup [k-1]) - F([k-1])\quad\quad\text{(by submodularity of $F$)} \\
& = \zeta_J^{[k-1]}.
\end{align*}
\endgroup

\textit{Case 3. } Suppose $t < d_L$ and $s > k-1-d_L$. In this case, the lower-weighted items we include in the candidate set $L^t\cup H^s$ all belong to $[k-1]$, and some higher-weighted items in $L^t\cup H^s$ are taken from $N\backslash [k-1]$. We define $W = (L^t\cup H^s)\cap [k-1]$. Let $l = \mathcal{L}_{t+1}$ be the ($t+1$)-th lower-weighted item. In this case, $t< d_L$, so $l\leq k-1$, $l\notin W$ and $\zeta_l = \rho_l$. Moreover, we let $q = |(L^t\cup H^s)\backslash [k-1]| = s - (k-1-d_L)$. Since $s > k-1-d_L$, $q >0$. In addition, $t+s = t+q+k-1-d_L \leq k-1$, so $t+q -d_L\leq 0$. Recall that $\mathcal{H}(q)$ is the set of the first $q$ higher-weighted items strictly after $k-1$. We notice that $L^t\cup H^s = W\cup \mathcal{H}(q)$. The set $V = W\cup \{l\}\cup \mathcal{H}(q-1) \subseteq [\mathcal{H}_q-1]$ has cardinality $(t+k-1-d_L)+1+q-1 = k-1 + (t+q-d_L) \leq k-1$. Thus $V$ corresponds to a feasible solution to the lifting problem \eqref{eq:EPI_lifting_simplified} for $\zeta_{\mathcal{H}_q}$, and $\zeta_{\mathcal{H}_{q}}^V \geq \zeta_{\mathcal{H}_{q}}$. 
\begingroup
\allowdisplaybreaks
\begin{align*}
\zeta_J^{L^t\cup H^s} & = f\left(a_J + \sum_{i\in W} a_i + \sum_{i\in \mathcal{H}(q)} a_i \right) - \sum_{i\in W} \rho_i - \sum_{i\in\mathcal{H}(q)} \zeta_i \\
& = f\left(a_l + \sum_{i\in W} a_i + \sum_{i\in \mathcal{H}(q-1)} a_i + a_H \right) - \sum_{i\in W\cup \{l\}} \rho_i - \sum_{i\in\mathcal{H}(q-1)} \zeta_i - \zeta_{\mathcal{H}_{q}} + \rho_l  \\
& = f\left(\sum_{i\in V} a_i + a_H \right) - \sum_{i\in V}  \zeta_i - \zeta_{\mathcal{H}_{q}} + \rho_l \\
& = \zeta_{\mathcal{H}_{q}}^{V} - \zeta_{\mathcal{H}_{q}} + \rho_l \\
& \geq \rho_l \\
& \geq \zeta_J^{[k-1]} \quad\quad \text{(because $l, J\in\mathcal{I}_L$ and Lemma \ref{lemma:descending_zeta} applies).}
\end{align*}
\endgroup

We have now considered every $L^t\cup H^s$, for any $0\leq t\leq |\mathcal{I}_L|$, $0\leq s\leq |\mathcal{I}_H|$ such that  $s+t\leq k-1$. In all the cases, $\zeta_J^{L^t\cup H^s} \geq \zeta_J^{[k-1]}$. Hence we conclude that $\zeta_J = \zeta_J^{[k-1]}$.
\end{proof}

\begin{lemma}
\label{lemma:induction_high_H1}
Suppose the induction hypothesis holds. If $J = \mathcal{H}_1$, then $\zeta_J = \zeta_J^{[k-1]}$. 
\end{lemma}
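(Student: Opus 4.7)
The plan is to parallel the structure of Lemma \ref{lemma:induction_low} but with $a_J = a_H$ instead of $a_L$. By Lemma \ref{lemma:general_Q_form}, every candidate optimal support of the $J$-th lifting problem \eqref{eq:EPI_lifting_simplified} may be assumed to take the form $L^t\cup H^s$ with $0\leq t\leq |\mathcal{I}_L|$, $0\leq s\leq |\mathcal{I}_H|$, and $s+t\leq k-1$. Because $J=\mathcal{H}_1$ is the first higher-weighted item strictly after $k-1$, every higher-weighted element of $[J-1]$ lies in $[k-1]$, so automatically $s\leq k-1-d_L$. Thus only two sub-cases arise, parametrized by whether $t\leq d_L$ or $t>d_L$; the ``$s>k-1-d_L$'' branch analogous to Case~3 of Lemma \ref{lemma:induction_low} is vacuous here, which should make the argument noticeably shorter than Lemma \ref{lemma:induction_low}.

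When $t\leq d_L$, the support $L^t\cup H^s$ is entirely contained in $[k-1]$, so Lemma \ref{lemma:no_k_in_sol} immediately gives $\zeta_J^{L^t\cup H^s}\geq \zeta_J^{[k-1]}$. When $t>d_L$, set $q:=t-d_L>0$, $W:=(L^t\cup H^s)\cap[k-1]=\mathcal{L}(d_L)\cup H^s$, and $U:=L^t\setminus[k-1]$. By the induction hypothesis each $i\in U$ has lifting coefficient $\zeta_i$ equal to the common constant $\zeta_L:=f(a_L+\sum_{i=1}^{k-1}a_i)-F([k-1])$, so
\[
\zeta_J^{L^t\cup H^s}=f\Bigl(a_H+\sum_{i\in W}a_i+qa_L\Bigr)-\sum_{i\in W}\rho_i-q\zeta_L.
\]
I would then telescope the leading $f$-term as
$f(a_H+\sum_{i\in W}a_i)+\sum_{p=1}^{q}\bigl[f(a_H+\sum_{i\in W}a_i+pa_L)-f(a_H+\sum_{i\in W}a_i+(p-1)a_L)\bigr]$
and bound each of the $q$ increments below using Lemma \ref{lemma:f_concave}. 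The weight comparison to verify is $a_H+\sum_{i\in W}a_i+(p-1)a_L\leq \sum_{i=1}^{k-1}a_i$, which follows from $|W|=d_L+s$ and $p-1\leq q-1\leq k-2-d_L-s$ combined with $a_L\leq a_H$. Each increment is therefore at least $f(a_L+\sum_{i=1}^{k-1}a_i)-f(\sum_{i=1}^{k-1}a_i)=\zeta_L$, which cancels the $-q\zeta_L$ term.

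What remains is $f(a_H+\sum_{i\in W}a_i)-\sum_{i\in W}\rho_i$. Rewriting $\sum_{i\in W}\rho_i=\sum_{i\in W}\rho_i([i-1])$ and applying submodularity of $F$ twice—first to replace $[i-1]$ by $W\cap[i-1]$ so that the sum telescopes to $F(W)$, and then to enlarge $W$ up to $[k-1]$—yields
\[
f\Bigl(a_H+\sum_{i\in W}a_i\Bigr)-\sum_{i\in W}\rho_i\;\geq\; F(\{J\}\cup[k-1])-F([k-1])\;=\;\zeta_J^{[k-1]}.
\]
Since $[k-1]$ is itself a feasible support attaining $\zeta_J^{[k-1]}$, the two cases together give $\zeta_J=\zeta_J^{[k-1]}$. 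The main technical obstacle is the weight-comparison book-keeping that underpins the invocation of Lemma \ref{lemma:f_concave} in the telescoping step; everything else is a direct transplant of the machinery already developed in the proofs of Lemma \ref{lemma:no_k_in_sol} and Case~2 of Lemma \ref{lemma:induction_low}.
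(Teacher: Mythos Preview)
Your proposal is correct and follows essentially the same route as the paper's proof: the same reduction to supports $L^t\cup H^s$ via Lemma~\ref{lemma:general_Q_form}, the same two-case split on $t\leq d_L$ versus $t>d_L$, the same telescoping of the $qa_L$ contribution, the same concavity bound (your weight comparison matches the paper's inequality~\eqref{eq:induct_high_case_12_property}), and the same double application of submodularity at the end. The only cosmetic difference is that the paper introduces an auxiliary element $u\in[k-1]\cap\mathcal{I}_H\setminus H^s$ to write $f(a_H+\sum_{i\in W}a_i)$ as $F(W\cup\{u\})$, whereas you write it directly as $F(\{J\}\cup W)$; both finish with $\rho_J(W)\geq\rho_J([k-1])=\zeta_J^{[k-1]}$.
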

\begin{proof}
Due to Lemma \ref{lemma:general_Q_form}, it is sufficient for us to show that $\zeta_J^Q \geq \zeta_J^{[k-1]}$ for all $Q\subseteq [J-1]$, $|Q|\leq k-1$ such that $Q = L^t\cup H^s$ for some $0\leq t\leq |\mathcal{I}_L|$ and $0\leq s\leq |\mathcal{I}_H|$. Since $J = \mathcal{H}_1$, the higher-weighted items that can be included in $Q$ must belong to $[k-1]$. That is, $s \leq |\mathcal{I}_H\cap [k-1]| = k-1-d_L$. \\

\textit{Case 1. } Suppose $t \leq d_L$. In other words, the lower weighted items we include in $Q$ are exclusively from $[k-1]$. Then $Q = L^t\cup H^s\subseteq [k-1]$. According to Lemma \ref{lemma:no_k_in_sol}, $\zeta_J^{Q} \geq \zeta_J^{[k-1]}$.    \\ 

\textit{Case 2. } Suppose $t \geq d_L + 1$. Now at least one lower-weighted item indexed between $k$ and $J-1$ is in $Q$. Let the number of such lower-weighted items be $q>0$. We define $W = Q\cap [k-1]$ and $U = Q\backslash [k-1]$. Then $[k-1]\backslash W \subseteq \mathcal{I}_H$, $|W| = d_L + s$ and $U = \{k,\dots, k+q-1\}$. Since $U\subseteq \mathcal{I}_L$, $\zeta_i = \zeta_k^{[k-1]}$ for all $i\in U$ by the induction hypothesis. To ensure the cardinality of $Q$ is at most $k-1$, $s \leq k-2-d_L$. This means that $[k-1]\cap\mathcal{I}_H\backslash H^s \neq \emptyset$. We use $u$ to denote an arbitrary element in $[k-1]\cap\mathcal{I}_H\backslash H^s$, which satisfies $a_u = a_H$ and $\zeta_u = \rho_u$. We observe that $W\cup \{u\}\subseteq [k-1]$. Since $t+s = d_L + q + s \leq k-1$, $q \leq k-1-(d_L+s) = k-1-|W|$. It follows that
\begin{equation}
\label{eq:induct_high_case_12_property}
\sum_{i\in W} a_i + a_H + (q-1) a_L \leq \sum_{i\in W} a_i + q a_H \leq \sum_{i\in [k-1]} a_i. 
\end{equation} For any $Q$ of the given type in this case, 
\begingroup
\allowdisplaybreaks
\begin{align*}
\zeta_J^{Q} & = f\left(a_H + \sum_{i\in W} a_i + \sum_{i\in U} a_L \right) - \sum_{i\in W} \rho_i - \sum_{i\in U} \zeta_i \\
& = f\left(a_H + \sum_{i\in W} a_i + qa_L \right) - \sum_{i\in W} \rho_i - q \zeta_k^{[k-1]} \quad \quad \text{(by induction hypothesis)}\\
& =  f\left(\sum_{i\in W} a_i + a_H + qa_L \right) - f\left(\sum_{i\in W}  a_i + a_H \right)- q \zeta_k^{[k-1]} + f\left(\sum_{i\in W\cup\{u\}} a_i\right) - \sum_{i\in W} \rho_i  \\ 
& = \sum_{p=1}^q \left[f\left(a_L + \sum_{i\in W} a_i + a_H + (p-1)a_L\right) - f\left( \sum_{i\in W} a_i + a_H + (p-1)a_L\right)\right]- q \zeta_k^{[k-1]} \\
& \quad \quad + f\left(\sum_{i\in W\cup\{u\}} a_i\right) - \sum_{i\in W} \rho_i  \\
& \geq \sum_{p=1}^q \left[f\left(a_L + \sum_{i\in [k-1]} a_i \right) - f\left( \sum_{i\in [k-1]} a_i \right)\right]- q \zeta_k^{[k-1]} + f\left(\sum_{i\in W\cup\{u\}} a_i\right) - \sum_{i\in W} \rho_i  \\
& \quad\quad\text{(by Lemma \ref{lemma:f_concave} and \eqref{eq:induct_high_case_12_property})}\\ 
& =  q \zeta_k^{[k-1]} -  q \zeta_k^{[k-1]} + F(W\cup\{u\}) - \sum_{i\in W} \rho_i([i-1]) \\
& \geq F(W\cup\{u\}) - \sum_{i\in W} \rho_i(W\cap[i-1]) \quad\quad\text{(because $F$ is submodular)}\\
& = F(W\cup\{u\}) - F(W) \\
& \geq F([k-1]\cup\{u\}) - F([k-1]) \quad\quad\text{(again because $F$ is submodular)}\\
& = f\left(a_H +\sum_{i=1}^{k-1} a_i\right) - \sum_{i=1}^{k-1}\zeta_i = \zeta_J^{[k-1]}. 
\end{align*}
\endgroup

Therefore, $\zeta_J^Q \geq \zeta_J^{[k-1]}$ for all $Q\subseteq [J-1]$ with $|Q|\leq k-1$. We conclude that $\zeta_{\mathcal{H}_1} = \zeta_J^{[k-1]}$. 
\end{proof}

\begin{lemma}
\label{lemma:induction_high_Hgeq2}
Suppose the induction hypothesis holds. If $J = \mathcal{H}_i$ for some $2\leq i \leq d_H$, then \[\zeta_J = \min\left\{ \zeta_{\mathcal{H}_{i-1}},  \zeta_j^{\mathcal{H}(\min\{i-1, d_L\}) \cup \mathcal{L}(d_L-i+1)\cup (\mathcal{I}_H\cap [k-1])}\right\}.\] 
\end{lemma}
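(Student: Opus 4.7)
My plan is to follow the template of Lemmas \ref{lemma:induction_low} and \ref{lemma:induction_high_H1}. By Lemma \ref{lemma:general_Q_form}, it suffices to minimize $\zeta_J^Q$ over supports of the form $Q=L^t\cup H^s$ with $t+s\leq k-1$, $t\leq|\mathcal{I}_L|$, and $s\leq k-1-d_L+i-1$ (the last bound because $H^s\subseteq[J-1]$ and $J=\mathcal{H}_i$). Write $Q^{*}=\mathcal{H}(\min\{i-1,d_L\})\cup\mathcal{L}(d_L-i+1)\cup(\mathcal{I}_H\cap[k-1])$ for the second candidate in the minimum, with the convention $\mathcal{L}(d_L-i+1)=\emptyset$ whenever $d_L-i+1<0$; one checks that $|Q^{*}|=k-1$ in either case.

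First I would dispatch the upper bound $\zeta_J\leq \min\{\zeta_{\mathcal{H}_{i-1}},\zeta_J^{Q^{*}}\}$. The inequality $\zeta_J\leq \zeta_J^{Q^{*}}$ is immediate because $Q^{*}$ is a feasible support for the $J$-th lifting problem \eqref{eq:EPI_lifting_simplified}, and $\zeta_J\leq \zeta_{\mathcal{H}_{i-1}}$ follows from Lemma \ref{lemma:descending_zeta}, since $\mathcal{H}_{i-1}$ and $\mathcal{H}_i=J$ both lie in $\mathcal{I}_H$ with $\mathcal{H}_{i-1}<J$.

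The substantive step is the reverse inequality $\zeta_J^{Q}\geq \min\{\zeta_{\mathcal{H}_{i-1}},\zeta_J^{Q^{*}}\}$ for every feasible $Q=L^t\cup H^s$. I would split the feasible region into three cases. \textbf{Case 1:} $t\leq d_L$ and $s\leq k-1-d_L$, so $Q\subseteq [k-1]$; here Lemma \ref{lemma:no_k_in_sol} yields $\zeta_J^Q\geq \zeta_J^{[k-1]}$, Lemma \ref{lemma:induction_high_H1} identifies $\zeta_J^{[k-1]}=\zeta_{\mathcal{H}_1}$, and Lemma \ref{lemma:descending_zeta} gives $\zeta_{\mathcal{H}_1}\geq \zeta_{\mathcal{H}_{i-1}}$. \textbf{Case 2:} $t>d_L$, so $Q$ contains at least one lower-weighted item outside $[k-1]$; using the induction hypothesis $\zeta_\ell=\zeta_k^{[k-1]}$ for every such $\ell$, together with Lemma \ref{lemma:f_concave} and submodularity of $F$, I plan to ``slide'' each extra lower-weighted item back into a slot inside $[k-1]$ without increasing $\zeta_J^Q$, exactly in the spirit of Case 2 of Lemma \ref{lemma:induction_high_H1}, thereby reducing to Case 1 or Case 3. \textbf{Case 3:} $t\leq d_L$ and $s>k-1-d_L$; set $q=s-(k-1-d_L)\in[1,\min\{i-1,d_L\}]$, so the higher-weighted items of $Q$ outside $[k-1]$ are exactly $\mathcal{H}(q)$. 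When $q=\min\{i-1,d_L\}$ and $t=\max\{d_L-i+1,0\}$, $Q$ coincides with $Q^{*}$ and there is nothing to prove; otherwise, I would invoke the induction hypothesis for $\zeta_{\mathcal{H}_q}$---which is itself a minimum of two quantities---to expand $\zeta_{\mathcal{H}_q}$ inside $\zeta_J^Q$, and then apply a telescoping estimate via Lemma \ref{lemma:f_concave} and submodularity of $F$. The argument would bifurcate: when the induction hypothesis resolves $\zeta_{\mathcal{H}_q}$ to $\zeta_{\mathcal{H}_{q-1}}$, I would iterate to peel off higher-weighted items and, mirroring Case 3 of Lemma \ref{lemma:induction_low}, construct from $Q$ a feasible support $V$ for the $\mathcal{H}_{i-1}$-th lifting problem so that $\zeta_J^Q\geq \zeta_{\mathcal{H}_{i-1}}^V\geq \zeta_{\mathcal{H}_{i-1}}$; when it resolves to the swapped-support candidate for $\mathcal{H}_q$, I would compare directly to $\zeta_J^{Q^{*}}$ by writing both quantities in a common form and cancelling the matching $F$-differences.

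I expect Case 3 to be the main obstacle, precisely because the induction hypothesis for each $\zeta_{\mathcal{H}_j}$ with $j<i$ is itself of the same min-of-two form, so the bookkeeping must track which branch is active at each step. The key structural observation driving the dichotomy in the final formula is the following: whenever we can upgrade a lower-weighted slot $\mathcal{L}_{d_L-q+1}$ to the next higher-weighted item $\mathcal{H}_{q+1}\in\mathcal{I}_H\setminus[k-1]$ without violating $s\leq k-1-d_L+i-1$, Lemma \ref{lemma:f_concave} ensures the lifting objective only decreases; it is precisely when this upgrade process terminates---either by exhausting $\mathcal{L}(d_L)$ or by hitting the boundary $\mathcal{H}_{i-1}$---that we arrive at one of the two candidates $\zeta_J^{Q^{*}}$ or $\zeta_{\mathcal{H}_{i-1}}$.
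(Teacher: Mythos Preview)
Your decomposition (Cases 1--3 by comparing $t$ to $d_L$ and $s$ to $k-1-d_L$) is different from the paper's, and while Cases 1 and 2 go through essentially as you sketch, Case 3 is where the plan becomes shaky.

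The paper's proof rests on one clean observation you do not exploit: since $a_J=a_{\mathcal{H}_{i-1}}=a_H$, for every $Q\subseteq[\mathcal{H}_{i-1}-1]$ with $|Q|\le k-1$ the objective values coincide, $\zeta_J^{Q}=\zeta_{\mathcal{H}_{i-1}}^{Q}\ge\zeta_{\mathcal{H}_{i-1}}$. This single line disposes of your Case 1, your Case 2 (where $t>d_L$ forces $s\le k-2-d_L$, so all of $H^s$ lies in $[k-1]$, but the extra low items $L^t\setminus[k-1]$ may straddle $\mathcal{H}_{i-1}$), and your Case 3 whenever $q<i-1$ (then $Q\subseteq[k-1]\cup\mathcal{H}(q)\subseteq[\mathcal{H}_{i-1}-1]$). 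The only genuinely new work is (i) $Q$ containing $\mathcal{H}_{i-1}$, where a short submodularity argument pushes $t$ up to $d_L-i+1$ and lands on $Q^{*}$, and (ii) $Q$ containing low-weight items strictly between $\mathcal{H}_{i-1}$ and $J$, handled by a brief telescoping identity that rewrites $\zeta_J^{Q}$ as $\zeta_{\mathcal{H}_{i-1}}^{W}+\sum_p[\zeta_{\mathcal{H}_{i-1}+p}^{W\cup\{\mathcal{H}_{i-1}\}}-\zeta_{\mathcal{H}_{i-1}+p}]\ge\zeta_{\mathcal{H}_{i-1}}$.

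Your Case 3 plan, by contrast, proposes to unfold the induction hypothesis for $\zeta_{\mathcal{H}_q}$ and bifurcate on which branch of that inner minimum is active, iterating downward. This is not wrong in principle, but it is considerably more intricate than necessary, and the closing heuristic---that ``upgrading $\mathcal{L}_{d_L-q+1}$ to $\mathcal{H}_{q+1}$'' always decreases $\zeta_J^{Q}$ by Lemma~\ref{lemma:f_concave}---is not justified: that swap changes the subtracted coefficient from $\rho_{\mathcal{L}_{d_L-q+1}}$ to $\zeta_{\mathcal{H}_{q+1}}$, and Lemma~\ref{lemma:f_concave} says nothing about the sign of $\zeta_{\mathcal{H}_{q+1}}-\rho_{\mathcal{L}_{d_L-q+1}}$ relative to the change in $f$. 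So the monotone-upgrade story is at best incomplete. Replacing your Case 3 machinery with the paper's observation above would close the argument immediately and avoid the recursive bookkeeping you anticipate as ``the main obstacle.''
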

\begin{proof}
In this case, there exists at least one higher-weighted item before $J$ and strictly after $k-1$. The coefficient $\zeta_{\mathcal{H}_{i-1}}$ is the optimal objective value of the $\mathcal{H}_{i-1}$-th lifting problem \eqref{eq:EPI_lifting_simplified}. Since both $\mathcal{H}_{i-1},\mathcal{H}_{i}\in\mathcal{I}_H$, all $Q\subseteq [\mathcal{H}_{i-1}-1]$ with $|Q|\leq k-1$ are the supports for all the feasible solutions $x$ to both the $\mathcal{H}_{i-1}$-th and the $\mathcal{H}_{i}$-th lifting problem \eqref{eq:EPI_lifting_simplified}. Thus $\zeta_J^Q \geq \zeta_{\mathcal{H}_{i-1}} \geq \min\left\{ \zeta_{\mathcal{H}_{i-1}},  \zeta_j^{\mathcal{H}(\min\{i-1, d_L\}) \cup \mathcal{L}(d_L-i+1)\cup (\mathcal{I}_H\cap [k-1])}\right\}$. \\

The following discussion focuses on $Q\subseteq [J-1]$ with $|Q|\leq k-1$, such that $Q$ is not a subset of $[\mathcal{H}_{i-1}-1]$. We aim to show that $\zeta_J^Q \geq \min\left\{ \zeta_{\mathcal{H}_{i-1}},  \zeta_j^{\mathcal{H}(\min\{i-1, d_L\}) \cup \mathcal{L}(d_L-i+1)\cup (\mathcal{I}_H\cap [k-1])}\right\}$ for any such set $Q$. This statement is true as long as it holds for $Q$ in the form of $L^t\cup H^s$ for some $0\leq t\leq |\mathcal{I}_L|$ and $0\leq s\leq |\mathcal{I}_H|$, as a result of Lemma \ref{lemma:general_Q_form}. Since $Q$ is not a subset of $[\mathcal{H}_{i-1}-1]$, $Q$ contains at least one item from the set $\{\mathcal{H}_{i-1},\mathcal{H}_{i-1}+1, \dots, \mathcal{H}_{i}-1\}$. In this set, $\mathcal{H}_{i-1}\in\mathcal{I}_H$ and $\{\mathcal{H}_{i-1}+1, \dots, \mathcal{H}_{i}-1\}\subseteq \mathcal{I}_L$.  \\

\textit{Case 1.} Suppose $\mathcal{H}_{i-1} \in Q = L^t\cup H^s$. In this case, $Q$ contains all the $s = (i-1) + (k-1-d_L) = k+i -2-d_L$ higher-weighted items up to and including $\mathcal{H}_{i-1}$ because of the form it assumes. This assumption implies that $k+i -2-d_L \leq k-1$, so $i-1\leq d_L$. To ensure that $|Q|\leq k-1$, $t\leq k-1-(i-1)-(k-1-d_L) = d_L+1-i$. Recall that $d_L=|\mathcal{I}_L\cap [k-1]|$. Therefore, $L^t = \mathcal{L}(t) \subseteq [k-1]$. For any $0\leq t\leq d_L{+}1-i$, we define $W =\mathcal{L}(t) \cup (\mathcal{I}_H \cap [k-1])$, and the set $Q = L^t\cup H^s$ satisfies
\begingroup
\allowdisplaybreaks
\begin{align*}
\zeta_J^{Q} & = f\left(a_J + \sum_{l\in Q} a_l\right) - \sum_{l\in Q} \zeta_l \\
& =  f\left(a_H + \sum_{l\in W} a_i + (i-1)a_H \right) - \sum_{l\in W} \rho_l - \sum_{l\in \mathcal{H}(i-1)}\zeta_l \\
& \geq  f\left(a_H + \sum_{l\in W} a_i + (i-1)a_H \right)  - \sum_{l\in W} \rho_l - \sum_{l\in \mathcal{H}(i-1)}\zeta_l \\
&\quad\quad + \sum_{p=t+1}^{d_L-i+1} \left( \rho_{\mathcal{L}_{p}}(\{J\}\cup W\cup\mathcal{H}(i-1)\cup \mathcal{L}(p-1))    - \rho_{\mathcal{L}_{p}}([\mathcal{L}_{p}-1])\right) \\
&\quad\quad\text{(because $[\mathcal{L}_{p}-1]\subseteq W\cup \mathcal{L}(p-1)$ for $t+1\leq p\leq d_L-i+1$, and $F$ is submodular)}\\
& =  f\left(a_H + \sum_{l\in W\cup\mathcal{L}(d_L-i+1)} a_i + (i-1)a_H \right) - \sum_{l\in W\cup\mathcal{L}(d_L-i+1)} \rho_l - \sum_{l\in \mathcal{H}(i-1)}\zeta_l \\
& = f\left(a_H + \sum_{l\in \mathcal{L}(d_L-i+1) \cup (\mathcal{I}_H\cap [k-1])} a_i + (i-1)a_H \right) - \sum_{l\in \mathcal{L}(d_L-i+1) \cup (\mathcal{I}_H\cap [k-1])} \rho_l - \sum_{l\in \mathcal{H}(i-1)}\zeta_l \\ 
& = \zeta_J^{\mathcal{L}(d_L-i+1) \cup (\mathcal{I}_H\cap [k-1]) \cup \mathcal{H}(i-1)} \\
& \geq \min\left\{ \zeta_{\mathcal{H}_{i-1}},  \zeta_j^{\mathcal{H}(\min\{i-1, d_L\}) \cup \mathcal{L}(d_L-i+1)\cup (\mathcal{I}_H\cap [k-1])}\right\}.
\end{align*}
\endgroup

\textit{Case 2.} Suppose $\mathcal{H}_{i-1} \notin Q = L^t\cup H^s$. In this case, $Q$ must contain the lower-weighted items $\mathcal{H}_{i-1}+1,\dots, \mathcal{H}_{i-1}+q$ for some $q>0$, so that $Q$ is not a subset of $[\mathcal{H}_{i-1}-1]$. Let $W = Q\cap [\mathcal{H}_{i-1}-1]$, then $Q = W\cup \{\mathcal{H}_{i-1}+1,\dots, \mathcal{H}_{i-1}+q\}$ and $|W| = k-1-q \leq k-2$. {Given that $\mathcal{H}_{i-1}+q\in Q$, we observe that $W\backslash [k-1]$ consists of only lower-weighted items, and there are at least $|W\backslash [k-1]|+q$ higher-weighted items in $[k-1]\backslash W$. Thus, for any $p\in\{1,\dots,q\}$, 
\begingroup
\allowdisplaybreaks
\begin{align*}
 & f\left(\sum_{l\in W\cup\{\mathcal{H}_{i-1}\}} a_l + p a_L \right) - f\left(\sum_{l\in W\cup\{\mathcal{H}_{i-1}\}} a_l + (p-1)a_L \right) \\
= \; &  f\left(\sum_{l\in W} a_l + a_H + pa_L \right) - f\left(\sum_{l\in W} a_l + a_H + (p-1)a_L \right) \\
= \; & f\left(\sum_{l\in W\cap [k-1]} a_l + \sum_{l\in |W\backslash [k-1]|} a_L+ a_H +pa_L \right) - f\left(\sum_{l\in W\cap [k-1]} a_l + \sum_{l\in |W\backslash [k-1]|} a_L + a_H + (p-1)a_L \right) \\
\geq \; &  f\left(\sum_{l\in W\cap [k-1]} a_l + \sum_{l\in |W\backslash [k-1]|+p} a_H+ a_L \right) - f\left(\sum_{l\in W\cap [k-1]} a_l + \sum_{l\in |W\backslash [k-1]|+p} a_H  \right)\quad \text{(by Lemma \ref{lemma:f_concave})}\\
\geq \; & f\left(\sum_{l\in [k-1]} a_l + a_L \right) - f\left(\sum_{l\in [k-1]} a_l \right) \quad \text{(by the aforementioned observation and Lemma \ref{lemma:f_concave})}\\
= \; & \zeta^{[k-1]}_{\mathcal{H}_{i-1}+p} \: . 
\end{align*}
\endgroup}

We further derive that
\begingroup
\allowdisplaybreaks
\begin{align*}
\zeta_J^{Q} & = f\left(a_J + \sum_{l\in W} a_l + qa_L \right) - \sum_{l\in W}\zeta_l - \sum_{p=1}^q \zeta_{\mathcal{H}_{i-1}+p} \\
& = f\left(a_H + \sum_{l\in W} a_l\right) - \sum_{l\in W}\zeta_l + f\left(a_H + \sum_{l\in W} a_l + qa_L \right) - f\left(a_H + \sum_{l\in W} a_l\right) - \sum_{p=1}^q \zeta_{\mathcal{H}_{i-1}+p} \\
& = f\left(a_{\mathcal{H}_{i-1}} + \sum_{l\in W} a_l\right) - \sum_{l\in W}\zeta_l + \sum_{p=1}^q \left[ f\left(\sum_{l\in W\cup\{\mathcal{H}_{i-1}\}} a_l + p a_L \right) - f\left(\sum_{l\in W\cup\{\mathcal{H}_{i-1}\}} a_l + (p-1)a_L \right) - \zeta_{\mathcal{H}_{i-1}+p}\right] \\
&{ \geq \zeta_{\mathcal{H}_{i-1}}^W + \sum_{p=1}^q \left[ \zeta_{\mathcal{H}_{i-1}+p}^{[k-1]}  - \zeta_{\mathcal{H}_{i-1}+p}\right]} \\
&{ \geq \zeta_{\mathcal{H}_{i-1}}^W \quad\quad\text{(for any $1\leq p\leq q$, $[k-1]$ is a feasible solution support for the $(\mathcal{H}_{i-1}+p)$-th lifting problem)} }\\
& \geq \zeta_{\mathcal{H}_{i-1}} \quad\quad\text{($W$ is a feasible solution support for the $\mathcal{H}_{i-1}$-th lifting problem)} \\
& \geq \min\left\{ \zeta_{\mathcal{H}_{i-1}},  \zeta_j^{\mathcal{H}(\min\{i-1, d_L\}) \cup \mathcal{L}(d_L-i+1)\cup (\mathcal{I}_H\cap [k-1])}\right\}.
\end{align*}
\endgroup
So far we have shown that for any $Q\subseteq [J-1]$ with $|Q|\leq k-1$, $\zeta_J^Q \geq \min\left\{ \zeta_{\mathcal{H}_{i-1}},  \zeta_j^{\mathcal{H}(\min\{i-1, d_L\}) \cup \mathcal{L}(d_L-i+1)\cup (\mathcal{I}_H\cap [k-1])}\right\}$. Hence $\zeta_J = \min\left\{ \zeta_{\mathcal{H}_{i-1}},  \zeta_j^{\mathcal{H}(\min\{i-1, d_L\}) \cup \mathcal{L}(d_L-i+1)\cup (\mathcal{I}_H\cap [k-1])}\right\}.$ 
\end{proof}

\begin{lemma}
\label{lemma:induction_high}
Suppose the induction hypothesis holds. If $J\in\mathcal{I}_H$, then $J = \mathcal{H}_i$ for some $i \in [d_H]$. The $J$-th lifted-EPI coefficient is   
\[ \zeta_J = \min\left\{ \zeta_{\mathcal{H}_{i-1}},  \zeta_j^{\mathcal{H}(\min\{i-1, d_L\}) \cup \mathcal{L}(d_L-i+1)\cup (\mathcal{I}_H\cap [k-1])}\right\},\] where $\zeta_{\mathcal{H}_{0}} =  \zeta_J^{[k-1]}$.
\end{lemma}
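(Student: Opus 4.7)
The plan is to observe that Lemma \ref{lemma:induction_high} is a consolidated restatement of Lemmas \ref{lemma:induction_high_H1} and \ref{lemma:induction_high_Hgeq2}, with all the substantive case analysis already settled in those two lemmas. The first step is to note that any $J \in \mathcal{I}_H$ appearing in the induction step (so $J \geq k+2$) lies in $\mathcal{I}_H \setminus [k-1]$; since $\mathcal{H} = (\mathcal{H}_1, \dots, \mathcal{H}_{d_H})$ enumerates $\mathcal{I}_H \setminus [k-1]$ in the order induced by $\delta$, there is a unique $i \in [d_H]$ with $J = \mathcal{H}_i$. This justifies the first assertion of the lemma.

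Next I would split into two cases. If $i \geq 2$, the claim is exactly Lemma \ref{lemma:induction_high_Hgeq2} and requires no further argument. If $i = 1$, Lemma \ref{lemma:induction_high_H1} yields $\zeta_J = \zeta_J^{[k-1]}$, and it only remains to check that the expression in the statement collapses to $\zeta_J^{[k-1]}$ under the stated conventions. By definition $\zeta_{\mathcal{H}_0} = \zeta_J^{[k-1]}$, so the first argument of the minimum is already $\zeta_J^{[k-1]}$. For the second argument, the index set becomes
\[
\mathcal{H}(\min\{0, d_L\}) \cup \mathcal{L}(d_L - 0) \cup (\mathcal{I}_H \cap [k-1]) = \emptyset \cup \mathcal{L}(d_L) \cup (\mathcal{I}_H \cap [k-1]) = [k-1],
\]
since $\mathcal{L}(d_L) = \mathcal{I}_L \cap [k-1]$ by definition of $d_L$. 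This again yields $\zeta_J^{[k-1]}$, so the minimum of the two identical values recovers $\zeta_J^{[k-1]}$, matching the conclusion of Lemma \ref{lemma:induction_high_H1}.

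The only real subtlety is bookkeeping the boundary conventions, in particular that $\mathcal{H}(q) = \emptyset$ for $q \leq 0$ and that $\zeta_{\mathcal{H}_0}$ is defined to equal $\zeta_J^{[k-1]}$, so that the $i = 1$ case reconciles cleanly with the unified formula. Since the technical heavy lifting has already been absorbed into Lemmas \ref{lemma:induction_high_H1} and \ref{lemma:induction_high_Hgeq2}, no new obstacle arises here; the proof is essentially a notational verification.
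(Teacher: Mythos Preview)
Your proposal is correct and matches the paper's own approach exactly: the paper's proof is a one-line invocation of Lemmas~\ref{lemma:induction_high_H1} and~\ref{lemma:induction_high_Hgeq2}, and you carry out precisely that reduction, even supplying the explicit boundary check for $i=1$ that the paper leaves implicit.
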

\begin{proof}
This induction step for $J\in\mathcal{I}_H$ holds by Lemmas \ref{lemma:induction_high_H1} and \ref{lemma:induction_high_Hgeq2}.  
\end{proof}

With all the lemmas established above, we now prove Proposition \ref{prop:EPI_lifted_coeff}. 
\begin{proof} (Proposition \ref{prop:EPI_lifted_coeff})
The proposed lifted-EPI coefficients $\zeta_j$ are correct in the base cases $j = k$ and $k+1$ according to Corollary \ref{coro:EPI_base_cases}. Given our induction hypothesis that the proposed coefficients hold for all $j\in[k,J-1]$, Lemmas \ref{lemma:induction_low} and \ref{lemma:induction_high} show that the proposed $\zeta_J$ is the optimal objective of the $J$-th lifting problem \eqref{eq:EPI_lifting_simplified}. Hence we conclude that the proposed lifted-EPI coefficients $\zeta_j$ for all $j\in[k,n]$ are indeed the desired optimal objective values of the corresponding lifting problems \eqref{eq:EPI_lifting_simplified}. In other words, our lifted-EPIs are exact from lifting the EPIs.
\end{proof}

Now we know that the lifting coefficients given in Proposition \ref{prop:EPI_lifted_coeff} are exact. In the next corollaries, we infer the strength of the lifted-EPIs. 
\begin{customcoro}{3.6.1}
\label{coro:lifted_EP_strength}
The lifted-EPIs are facet-defining for $\conv{\mathcal{P}^2_k}$. 
\end{customcoro}
\begin{proof}
For any $S\subseteq N$ with $|S| = k$, the cardinality constraint in $\mathcal{P}^2_k(S)$ is redundant. Thus the EPIs are facet-defining for such $\conv{\mathcal{P}^2_k(S)}$ \cite{edmonds2003submodular}. Since the lifted-EPIs are exactly lifted from the EPIs, they are facet-defining for $\conv{\mathcal{P}^2_k}$. 
\end{proof}

\begin{customcoro}{3.6.2}
\label{coro:lifted_vs_epi}
For any $\conv{\mathcal{P}^2_k}$, the lifted-EPIs are at least as strong as the \textit{approximate lifted inequalities} proposed in \cite{yu2017polyhedral}, Proposition 11. Although \citet{yu2017polyhedral} call such inequalities the lifted inequalities, to distinguish them from the lifted-EPIs with exact lifting coefficients, we refer to them as the approximate lifted inequalities (ALIs). An ALI has the form 
\[ w\geq \sum_{i=1}^k\rho_i x_i + \sum_{i=k+1}^n \phi_ix_i, \]
where $\rho_i$ for $ i\in [k]$ are the EPI coefficients. For each $i>k$, let $T$ with $|T|= k-1$ be a subset of $[i-1]$ such that the sum of the weights are as high as possible. Then $\phi_i = f(a_i + \sum_{j\in T}a_j) - f(\sum_{j\in T}a_j)$. 
\end{customcoro}
\begin{proof}
Let an EPI with respect to $[k]$ be given. The lifted-EPI $w\geq \sum_{i=1}^k\rho_i x_i + \sum_{i=k+1}^n \zeta_ix_i$ is exactly lifted from this base EPI. The proof of Proposition 11 in \cite{yu2017polyhedral} shows that $\phi_i\leq \zeta_i$ for $i\in [k+1, n]$.
\end{proof}

\begin{example} Suppose $N = [6]$, $a = [ 4, 100,  100,  100,   4,   4]$ and $k = 2$. Let us consider the concave function $f(a^\top x) = \sqrt{a^\top x}$. The ALI \cite{yu2017polyhedral} with a permutation of $N$, $\delta = (2,5,1,6,4,3)$, is
\[w \geq 0.198x_1+ 10x_2+ 4.142x_3+ 4.142x_4+ 0.198x_5+ 0.198x_6,\] which coincides with the lifted-EPI, that we exactly lift from the base EPI for $S = \{2,5\}$. Another permutation $\delta = (5,2,3,1,4,6)$ yields an ALI
\[w \geq 0.198x_1+ 8.198x_2+ 4.142x_3+ 4.142x_4+ 2x_5+ 0.198x_6. \] Consider the EPI that is associated with $S = \{2,5\}$ and $\delta$. The corresponding lifted-EPI is
\[w\geq 0.828x_1+ 8.198x_2+ 5.944x_3+5.944x_4+ 2x_5+ 0.828x_6.\] In this example, the lifted-EPI dominates the ALI. 
\end{example}

\section{Exact Lifting of Separation Inequalities}
\label{sect:sepa}
In this section, we exactly lift the SIs proposed in \cite{yu2017polyhedral} to obtain strong valid linear inequalities for $\conv{\mathcal{P}^2_k}$. We refer the readers to Section \ref{sect:prelim} for a detailed introduction to the SIs \eqref{eq:sepa_coeff} and the definitions of $\mathcal{P}^1_k(\mathcal{I}_L)$ and $\mathcal{P}^1_k(\mathcal{I}_H)$. In particular, recall that $i_0\in \{0,1,\dots, k-1\}$ is a fixed parameter used to construct an SI. In Section \ref{sect:L_sepa}, we propose the \textit{lower-separation inequalities} (lower-SIs) that are exactly lifted from the SIs of $\conv{\mathcal{P}^1_k(\mathcal{I}_L)}$. In Section \ref{sect:H_sepa}, we propose another class of inequalities that are exactly lifted from the SIs of $\conv{\mathcal{P}^1_k(\mathcal{I}_H)}$. We call these lifted cuts the \textit{higher-separation inequalities} (higher-SIs). \\

Before analyzing the lifting procedures, we show some useful properties of the coefficients in any SI constructed with an integer $0\leq i_0\leq k-1$. In the lemmas below, we let $N=[n]$ be the ground set, in which each item has weight $\alpha\in\mathbb{R}_+$. For ease of notation, we assume that the permutation $\delta$ used to construct SI is $(1,2,\dots,n)$, so we omit $\delta$ in the indices.

\begin{lemma}
\label{lemma:average_terms}
For any $r\in [k-i_0]$, \[r\psi = \frac{r}{k-i_0}\left[ f(k\alpha) - f(i_0\alpha) \right] \leq f((i_0+r)\alpha) - f(i_0\alpha).\] 
\end{lemma}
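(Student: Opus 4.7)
The plan is to recognize this statement as a direct consequence of the concavity of $f$: geometrically, it says that the value of $f$ at the intermediate point $(i_0+r)\alpha$ lies on or above the chord of $f$ joining $(i_0\alpha, f(i_0\alpha))$ and $(k\alpha, f(k\alpha))$.

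The key observation is that for any $r \in [k-i_0]$, i.e.\ $1 \leq r \leq k-i_0$, the point $(i_0+r)\alpha$ lies between $i_0\alpha$ and $k\alpha$. Introducing $\lambda := r/(k-i_0) \in (0,1]$, I would write
\[
(i_0+r)\alpha \;=\; (1-\lambda)(i_0\alpha) + \lambda(k\alpha),
\]
which is the needed convex combination of the two endpoints. Applying concavity of $f$ then gives
\[
f\bigl((i_0+r)\alpha\bigr) \;\geq\; (1-\lambda) f(i_0\alpha) + \lambda f(k\alpha) \;=\; f(i_0\alpha) + \tfrac{r}{k-i_0}\bigl[f(k\alpha) - f(i_0\alpha)\bigr].
\]
Subtracting $f(i_0\alpha)$ from both sides and recalling that $\psi = \bigl[f(k\alpha)-f(i_0\alpha)\bigr]/(k-i_0)$ yields $r\psi \leq f((i_0+r)\alpha) - f(i_0\alpha)$, which is exactly the claim.

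There is no real obstacle here beyond verifying that $\lambda \in [0,1]$ and writing the convex combination correctly; the result is essentially the chord-lies-below-graph characterization of concavity. The reason the lemma is worth stating separately is likely that later sections use it as a building block to argue that the SI coefficient $\psi$ is dominated by marginal function increments at various intermediate cardinalities, rather than any subtlety in its proof.
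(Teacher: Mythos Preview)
Your proof is correct. You write $(i_0+r)\alpha$ as the convex combination $(1-\lambda)(i_0\alpha) + \lambda(k\alpha)$ with $\lambda = r/(k-i_0) \in (0,1]$ and apply the defining inequality of concavity directly; subtracting $f(i_0\alpha)$ gives the claim immediately.

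The paper takes a different route. It first rewrites the target as $\frac{k-i_0}{r}\bigl[f((i_0+r)\alpha) - f(i_0\alpha)\bigr] \geq f(k\alpha) - f(i_0\alpha)$, then splits the left side into $\bigl[f((i_0+r)\alpha) - f(i_0\alpha)\bigr] + \frac{k-i_0-r}{r}\bigl[f((i_0+r)\alpha) - f(i_0\alpha)\bigr]$, expands each difference as a telescoping sum of unit increments $f((i_0+i)\alpha) - f((i_0+i-1)\alpha)$, and compares terms using the fact that these increments are non-increasing for concave $f$. After two rounds of such increment bounds, the sums collapse to $f(k\alpha) - f(i_0\alpha)$. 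Your argument is the more elementary one---a one-line invocation of the chord characterization of concavity---whereas the paper's proof works entirely through discrete increment monotonicity, presumably for stylistic consistency with the surrounding lemmas (e.g., Lemma~\ref{lemma:f_concave}). For this particular statement, your approach is shorter and loses nothing.
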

\begin{proof}
The stated inequality is equivalent to $\frac{k-i_0}{r}[f((i_0+r)\alpha) - f(i_0\alpha)] \geq f(k\alpha) - f(i_0\alpha)$ because $\frac{k-i_0}{r} >0$. We observe that
\begingroup
\allowdisplaybreaks
\begin{align*}
& \quad \quad \frac{k-i_0}{r} [f((i_0+r)\alpha) - f(i_0\alpha)]  \\
& = f((i_0+r)\alpha) - f(i_0\alpha) + \frac{k-i_0-r}{r} [f((i_0+r)\alpha) - f(i_0\alpha)]  \\
& = f((i_0+r)\alpha) - f(i_0\alpha) + \frac{k-i_0-r}{r} \sum_{i=1}^r [f((i_0+i)\alpha) - f((i_0+i-1)\alpha)] \\
& \geq f((i_0+r)\alpha) - f(i_0\alpha) + \frac{k-i_0-r}{r} \sum_{i=1}^r [f((i_0+r)\alpha) - f((i_0+r-1)\alpha)] \quad \text{(by concavity of $f$)}  \\
& = f((i_0+r)\alpha) - f(i_0\alpha) + (k-i_0-r)[f((i_0+r)\alpha) - f((i_0+r-1)\alpha)]  \\
& \geq f((i_0+r)\alpha) - f(i_0\alpha) + \sum_{l=1}^{k-i_0-r} [f((i_0+r+l)\alpha) - f((i_0+r-1+l)\alpha)] \quad \text{(by concavity of $f$)} \\
& = f((i_0+r)\alpha) - f(i_0\alpha) + f(k\alpha) - f((i_0+r)\alpha)  \\
& =  f(k\alpha) - f(i_0\alpha). 
\end{align*} Therefore the stated relation holds. 
\endgroup
\end{proof}

\begin{lemma}
\label{lemma:descending_sepa}
In the SI \eqref{eq:sepa_coeff}, $\rho_1 \geq \rho_2 \geq \dots \geq \rho_{i_0} \geq \psi$.
\end{lemma}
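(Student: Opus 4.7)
The plan is to split the chain into two parts: the monotone decrease $\rho_1 \geq \rho_2 \geq \dots \geq \rho_{i_0}$ among the SI coefficients for the first $i_0$ items, and the final comparison $\rho_{i_0} \geq \psi$ between the last marginal-return coefficient and the averaged coefficient $\psi$. Both parts are consequences of the concavity of $f$, and in particular Lemma \ref{lemma:f_concave} already provides the exact inequality on differences of $f$ that we need.

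For the first part, I would apply Lemma \ref{lemma:f_concave} with $d = \alpha$, $y_1 = (i-1)\alpha$, and $y_2 = i\alpha$, which gives
\[
\rho_i = f(i\alpha) - f((i-1)\alpha) \geq f((i+1)\alpha) - f(i\alpha) = \rho_{i+1}
\]
for each $i \in [i_0 - 1]$. Iterating yields $\rho_1 \geq \rho_2 \geq \dots \geq \rho_{i_0}$.

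For the second part, I would telescope $f(k\alpha) - f(i_0\alpha)$ as
\[
f(k\alpha) - f(i_0\alpha) = \sum_{j=i_0+1}^{k} \bigl[f(j\alpha) - f((j-1)\alpha)\bigr].
\]
Each summand is bounded above by $\rho_{i_0} = f(i_0\alpha) - f((i_0-1)\alpha)$ by another application of Lemma \ref{lemma:f_concave} (with $y_1 = (i_0-1)\alpha$, $y_2 = (j-1)\alpha$, $d = \alpha$, noting $y_1 \leq y_2$ for $j \geq i_0 + 1$). Since there are $k - i_0$ summands, dividing by $k - i_0$ gives
\[
\psi = \frac{f(k\alpha) - f(i_0\alpha)}{k - i_0} \leq \rho_{i_0},
\]
which completes the chain.

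No step is truly an obstacle here: once concavity is invoked in the form of Lemma \ref{lemma:f_concave}, both the monotonicity of the $\rho_i$ and the averaging bound $\psi \leq \rho_{i_0}$ follow by direct telescoping. One small bookkeeping matter is the degenerate case $i_0 = 0$, in which the chain is vacuously the single statement that $\psi \leq \psi$; and the case $i_0 = k - 1$, in which the sum on the right of the telescoping identity has a single term and the inequality $\rho_{i_0} \geq \psi$ becomes an equality — both are handled automatically by the argument above.
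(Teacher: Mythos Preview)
Your proof is correct and follows the same overall template as the paper's: the chain $\rho_1 \geq \dots \geq \rho_{i_0}$ comes directly from concavity (Lemma~\ref{lemma:f_concave}), and the final inequality $\rho_{i_0} \geq \psi$ is derived from concavity combined with a telescoping step. The one difference is in how you reach $\rho_{i_0} \geq \psi$. The paper invokes Lemma~\ref{lemma:average_terms} with $r=1$ to obtain $\psi \leq f((i_0+1)\alpha) - f(i_0\alpha)$ and then applies concavity once more to bound this by $\rho_{i_0}$. You instead telescope $f(k\alpha) - f(i_0\alpha)$ into $k-i_0$ increments and bound each increment by $\rho_{i_0}$ directly. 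Your route is a touch more self-contained (it bypasses Lemma~\ref{lemma:average_terms}); the paper's route simply reuses a lemma already needed elsewhere.

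One small slip in your closing remarks: when $i_0 = k-1$ you have $\psi = f(k\alpha) - f((k-1)\alpha)$ while $\rho_{i_0} = f((k-1)\alpha) - f((k-2)\alpha)$, so $\rho_{i_0} \geq \psi$ is still a genuine concavity inequality, not an equality. This does not affect the validity of your argument.
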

\begin{proof}
The descending trend among $\rho_i$, for $i\in [i_0]$, follows from concavity of $f$. By Lemma \ref{lemma:average_terms}, $\psi \leq f((i_0+1)\alpha) - f(i_0 \alpha)$. Moreover, $f((i_0+1)\alpha) - f(i_0 \alpha) \leq f(i_0\alpha) - f((i_0-1) \alpha) = \rho_{i_0}$ because of Lemma \ref{lemma:f_concave}. Thus $\rho_{i_0} \geq \psi$, which completes the proof.  
\end{proof}

\subsection{Lower-separation inequalities}
\label{sect:L_sepa}
Without loss of generality, we index the items in $N$ in a way such that, $[|\mathcal{I}_L|]$ are the lower-weighted items, and $[|\mathcal{I}_L|+1, n]$ are higher-weighted. We assume that $ |\mathcal{I}_L|\geq k$ so that SIs are defined for $\conv{\mathcal{P}^1_k(\mathcal{I}_L)}$. Let any such SI \eqref{eq:sepa_coeff} constructed with some $i_0\in\{0,1,\dots,k-1\}$ be given. Suppose the permutation of $\mathcal{I}_L$ used to construct this SI is $\delta$. Again without loss of generality, we assume that the permutation  $\delta=(1,2,\dots, |\mathcal{I}_L|) $. This can be achieved by re-indexing the lower-weighted items in $N$. Thus we omit $\delta$ in the discussion below. \\

We would like to lift this arbitrary SI to derive an inequality of the form 
\begin{equation}
\label{eq:lower_sepa}
w\geq \sum_{i = 1}^{i_0} \rho_i x_i + \sum_{i = i_0+1}^{|\mathcal{I}_L|} \psi x_i + \sum_{j = |\mathcal{I}_L|+1}^n \eta_jx_j.
\end{equation} 
In this expression, $\eta_j$ is the optimal objective value of the $j$-th lifting problem \eqref{eq:l_lifting_prob} for $j \in [|\mathcal{I}_L|+1, n]$.
\begingroup
\allowdisplaybreaks
\begin{subequations}
\label{eq:l_lifting_prob}
\begin{alignat}{2}
\eta_j := \min \hspace{0.2cm} & w - \sum_{i=1}^{i_0} \rho_i x_i - \sum_{i=i_0+1}^{|\mathcal{I}_L|} \psi x_i - \sum_{i = |\mathcal{I}_L|+1}^{j-1} \eta_i x_i&& \\
\textrm{s.t.} \quad & w\geq f\left(a_H + \sum_{i = 1}^{j-1} a_i x_i\right), &&\label{eq:l_sepa_constr_w}\\
& \sum_{i=1}^{j-1} x_i \leq k-1, && \\
& x \in \{0,1\}^{j-1}. && 
\end{alignat}
\end{subequations}
\endgroup
We call such inequalities the \textit{lower-SIs}. \\

In the $j$-th lifting problem \eqref{eq:l_lifting_prob}, any feasible $x$ has a corresponding support $X = \{i\in [j-1]: x_i = 1\}$. On the other hand, for any $X\subseteq [j-1]$ with $|X|\leq k-1$, there exists a unique feasible solution $x$ such that $x_i = 1$ if $i\in X$, and $0$ otherwise. We will later analyze the optimal objective of \eqref{eq:l_lifting_prob} in terms of the feasible supports. Since we are minimizing the objective function, given any feasible $x$, the lowest objective value is attained when constraint \eqref{eq:l_sepa_constr_w} is tight. We denote the best objective value evaluated at a feasible $x$ with support $X$ by 
\[\eta^{X} = f\left(a_H + \sum_{i\in X}a_i\right) - \sum_{i\in[i_0] \cap X} \rho_i  - \sum_{i\in[i_0+1, |\mathcal{I}_L|]\cap X} \psi - \sum_{i \in [|\mathcal{I}_L|+1, j-1]\cap X} \eta_i.\]

We first note that, the lifted coefficients $\eta_j$'s are descending. 
\begin{lemma}
\label{lemma:descending_eta}
For any $|\mathcal{I}_L|+1\leq j_1 < j_2 \leq n$, $\eta_{j_1} \geq \eta_{j_2}$. 
\end{lemma}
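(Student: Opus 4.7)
The plan is to give a direct feasibility argument using the fact that both $j_1$ and $j_2$ are higher-weighted items, so $a_{j_1} = a_{j_2} = a_H$. As a consequence, the $j_1$-th and $j_2$-th lifting problems \eqref{eq:l_lifting_prob} have the same base term $f(a_H + \sum_{i=1}^{j-1} a_i x_i)$ in constraint \eqref{eq:l_sepa_constr_w}; the $j_2$-th problem just has more decision variables (namely $x_{j_1}, x_{j_1+1},\dots, x_{j_2-1}$) and correspondingly more terms $\eta_{j_1}, \dots, \eta_{j_2-1}$ in its objective. This is the same monotonicity phenomenon invoked via Wolsey (Proposition 1.3) in Lemma \ref{lemma:descending_zeta}, but here I would make the short argument explicit.

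First I would fix an optimal support $X^*\subseteq [j_1-1]$ with $|X^*|\leq k-1$ for the $j_1$-th lifting problem, so that $\eta_{j_1}=\eta^{X^*}$, where the right-hand side is evaluated in the $j_1$-th problem. Next I would treat $X^*$ as a candidate support for the $j_2$-th lifting problem. It is feasible there because $X^*\subseteq [j_1-1]\subseteq [j_2-1]$ and the cardinality bound $|X^*|\leq k-1$ still holds. Since $a_{j_2}=a_H = a_{j_1}$, the right-hand side of \eqref{eq:l_sepa_constr_w} at this $X^*$ is the same in both problems, namely $f(a_H+\sum_{i\in X^*} a_i)$.

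The key observation is that $X^*$ contains no index in the range $[j_1, j_2-1]$, so $[|\mathcal{I}_L|+1, j_2-1]\cap X^* = [|\mathcal{I}_L|+1, j_1-1]\cap X^*$. Therefore the newly introduced lifting terms $\eta_{j_1} x_{j_1}+\dots+\eta_{j_2-1} x_{j_2-1}$ in the objective of the $j_2$-th problem vanish at $X^*$, and the objective value attained by $X^*$ in the $j_2$-th problem equals
\[ f\!\left(a_H+\sum_{i\in X^*} a_i\right) - \sum_{i\in [i_0]\cap X^*}\!\rho_i - \sum_{i\in [i_0+1,|\mathcal{I}_L|]\cap X^*}\!\psi - \sum_{i\in [|\mathcal{I}_L|+1,j_1-1]\cap X^*}\!\eta_i = \eta^{X^*} = \eta_{j_1}. \]
Since $\eta_{j_2}$ is defined as the minimum objective over feasible supports of the $j_2$-th problem, $\eta_{j_2}\leq \eta_{j_1}$, which is the claim.

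There is no real obstacle: this is the standard monotonicity of sequential lifting coefficients for equal-weight variables. The only care is to confirm that re-interpreting $X^*$ in the $j_2$-th problem neither changes the $w$-term (which uses the substitution $a_{j_2}=a_H=a_{j_1}$) nor picks up any stray $\eta_i$ term (which uses $X^*\subseteq [j_1-1]$).
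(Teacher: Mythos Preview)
Your argument is correct and is essentially the same approach as the paper's: the paper simply cites Proposition~1.3 on page~264 of \cite{wolsey1999integer} for the monotonicity of sequential lifting coefficients, and you have written out that short feasibility argument explicitly for this setting.
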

\begin{proof}
This result immediately follows from Proposition 1.3 on page 264 of \cite{wolsey1999integer}.
\end{proof}

Recall that $H^s$ is the set of the first $s$ higher-weighted items in $N$. In this section, by our assumed indexing, $H^s = [|\mathcal{I}_L|+1, |\mathcal{I}_L|+s]$. The next lemma characterizes a general form of an optimal solution support to any lifting problem \eqref{eq:l_lifting_prob}.

\begin{lemma}
\label{lemma:l_lift_sol_form}
For any $j\in [|\mathcal{I}_L|+1, n]$, $\eta_j = \eta^{Q^*}$ for some $Q^*\subseteq [j-1]$ with  $|Q^*|\leq k-1$, such that $Q^*=[t]\cup H^s$, for some $0\leq t\leq k-1$ and $0\leq s\leq |\mathcal{I}_H|$ such that $t+s\leq k-1$. 
\end{lemma}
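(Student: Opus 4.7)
The plan is to establish Lemma \ref{lemma:l_lift_sol_form} by an exchange argument analogous to that used in Lemma \ref{lemma:general_Q_form}, exploiting the monotonicity results in Lemmas \ref{lemma:descending_sepa} and \ref{lemma:descending_eta}. The key observation is that, given any feasible support $Q$, the value $f\bigl(a_H+\sum_{i\in Q}a_i\bigr)$ depends on $Q$ only through $t := |Q\cap\mathcal{I}_L|$ and $s := |Q\cap\mathcal{I}_H|$, so minimizing $\eta^Q$ over all $Q$ with the same $(t,s)$ is equivalent to maximizing the sum of subtracted lifting coefficients.

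First, I would fix $t$ and $s$ with $t+s\le k-1$ (and $s\le j-1-|\mathcal{I}_L|$ so that $s$ higher-weighted items are actually available in $[j-1]$) and show, among all supports with exactly $t$ lower-weighted and $s$ higher-weighted items, that $[t]\cup H^s$ maximizes the total of $\sum_{i\in Q\cap[i_0]}\rho_i+\sum_{i\in Q\cap[i_0+1,|\mathcal{I}_L|]}\psi+\sum_{i\in Q\cap[|\mathcal{I}_L|+1,j-1]}\eta_i$. For the higher-weighted part this is direct: by Lemma \ref{lemma:descending_eta}, $\eta_{|\mathcal{I}_L|+1}\ge\eta_{|\mathcal{I}_L|+2}\ge\cdots\ge\eta_{n}$, so among any $s$ indices drawn from $[|\mathcal{I}_L|+1,j-1]$ the sum is largest when we pick the first $s$, namely $H^s$.

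For the lower-weighted part, the argument is slightly more delicate because the coefficients are $\rho_1,\dots,\rho_{i_0}$ (on $[i_0]$) and the constant $\psi$ (on $[i_0+1,|\mathcal{I}_L|]$). Writing $q_1=|Q\cap[i_0]|$ and $q_2=t-q_1$, the contribution is $\sum_{i\in Q\cap[i_0]}\rho_i+q_2\psi$. Lemma \ref{lemma:descending_sepa} tells us $\rho_1\ge\rho_2\ge\cdots\ge\rho_{i_0}\ge\psi$, so increasing $q_1$ (while decreasing $q_2$ by the same amount) can only help; hence $q_1=\min\{t,i_0\}$ is optimal, and among such choices the descending order of $\rho_i$ forces the optimal $Q\cap[i_0]$ to be $[\min\{t,i_0\}]$. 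Any selection of the remaining $q_2$ items from $[i_0+1,|\mathcal{I}_L|]$ yields the same contribution $q_2\psi$, so we may choose them to be $[i_0+1,t]$. Together this gives $Q\cap\mathcal{I}_L=[t]$.

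Combining the two parts, for each admissible pair $(t,s)$ the support $[t]\cup H^s$ achieves the minimum of $\eta^Q$ over supports with that $(t,s)$. Minimizing finally over $(t,s)$ yields some $(t^*,s^*)$ with $t^*+s^*\le k-1$ such that $\eta_j=\eta^{[t^*]\cup H^{s^*}}$, which is exactly the claimed form. The only mildly delicate point—and the step I would be most careful to state precisely—is the two-case reasoning in the lower-weighted exchange, since the coefficients are not strictly decreasing at the boundary between $\rho_{i_0}$ and $\psi$; Lemma \ref{lemma:descending_sepa} is precisely what makes the exchange work there.
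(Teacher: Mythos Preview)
Your proposal is correct and follows essentially the same approach as the paper: an exchange argument showing $\eta^Q\ge\eta^{[\bar t]\cup H^{\bar s}}$ for $\bar t=|Q\cap\mathcal{I}_L|$ and $\bar s=|Q\cap\mathcal{I}_H|$, justified by Lemmas~\ref{lemma:descending_sepa} and~\ref{lemma:descending_eta}. The paper compresses the lower-weighted exchange into a single inequality citing Lemma~\ref{lemma:descending_sepa}, whereas you spell out the two-case analysis (splitting into $q_1=|Q\cap[i_0]|$ and $q_2=t-q_1$); your more explicit treatment is sound and arguably clearer, but the underlying argument is identical.
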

\begin{proof}
Consider any $Q\subseteq [j-1]$ that satisfies $|Q|\leq k-1$. We let $\overline{t} = |\mathcal{I}_L\cap Q|$ and $\overline{s} = |\mathcal{I}_H\cap Q|$. Then 
\begingroup
\allowdisplaybreaks
\begin{align*}
\eta^Q & = f\left(a_H + \overline{t}a_L + \overline{s}a_H\right) - \sum_{i\in[i_0]\cap Q} \rho_i  - \sum_{i\in[i_0+1,|\mathcal{I}_L|]\cap Q} \psi - \sum_{i \in [|\mathcal{I}_L|+1,j-1]\cap Q} \eta_i \\
& \geq f\left(a_H + \overline{t}a_L + \overline{s}a_H\right) - \sum_{i\in[i_0]\cap [\overline{t}]} \rho_i  - \sum_{i\in[i_0+1,|\mathcal{I}_L|]\cap [\overline{t}]} \psi - \sum_{i\in H^{\overline{s}}} \eta_i \quad\quad\text{(from Lemmas \ref{lemma:descending_sepa} and \ref{lemma:descending_eta})}\\
& = \eta^{[\overline{t}]\cup H^{\overline{s}}}. 
\end{align*}
\endgroup Therefore the set of all the feasible supports $Q$ in the form of $[t]\cup H^s$ contains an optimal support $Q^* = \arg\min_{Q\subseteq [j-1], |Q|\leq k-1} \eta^Q$, such that $\eta^{Q^*} = \eta_j$. 
\end{proof}

Lemma \ref{lemma:l_lift_sol_form} suggests that there must exist an optimal solution support for any lifting problem \eqref{eq:l_lifting_prob} that has the form $[t]\cup H^s$, which concatenates the first $t$ lower-weighted items with the first $s$ higher-weighted items. We next compare all feasible solutions of this form in Lemmas \ref{lemma:l_head_eta} , \ref{lemma:l_tail_eta} and \ref{lemma:i_0_loses}. It turns out that for any fixed number of higher-weighted items $s\leq k-1$, the support $[k-1-s]\cup H^s$ always has the lowest objective value. This result is formalized in Lemma \ref{lemma:card_k-1}. \\

\begin{lemma}
\label{lemma:l_head_eta} 
For any $0\leq s\leq \min\{|\mathcal{I}_H|, k-1\}$, $\eta^{[t-1]\cup H^s} \geq \eta^{[t]\cup H^s}$ for all $t\in [\min\{k-1-s, i_0\}]$. 
\end{lemma}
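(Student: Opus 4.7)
The plan is to prove the inequality by a direct algebraic cancellation followed by a single application of the concavity lemma (Lemma \ref{lemma:f_concave}). First, I would observe that since $t \in [\min\{k-1-s, i_0\}]$, we have $t \leq i_0$, so every lower-weighted index in both $[t-1]$ and $[t]$ lies in $[i_0]$. Consequently, in the definition of $\eta^{Q}$ given just before Lemma \ref{lemma:descending_eta}, every such index contributes its EPI-type coefficient $\rho_i = f(i a_L) - f((i-1)a_L)$ rather than the averaged coefficient $\psi$. Furthermore, the $H^s$ portion of the support is identical in both expressions, so the lifted contribution $\sum_{i \in H^s}\eta_i$ cancels when we form the difference.

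Next, I would write out the difference explicitly:
\[
\eta^{[t-1]\cup H^s} - \eta^{[t]\cup H^s} = \rho_t - \bigl[f(a_H + t a_L + s a_H) - f(a_H + (t-1)a_L + s a_H)\bigr].
\]
Substituting $\rho_t = f(ta_L) - f((t-1)a_L)$, the desired inequality $\eta^{[t-1]\cup H^s} \geq \eta^{[t]\cup H^s}$ becomes
\[
f(ta_L) - f((t-1)a_L) \geq f\bigl((t-1)a_L + (s+1)a_H + a_L\bigr) - f\bigl((t-1)a_L + (s+1)a_H\bigr).
\]
This is precisely the statement of Lemma \ref{lemma:f_concave} applied with $y_1 = (t-1)a_L$, $y_2 = (t-1)a_L + (s+1)a_H$, and $d = a_L$: we have $y_1 \leq y_2$ because $(s+1)a_H \geq 0$, and $d \geq 0$ since $a_L \geq 0$. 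The conclusion of Lemma \ref{lemma:f_concave} then yields the required inequality.

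Since the argument reduces to one invocation of the concavity lemma after a straightforward cancellation, there is no substantive obstacle; the only bookkeeping point is to confirm that the coefficient attached to the newly added index $t$ is indeed $\rho_t$ (the sharper EPI coefficient) and not $\psi$, which is guaranteed by the hypothesis $t \leq i_0$. This dependence on $t \leq i_0$ is exactly what makes the two sub-lemmas (Lemma \ref{lemma:l_head_eta} for the ``head'' region $t \leq i_0$, and the forthcoming analogue for the ``tail'' region $t > i_0$) structurally different.
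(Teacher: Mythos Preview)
Your proposal is correct and follows essentially the same approach as the paper: both reduce the claim to the single concavity inequality $f(ta_L)-f((t-1)a_L)\geq f(ta_L+(s+1)a_H)-f((t-1)a_L+(s+1)a_H)$, which is exactly Lemma~\ref{lemma:f_concave} with $y_1=(t-1)a_L$, $y_2=(t-1)a_L+(s+1)a_H$, $d=a_L$. The paper presents this as a chain of equalities/inequalities starting from $\eta^{[t-1]\cup H^s}$, whereas you compute the difference directly; the substance is identical.
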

\begin{proof} Given any $[t-1]\cup H^s$ that satisfies the stated properties, 
\begingroup
\allowdisplaybreaks
\begin{align*} 
\eta^{[t-1]\cup H^s} & = f\left(a_H + (t-1)a_L + sa_H\right) - \sum_{i=1}^{t-1} \rho_i - \sum_{i = |\mathcal{I}_L|+1}^{|\mathcal{I}_L|+s} \eta_i \\
& = f\left(ta_L + (s+1)a_H\right) - \left[ f\left(ta_L + (s+1)a_H\right) -  f\left((t-1)a_L + (s+1)a_H\right)\right] - \sum_{i=1}^{t-1} \rho_i - \sum_{i = |\mathcal{I}_L|+1}^{|\mathcal{I}_L|+s} \eta_i \\
& \geq  f\left(ta_L + (s+1)a_H\right) - \left[ f\left(ta_L \right) -  f\left((t-1)a_L\right)\right] - \sum_{i=1}^{t-1} \rho_i - \sum_{i = |\mathcal{I}_L|+1}^{|\mathcal{I}_L|+s} \eta_i \quad\quad \text{(because $f$ is concave)}\\
& = f\left(a_H + ta_L + sa_H\right) - \rho_t - \sum_{i=1}^{t-1} \rho_i - \sum_{i = |\mathcal{I}_L|+1}^{|\mathcal{I}_L|+s} \eta_i\\ 
& = \eta^{[t]\cup H^s}.
\end{align*}
\endgroup 
\end{proof}

\begin{lemma}
\label{lemma:l_tail_eta}
Let any $0\leq s\leq \min\{|\mathcal{I}_H|, k-1\}$ be given. If $k-1-s \geq i_0$, then $\eta^{[t]\cup H^s} \geq \min \{\eta^{[i_0]\cup H^s}, \eta^{[k-1-s]\cup H^s}\}$ for all $t\in [i_0, k-1-s]$.
\end{lemma}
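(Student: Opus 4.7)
The plan is to view $\eta^{[t]\cup H^s}$ as a function of $t$ on the integer interval $[i_0, k-1-s]$ and reduce the lemma to the standard fact that a discretely concave function on such an interval attains its minimum at one of the endpoints.

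First, I would expand the definition of $\eta^{[t]\cup H^s}$ in this regime. Since $t \geq i_0$ and $t \leq k-1-s \leq k-1 < |\mathcal{I}_L|$, the first $i_0$ lower-weighted items contribute the $\rho_i$ terms, the next $t - i_0$ lower-weighted items contribute $\psi$ each, and the higher-weighted part contributes $\sum_{i=|\mathcal{I}_L|+1}^{|\mathcal{I}_L|+s}\eta_i$. This gives
\[
\eta^{[t]\cup H^s} \;=\; f(a_H + t a_L + s a_H) - \sum_{i=1}^{i_0}\rho_i - (t-i_0)\psi - \sum_{i=|\mathcal{I}_L|+1}^{|\mathcal{I}_L|+s}\eta_i.
\]
Subtracting the value at $t = i_0$, only the first term and the $(t - i_0)\psi$ contribution depend on $t$, so writing $h(t) := f(a_H + t a_L + s a_H)$ I obtain $\Delta(t) := \eta^{[t]\cup H^s} - \eta^{[i_0]\cup H^s} = [h(t) - h(i_0)] - (t - i_0)\psi$.

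Next I would argue that $\Delta$ is discretely concave in $t$. Because $f$ is concave and $a_L \geq 0$, Lemma~\ref{lemma:f_concave} implies that the forward differences $h(t+1) - h(t)$ are non-increasing in $t$; subtracting the constant $\psi$ preserves this monotonicity, so the forward differences of $\Delta$ are themselves non-increasing. A standard argument then yields that such a sequence on $\{i_0, i_0+1,\dots, k-1-s\}$ is unimodal (non-decreasing then non-increasing), and hence its minimum over this interval is attained at one of the two endpoints: $\min_t \Delta(t) = \min\{\Delta(i_0), \Delta(k-1-s)\} = \min\{0,\; \eta^{[k-1-s]\cup H^s} - \eta^{[i_0]\cup H^s}\}$.

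Adding $\eta^{[i_0]\cup H^s}$ to both sides then immediately yields $\eta^{[t]\cup H^s} \geq \min\{\eta^{[i_0]\cup H^s},\, \eta^{[k-1-s]\cup H^s}\}$ for every $t \in [i_0, k-1-s]$, as required. The only point needing a little care is the expansion of $\eta^{[t]\cup H^s}$ — in particular confirming that $t \geq i_0$ makes the $\rho$-sum saturate at $\sum_{i=1}^{i_0}\rho_i$ and that $t \leq k-1-s < |\mathcal{I}_L|$ ensures $[t] \subseteq \mathcal{I}_L$ — but this is routine bookkeeping, and I do not anticipate any substantive obstacle beyond it.
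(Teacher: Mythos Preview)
Your proposal is correct and follows essentially the same approach as the paper: both identify that the forward differences $\eta^{[t+1]\cup H^s}-\eta^{[t]\cup H^s}=f((s+1)a_H+(t+1)a_L)-f((s+1)a_H+ta_L)-\psi$ are non-increasing in $t$ by concavity of $f$, and then conclude that the minimum over $t\in[i_0,k-1-s]$ is attained at an endpoint. The only cosmetic difference is that the paper finishes by contradiction (an interior strict minimum would force a negative forward difference to precede a positive one), whereas you invoke the equivalent direct fact that a discretely concave sequence is unimodal and hence minimized at an endpoint.
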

\begin{proof}
We first observe that for every $t\in[i_0, k-2-s]$, \[\eta_j^{[t+1]\cup H^s} - \eta_j^{[t]\cup H^s}  = f\left((s+1)a_H + ta_L + a_L\right) - f\left((s+1)a_H + ta_L\right)  - \psi.\] Since $f$ is concave, $f\left((s+1)a_H + ta_L + a_L\right) - f\left((s+1)a_H + ta_L\right)$ decreases as $t$ increases. This implies that when $t$ becomes bigger, $\eta_j^{[t+1]\cup H^s} - \eta_j^{[t]\cup H^s}$ shrinks. With this observation, we prove this lemma by contradiction. Suppose there exists $i_0 < q < k-1-s$, such that $\eta_j^{[q]\cup H^s} < \eta^{[i_0]\cup H^s}$ and $\eta_j^{[q]\cup H^s} < \eta^{[k-1-s]\cup H^s}$. Then 
\[\eta^{[q]\cup H^s}-\eta^{[i_0]\cup H^s} = \sum_{p=i_0}^{q-1} \left[\eta^{[p+1]\cup H^s} - \eta^{[p]\cup H^s}\right] < 0,\] 
and \[\eta^{[k-1-s]\cup H^s}-\eta^{[q]\cup H^s} = \sum_{p=q}^{k-2-s} \left[\eta^{[p+1]\cup H^s} - \eta^{[p]\cup H^s}\right] > 0.\] 
Hence, there exists $q_1\in[i_0, q-1]$ such that $ \eta^{[q_1+1]\cup H^s} - \eta^{[q_1]\cup H^s} < 0$. There also exists $q_2 \in[q,k-2-s]$ such that $\eta^{[q_2+1]\cup H^s} - \eta^{[q_2]\cup H^s} > 0$. This contradicts our observation that $\eta_j^{[t+1]\cup H^s} - \eta_j^{[t]\cup H^s}$ decreases as $t$ gets larger. Thus no such $q$ exists. We conclude that $\eta^{[t]\cup H^s} \geq \min \{\eta^{[i_0]\cup H^s}, \eta^{[k-1-s]\cup H^s}\}$ for all $t\in[i_0, k-1-s]$. 
\end{proof}

\begin{lemma}
\label{lemma:i_0_loses}
Let any $0\leq s\leq \min\{|\mathcal{I}_H|, k-1\}$ be given. If $k-1-s \geq i_0$, then $\eta^{[i_0]\cup H^s}\geq \eta^{[k-1-s]\cup H^s}$. In other words, $\eta^{[t]\cup H^s} \geq \eta^{[k-1-s]\cup H^s}$ for all $t\in[i_0, k-1-s]$.
\end{lemma}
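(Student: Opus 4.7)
The plan is to reduce the claim to a one-dimensional inequality between $\psi$ and a single difference of $f$-values, then dispatch it via a two-step concavity argument. First I would unpack the definition: under the hypothesis $k-1-s \geq i_0$, the supports $[i_0]\cup H^s$ and $[k-1-s]\cup H^s$ agree on the ``head'' block $[i_0]$ (contributing the $\rho$-terms) and on $H^s$ (contributing the $\eta$-terms), while the latter additionally picks up the indices $i_0+1,\ldots,k-1-s$, each carrying coefficient $\psi$. Cancelling the common terms, the desired inequality $\eta^{[i_0]\cup H^s}\geq \eta^{[k-1-s]\cup H^s}$ becomes equivalent to
\[(k-1-s-i_0)\,\psi \;\geq\; f\bigl((s+1)a_H + (k-1-s)a_L\bigr) - f\bigl((s+1)a_H + i_0 a_L\bigr).\]

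Next I would apply Lemma \ref{lemma:f_concave} to transport the increment of length $d=(k-1-s-i_0)a_L$ from the starting point $y_2=(s+1)a_H + i_0 a_L$ down to the smaller starting point $y_1=(i_0+s+1)a_L$; the inequality $y_1\leq y_2$ holds because $a_L\leq a_H$. This yields
\[f(y_2+d)-f(y_2)\;\leq\; f(y_1+d)-f(y_1)\;=\;f(ka_L)-f\bigl((i_0+s+1)a_L\bigr).\]
To close the argument, I would establish the ``tail-averaging'' bound $f(ka_L)-f((i_0+s+1)a_L)\leq (k-1-s-i_0)\psi$ in the spirit of Lemma \ref{lemma:average_terms}. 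Writing $\sigma_j:=f((j+1)a_L)-f(ja_L)$ for $i_0\leq j\leq k-1$, concavity of $f$ gives $\sigma_{i_0}\geq\cdots\geq\sigma_{k-1}$ with overall mean $\psi$, so the average of the \emph{last} $T:=k-1-s-i_0$ of these values is at most $\psi$, i.e.\ $\sum_{j=i_0+s+1}^{k-1}\sigma_j\leq T\psi$; the left-hand side telescopes to $f(ka_L)-f((i_0+s+1)a_L)$. Chaining these two steps establishes the reduced inequality and hence the main claim; the boundary case $T=0$ is trivial.

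The ``in other words'' clause is immediate once the main inequality is in hand: combining it with Lemma \ref{lemma:l_tail_eta} gives $\eta^{[t]\cup H^s}\geq\min\{\eta^{[i_0]\cup H^s},\eta^{[k-1-s]\cup H^s}\}=\eta^{[k-1-s]\cup H^s}$ for every $t\in[i_0,k-1-s]$. The only step requiring genuine care is the tail-averaging bound: it is routine in content but is the ``right-tail'' analogue of Lemma \ref{lemma:average_terms} rather than the ``left-head'' version stated there, so I expect to spell it out as a short self-contained subargument (or ancillary lemma) before invoking it. Beyond that, the argument is just two applications of concavity glued together.
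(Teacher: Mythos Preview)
Your argument is correct and structurally identical to the paper's: both reduce to the same target inequality $(k-1-s-i_0)\psi \geq f((s+1)a_H+(k-1-s)a_L)-f((s+1)a_H+i_0a_L)$, bound the right-hand side above by $f(ka_L)-f((i_0+s+1)a_L)$ via the concavity shift (your Lemma~\ref{lemma:f_concave} step, the paper's final line), and then compare this to $(k-1-s-i_0)\psi$. The only cosmetic difference is in this last comparison: you prove a ``tail-averaging'' bound from scratch, whereas the paper obtains it in one line by subtracting the head bound $(s+1)\psi\leq f((i_0+s+1)a_L)-f(i_0a_L)$ of Lemma~\ref{lemma:average_terms} (with $r=s+1$) from the identity $(k-i_0)\psi=f(ka_L)-f(i_0a_L)$---so your planned ancillary lemma is unnecessary.
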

\begin{proof} The difference $\eta^{[i_0]\cup H^s} - \eta^{[k-1-s]\cup H^s}$ turns out to be non-negative. 
\begingroup
\allowdisplaybreaks
\begin{align*}
&  \quad\quad \eta^{[i_0]\cup H^s} - \eta^{[k-1-s]\cup H^s}  \\
& = f\left(a_H + i_0a_L + sa_H\right) - f\left(a_H + (k-1-s)a_L + sa_H\right) +(k-1-s-i_0) \psi \\
& = f\left( i_0a_L + (s+1)a_H\right) - f\left((k-1-s)a_L + (s+1)a_H\right) +\frac{(k-i_0)-(1+s)}{k-i_0} \left[f(ka_L)-f(i_0a_L) \right] \\ 
& =  f\left( i_0a_L + (s+1)a_H\right) - f\left((k-1-s)a_L + (s+1)a_H\right) + f(ka_L)-f(i_0a_L) - (1+s)\psi \\
& \geq f\left( i_0a_L + (s+1)a_H\right) - f\left((k-1-s)a_L + (s+1)a_H\right) + f(ka_L)-f(i_0a_L)  + f(i_0a_L) - f((i_0+1+s)a_L) \\
&  \quad \quad \text{(by Lemma \ref{lemma:average_terms})}\\
& = f\left( i_0a_L + (s+1)a_H\right) - f((i_0+1+s)a_L)  - \left[ f\left((k-1-s)a_L + (s+1)a_H\right) - f(ka_L)\right]\\
& = f\left( (i_0+1+s)a_L + (s+1)(a_H-a_L)\right) - f((i_0+1+s)a_L)  - \left[ f\left(ka_L + (s+1)(a_H-a_L)\right) - f(ka_L)\right]\\
& \geq 0, \quad \quad  \text{(because $i_0+1+s\leq k$ and $f$ is concave).} 
\end{align*}
\endgroup
\end{proof}

\begin{lemma}
\label{lemma:card_k-1}
Given any $0\leq s\leq \min\{k-1, |\mathcal{I}_H|\}$, $\eta^{[k-1-s]\cup H^s} \leq \eta^{[t]\cup H^s}$ for any $t$ such that $[t]\cup H^s$ is a feasible solution support to the $(|\mathcal{I}_L|+s+1)$-th lifting problem. 
\end{lemma}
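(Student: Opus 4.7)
The plan is to prove the lemma by combining the three previous lemmas (Lemmas \ref{lemma:l_head_eta}, \ref{lemma:l_tail_eta}, and \ref{lemma:i_0_loses}) in a straightforward case analysis on whether $k-1-s$ lies below or above the threshold $i_0$. The feasibility of $[t]\cup H^s$ as a support for the $(|\mathcal{I}_L|+s+1)$-th lifting problem imposes $t \in [0, k-1-s]$, so the entire claim amounts to minimizing $\eta^{[t]\cup H^s}$ over this one-dimensional range.

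First I would handle the easy case in which $k-1-s < i_0$. Here the feasible range $t \in [0, k-1-s]$ lies entirely within $[0, i_0]$, so Lemma \ref{lemma:l_head_eta} applies to every consecutive pair and yields a non-increasing chain $\eta^{[0]\cup H^s}\geq \eta^{[1]\cup H^s}\geq \cdots \geq \eta^{[k-1-s]\cup H^s}$. The minimum is therefore attained at $t = k-1-s$, which is exactly what we want.

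Next I would address the case $k-1-s \geq i_0$ by splitting the feasible range at $t = i_0$. For $t \in [0, i_0]$, Lemma \ref{lemma:l_head_eta} again gives $\eta^{[t]\cup H^s} \geq \eta^{[i_0]\cup H^s}$, and then Lemma \ref{lemma:i_0_loses} gives $\eta^{[i_0]\cup H^s} \geq \eta^{[k-1-s]\cup H^s}$, so these values are all at least $\eta^{[k-1-s]\cup H^s}$. For $t \in [i_0, k-1-s]$, Lemma \ref{lemma:l_tail_eta} bounds $\eta^{[t]\cup H^s}$ below by $\min\{\eta^{[i_0]\cup H^s}, \eta^{[k-1-s]\cup H^s}\}$, and Lemma \ref{lemma:i_0_loses} collapses this minimum to $\eta^{[k-1-s]\cup H^s}$. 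Combining the two sub-ranges gives $\eta^{[t]\cup H^s} \geq \eta^{[k-1-s]\cup H^s}$ for every feasible $t$.

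I do not anticipate significant obstacles since all the heavy concavity/submodularity estimates have been pushed into the preceding three lemmas; the only thing to double-check is that the boundary value $t = i_0$ is covered by both Lemma \ref{lemma:l_head_eta} (the chain includes $t = i_0$) and Lemma \ref{lemma:l_tail_eta} (whose range starts at $i_0$), so no gap is left. The proof will consist essentially of stating the two cases and citing the relevant lemmas, with no additional computation required.
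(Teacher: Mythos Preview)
Your proposal is correct and follows essentially the same approach as the paper: a case split on the position of $k-1-s$ relative to $i_0$, invoking Lemma~\ref{lemma:l_head_eta} for the small-$t$ range and Lemmas~\ref{lemma:l_tail_eta}/\ref{lemma:i_0_loses} for the large-$t$ range. The only cosmetic difference is that the paper places the boundary case $k-1-s=i_0$ in the first branch rather than the second, and cites only Lemma~\ref{lemma:i_0_loses} (whose statement already subsumes the conclusion of Lemma~\ref{lemma:l_tail_eta}) in the second branch.
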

\begin{proof}
If $s \geq k-1-i_0$, then to ensure $|[t]\cup H^s| \leq k-1$, $0\leq t\leq k-1-s \leq i_0$. In this case, Lemma \ref{lemma:l_head_eta} immediately suggests that $\eta^{[t]\cup H^s} \geq \eta^{[k-1-s]\cup H^s}$. On the other hand, if $s\leq k-2-i_0$, then any $t\leq k-1-s$ makes $[t]\cup H^s$ is a feasible solution support in the $(|\mathcal{I}_L|+s+1)$-th lifting problem. For all $t\leq i_0$, $\eta^{[t]\cup H^s} \geq \eta^{[i_0]\cup H^s}$ again by Lemma \ref{lemma:l_head_eta}. It then follows from Lemma \ref{lemma:i_0_loses} that for any $t\leq k-1-s$, $\eta^{[t]\cup H^s} \geq \min \{\eta^{[i_0]\cup H^s}, \eta^{[k-1-s]\cup H^s}\} = \eta^{[k-1-s]\cup H^s}$.
\end{proof}

\begin{proposition}
\label{prop:l_sepa_coeff}
The exact lifting coefficients from the lifting problems \eqref{eq:l_lifting_prob} are \[\eta_j = \begin{cases}
\eta^{[k-1]}, & j = |\mathcal{I}_L|+1, \\
\min\{\eta_{j-1}, \eta^{[k-1-s]\cup H^s}\}, & j = |\mathcal{I}_L|+1+s, s\in [n-1-|\mathcal{I}_L|].
\end{cases}
\]
\end{proposition}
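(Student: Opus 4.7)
The plan is to prove Proposition \ref{prop:l_sepa_coeff} by strong induction on $s$ (equivalently, on $j = |\mathcal{I}_L|+1+s$), essentially as a corollary of Lemmas \ref{lemma:l_lift_sol_form} and \ref{lemma:card_k-1}, which together tell us that the optimal support for any lifting problem \eqref{eq:l_lifting_prob} must have the specific form $[k-1-s']\cup H^{s'}$ for some admissible $s'$.

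First I would handle the base case $j = |\mathcal{I}_L|+1$, i.e., $s=0$. In this case, the feasible supports $Q\subseteq [|\mathcal{I}_L|]$ contain no higher-weighted items, so Lemma \ref{lemma:l_lift_sol_form} forces $Q = [t]$ for some $0\leq t\leq k-1$. Then Lemma \ref{lemma:card_k-1} (with $s=0$) shows that $[k-1]$ is the best such support, yielding $\eta_{|\mathcal{I}_L|+1} = \eta^{[k-1]}$ as claimed.

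For the inductive step, I would fix $j = |\mathcal{I}_L|+1+s$ with $s\geq 1$ and assume the proposition holds for all indices less than $j$. By Lemmas \ref{lemma:l_lift_sol_form} and \ref{lemma:card_k-1}, any optimal support for the $j$-th lifting problem is of the form $[k-1-s']\cup H^{s'}$ for some $s'$ with $0\leq s'\leq \min\{s,k-1\}$ (the bound $s'\leq s$ because $H^{s'}\subseteq [j-1]$ requires $|\mathcal{I}_L|+s'\leq j-1$, and $s'\leq k-1$ so that $k-1-s'\geq 0$ and the support has cardinality at most $k-1$). Hence
\[
\eta_j = \min_{0\leq s'\leq \min\{s,k-1\}} \eta^{[k-1-s']\cup H^{s'}}.
\]
Splitting off the $s'=s$ term (when it is admissible) gives $\eta_j = \min\bigl\{\min_{0\leq s'\leq \min\{s-1,k-1\}} \eta^{[k-1-s']\cup H^{s'}},\; \eta^{[k-1-s]\cup H^s}\bigr\}$. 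A brief observation I would make here is that each $\eta^{[k-1-s']\cup H^{s'}}$ depends only on $\{\eta_i\}_{i\leq |\mathcal{I}_L|+s'}$, so its value is independent of the stage at which it is evaluated; thus by the induction hypothesis applied at stage $j-1$, the inner minimum equals $\eta_{j-1}$. The conclusion $\eta_j = \min\{\eta_{j-1},\eta^{[k-1-s]\cup H^s}\}$ follows.

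The main obstacle, though largely cosmetic, is reconciling the statement with the edge case $s\geq k$, where $[k-1-s]$ is not well-defined under the paper's convention and $H^s$ alone already exceeds the cardinality limit. I would dispense with this either by adopting the convention that $\eta^{[k-1-s]\cup H^s} = +\infty$ (so it drops out of the minimum and $\eta_j = \eta_{j-1}$), or equivalently by noting that at stage $j-1$ and $j$ the feasible admissible values of $s'$ coincide when $s\geq k$, so no new candidate is introduced and the recursion collapses to $\eta_j = \eta_{j-1}$, which is consistent with the descending property from Lemma \ref{lemma:descending_eta}. All the analytic work has been done in the preceding lemmas; this proposition is essentially a bookkeeping consequence.
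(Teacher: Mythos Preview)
Your proposal is correct and follows essentially the same route as the paper: both arguments reduce the search via Lemmas \ref{lemma:l_lift_sol_form} and \ref{lemma:card_k-1} to supports of the form $[k-1-s']\cup H^{s'}$, then separate the newest candidate from the rest. The paper phrases the split as ``$j-1\in Q$ versus $j-1\notin Q$'' (which, after restricting to the special form, is exactly your split $s'=s$ versus $s'<s$) and invokes the definition of $\eta_{j-1}$ directly rather than unfolding the recursion by induction, but the content is the same.
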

\begin{proof}
Recall that the optimal solution support for any lifting problem \eqref{eq:l_lifting_prob} has the form $[t]\cup H^s$ for some $s,t\leq k-1$ according to Lemma \ref{lemma:l_lift_sol_form}. In addition, according to Lemma \ref{lemma:card_k-1}, such a support has $s+t = k-1$. Therefore, $\eta_{|\mathcal{I}_L|+1} = \eta^{[k-1]\cup H^0} = \eta^{[k-1]}$ as stated in this proposition. For any $j > |\mathcal{I}_L|+1$, we let $j = |\mathcal{I}_L|+1+s$, where $s\in\{1,2,\dots, n-1-|\mathcal{I}_L|\}$. We first consider any solution support $Q\subseteq [j-1]$ with $|Q|\leq k-1$, such that $j-1\notin Q$. Such solutions are feasible to both the $j$-th and the $(j-1)$-th lifting problems \eqref{eq:l_lifting_prob}. Thus for any such $Q$, $\eta^Q \geq \eta_{j-1} \geq \min\{\eta_{j-1}, \eta^{[k-1-s]\cup H^s}\}$. The remaining feasible supports are $Q\subseteq [j-1]$ with $|Q|\leq k-1$ that contain $j-1$. Due to Lemma \ref{lemma:l_lift_sol_form} we only need to consider those with the form $[t]\cup H^s$ for some $0\leq t\leq k-1-s$. Thanks to Lemma \ref{lemma:card_k-1}, we know that $\eta^{[k-1-s]\cup H^s}$ has the lowest objective value among all these supports. Therefore, in this case, $\eta^Q \geq \eta^{[k-1-s]\cup H^s} \geq  \min\{\eta_{j-1}, \eta^{[k-1-s]\cup H^s}\}$ as well. We conclude that the proposed assignments are indeed the exact lifting coefficients.
\end{proof}

\citet{yu2017polyhedral} show that the SIs constructed with all $i_0$ such that $0\leq i_0\leq k-1$, together with the trivial 0-1 bounds and the cardinality constraint, give the convex hull of $\mathcal{P}^1_k(\mathcal{I}_L)$, as well as $\mathcal{P}^1_k(\mathcal{I}_H)$. We thus infer the following corollary regarding the strength of our lower-SIs. 
\begin{corollary}
\label{eq:l_sepa_strength}
Based on any SI that is facet-defining for $\conv{\mathcal{P}^1_k(\mathcal{I}_L)}$, the lower-SIs given by Proposition \ref{prop:l_sepa_coeff} are facet-defining for $\conv{\mathcal{P}^2_k}$. 
\end{corollary}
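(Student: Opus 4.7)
The strategy is a direct appeal to the classical sequential lifting theorem (see, e.g., \cite{wolsey1999integer}, Chapter II.2): if an inequality is facet-defining for a face of a polytope obtained by fixing a set of binary variables to zero, and if each of these variables is subsequently lifted using the exact optimal value of the associated lifting problem, then the resulting inequality is facet-defining for the full polytope. Everything needed to invoke this theorem has essentially been assembled by Proposition \ref{prop:l_sepa_coeff}.

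First I would identify $\conv{\mathcal{P}^1_k(\mathcal{I}_L)}$ with the face of $\conv{\mathcal{P}^2_k}$ obtained by fixing $x_j = 0$ for every $j \in \mathcal{I}_H$; this identification is valid because setting these variables to zero makes the contribution of the higher-weighted items to both $a^\top x$ and the cardinality constraint vanish. Under this identification, an SI that is facet-defining for $\conv{\mathcal{P}^1_k(\mathcal{I}_L)}$ is a facet of the stated face of $\conv{\mathcal{P}^2_k}$. I would then lift the variables $x_{|\mathcal{I}_L|+1}, x_{|\mathcal{I}_L|+2}, \ldots, x_n$ one by one in this order, obtaining at each step an inequality that is facet-defining for the intermediate polytope in which the not-yet-lifted variables remain fixed at zero. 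Since Proposition \ref{prop:l_sepa_coeff} establishes that $\eta_j$ is exactly the optimal objective value of the $j$-th lifting problem \eqref{eq:l_lifting_prob}, the lifting step is exact at every stage, and the sequential lifting theorem then propagates the facet-defining property through each step.

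The main obstacle is verifying the hypotheses of the sequential lifting theorem, namely that each lifting problem \eqref{eq:l_lifting_prob} is feasible so that $\eta_j$ is finite, and that the intermediate polytopes are full-dimensional in their ambient spaces so that a ``facet-defining'' conclusion is meaningful. Feasibility is immediate because $x \equiv \mathbf{0}$ satisfies both the cardinality bound $\sum_{i=1}^{j-1} x_i \leq k-1$ and, after choosing $w$ large enough, the epigraph constraint. Full-dimensionality of $\conv{\mathcal{P}^2_k}$ and its relevant faces can be argued in a standard way, for instance by exhibiting the point corresponding to $x = \mathbf{0}$ together with each unit vector $e_i$ (each of which is feasible once $k \geq 1$) and using the free $w$-coordinate in the epigraph to produce the extra independent direction. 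Once these conditions are in place, the corollary follows immediately from the exactness result of Proposition \ref{prop:l_sepa_coeff}.
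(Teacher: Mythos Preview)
Your proposal is correct and follows essentially the same approach the paper takes (compare the analogous Corollary~\ref{coro:lifted_EP_strength}): invoke the standard sequential lifting theorem, noting that the base SI is facet-defining for the restricted polytope and that Proposition~\ref{prop:l_sepa_coeff} supplies the exact lifting coefficients. Your added care in checking feasibility of the lifting problems and full-dimensionality of the intermediate polytopes is more explicit than the paper's treatment, but the underlying argument is the same.
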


\subsection{Higher-separation inequalities}
\label{sect:H_sepa}
Next, we lift the SIs of $\conv{\mathcal{P}^1_k(\mathcal{I}_H)}$. Throughout this section, we impose the following assumption.
\begin{assumption}
\label{assumption}
For a given $i_0\in \{0,1,\dots, k-2\}$, the weights $a_L$ and $a_H$ satisfy 
\begin{equation}
\label{eq:assump}
f(a_L + (i_0+1)a_H) - f(a_L+i_0a_H) \leq \frac{f(ka_H)-f(i_0a_H)}{k-i_0}.
\end{equation}
\end{assumption} Note that \eqref{eq:assump} is always true when $i_0 = k-1$. This is because $f(a_L + ka_H) - f(a_L+(k-1)a_H)  = f(a_L + (k-1)a_H + a_H) - f(a_L+(k-1)a_H) \leq f((k-1)a_H + a_H) - f((k-1)a_H)$, where the inequality follows from concavity of $f$. {The right-hand side of \eqref{eq:assump} is the average marginal contribution of $k-i_0$ units of the higher-weighted items, which matches the coefficient $\psi$ in the SI, associated with $i_0$, of $\conv{\mathcal{P}^1_k(\mathcal{I}_H)}$. Intuitively, Assumption \ref{assumption} suggests that $\psi$ dominates the marginal contribution of one unit of the higher-weighted item when it is added to a collection of at least one lower-weighted item and at least $i_0$ higher-weighted items. Under this assumption, we will be able to quantify the net effect of adding or removing a higher-weighted item to the objective value of the lifting problem, given any feasible support with a fixed number of lower-weighted items (see Lemmas \ref{lemma:h_tail_eta} and \ref{lemma:h_card_k-1}). This is crucial to the derivation of the exact lifting coefficients.}  

{
\begin{remark}
Assumption \ref{assumption} is satisfied when $a_H/q \leq a_L$ for some real number $q\geq 1$ that depends on the given parameters $i_0$, $k$, and the function $f$. For example, for $f(\cdot) = \sqrt{\cdot}$, $k=2$, and $i_0=0$, this assumption holds when $a_H/8\leq a_L \leq a_H$. A higher value of $q$ means that a wider range of $a_L$ will satisfy Assumption \ref{assumption} given a fixed $a_H$. We observe empirically that, when $k$ is low, $q$ is high across the feasible choices of $i_0$. For a fixed $k$, $q$ is usually high when $k-i_0$ is low. When $f$ is twice differentiable, a high curvature of $f$ at $i_0a_H$ for a fixed $a_H$ tends to suggest a high $q$ as well. 
\end{remark}
}

Similar to the setups in Section \ref{sect:L_sepa}, we re-index $N$ such that the first $|\mathcal{I}_H|$ items are higher-weighted, and the items $|\mathcal{I}_H|+1$ to $n$ are lower-weighted. Suppose we are given an arbitrary SI for $\conv{\mathcal{P}^1_k(\mathcal{I}_H)}$ constructed with $i_0\in\{0,1,\dots, k-1\}$. In this section, we assume that Assumption \ref{assumption} holds for this given $i_0$. Moreover, we assume that $|\mathcal{I}_H| > k$ for this SI to be defined. Without loss of generality, $\delta = (1,2,\dots, |\mathcal{I}_H|)$ is the permutation associated with the given SI. This allows us to drop $\delta$ and simplify the notation. \\

In the order of $j = |\mathcal{I}_H|+1,|\mathcal{I}_H|+2, \dots, n$, we sequentially solve the lifting problem \eqref{eq:h_lifting_prob} 
\begingroup
\allowdisplaybreaks
\begin{subequations}
\label{eq:h_lifting_prob}
\begin{alignat}{2}
\gamma_j := \min \hspace{0.2cm} & w - \sum_{i=1}^{i_0} \rho_i x_i - \sum_{i=i_0+1}^{|\mathcal{I}_H|} \psi x_i - \sum_{i = |\mathcal{I}_H|+1}^{j-1} \gamma_i x_i&& \\
\textrm{s.t.} \quad & w\geq f\left(a_L + \sum_{i = 1}^{j-1} a_i x_i\right), &&\label{eq:h_sepa_constr_w}\\
& \sum_{i=1}^{j-1} x_i \leq k-1, && \\
& x \in \{0,1\}^{j-1}. && 
\end{alignat}
\end{subequations}
\endgroup
With the optimal objective values $\gamma_j$, we construct inequality \eqref{eq:higher_sepa}, which is exactly lifted from the given SI. 
\begin{equation}
\label{eq:higher_sepa}
w\geq \sum_{i = 1}^{i_0} \rho_i x_i + \sum_{i = i_0+1}^{|\mathcal{I}_H|} \psi x_i + \sum_{j = |\mathcal{I}_H|+1}^n \gamma_jx_j.
\end{equation} 
We call such inequalities the \textit{higher-SIs}. \\

Similar to the discussion in Section \ref{sect:L_sepa}, We denote the best objective value evaluated at a feasible $x$ with support $X$ by 
\[\gamma^{X} = f\left(a_L + \sum_{i\in X}a_i\right) - \sum_{i\in[i_0]\cap X} \rho_i  - \sum_{i\in [i_0+1,|\mathcal{I}_H|]\cap X} \psi - \sum_{i \in [|\mathcal{I}_H|+1,j-1] \cap X} \gamma_i.\]

Lemma \ref{lemma:descending_gamma} captures the observation that the lifted coefficients $\gamma_j$ decreases as $j$ becomes larger.
\begin{lemma}
\label{lemma:descending_gamma}
For any $|\mathcal{I}_H|+1\leq j_1 < j_2 \leq n$, $\gamma_{j_1} \geq \gamma_{j_2}$. 
\end{lemma}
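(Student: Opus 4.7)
The plan is to mirror the argument already used for Lemma \ref{lemma:descending_eta}, invoking the classical monotonicity property of sequential lifting coefficients. In the current setting, the variables being lifted into the base SI for $\conv{\mathcal{P}^1_k(\mathcal{I}_H)}$ are precisely the indicator variables for the lower-weighted items $|\mathcal{I}_H|+1, |\mathcal{I}_H|+2, \dots, n$, all of which share the common weight $a_L$. Because these variables are interchangeable in the constraint $w \geq f(a_L + \sum_{i=1}^{j-1} a_i x_i)$ and in the cardinality constraint --- permuting any two of them yields an equivalent lifting problem --- the hypothesis of Proposition 1.3 on page 264 of \cite{wolsey1999integer} applies.

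First I would state that for each $j \in [|\mathcal{I}_H|+1, n]$, the lifting problem \eqref{eq:h_lifting_prob} is the standard sequential lifting problem for a binary variable with coefficient $a_L$ in the underlying submodular epigraph, given that the base SI is valid for $\conv{\mathcal{P}^1_k(\mathcal{I}_H)}$ (a validity which is precisely what Assumption \ref{assumption} secures; see Section \ref{sect:H_sepa}'s setup). Next I would cite the aforementioned monotonicity result, which asserts that when identical variables are sequentially lifted into a valid base inequality, each newly computed lifting coefficient is no larger than any previously computed one. Applied here, this yields $\gamma_{j_1} \geq \gamma_{j_2}$ for every $|\mathcal{I}_H|+1 \leq j_1 < j_2 \leq n$.

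Because Lemma \ref{lemma:descending_eta} was discharged by exactly this citation in essentially one line, the present lemma admits the same one-line proof. I do not anticipate any obstacle: the only conceptual check is to verify that the variables $x_{|\mathcal{I}_H|+1}, \dots, x_n$ all correspond to items of identical weight $a_L$ so that the Wolsey--Nemhauser monotonicity theorem applies verbatim, and this is immediate from the re-indexing adopted at the start of Section \ref{sect:H_sepa}.
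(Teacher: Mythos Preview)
Your proposal is correct and matches the paper's one-line proof, which likewise cites Proposition~1.3 on page~264 of \cite{wolsey1999integer}. One minor aside: the validity of the base SI for $\conv{\mathcal{P}^1_k(\mathcal{I}_H)}$ does not depend on Assumption~\ref{assumption} (it holds by \cite{yu2017polyhedral}); that assumption is invoked only later, in Lemma~\ref{lemma:h_tail_eta} and Proposition~\ref{prop:h_sepa_coeff}, to obtain the closed-form lifting coefficients.
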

\begin{proof}
This result immediately follows from Proposition 1.3 on page 264 of \cite{wolsey1999integer}.
\end{proof}

We remind the readers that $L^t$ denotes the set of the first $t$ lower-weighted items in $N$. The next lemma argues that there exists an optimal solution support to the $j$-th lifting problem \eqref{eq:h_lifting_prob}, that is the concatenation of the first $t$ lower-weighted items and the first $s$ higher-weighted items for some $s,t\leq k-1$. 

\begin{lemma}
\label{lemma:h_lift_sol_form}
For any $j\in[|\mathcal{I}_H|+1, n]$, $\gamma_j = \gamma^{Q^*}$ for some $Q^*\subseteq [j-1]$ with  $|Q^*|\leq k-1$, such that $Q^*=L^t\cup [s]$. Specifically, $0\leq s\leq k-1$ and $0\leq t\leq |\mathcal{I}_L|$ such that $t+s\leq k-1$. 
\end{lemma}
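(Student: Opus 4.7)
The plan is to mirror the argument used for Lemma \ref{lemma:l_lift_sol_form} with the roles of the higher- and lower-weighted items exchanged. Given any feasible support $Q\subseteq[j-1]$ with $|Q|\leq k-1$, I would introduce $\bar s = |Q\cap \mathcal{I}_H|$ and $\bar t = |Q\cap \mathcal{I}_L|$, and aim to prove the single pointwise inequality $\gamma^Q \geq \gamma^{L^{\bar t}\cup[\bar s]}$. Once this is established, the minimum in \eqref{eq:h_lifting_prob} must be attained at some support of the prescribed form, because every feasible $Q$ is dominated by one, and clearly $|L^{\bar t}\cup[\bar s]| = \bar t + \bar s = |Q|\leq k-1$, so feasibility is preserved.

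The central observation is that $\gamma^Q$ and $\gamma^{L^{\bar t}\cup[\bar s]}$ share the same function value $f(a_L + \bar t\,a_L + \bar s\,a_H)$, so the desired inequality reduces to a comparison of the coefficient sums that are subtracted, which conveniently splits into independent higher- and lower-weighted pieces. Because these coefficients enter with a minus sign, I want to show that the replacement $Q \mapsto L^{\bar t}\cup[\bar s]$ makes each piece as large as possible. For the higher-weighted piece, Lemma \ref{lemma:descending_sepa} tells me that the sequence $(\rho_1,\rho_2,\ldots,\rho_{i_0},\psi,\psi,\ldots,\psi)$ indexed by $[|\mathcal{I}_H|]$ is non-increasing, so among all $\bar s$-element subsets of $[|\mathcal{I}_H|]$ the initial block $[\bar s]$ maximizes the coefficient sum. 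For the lower-weighted piece, Lemma \ref{lemma:descending_gamma} asserts that $\gamma_{|\mathcal{I}_H|+1}\geq \gamma_{|\mathcal{I}_H|+2}\geq\cdots\geq\gamma_{n}$, so the initial block $L^{\bar t}$ maximizes the sum of any $\bar t$ chosen $\gamma_i$'s within $[|\mathcal{I}_H|+1,j-1]$.

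Adding the two inequalities gives $\gamma^Q \geq \gamma^{L^{\bar t}\cup[\bar s]}$, which completes the argument. I do not expect any genuine obstacle here: the proof is a two-step exchange argument driven entirely by the monotonicity of the SI coefficients and of the lifting coefficients that have already been produced in the preceding stages. In particular, Assumption \ref{assumption} plays no role at this point and will only be needed later when ranking the candidate supports $L^t\cup[s]$ against one another; likewise, no concavity argument on $f$ itself is required, since the $f$-value is held fixed on both sides of the comparison.
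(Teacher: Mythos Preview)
Your proposal is correct and follows essentially the same approach as the paper: the paper's proof simply cites Lemmas~\ref{lemma:descending_sepa} and~\ref{lemma:descending_gamma} and defers to the argument of Lemma~\ref{lemma:l_lift_sol_form} with the roles of higher- and lower-weighted items swapped, which is exactly the exchange argument you spell out. Your additional remarks that the $f$-value is identical on both sides and that Assumption~\ref{assumption} plays no role here are accurate and make the comparison more transparent than the paper's terse cross-reference.
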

\begin{proof}
This result follows from Lemmas \ref{lemma:descending_sepa} and \ref{lemma:descending_gamma}. We refer the readers to the proof of Lemma \ref{lemma:l_lift_sol_form} for more details.
\end{proof}

Thanks to Lemma \ref{lemma:h_lift_sol_form}, we know that the support with the lowest objective, among all the feasible solution supports in given special form, gives the optimal objective of \eqref{eq:h_lifting_prob}. Lemmas \ref{lemma:h_head_eta} and \ref{lemma:h_tail_eta} explore and compare the objectives  of these candidate solutions. 

\begin{lemma}
\label{lemma:h_head_eta} 
For any $0\leq t\leq \min\{|\mathcal{I}_L|, k-1\}$, $\gamma^{L^t\cup [s-1]} \geq \gamma^{L^t\cup [s]}$ for all $s\in [\min\{k-1-t, i_0\}]$. 
\end{lemma}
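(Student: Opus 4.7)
The plan is to mirror the argument used in Lemma \ref{lemma:l_head_eta}, since the present statement is the higher-separation analogue of it. The main idea is to directly compute the difference $\gamma^{L^t\cup [s-1]} - \gamma^{L^t\cup [s]}$ using the explicit formula for $\gamma^X$, and then invoke concavity (Lemma \ref{lemma:f_concave}) to conclude that this difference is non-negative.

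First I would write out the two quantities. Since the items in $L^t$ have weight $a_L$ and the items in $[s]$ and $[s-1]$ have weight $a_H$, and since the hypothesis $s\leq i_0$ guarantees that the coefficients for $[s]$ and $[s-1]$ in the base SI are the EPI coefficients $\rho_i = f(ia_H)-f((i-1)a_H)$ (and not $\psi$), I get
\begin{align*}
\gamma^{L^t\cup [s-1]} &= f\bigl(a_L + t a_L + (s-1)a_H\bigr) - \sum_{i=1}^{s-1}\rho_i - \sum_{i\in L^t}\gamma_i,\\
\gamma^{L^t\cup [s]} &= f\bigl(a_L + t a_L + s a_H\bigr) - \sum_{i=1}^{s}\rho_i - \sum_{i\in L^t}\gamma_i.
\end{align*}
All the $\gamma_i$ terms cancel, as do $\rho_1,\dots,\rho_{s-1}$, leaving
\[
\gamma^{L^t\cup [s-1]} - \gamma^{L^t\cup [s]} = \rho_s - \Bigl[f\bigl((t+1)a_L + s a_H\bigr) - f\bigl((t+1)a_L + (s-1)a_H\bigr)\Bigr].
\]

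Next I would substitute $\rho_s = f(sa_H) - f((s-1)a_H)$ and apply Lemma \ref{lemma:f_concave} with $y_1 = (s-1)a_H$, $y_2 = (t+1)a_L + (s-1)a_H$, and $d = a_H$. Because $a_L\geq 0$ and $t\geq 0$, we have $y_1\leq y_2$ and $d\geq 0$, so the lemma yields
\[
f(sa_H) - f((s-1)a_H) \;\geq\; f\bigl((t+1)a_L + sa_H\bigr) - f\bigl((t+1)a_L + (s-1)a_H\bigr),
\]
which is exactly $\gamma^{L^t\cup [s-1]} - \gamma^{L^t\cup [s]}\geq 0$, completing the proof.

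There is essentially no obstacle here beyond bookkeeping: the restriction $s\in[\min\{k-1-t, i_0\}]$ is precisely what is needed so that (i) $L^t\cup[s]$ is a feasible support (cardinality $t+s\leq k-1$), and (ii) both $\rho_{s-1}$ and $\rho_s$ are genuine EPI coefficients rather than the averaged value $\psi$, allowing the clean cancellation in the displayed difference. The concavity of $f$ (packaged in Lemma \ref{lemma:f_concave}) then supplies the one-line estimate. The only care required is to keep track of which coefficients appear as $\rho_i$ versus $\psi$ in the expression for $\gamma^X$; the hypothesis $s\leq i_0$ ensures this distinction is in our favor.
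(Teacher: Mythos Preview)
Your proof is correct and follows essentially the same approach as the paper: both compute the difference $\gamma^{L^t\cup[s-1]}-\gamma^{L^t\cup[s]}$, reduce it to $\rho_s - \bigl[f((t+1)a_L+sa_H)-f((t+1)a_L+(s-1)a_H)\bigr]$, and invoke concavity (Lemma~\ref{lemma:f_concave}) with $d=a_H$ to conclude. The paper just organizes the same calculation as a chain of inequalities starting from $\gamma^{L^t\cup[s-1]}$ rather than isolating the difference, but the content is identical.
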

{
\begin{proof}
This proof follows the same arguments for the proof of Lemma \ref{lemma:l_head_eta}.
\end{proof}
}

\begin{lemma}
\label{lemma:h_tail_eta}
Recall that $i_0$ is the parameter used to construct the base SI. Suppose Assumption \ref{assumption} holds for this $i_0$. Let any $0\leq t\leq \min\{|\mathcal{I}_L|, k-1\}$ be given. If $k-1-t \geq i_0$, then $\gamma^{L^t\cup [s]} \geq \gamma^{L^t\cup [k-1-t]}$ for all $s\in[i_0, k-1-t]$.
\end{lemma}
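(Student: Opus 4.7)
The plan is to show that the discrete function $s \mapsto \gamma^{L^t\cup [s]}$ is non-increasing on $[i_0, k-1-t]$, which immediately implies the minimum over this interval is attained at the right endpoint $s = k-1-t$.

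First, I would write out a clean closed-form expression for $\gamma^{L^t\cup [s]}$ valid whenever $s\geq i_0$. In that regime, $[s]\cap[i_0] = [i_0]$ and $[s]\cap[i_0+1,|\mathcal{I}_H|] = [i_0+1,s]$, so
\[
\gamma^{L^t\cup [s]} = f\bigl((t+1)a_L + s\,a_H\bigr) - \sum_{i=1}^{i_0}\rho_i - (s-i_0)\psi - \sum_{i\in L^t}\gamma_i.
\]
From this formula, the consecutive difference is
\[
\Delta(s) := \gamma^{L^t\cup [s+1]} - \gamma^{L^t\cup [s]} = \bigl[f((t+1)a_L+(s+1)a_H) - f((t+1)a_L+s\,a_H)\bigr] - \psi,
\]
valid for every $s\in [i_0, k-2-t]$.

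Next I would argue that $\Delta(i_0)\leq 0$. Applying Lemma \ref{lemma:f_concave} with $y_1 = a_L + i_0 a_H \leq (t+1)a_L + i_0 a_H = y_2$ and $d = a_H$ gives
\[
f((t+1)a_L+(i_0+1)a_H) - f((t+1)a_L + i_0 a_H) \leq f(a_L+(i_0+1)a_H) - f(a_L + i_0 a_H).
\]
By Assumption \ref{assumption}, the right-hand side is bounded above by $\psi$, so $\Delta(i_0)\leq 0$.

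Finally, I would use concavity of $f$ to show that $\Delta(s)$ is non-increasing in $s$: the bracketed term in $\Delta(s)$ is a forward difference of $f$ at step $a_H$, evaluated at the point $(t+1)a_L + s\,a_H$, and such forward differences of concave functions are non-increasing in $s$ (a direct consequence of Lemma \ref{lemma:f_concave}). Combined with $\Delta(i_0)\leq 0$, this yields $\Delta(s)\leq 0$ for all $s\in [i_0, k-2-t]$, so $\gamma^{L^t\cup [s]}$ is non-increasing on $[i_0, k-1-t]$ and the result follows.

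The only step requiring genuine care is verifying $\Delta(i_0)\leq 0$, which is the exact reason Assumption \ref{assumption} was imposed; the remaining monotonicity step is a routine application of concavity. I do not anticipate any other obstacle.
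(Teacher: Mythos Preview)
Your proof is correct and follows essentially the same approach as the paper: both write out the closed form for $\gamma^{L^t\cup [s]}$ when $s\geq i_0$, compute the consecutive difference $f((t+1)a_L+(s+1)a_H)-f((t+1)a_L+s\,a_H)-\psi$, and show it is non-positive using Assumption~\ref{assumption} together with concavity (Lemma~\ref{lemma:f_concave}). The only cosmetic difference is that the paper bounds each difference term directly in one step, whereas you split this into showing $\Delta(i_0)\leq 0$ and then that $\Delta$ is non-increasing; both arguments are equivalent.
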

\begin{proof}
We first deduce the following relation from Assumption \ref{assumption}:
\begingroup
\allowdisplaybreaks
\begin{align*} 
0 &\geq f(a_L + (i_0+1)a_H) - f(a_L + i_0a_H) - \psi\\
&\geq f(ma_L + (i_0+p)a_H) - f(ma_L + (i_0+p-1)a_H) - \psi, 
\end{align*}
\endgroup for any $m,p\geq 1$. Given any $L^t\cup [s]$ described in the lemma, 
\begingroup
\allowdisplaybreaks
\begin{align*} 
\gamma^{L^t\cup [s]} & = f\left(a_L + ta_L + sa_H\right) - \sum_{i=1}^{i_0} \rho_i - (s-i_0)\psi - \sum_{i = |\mathcal{I}_H|+1}^{|\mathcal{I}_H|+t} \gamma_i \\
& =  f\left((t+1)a_L + i_0a_H\right) + \sum_{i=1}^{s-i_0} \left[ f\left((t+1)a_L + (i_0+i)a_H\right) -f\left((t+1)a_L + (i_0+i-1)a_H\right) -\psi \right]\\
&\quad\quad - \sum_{i=1}^{i_0} \rho_i - \sum_{i = |\mathcal{I}_H|+1}^{|\mathcal{I}_H|+t} \gamma_i \\
& \geq f\left((t+1)a_L + i_0a_H\right) + \sum_{i=1}^{k-1-t-i_0} \left[ f\left((t+1)a_L + (i_0+i)a_H\right) -f\left((t+1)a_L + (i_0+i-1)a_H\right) -\psi \right]\\
&\quad\quad - \sum_{i=1}^{i_0} \rho_i - \sum_{i = |\mathcal{I}_H|+1}^{|\mathcal{I}_H|+t} \gamma_i  \quad\quad \text{(follows from Assumption \ref{assumption})}\\
 & = f\left(a_L + ta_L + (k-1-t)a_H\right) - \sum_{i=1}^{i_0} \rho_i - (k-1-t-i_0)\psi - \sum_{i = |\mathcal{I}_H|+1}^{|\mathcal{I}_H|+t} \gamma_i \\
 & = \gamma^{L^t\cup [k-1-t]}. 
\end{align*}
\endgroup
\end{proof}
In fact, $\gamma^{L^t\cup [s]}$ may be lower than $\gamma^{L^t\cup [k-1-t]}$ when Assumption \ref{assumption} is violated, despite the fact that its counterpart Lemma \ref{lemma:i_0_loses} is true in general. Lemma \ref{lemma:h_card_k-1} summarizes Lemmas \ref{lemma:h_head_eta} and \ref{lemma:h_tail_eta}. It establishes that, under Assumption \ref{assumption}, $L^t\cup[k-1-t]$ has the lowest objective among all the supports that contains exactly $t$ lower-weighted items. 

\begin{lemma}
\label{lemma:h_card_k-1}
Suppose Assumption \ref{assumption} holds for a given $i_0\in \{0,1,\dots, k-1\}$. For any $0\leq t\leq \min\{k-1, |\mathcal{I}_L|\}$, $\gamma^{L^t\cup [k-1-t]} \leq \gamma^{L^{t}\cup [s]}$ for any $t$ such that $L^{t}\cup [s]$ is a feasible solution support to the $(|\mathcal{I}_H|+t+1)$-th lifting problem. 
\end{lemma}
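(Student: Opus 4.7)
The plan is to conclude this lemma by combining the two preceding lemmas in a case analysis on whether the cardinality cap $k-1-t$ falls below or above the threshold $i_0$ used in the base SI. The hypothesis of the lemma states that $L^t\cup[s]$ is a feasible support, which forces $s\leq k-1-t$, so the range of $s$ to consider is $[0,k-1-t]$.

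First I would handle the regime $k-1-t \leq i_0$. Here the interval $[0,k-1-t]$ is contained in the range covered by Lemma \ref{lemma:h_head_eta}, so a telescoping chain $\gamma^{L^t\cup[0]}\geq \gamma^{L^t\cup[1]}\geq\cdots\geq \gamma^{L^t\cup[k-1-t]}$ immediately gives the claim, and Assumption \ref{assumption} is not even invoked.

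Next I would treat the regime $k-1-t \geq i_0$ by splitting $[0,k-1-t]$ at $i_0$. On the lower segment $s\in[0,i_0]$, Lemma \ref{lemma:h_head_eta} yields $\gamma^{L^t\cup[s]}\geq \gamma^{L^t\cup[i_0]}$. On the upper segment $s\in[i_0,k-1-t]$, Lemma \ref{lemma:h_tail_eta} (which is where Assumption \ref{assumption} enters) gives $\gamma^{L^t\cup[s]}\geq \gamma^{L^t\cup[k-1-t]}$; in particular, taking $s=i_0$ in that lemma delivers $\gamma^{L^t\cup[i_0]}\geq \gamma^{L^t\cup[k-1-t]}$. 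Chaining the two bounds through the pivot value at $s=i_0$ covers every feasible $s$ in a single inequality $\gamma^{L^t\cup[s]}\geq \gamma^{L^t\cup[k-1-t]}$.

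I do not anticipate a genuine obstacle, since all the real work has already been done in Lemmas \ref{lemma:h_head_eta} and \ref{lemma:h_tail_eta}; the only subtlety is to notice that the absence of an analogue of Lemma \ref{lemma:i_0_loses} in the higher-weighted setting is precisely what forces us to go through $L^t\cup[i_0]$ as an intermediate point and to rely on Assumption \ref{assumption} in the upper segment. The proof is essentially a two-line stitching of the previous two lemmas once the case split is made explicit.
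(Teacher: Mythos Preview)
Your proposal is correct and follows essentially the same approach as the paper: both proofs split on whether $k-1-t\leq i_0$ or $k-1-t\geq i_0$, invoke Lemma~\ref{lemma:h_head_eta} to telescope down to $\gamma^{L^t\cup[i_0]}$ (or directly to $\gamma^{L^t\cup[k-1-t]}$ in the first case), and then use Lemma~\ref{lemma:h_tail_eta} under Assumption~\ref{assumption} to bridge from $i_0$ to $k-1-t$ in the second case. Your remark that the absence of an analogue of Lemma~\ref{lemma:i_0_loses} is what necessitates Assumption~\ref{assumption} here is exactly the point the paper makes just before stating Lemma~\ref{lemma:h_card_k-1}.
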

\begin{proof}
If $k-1-t\leq i_0$, then $0\leq s\leq k-1-t$. In this case, $\gamma^{L^t\cup [k-1-t]} \leq \gamma^{L^{t}\cup [s]}$ is immediate from Lemma \ref{lemma:h_head_eta}. Otherwise, suppose $s\geq i_0$ is viable. For any $0\leq s\leq i_0$, $\gamma^{L^t\cup [s]} \geq \gamma^{L^{t}\cup [i_0]}$. Then combining this observation with Lemma \ref{lemma:h_tail_eta}, we conclude that $\gamma^{L^t\cup [k-1-t]} \leq \gamma^{L^{t}\cup [s]}$.
\end{proof}

Next we provide the explicit form of the lifting coefficients. 
\begin{proposition}
\label{prop:h_sepa_coeff}
Suppose Assumption \ref{assumption} holds for a given $i_0\in \{0,1,\dots, k-1\}$. The exact lifting coefficients from the lifting problems \eqref{eq:h_lifting_prob} are \[\gamma_j = \begin{cases}
\gamma^{[k-1]}, & j = |\mathcal{I}_H|+1, \\
\min\{\gamma_{j-1}, \gamma^{L^t\cup [k-1-t]}\}, & j = |\mathcal{I}_H|+1+t, t\in[n-1-|\mathcal{I}_H|].
\end{cases}
\]
\end{proposition}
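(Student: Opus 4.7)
The plan is to mirror the proof of Proposition \ref{prop:l_sepa_coeff} by leveraging the structural lemmas already established for the higher-SI lifting problem. Since the items have been re-indexed so that the higher-weighted items are $[|\mathcal{I}_H|]$ and the lower-weighted items are $[|\mathcal{I}_H|+1,n]$, the index $j-1 = |\mathcal{I}_H|+t$ corresponds to the $t$-th lower-weighted item under the natural order. This observation will drive the case analysis.

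First I would handle the base case $j = |\mathcal{I}_H|+1$. Here $[j-1] = [|\mathcal{I}_H|] \subseteq \mathcal{I}_H$, so no lower-weighted items are yet available in the lifting problem. By Lemma \ref{lemma:h_lift_sol_form}, an optimal support has the form $L^{t'} \cup [s]$, which forces $t' = 0$ and $0 \leq s \leq k-1$. Applying Lemma \ref{lemma:h_card_k-1} with $t=0$ (using Assumption \ref{assumption}) yields $\gamma_{|\mathcal{I}_H|+1} = \gamma^{[k-1]}$, as stated.

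Next I would treat the inductive case $j = |\mathcal{I}_H|+1+t$ for $t \in [n-1-|\mathcal{I}_H|]$. I would partition the feasible supports $Q \subseteq [j-1]$ with $|Q| \leq k-1$ according to whether $j-1 \in Q$. If $j-1 \notin Q$, then $Q \subseteq [j-2]$ is a feasible support for the $(j-1)$-th lifting problem, so $\gamma^Q \geq \gamma_{j-1}$. If $j-1 \in Q$, then by Lemma \ref{lemma:h_lift_sol_form} it suffices to consider supports of the form $L^{t'} \cup [s]$. Because $j-1$ is the $t$-th lower-weighted item, membership $j-1 \in L^{t'}$ combined with $L^{t'} \subseteq [j-1]$ forces $t' = t$, and then $s \leq k-1-t$. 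Lemma \ref{lemma:h_card_k-1} gives $\gamma^{L^t \cup [s]} \geq \gamma^{L^t \cup [k-1-t]}$ for every admissible $s$. Combining both cases yields $\gamma_j \geq \min\{\gamma_{j-1}, \gamma^{L^t \cup [k-1-t]}\}$, and this value is attained since both candidates correspond to genuine feasible supports for the $j$-th lifting problem.

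The main subtlety, rather than an obstacle, is to verify that the argument correctly relies on Assumption \ref{assumption}: this assumption is exactly what powers Lemma \ref{lemma:h_tail_eta}, and hence Lemma \ref{lemma:h_card_k-1}, which pins down $L^t\cup[k-1-t]$ as the minimizer among supports of cardinality at most $k-1$ with exactly $t$ lower-weighted items. Without this assumption the second case could fail, and one would need to retain additional candidate supports with $s < k-1-t$ in the recursion. Since all the heavy combinatorial and concavity arguments are already encapsulated in Lemmas \ref{lemma:h_lift_sol_form} and \ref{lemma:h_card_k-1}, the remaining proof is short and structurally identical to that of Proposition \ref{prop:l_sepa_coeff}.
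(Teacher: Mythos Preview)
Your proposal is correct and follows essentially the same approach as the paper's proof: both handle the base case via Lemma~\ref{lemma:h_card_k-1} with $t=0$, and for $j=|\mathcal{I}_H|+1+t$ both split on whether $j-1$ lies in the support, invoking $\gamma_{j-1}$ in one branch and Lemmas~\ref{lemma:h_lift_sol_form} and~\ref{lemma:h_card_k-1} in the other. Your write-up is slightly more explicit (noting $t'=t$ and attainment of both candidates), but the structure is identical.
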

\begin{proof}
When $j = |\mathcal{I}_H|+1$, any feasible support contains only the higher-weighted items. Thus $\gamma_{|\mathcal{I}_H|+1} = \gamma^{[k-1]}$ immediately follows from Lemma \ref{lemma:h_card_k-1}. When $j > |\mathcal{I}_H|+1$, we represent $j$ as $|\mathcal{I}_H|+1+t$, where $t\in [n-1-|\mathcal{I}_H|]$. All the solutions to the $j$-th lifting problem \eqref{eq:h_lifting_prob} with $x_{j-1} = 0$, are feasible to the $j-1$-th lifting problem. Thus the objective evaluated at these solutions are no lower than $\gamma_{j-1}$. On the other hand, if $j-1$ is included in the support, then we know that all such solution supports have worse objective values than $L^{t}\cup [k-1-t]$ by Lemma \ref{lemma:h_card_k-1}. Therefore, $\min\{\gamma_{j-1}, \gamma^{L^t\cup [k-1-t]}\}$ is the lowest attainable objective value in the $j$-th lifting problem \eqref{eq:h_lifting_prob}. This completes the proof.
\end{proof}

\begin{corollary}
\label{eq:h_sepa_strength}
Based on any SI that is facet-defining for $\conv{\mathcal{P}^1_k(\mathcal{I}_H)}$, the  higher-SIs given by Proposition \ref{prop:h_sepa_coeff} are facet-defining for $\conv{\mathcal{P}^2_k}$. 
\end{corollary}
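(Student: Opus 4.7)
The plan is to mirror the proof of Corollary \ref{eq:l_sepa_strength} for the lower-SIs, leveraging standard sequential lifting theory. First I would observe that, by the assumption in the statement, the base SI $w\geq \sum_{i=1}^{i_0}\rho_i x_i+\sum_{i=i_0+1}^{|\mathcal{I}_H|}\psi x_i$ is facet-defining for $\conv{\mathcal{P}^1_k(\mathcal{I}_H)}$ (this is the convex hull characterization from \cite{yu2017polyhedral} recalled in Section \ref{sect:prelim}). The set $\mathcal{P}^1_k(\mathcal{I}_H)$ arises as the face of $\mathcal{P}^2_k$ obtained by fixing $x_i=0$ for every $i\in\mathcal{I}_L$, so the base SI is a facet of this face.

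Next I would invoke the classical sequential lifting result (see, e.g., Nemhauser and Wolsey \cite{wolsey1999integer}, Chapter II.1, Proposition 1.1): if one sequentially lifts the variables $x_{|\mathcal{I}_H|+1},x_{|\mathcal{I}_H|+2},\dots,x_n$ using the exact optimal objective values $\gamma_j$ of the lifting problems \eqref{eq:h_lifting_prob}, then the resulting inequality is facet-defining for $\conv{\mathcal{P}^2_k}$. Proposition \ref{prop:h_sepa_coeff} establishes precisely that the coefficients prescribed for the higher-SI are the optimal values $\gamma_j$ of these lifting problems under Assumption \ref{assumption}. Hence the higher-SI is the result of valid exact sequential lifting and therefore inherits the facet-defining property.

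The only delicate point to verify is that the lifting problems \eqref{eq:h_lifting_prob} remain feasible at every step so that the sequential procedure is well-defined; this is immediate because $|\mathcal{I}_H|>k$ is assumed for SIs to exist on $\mathcal{P}^1_k(\mathcal{I}_H)$, so any choice of $k-1$ higher-weighted items from $[j-1]$ gives a feasible support. I would also remark that Assumption \ref{assumption} is used only to guarantee the explicit closed form of $\gamma_j$ in Proposition \ref{prop:h_sepa_coeff}; the exactness of the lifting, and hence the facet-defining conclusion, follows regardless, provided we use the true optimal values of \eqref{eq:h_lifting_prob}.

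The hardest part is essentially nothing new beyond what has been done for the lifted-EPIs and the lower-SIs: once exactness of the lifting coefficients is in hand (Proposition \ref{prop:h_sepa_coeff}) and the base inequality is known to be facet-defining, the proof is a one-line appeal to sequential lifting theory, exactly as in Corollaries \ref{coro:lifted_EP_strength} and \ref{eq:l_sepa_strength}.
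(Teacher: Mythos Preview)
Your proposal is correct and follows exactly the approach the paper takes: the paper does not even spell out a proof for this corollary, relying implicitly on the same one-line sequential lifting argument you describe (base SI is facet-defining for $\conv{\mathcal{P}^1_k(\mathcal{I}_H)}$, exact lifting via Proposition~\ref{prop:h_sepa_coeff} preserves facet-definingness), in direct analogy with Corollaries~\ref{coro:lifted_EP_strength} and~\ref{eq:l_sepa_strength}. Your additional remarks on feasibility of the lifting problems and the role of Assumption~\ref{assumption} are accurate and go slightly beyond what the paper states, but they do not change the argument.
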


\section{Full Description of $\conv{\mathcal{P}^2_2}$}
\label{sect:conv}
In the previous sections, we propose the lifted-EPIs, the lower-SIs, and the  higher-SIs. These inequalities are shown to be facet-defining for $\conv{\mathcal{P}^2_k}$ under certain conditions. The readers may wonder to what extent these strong valid inequalities can narrow the relaxed feasible space toward its convex hull. To provide insights into this, we construct the convex hull of $\mathcal{P}^2_2$, with the help of the proposed inequalities, where $\mathcal{P}^2_2$ contains two types of weights and has two as its cardinality upper bound. \\

Throughout this section, we require Assumption \ref{assumption} to hold for $i_0 = 0$. In other words, 
\[f(a_L + a_H) - f(a_L) \leq \frac{f(2a_H)}{2}.\]
In Section \ref{sect:p22_valid_cuts}, we will first describe an additional single constraint, which we call the \textit{super-average inequality}, and prove its validity for $\conv{\mathcal{P}^2_2}$. Then we present the explicit forms of the lifted-EPIs, the lower-SIs, and the  higher-SIs specific to $\conv{\mathcal{P}^2_2}$. In Section \ref{sect:p22_polar}, we enumerate all the facets of $\conv{\mathcal{P}^2_2}$ by examining its polar. Lastly, we show that these proposed inequalities together with the 0-1 bounds and cardinality constraint fully characterize $\conv{\mathcal{P}^2_2}$ in Section \ref{sect:p22_final_conv}.

\subsection{Valid inequalities for $\conv{\mathcal{P}^2_2}$}
\label{sect:p22_valid_cuts}
We begin this subsection with a summary of the properties of $f$ that will be helpful for describing the valid inequalities. First, by the definition of concave functions, 
\begin{equation}
\label{eq:concave_def}
f(a_L + a_H)  = f\left(\frac{1}{2}\cdot 2a_L + \frac{1}{2}\cdot 2a_H\right)  \geq f(2a_L)/2 + f(2a_H)/2.  
\end{equation}
In addition, 
\begin{equation}
\begin{aligned}
\label{eq:one_l_item}
f(a_L) & = [f(a_L) + f(a_L) - f(0)]/2  \\
& \geq [f(a_L) + f(2a_L) - f(a_L)]/2 \quad \text{(because $f$ is concave)} \\
& = f(2a_L)/2.
\end{aligned}
\end{equation}
With exactly the same reasoning, we derive 
\begin{equation}
\label{eq:one_h_item}
f(a_H) \geq f(2a_H)/2. 
\end{equation}

Relying on these properties, we propose a new single valid inequality for $\mathcal{P}^2_2$.
\begin{proposition}
\label{prop:super-average}
The inequality
\begin{equation}
\label{eq:p2_super_average}
w\geq \sum_{i\in \mathcal{I}_L} \frac{f(2 a_L)}{2} x_i + \sum_{i\in \mathcal{I}_H} \frac{f(2 a_H)}{2} x_i
\end{equation}
is valid for $\mathcal{P}^2_2$. We call this inequality the super-average inequality. 
\end{proposition}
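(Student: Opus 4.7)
The plan is to prove validity by a direct case analysis on the support $S^x = \{i \in N : x_i = 1\}$ of a feasible point $(w,x) \in \mathcal{P}^2_2$. Since the cardinality constraint forces $|S^x| \in \{0,1,2\}$, there are only a handful of configurations to check, and in each case the defining inequality $w \geq f(a^\top x)$ together with one of the three elementary properties \eqref{eq:concave_def}, \eqref{eq:one_l_item}, \eqref{eq:one_h_item} (which are already recorded in the text preceding the proposition) will deliver the bound.

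Concretely, I would organize the verification as follows. First, if $S^x = \emptyset$, both sides reduce to $0$ using $f(0)=0$, so the inequality holds trivially. Second, if $|S^x| = 1$, then either $S^x = \{i\}$ for some $i \in \mathcal{I}_L$ or for some $i \in \mathcal{I}_H$; in the former case $w \geq f(a_L)$ and the right-hand side equals $f(2a_L)/2$, which is dominated by $f(a_L)$ thanks to \eqref{eq:one_l_item}, while the latter case is symmetric and uses \eqref{eq:one_h_item}. Third, if $|S^x| = 2$, I would split into the three sub-configurations $S^x \subseteq \mathcal{I}_L$, $S^x \subseteq \mathcal{I}_H$, and $|S^x \cap \mathcal{I}_L| = |S^x \cap \mathcal{I}_H| = 1$. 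In the first two the right-hand side matches $f(2a_L)$ or $f(2a_H)$ exactly, while in the mixed case $w \geq f(a_L + a_H)$ and the right-hand side equals $f(2a_L)/2 + f(2a_H)/2$, so the needed bound is exactly the midpoint-concavity inequality \eqref{eq:concave_def}.

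Since each case reduces to an immediate application of one of the preparatory facts or trivial equality, there is really no hard step here; the only subtlety worth stating cleanly is that the right-hand side of \eqref{eq:p2_super_average} is linear in $x$, so it suffices to verify the inequality at the finitely many feasible binary supports rather than all of $\mathcal{P}^2_2$ (the $w$-component is then handled by tightness of the constraint $w \geq f(a^\top x)$ at the minimum). I would close by remarking that the inequality is called \emph{super-average} precisely because it uses half of the value at the ``doubled'' weight as a uniform per-item coefficient, and that the mixed-pair case is where Assumption \ref{assumption} (specialized to $i_0 = 0$) is compatible with — though not actually needed for — this validity argument.
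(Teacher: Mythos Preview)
Your proposal is correct and essentially identical to the paper's own proof: both argue by case analysis on the support $S^x$ (empty, a single lower- or higher-weighted item, two lower, two higher, or one of each), invoking \eqref{eq:one_l_item}, \eqref{eq:one_h_item}, and \eqref{eq:concave_def} in exactly the same places. Your closing remark that Assumption~\ref{assumption} is not needed here also matches the paper's explicit observation immediately after the proof.
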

\begin{proof}
We need to show that inequality \eqref{eq:p2_super_average} is satisfied at all the feasible points of $\mathcal{P}^2_2$. In particular, it is sufficient to check validity at any point $(f(x),x)$ in $\mathcal{P}^2_2$. We represent such points by $P(S_1,S_2)$, where $S_1=\{i\in \mathcal{I}_L: x_i = 1\}$ and $S_2=\{i\in \mathcal{I}_H: x_i = 1\}$. These points fall into one of the following classes: $P(\emptyset,\emptyset)$, $P(\{i\},\emptyset)$, $P(\{i_1,i_2\},\emptyset)$, $P(\emptyset,\{j\})$, $P(\emptyset,\{j_1,j_2\})$, and $P(\{i\},\{j\})$, where $i ,i_1,i_2\in \mathcal{I}_L$ and $j,j_1,j_2\in \mathcal{I}_H$. We first observe that $P(\emptyset,\emptyset)$, $P(\{i_1,i_2\},\emptyset)$ and $P(\emptyset,\{j_1,j_2\})$ satisfy inequality \eqref{eq:p2_super_average} by construction. It follows from \eqref{eq:concave_def} that inequality \eqref{eq:p2_super_average} is valid for $P(\{i\},\{j\})$. Inequality \eqref{eq:p2_super_average} is also valid for $P(\{i\},\emptyset)$ and $P(\emptyset,\{j\})$ due to properties \eqref{eq:one_l_item} and \eqref{eq:one_h_item}, respectively. Therefore, inequality \eqref{eq:p2_super_average} is valid for $\mathcal{P}^2_2$.
\end{proof}
It is worth noting that the validity of inequality \eqref{eq:p2_super_average} does not require Assumption \ref{assumption}. In the inequalities we describe below, $l$ denotes an arbitrary lower-weighted item in $N$, and $h$ is any higher-weighted item. 

\begin{proposition}
Given Assumption \ref{assumption} for $i_0=0$, the lifted-EPIs for $\mathcal{P}^2_2$ are 
\begin{equation}
\label{eq:p2_EPI_form1}
w\geq f(a_L) x_l + \sum_{i\in \mathcal{I}_L\backslash \{l\}} [f(2a_L) - f(a_L)] x_i + \sum_{i\in \mathcal{I}_H} [f(a_L + a_H) - f(a_L)] x_i, 
\end{equation}
 and 
\begin{equation}
\label{eq:p2_EPI_form2}
w \geq f(a_H) x_h + \sum_{i\in \mathcal{I}_L} [f(a_L+a_H) - f(a_H)] x_i + \sum_{i\in \mathcal{I}_H\backslash \{h\}} [f(2a_H) - f(a_H)] x_i.
\end{equation}
\end{proposition}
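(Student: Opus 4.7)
The plan is to apply Proposition~\ref{prop:EPI_lifted_coeff} (the explicit formula for lifted-EPI coefficients) specialized to the case $k=2$, choosing the base permutation appropriately for each form. To derive \eqref{eq:p2_EPI_form1}, I would re-index the ground set so that the base set $S = [2]$ has $l \in \mathcal{I}_L$ as its leader and (when $|\mathcal{I}_L| \geq 2$) another lower-weighted item in second position; to derive \eqref{eq:p2_EPI_form2}, the base permutation would instead begin with $h \in \mathcal{I}_H$ followed by another higher-weighted item. The base EPI coefficients are then immediate from the definition: $\rho_1 = f(a_L)$ and $\rho_2 = f(2a_L) - f(a_L)$ for form 1, and $\rho_1 = f(a_H)$ and $\rho_2 = f(2a_H) - f(a_H)$ for form 2.

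For form 1, with this indexing, $d_L = 1$ and $\mathcal{I}_H \cap [1] = \emptyset$. The coefficients $\zeta_j$ for $j \geq 3$ are read off directly from Proposition~\ref{prop:EPI_lifted_coeff}: any lower-weighted $j$ falls into the second case and gives $\zeta_j = \zeta_j^{[1]} = f(2a_L) - f(a_L)$; any higher-weighted $j = \mathcal{H}_i$ falls into the third case. A direct computation of the support set $\mathcal{H}(\min\{i-1,1\}) \cup \mathcal{L}(2-i) \cup \emptyset$ shows that it reduces to a single lower-weighted item when $i=1$ and to $\{\mathcal{H}_1\}$ for all $i \geq 2$. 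This yields $\zeta_{\mathcal{H}_1} = f(a_L+a_H) - f(a_L)$ and, for $i \geq 2$, $\zeta_{\mathcal{H}_i} = \min\{\zeta_{\mathcal{H}_{i-1}},\ f(2a_H) - \zeta_{\mathcal{H}_1}\}$.

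The main obstacle is resolving this min for $i \geq 2$: I would prove by induction that $\zeta_{\mathcal{H}_i} = f(a_L + a_H) - f(a_L)$ for every $i$. The inductive step amounts to verifying $\zeta_{\mathcal{H}_1} \leq f(2a_H) - \zeta_{\mathcal{H}_1}$, equivalently $f(a_L+a_H) - f(a_L) \leq f(2a_H)/2$, which is precisely Assumption~\ref{assumption} instantiated with $i_0 = 0$. This closes the induction and establishes \eqref{eq:p2_EPI_form1}.

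Form 2 follows by an analogous but simpler argument. With the base permutation starting with two higher-weighted items, $d_L = 0$ and $\mathcal{I}_H \cap [1] = \{h\}$. Any lower-weighted $j \geq 3$ falls into the second case of Proposition~\ref{prop:EPI_lifted_coeff} and gives $\zeta_j = \zeta_j^{[1]} = f(a_L+a_H) - f(a_H)$. Any higher-weighted $j = \mathcal{H}_i$ falls into the third case, where the relevant support set collapses to $\{h\}$ and yields $\zeta_j = f(2a_H) - f(a_H)$ directly, without invoking Assumption~\ref{assumption}. The degenerate cases $|\mathcal{I}_L| = 1$ (for form 1) or $|\mathcal{I}_H| = 1$ (for form 2) are handled by starting the base permutation with a mixed pair; the computations via Proposition~\ref{prop:EPI_lifted_coeff} remain routine and produce the same coefficients, with the appropriate empty sums vanishing.
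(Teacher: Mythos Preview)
Your proposal is correct and follows essentially the same approach as the paper: specialize Proposition~\ref{prop:EPI_lifted_coeff} with $k=2$, read off the base EPI coefficients, apply the second case for lower-weighted items, and resolve the recursive minimum for higher-weighted items using Assumption~\ref{assumption} in form~\eqref{eq:p2_EPI_form1}. One small bonus: for form~\eqref{eq:p2_EPI_form2} your computation that the support in the third case collapses to $\{h\}$ (since $d_L=0$) gives $\zeta_{\mathcal{H}_i}=f(2a_H)-f(a_H)$ for all $i$ with both min-candidates equal, whereas the paper's write-up compares against $f(a_H)$ and invokes Lemma~\ref{lemma:f_concave}; your version is a touch cleaner but the argument is the same in spirit.
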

\begin{proof}
Inequality \eqref{eq:p2_EPI_form1} is lifted from the EPI with respect to $\{l\}$ and any permutation $\delta$ of $N$, such that $\delta_1 = l$. By Proposition \ref{prop:EPI_lifted_coeff}, $\zeta_l = f(a_L)$ and for all $i\in\mathcal{I}_L\backslash \{l\}$, $\zeta_i = f(2a_L) - f(a_L)$. Let the first higher-weighted item in $\delta$ be $h$, then $\zeta_h = f(a_L+a_H) - f(a_L)$. The next higher-weighted item $i$ has coefficient $\min\{f(a_L+a_H) - f(a_L), f(2a_H) - [f(a_L+a_H) - f(a_L)]\}$. We observe the second candidate minus the former gives
\begingroup
\allowdisplaybreaks
\begin{align} 
\label{eq:candidate_diff}
f(2a_H) - 2[f(a_L+a_H) - f(a_L)] \geq 0 
\end{align}
\endgroup as a consequence of Assumption \ref{assumption}. Thus this second higher-weighted item also takes on the lifting coefficient $f(a_L+a_H) - f(a_L)$. Following the same reasoning, we can iteratively show that $\zeta_i = f(a_L+a_H) - f(a_L)$ for every $i\in\mathcal{I}_H$. \\

Inequality \eqref{eq:p2_EPI_form2} is lifted from the EPI with respect to $\{h\}$ and permutation $\delta$, in which $\delta_1 = h$. Again due to Proposition \ref{prop:EPI_lifted_coeff}, $\zeta_h = f(a_H)$ and $\zeta_i = f(a_L+a_H) - f(a_H)$ for all $i\in\mathcal{I}_L$. Moreover, the second higher-weighted item in $\delta$ has coefficient $f(2a_H)-f(a_H)$. Now, the third higher-weighted item in $\delta$ has the coefficient $\min\{f(2a_H)-f(a_H), f(2a_H) - [f(2a_H)-f(a_H)] = f(a_H)\}$, which is $f(2a_H)-f(a_H)$. This follows from Lemma \ref{lemma:f_concave}. Iteratively, we can apply the same reasoning to show that $\zeta_i = f(2a_H) - f(a_H)$ for every $i\in\mathcal{I}_H\backslash \{h\}$.
\end{proof}

As we noted in Corollary \ref{coro:lifted_EP_strength}, inequalities \eqref{eq:p2_EPI_form1} and \eqref{eq:p2_EPI_form2} are facet-defining for $\conv{\mathcal{P}^2_2}$. Next we state the explicit forms of the lower- and the  higher-SIs.

\begin{proposition}
Suppose Assumption \ref{assumption} holds for $i_0=0$. When $|\mathcal{I}_L| \geq 2$, the lower-SIs for $\mathcal{P}^2_2$ are 
\begin{equation}
\label{eq:p2_lower_average}
w \geq \sum_{i\in \mathcal{I}_L} \frac{f(2a_L)}{2} x_i + \left[f(a_L+a_H)-\frac{f(2a_L)}{2}\right] x_h + \sum_{i\in \mathcal{I}_H\backslash \{h\}} \left[f(2a_H)-f(a_L+a_H)+\frac{f(2a_L)}{2}\right] x_i,   
\end{equation}  for $i_0 = 0$, and for $i_0 = 1$,
\begin{equation}
\label{eq:p2_l_redundant}
w\geq f(a_L) x_l + \sum_{i\in \mathcal{I}_L\backslash \{l\}} [f(2a_L) - f(a_L)] x_i + \sum_{i\in \mathcal{I}_H} [f(a_L + a_H) - f(a_L)] x_i.
\end{equation}
\end{proposition}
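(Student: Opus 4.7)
The plan is to apply Proposition \ref{prop:l_sepa_coeff} directly with $k = 2$ and simplify, handling the two admissible values $i_0 \in \{0,1\}$ in turn. Under both choices, the lower-weighted portion of the inequality is immediate from the base SI of Section \ref{sect:L_sepa}: for $i_0 = 0$ every lower-weighted item receives coefficient $\psi = [f(2a_L)-f(0)]/(2-0) = f(2a_L)/2$, while for $i_0 = 1$ the designated first lower-weighted item $l$ carries $\rho_1 = f(a_L)$ and the remaining lower-weighted items carry $\psi = f(2a_L) - f(a_L)$. These match the patterns in \eqref{eq:p2_lower_average} and \eqref{eq:p2_l_redundant}, so only the higher-weighted coefficients require work.

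For $i_0 = 0$, Proposition \ref{prop:l_sepa_coeff} yields $\eta_{|\mathcal{I}_L|+1} = \eta^{[1]} = f(a_L + a_H) - f(2a_L)/2$ for the first higher-weighted item $h$, which is exactly the coefficient of $x_h$ in \eqref{eq:p2_lower_average}. For the second higher-weighted item the recursion returns $\min\{\eta_{|\mathcal{I}_L|+1}, \eta^{H^1}\}$ with $\eta^{H^1} = f(2a_H) - \eta_{|\mathcal{I}_L|+1}$. I would verify $\eta^{H^1} \leq \eta_{|\mathcal{I}_L|+1}$ by rewriting the difference as $f(2a_H) + f(2a_L) - 2f(a_L + a_H) \leq 0$, which is the midpoint concavity property \eqref{eq:concave_def}. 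The minimum is therefore $\eta^{H^1} = f(2a_H) - f(a_L + a_H) + f(2a_L)/2$, matching \eqref{eq:p2_lower_average}. For every subsequent higher-weighted item, the recursion parameter $s \geq 2$ exceeds $k-1 = 1$, so the candidate support $[k-1-s]\cup H^s$ has cardinality $s \geq k$ and is infeasible; the recursion collapses to $\eta_j = \eta_{j-1}$ and the same coefficient propagates.

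The $i_0 = 1$ case mirrors this structure but with the comparison reversed. Proposition \ref{prop:l_sepa_coeff} gives $\eta_{|\mathcal{I}_L|+1} = f(a_L + a_H) - f(a_L)$ and $\eta^{H^1} = f(2a_H) - f(a_L + a_H) + f(a_L)$, so $\eta^{H^1} - \eta_{|\mathcal{I}_L|+1} = f(2a_H) - 2[f(a_L + a_H) - f(a_L)]$. The sign is now controlled by Assumption \ref{assumption} at $i_0 = 0$, which is exactly the hypothesis $f(a_L + a_H) - f(a_L) \leq f(2a_H)/2$. Hence $\eta^{H^1} \geq \eta_{|\mathcal{I}_L|+1}$, the minimum equals $\eta_{|\mathcal{I}_L|+1}$, and this coefficient persists through the tail of the recursion (again because $[k-1-s]\cup H^s$ is infeasible for $s \geq 2$), yielding \eqref{eq:p2_l_redundant}.

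The main obstacle is really just careful bookkeeping at the transition point $s = 1$: recognizing that plain midpoint concavity of $f$ suffices to close the $i_0 = 0$ case, whereas Assumption \ref{assumption} is what drives the opposite conclusion in the $i_0 = 1$ case. A secondary technical point is the interpretation of the recursion of Proposition \ref{prop:l_sepa_coeff} for $s \geq k$: the candidate $[k-1-s]\cup H^s$ is no longer a feasible support, and Lemma \ref{lemma:l_lift_sol_form} forces any optimal support to avoid the newly lifted variable, so $\eta_j = \eta_{j-1}$ automatically.
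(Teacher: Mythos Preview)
Your proposal is correct and follows essentially the same route as the paper: apply Proposition~\ref{prop:l_sepa_coeff} with $k=2$, compute the first two higher-weighted coefficients explicitly, and compare them using midpoint concavity \eqref{eq:concave_def} for $i_0=0$ and Assumption~\ref{assumption} for $i_0=1$. The only expository difference is that where the paper simply writes ``following the exact same reasoning'' to propagate the coefficient to all remaining higher-weighted items, you make the mechanism explicit by observing that for $s\geq 2$ the candidate support $[k-1-s]\cup H^s$ is infeasible and hence, via Lemma~\ref{lemma:l_lift_sol_form}, the recursion degenerates to $\eta_j=\eta_{j-1}$; this is a mild clarification rather than a different argument.
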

\begin{proof}
We show that the coefficients constructed according to Proposition \ref{prop:l_sepa_coeff} is identical with those in the given inequalities. When $i_0=0$, $\psi = f(2a_L)/2$, which is the lifting coefficient for all $i\in\mathcal{I}_L$. We can interpret $h$ as the first higher-weighted item being lifted, then $\eta_h = f(a_L+a_H)-f(2a_L)/2$. The next higher-weighted item to be lifted takes on the coefficient $\min\{f(a_L+a_H)-f(2a_L)/2, f(2a_H)-f(a_L+a_H)+f(2a_L)/2\}$. We observe that 
\begingroup
\allowdisplaybreaks
\begin{align*} 
& f(a_L+a_H)-f(2a_L)/2 - [ f(2a_H)-f(a_L+a_H)+f(2a_L)/2 ] \\
& \geq 2f(a_L+a_H)-f(2a_L)-f(2a_H) \\
& \geq 0 \quad\quad \text{(due to \eqref{eq:concave_def}).}
\end{align*}
\endgroup Therefore, this higher-weighted item has lifting coefficient $f(2a_H)-f(a_L+a_H)+f(2a_L)/2$. Following the exact same reasoning, we infer that $\eta_i = f(2a_H)-f(a_L+a_H)+f(2a_L)/2$ for all $i\in\mathcal{I}_H\backslash \{h\}$. Hence the lower-SI with $i_0=0$ constructed according to Proposition \ref{prop:l_sepa_coeff} is the same as \eqref{eq:p2_lower_average}. \\

In the case of $i_0 = 1$, $\rho_l = f(a_L)$ and the remaining lower-weighted items take the coefficient $\psi = f(2a_L) - f(a_L)$. The first higher-weighted item $i$ has $\eta_i = f(a_L+a_H)-f(a_L)$. For the higher-weighted item $j$ right after $i$ in the permutation of $N$, $\eta_j = \min\{f(a_L+a_H)-f(a_L), f(2a_H) - [f(a_L+a_H)-f(a_L)]\} = f(a_L+a_H)-f(a_L)$. We have justified this in \eqref{eq:candidate_diff} which relies on Assumption \ref{assumption}. Hence, $\eta_i = f(a_L+a_H)-f(a_L)$ for all $i\in\mathcal{I}_H$, and inequality \eqref{eq:p2_l_redundant} is exactly the lower-SI with $i_0=1$. 
\end{proof}

\begin{proposition}
Suppose Assumption \ref{assumption} holds for $i_0=0$. When $|\mathcal{I}_H| \geq 2$, the  higher-SIs are 
\begin{equation}
\label{eq:p2_higher_average}
w\geq \left[f(a_L + a_H) - \frac{f(2 a_H)}{2}\right] x_l + \sum_{i \in\mathcal{I}_L\backslash \{l\}} \left[f(2a_L) - f(a_L + a_H) + \frac{f(2 a_H)}{2}\right] x_i +  \sum_{i\in\mathcal{I}_H} \frac{f(2 a_H)}{2} x_i
\end{equation} for $i_0 = 0$, and for $i_0 = 1$,
\begin{equation}
\label{eq:p2_h_redundant}
w \geq f(a_H) x_h + \sum_{i\in \mathcal{I}_L} [f(a_L+a_H) - f(a_H)] x_i + \sum_{i\in \mathcal{I}_H\backslash \{h\}} [f(2a_H) - f(a_H)] x_i.
\end{equation} 
\end{proposition}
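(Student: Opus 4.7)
The plan is to specialize Proposition \ref{prop:h_sepa_coeff} to $k=2$ with two distinct weights and verify that its recursive formula collapses to the explicit coefficients stated. A preliminary observation is that Assumption \ref{assumption} for $i_0 = 1 = k-1$ holds automatically (as noted in the paragraph following the assumption), while for $i_0 = 0$ it is imposed by hypothesis, so Proposition \ref{prop:h_sepa_coeff} is applicable in both subcases. The H-item coefficients are inherited verbatim from the base SI: for $i_0 = 0$ every H item gets $\psi = f(2a_H)/2$, and for $i_0 = 1$ the first H item (call it $h$) gets $\rho_1 = f(a_H)$ while the remaining ones get $\psi = f(2a_H) - f(a_H)$. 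The substantive work is in computing and comparing the L-item lifting coefficients produced by the formula in Proposition \ref{prop:h_sepa_coeff}.

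For the $i_0 = 0$ case, I would directly evaluate $\gamma_l = \gamma^{[1]} = f(a_L + a_H) - f(2a_H)/2$ for the first L item $l$ in the lifting order, and the alternative candidate $\gamma^{L^1} = f(2a_L) - [f(a_L + a_H) - f(2a_H)/2]$ when the second L item is lifted. The formula gives the second L-item coefficient as $\min\{\gamma_l, \gamma^{L^1}\}$, so I must check that $\gamma^{L^1}$ wins. Rearranging $\gamma^{L^1} \leq \gamma_l$ yields $f(2a_L) + f(2a_H) \leq 2 f(a_L + a_H)$, which is exactly \eqref{eq:concave_def}. Since no candidate of the form $L^t \cup [k-1-t]$ with $t \geq 2$ is feasible when $k=2$, the recursion stabilizes from this point onward and every L item other than $l$ inherits the coefficient $\gamma^{L^1} = f(2a_L) - f(a_L+a_H) + f(2a_H)/2$, matching \eqref{eq:p2_higher_average}.

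For the $i_0 = 1$ case, the first L item gives $\gamma_l = f(a_L + a_H) - f(a_H)$ and the alternative candidate for the second L item is $\gamma^{L^1} = f(2a_L) - f(a_L + a_H) + f(a_H)$. Now \eqref{eq:p2_h_redundant} requires every L item to share the coefficient $\gamma_l$, so this time I need the opposite inequality $\gamma_l \leq \gamma^{L^1}$, which rearranges to $2[f(a_L + a_H) - f(a_H)] \leq f(2a_L)$. I expect this to be the main obstacle, since neither Assumption \ref{assumption} nor \eqref{eq:concave_def} delivers it directly. My plan is a two-step chain: first, Lemma \ref{lemma:f_concave} applied with $y_1 = a_L$, $y_2 = a_H$, $d = a_L$ gives $f(a_L + a_H) - f(a_H) \leq f(2a_L) - f(a_L)$; second, \eqref{eq:one_l_item} is equivalent to $2 f(a_L) \geq f(2a_L)$, hence $2[f(2a_L) - f(a_L)] \leq f(2a_L)$. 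Chaining these yields $2[f(a_L + a_H) - f(a_H)] \leq 2[f(2a_L) - f(a_L)] \leq f(2a_L)$, as required. The same stabilization argument as in the $i_0 = 0$ case then propagates this coefficient to every L item, so \eqref{eq:p2_h_redundant} holds and the proof is complete.
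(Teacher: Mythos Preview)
Your proposal is correct and follows essentially the same route as the paper: specialize Proposition~\ref{prop:h_sepa_coeff} to $k=2$, read off the $\psi$ and $\rho$ coefficients for the $\mathcal{I}_H$ items, compute the first two lifting coefficients for $\mathcal{I}_L$, and verify the required inequality between the two candidates via concavity. The only cosmetic difference is in the $i_0=1$ comparison: the paper splits $f(2a_L)=[f(a_L)-f(0)]+[f(2a_L)-f(a_L)]$ and bounds each copy of $f(a_L+a_H)-f(a_H)$ separately with Lemma~\ref{lemma:f_concave}, whereas you bound once with Lemma~\ref{lemma:f_concave} and then invoke \eqref{eq:one_l_item}; both are equivalent elementary concavity manipulations.
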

\begin{proof}
We construct the  higher-SIs as given in Proposition \ref{prop:h_sepa_coeff}, then show that they match the given inequalities. Recall that Proposition \ref{prop:h_sepa_coeff} is true when Assumption \ref{assumption} for $i_0=0$ holds. When $i_0 = 0$, all the higher-weighted items have coefficient $\psi = f(2a_H)/2$. Suppose $l$ is the first lower-weighted item in a fixed permutation of $N$. Then $\gamma_l = f(a_L+a_H) -f(2a_H)/2$. Let $i\in\mathcal{I}_L$ be right after $l$. The lifting coefficient $\gamma_i = \min\{ f(a_L+a_H) -f(2a_H)/2, f(2a_L) - [ f(a_L+a_H) -f(2a_H)/2]\}$. We examine the difference between the two candidates, which is 
\begingroup
\allowdisplaybreaks
\begin{align*} 
& f(a_L+a_H)-f(2a_H)/2 - [ f(2a_L) - f(a_L+a_H) + f(2a_H)/2 ] \\
& \geq 2f(a_L+a_H)-f(2a_L)-f(2a_H) \\
& \geq 0 \quad\quad \text{(by \eqref{eq:concave_def}).}
\end{align*} 
\endgroup Thus $\gamma_i = f(2a_L) - f(a_L+a_H) + f(2a_H)/2$. In fact, we can iteratively show that all the lifting coefficients for the lower-weighted items are $f(2a_L) - f(a_L+a_H) + f(2a_H)/2$ with the same argument. Therefore, inequality \eqref{eq:p2_higher_average} is correct. \\

Now suppose $i_0 = 1$. The corresponding SI over $\mathcal{I}_H$ is $w\geq f(a_H)x_h + \sum_{i\in\mathcal{I}_H\backslash \{h\}} [f(2a_H) - f(a_H)] x_i$. By Proposition \ref{prop:h_sepa_coeff}, the coefficient of the first lifted lower-weighted item is $f(a_L+a_H) - f(a_H)$. Let $j\in\mathcal{I}_L$ be the second lifted item. Then $\gamma_j = \min\{f(a_L+a_H) - f(a_H), f(2a_L) - f(a_L+a_H) + f(a_H)\}$. Furthermore, 
\begingroup
\allowdisplaybreaks
\begin{align*} 
&  f(2a_L) - f(a_L+a_H) + f(a_H) - [f(a_L+a_H) - f(a_H)] \\
& = [f(a_L)- f(0)] + [f(2a_L) - f(a_L)] - [f(a_L+a_H) - f(a_H)] - [f(a_L+a_H) - f(a_H)] \\
& \geq  [f(a_L)- f(0)] + [f(2a_L) - f(a_L)] -  [f(a_L)- f(0)] -  [f(2a_L) - f(a_L)] \quad\quad \text{(due to Lemma \ref{lemma:f_concave})}\\ 
& = 0. 
\end{align*} 
\endgroup
Therefore $\gamma_j = f(a_L+a_H) - f(a_H)$. By iteratively applying the same argument, we conclude that the for all $i\in\mathcal{I}_L$, $\gamma_i = f(a_L+a_H) - f(a_H)$. Hence inequality \eqref{eq:p2_h_redundant} is the higher-SI with $i_0=1$. 
\end{proof}

Note that the lower-SI \eqref{eq:p2_l_redundant} and the higher-SI \eqref{eq:p2_h_redundant}  coincide with the lifted-EPIs \eqref{eq:p2_EPI_form1} and \eqref{eq:p2_EPI_form2}. To avoid confusion, we will refer to these inequalities as the lifted-EPIs, and refer to inequalities \eqref{eq:p2_lower_average} and \eqref{eq:p2_higher_average} as the lower- and  higher-SIs, respectively. \\

Lastly, the trivial inequalities
\begin{equation}
\label{eq:p2_trivial}
0\leq x_i \leq 1, \quad \text{for all $i\in N$}, 
\end{equation}
and the cardinality constraint
\begin{equation}
\label{eq:p2_cardinality}
\sum_{i\in N} x_i \leq 2, 
\end{equation} are naturally valid for $\mathcal{P}^2_2$.

\subsection{Polarity and facets of $\conv{\mathcal{P}^2_2}$}
\label{sect:p22_polar}
Our next goal is to prove that the inequalities provided in Section \ref{sect:p22_valid_cuts} fully describe $\conv{\mathcal{P}^2_2}$. We show this by enumerating the extreme rays of the polar $\Pi$ of $\conv{\mathcal{P}^2_2}$, where 
\[\Pi = \{(-\pi_w, \pi, -\pi_0)\in\mathbb{R}^{n+2}: -\pi_w w + \pi^\top x + \pi_0 \leq 0, \forall (w,x)\in \conv{\mathcal{P}^2_2}\}.\]
It is well-known that, for any full-dimensional polyhedron, any non-zero element of its polar is an extreme ray, if and only if the corresponding inequality is a facet of the polyhedron (see Theorem 5.2, pg. 99 \cite{wolsey1999integer}). In our context, if $\conv{\mathcal{P}^2_2}$ is full dimensional, then $\pi_w w\geq \pi_0 + \pi^\top x$ is a facet of $\conv{\mathcal{P}^2_2}$ if and only if $(-\pi_w, \pi, -\pi_0)$ is an extreme ray of $\Pi$, where $(-\pi_w, \pi)\neq \mathbf{0}$. The trivial inequalities \eqref{eq:p2_trivial} and the cardinality constraint \eqref{eq:p2_cardinality} are the trivial facets of $\conv{\mathcal{P}^2_2}$. To obtain all the other non-trivial facets of $\conv{\mathcal{P}^2_2}$, it is sufficient to find all the optimal solutions to problem \eqref{eq:separation_primal} given any $(\overline{w}, \overline{x})\in \mathbb{R}\times [0,1]^n$ such that $\sum_{i=1}^n \overline{x}_i \leq 2$. 

\begingroup
\allowdisplaybreaks
\begin{subequations}
\label{eq:separation_primal}
\begin{alignat}{2}
 \max \hspace{0.2cm} & \pi_0 + \sum_{i\in N} \overline{x}_i\pi_i && \\
\textrm{s.t.} \quad & \pi_0 + \sum_{i\in S} \pi_i \leq f\left(\sum_{i\in S} a_i\right), && \text{ for all $S\subseteq N$ with $|S|\leq 2$}. \label{constr:primal}
\end{alignat}
\end{subequations}
\endgroup

This is because all such optimal solutions are the desired extreme rays of $\Pi$. We note that $\pi_w \geq 0$ because $(1, \mathbf{0})$ is the recession direction of $\conv{\mathcal{P}^2_2}$. Therefore, $\pi_w$ is normalized to one in problem \eqref{eq:separation_primal} to avoid unboundedness.  In this subsection, we first show that $\conv{\mathcal{P}^2_2}$ is full-dimensional. Then we prove that the optimal solutions to problem \eqref{eq:separation_primal} for any $(\overline{w}, \overline{x})\in \mathbb{R}\times [0,1]^n$ with $\sum_{i=1}^n \overline{x}_i \leq 2$, are exactly the coefficients of the proposed non-trivial inequalities  \eqref{eq:p2_super_average},  \eqref{eq:p2_EPI_form1}, \eqref{eq:p2_EPI_form2}, \eqref{eq:p2_lower_average} and  \eqref{eq:p2_higher_average}.

\begin{proposition}
The polyhedron $\conv{\mathcal{P}^2_2}$ is full-dimensional. 
\end{proposition}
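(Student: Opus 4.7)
The plan is to exhibit $n+2$ affinely independent points in $\mathcal{P}^2_2 \subseteq \mathbb{R}^{n+1}$, which is precisely what is needed for $\conv{\mathcal{P}^2_2}$ to have dimension $n+1$. The key observation that makes this easy is that $(1, \mathbf{0})$ is a recession direction of $\mathcal{P}^2_2$: for any feasible $(w,x)$, the point $(w+1, x)$ is also feasible, because inflating $w$ preserves the epigraph constraint and leaves the cardinality constraint untouched. This gives a cheap way to generate an ``extra'' point in the $w$-direction.

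Concretely, I would pick $p_0 = (0, \mathbf{0})$ (feasible because $f(0) = 0$ by the standing assumption), $p_i = (f(a_i), e_i)$ for each $i \in [n]$ where $e_i$ denotes the $i$-th standard basis vector of $\mathbb{R}^n$ (each such point satisfies the epigraph constraint with equality and has cardinality $1 \leq 2$), and finally $p_{n+1} = (f(a_1) + 1, e_1)$, which shares the $x$-coordinates of $p_1$ but is bumped up in $w$, hence is feasible by the recession-direction argument above.

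Affine independence of $\{p_0, p_1, \dots, p_{n+1}\}$ reduces to linear independence of the $n+1$ difference vectors $p_i - p_0$. Arranged as rows of an $(n+1)\times(n+1)$ matrix with the $w$-coordinate as the first column and the $x$-coordinates as the remaining $n$ columns, rows $1, \dots, n$ have the $n \times n$ identity matrix in the $x$-block. Subtracting the first row from the $(n+1)$-st row produces $(1, \mathbf{0})$, giving a nonzero pivot in the $w$-column while preserving the identity block. Hence the matrix has full rank $n+1$, so the $p_i$'s are affinely independent, and $\conv{\mathcal{P}^2_2}$ is full-dimensional.

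I do not anticipate any real obstacle here; the argument is purely structural and uses only $f(0) = 0$ together with the fact that the epigraph/cardinality constraints are insensitive to increasing $w$. In particular, no appeal to Assumption \ref{assumption} or to properties of $a_L, a_H$ beyond $|N|\geq 1$ is required.
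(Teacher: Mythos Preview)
Your proof is correct and essentially identical to the paper's: both exhibit $n+2$ affinely independent points by taking $(0,\mathbf{0})$, the $n$ singleton points $(f(a_i),e_i)$, and one extra point obtained by moving in the $w$-direction (the paper uses $(1,\mathbf{0})$ instead of your $(f(a_1)+1,e_1)$, which is the same idea). Your verification of affine independence is more explicit than the paper's, which simply asserts it.
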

\begin{proof}
Let $\mathbf{0}\in\mathbb{R}^n$ be a zero vector, and $e^i\in\mathbb{R}^n$ be a vector with 1 in the $i$-th entry and 0 everywhere else. The points $(0,\mathbf{0})$, $(1,\mathbf{0})$, and $\{(f(a^\top e^i), e^i)\}_{i=1}^{n}$ all lie in $\conv{\mathcal{P}^2_2}$ and are affine independent. Hence, dim$(\conv{\mathcal{P}^2_2})=n+1$. 
\end{proof}

We proceed to enumerate the optimal solutions to problem \eqref{eq:separation_primal}. For any $\overline{x}\in [0,1]^n$, we define $l = \arg \max_{i\in\mathcal{I}_L} \overline{x}_i$ and $h = \arg\max_{i\in \mathcal{I}_H} \overline{x}_i$. We partition the set of $(\overline{w}, \overline{x})\in \mathbb{R}\times [0,1]^n$ with $\sum_{i=1}^n \overline{x}_i \leq 2$ into the following five subsets, where $\overline{x}$ additionally satisfies
\begin{itemize}
\item[(c1)] $2\overline{x}_l > \sum_{i\in N} \overline{x}_i$;
\item[(c2)] $2\overline{x}_h > \sum_{i\in N} \overline{x}_i$; 
\item[(c3)] $2\overline{x}_l < \sum_{i\in \mathcal{I}_L} \overline{x}_i$ and $2\overline{x}_h < \sum_{i\in \mathcal{I}_H} \overline{x}_i$; 
\item[(c4)] $\sum_{i\in \mathcal{I}_L} \overline{x}_i \leq 2\overline{x}_l \leq \sum_{i\in N} \overline{x}_i$, $2\overline{x}_h \leq \sum_{i\in N} \overline{x}_i$, and $2\overline{x}_h - \sum_{i\in \mathcal{I}_H} \overline{x}_i \leq 2\overline{x}_l - \sum_{i\in \mathcal{I}_L} \overline{x}_i$;
\item[(c5)] $\sum_{i\in \mathcal{I}_H} \overline{x}_i \leq 2\overline{x}_h \leq \sum_{i\in N} \overline{x}_i$, $2\overline{x}_l \leq \sum_{i\in N} \overline{x}_i$, and $2\overline{x}_l - \sum_{i\in \mathcal{I}_L} \overline{x}_i \leq 2\overline{x}_h - \sum_{i\in \mathcal{I}_H} \overline{x}_i$.
\end{itemize}
These subsets are pairwise disjoint and their union is the original set of $(\overline{w},\overline{x})$. Later, we may refer to these subsets as categories as well. Given any $(\overline{w}, \overline{x})$ from each subset, we show that the corresponding problem \eqref{eq:separation_primal} has the coefficients of one of the five classes of inequalities, \eqref{eq:p2_super_average},  \eqref{eq:p2_EPI_form1}, \eqref{eq:p2_EPI_form2}, \eqref{eq:p2_lower_average} and  \eqref{eq:p2_higher_average}, as its optimal solution. {Problem} \eqref{eq:separation_dual} is the dual problem of problem \eqref{eq:separation_primal}, where $Q(S)$ is the dual variable associated with constraint \eqref{constr:primal}.
\begingroup
\allowdisplaybreaks
\begin{subequations}
\label{eq:separation_dual}
\begin{alignat}{2}
{\min} \hspace{0.2cm} & \sum_{S\subseteq N, |S|\leq 2} Q(S) f\left(\sum_{i\in S} a_i\right) && \label{eq:separation_dual_obj}\\
\textrm{s.t.} \quad & \sum_{S:|S|\leq 2, S\ni i} Q(S) = \overline{x}_i , && \text{ for all $i\in N$, } \label{eq:separation_dual_x}\\
& \sum_{S:|S|\leq 2} Q(S) = 1, && \label{eq:separation_dual_one} \\
& Q(S) \geq 0, && \text{ for all $S$ such that $|S|\leq 2$.} \label{eq:separation_dual_nonnegative}
\end{alignat}
\end{subequations}
\endgroup
This dual linear program is crucial in the succeeding discussions because we will use strong duality to show the optimality of the proposed primal feasible solutions. \\

Now, let any $(\overline{w}, \overline{x})\in \mathbb{R}\times [0,1]^n$ with $\sum_{i=1}^n \overline{x}_i \leq 2$ be given. Recall that $l = \arg \max_{i\in\mathcal{I}_L} \overline{x}_i$ and $h = \arg\max_{i\in \mathcal{I}_H} \overline{x}_i$.

\begin{proposition}(c1) 
\label{prop:separate_EPI-L}
If $2\overline{x}_l > \sum_{i\in N} \overline{x}_i$, then 
\[ \overline{\pi}_i = \begin{cases} 0, & i = 0, \\
f(a_L), & i = l, \\
f(2a_L)-f(a_L), & i\in\mathcal{I}_L\backslash \{l\}, \\
f(a_L+a_H)-f(a_L), & i\in\mathcal{I}_H,
\end{cases} \] is an optimal solution to problem \eqref{eq:separation_primal} associated with $\overline{x}$. This $\overline{\pi}$ corresponds to the coefficients of the lifted-EPI \eqref{eq:p2_EPI_form1}. 
\end{proposition}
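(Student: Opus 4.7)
The plan is to establish optimality of $\overline{\pi}$ via LP duality: I will construct a dual feasible solution $\overline{Q}$ to problem \eqref{eq:separation_dual} whose objective value matches that of $\overline{\pi}$ in problem \eqref{eq:separation_primal}. This requires verifying (i) primal feasibility of $\overline{\pi}$, (ii) dual feasibility of $\overline{Q}$, and (iii) equality of the two objective values, after which strong duality closes the argument.

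For primal feasibility, I would check the inequality $\overline{\pi}_0+\sum_{i\in S}\overline{\pi}_i\leq f\bigl(\sum_{i\in S}a_i\bigr)$ for every $S\subseteq N$ with $|S|\leq 2$. The sets $\emptyset$, $\{l\}$, and $\{l,i\}$ for $i\neq l$ are tight by construction of $\overline{\pi}$. A singleton $\{i\}\subseteq\mathcal{I}_L\setminus\{l\}$ reduces to $f(2a_L)\leq 2f(a_L)$, which is concavity with $f(0)=0$; a singleton $\{i\}\subseteq\mathcal{I}_H$ reduces to $f(a_L+a_H)-f(a_L)\leq f(a_H)$, which is Lemma \ref{lemma:f_concave} applied with $y_1=0$, $y_2=a_L$, $d=a_H$. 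Pairs $\{i,j\}\subseteq\mathcal{I}_L\setminus\{l\}$ and mixed pairs with one item in $\mathcal{I}_L\setminus\{l\}$ and one in $\mathcal{I}_H$ both collapse again to $f(2a_L)\leq 2f(a_L)$. The only delicate case is $\{i,j\}\subseteq\mathcal{I}_H$, where the required inequality is $2[f(a_L+a_H)-f(a_L)]\leq f(2a_H)$; this is precisely Assumption \ref{assumption} specialized to $i_0=0$. This case is the main obstacle and the unique place where the assumption enters, explaining why it must be imposed throughout Section \ref{sect:conv}.

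For the dual, I propose the solution supported on exactly the tight primal constraints:
\[
\overline{Q}(\emptyset)=1-\overline{x}_l,\quad \overline{Q}(\{l\})=2\overline{x}_l-\sum_{i\in N}\overline{x}_i,\quad \overline{Q}(\{l,i\})=\overline{x}_i \text{ for } i\in N\setminus\{l\},
\]
with $\overline{Q}(S)=0$ for every other $S$. Nonnegativity of $\overline{Q}(\{l\})$ is exactly hypothesis (c1); $\overline{Q}(\emptyset)\geq 0$ follows from $\overline{x}_l\leq 1$; and the marginal constraints \eqref{eq:separation_dual_x} together with \eqref{eq:separation_dual_one} follow by direct substitution. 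A short computation then shows that both objective values equal
\[
\Bigl(2\overline{x}_l-\sum_{i\in N}\overline{x}_i\Bigr)f(a_L)+\sum_{i\in\mathcal{I}_L\setminus\{l\}}\overline{x}_i\,f(2a_L)+\sum_{i\in\mathcal{I}_H}\overline{x}_i\,f(a_L+a_H),
\]
so strong duality yields optimality of $\overline{\pi}$.
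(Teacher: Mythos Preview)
Your proof is correct and follows essentially the same approach as the paper: you construct the identical dual solution $\overline{Q}$ and invoke strong duality. The only cosmetic difference is that the paper establishes primal feasibility by citing the already-proved validity of inequality \eqref{eq:p2_EPI_form1}, whereas you re-derive it case by case (correctly identifying the $\{i,j\}\subseteq\mathcal{I}_H$ case as the sole place Assumption~\ref{assumption} is needed).
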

\begin{proof}
Given that inequality \eqref{eq:p2_EPI_form1} is valid for $\mathcal{P}^2_2$, $\overline{\pi}$ is a feasible solution to problem \eqref{eq:separation_primal}. To prove its optimality, we first propose a solution to the dual problem \eqref{eq:separation_dual}:
\[\overline{Q}(S) = 
\begin{cases}
 \overline{x}_i, & S = \{l,i\}, i\in N\backslash\{l\}, \\
2 \overline{x}_l - \sum_{j\in N} \overline{x}_j, & S = \{l\}, \\
1- \overline{x}_l, & S = \emptyset, \\
0, & \text{for all other $S\subseteq N$ with $|S|\leq 2$.} 
\end{cases}
\] Since $\overline{x}\in [0,1]^n$ and $2\overline{x}_l > \sum_{i\in N} \overline{x}_i$, $\overline{Q}(S) \geq 0$ for all $S\subseteq N$ with $|S|\leq 2$. Thus constraint \eqref{eq:separation_dual_nonnegative} is satisfied. We observe that 
\[ \sum_{S:|S|\leq 2} \overline{Q}(S) = \sum_{i\in N\backslash\{l\}}\overline{x}_i + 2 \overline{x}_l - \sum_{j\in N} \overline{x}_j + 1- \overline{x}_l = 1,\] so constraint \eqref{eq:separation_dual_one} is also satisfied by the proposed solution. For any $i\in N\backslash \{l\}$, 
\[ \sum_{S:|S|\leq 2, S\ni i} \overline{Q}(S) = \overline{Q}(\{l,i\}) =  \overline{x}_i.\] In addition, 
\[ \sum_{S:|S|\leq 2, S\ni l} \overline{Q}(S) = \sum_{i\in N\backslash\{l\}} \overline{Q}(\{l,i\}) + \overline{Q}(\{l\}) = \sum_{i\in N\backslash\{l\}}\overline{x}_i + 2 \overline{x}_l - \sum_{j\in N} \overline{x}_j = \overline{x}_l.\] Therefore, \eqref{eq:separation_dual_x} is satisfied, and $\overline{Q}(\cdot)$ is a feasible solution to the dual problem \eqref{eq:separation_dual}. \\

The objective of \eqref{eq:separation_primal} evaluated at $\overline{\pi}$ is  \[ f(a_L) \overline{x}_l +  \sum_{i\in \mathcal{I}_L\backslash\{l\}}  [f(2a_L)-f(a_L)]\overline{x}_i + \sum_{i\in \mathcal{I}_H}  [f(a_L+a_H)-f(a_L)] \overline{x}_i. \] The dual objective evaluated at $\overline{Q}(\cdot)$ is 
\begin{align}
& \quad \quad f(0)\overline{Q}(\emptyset) + f(a_L) \overline{Q}(\{l\}) + f(2a_L) \sum_{i\in \mathcal{I}_L\backslash\{l\}}\overline{Q}(\{l, i\}) + f(a_L+a_H)\sum_{i\in \mathcal{I}_H} \overline{Q}(\{l, i\}) \\
&= f(a_L) (2 \overline{x}_l - \sum_{j\in N} \overline{x}_j ) +  \sum_{i\in \mathcal{I}_L\backslash\{l\}}f(2a_L)\overline{x}_i + \sum_{i\in \mathcal{I}_H}f(a_L+a_H)\overline{x}_i \\
&=  f(a_L) \left(2\overline{x}_l - \overline{x}_l - \sum_{j\in \mathcal{I}_L\backslash\{l\}} \overline{x}_j - \sum_{j\in \mathcal{I}_H} \overline{x}_j\right) +  \sum_{i\in \mathcal{I}_L\backslash\{l\}}f(2a_L)\overline{x}_i + \sum_{i\in \mathcal{I}_H}f(a_L+a_H)\overline{x}_i \\
& = f(a_L)\overline{x}_l +  \sum_{i\in \mathcal{I}_L\backslash\{l\}}[f(2a_L)-f(a_L)]\overline{x}_i + \sum_{i\in \mathcal{I}_H}[f(a_L+a_H)-f(a_L)]\overline{x}_i,
\end{align} which is identical with the primal objective at $\overline{\pi}$. By strong duality, $\overline{\pi}$ is optimal in problem \eqref{eq:separation_primal}. 
\end{proof}

\begin{proposition} (c2) 
\label{prop:separate_EPI-H}
 If $2\overline{x}_h > \sum_{i\in N} \overline{x}_i$, then 
\[ \overline{\pi}_i = \begin{cases} 0, & i = 0, \\
f(a_H), & i = h, \\
f(a_L+a_H)-f(a_H), & i\in\mathcal{I}_L, \\
f(2a_H)-f(a_H), & i\in\mathcal{I}_H\backslash \{h\}, 
\end{cases} \] is an optimal solution to problem \eqref{eq:separation_primal} associated with $\overline{x}$. This optimal solution is exactly the coefficients of the lifted-EPI \eqref{eq:p2_EPI_form2}.
\end{proposition}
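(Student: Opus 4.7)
The plan is to mirror the argument used for Proposition~\ref{prop:separate_EPI-L}, exploiting the natural symmetry between $l,a_L,\mathcal{I}_L$ and $h,a_H,\mathcal{I}_H$. Primal feasibility of the candidate $\overline{\pi}$ is immediate: the lifted-EPI \eqref{eq:p2_EPI_form2} was already shown to be valid for $\mathcal{P}^2_2$, and $\overline{\pi}_0 = 0$, so the constraints \eqref{constr:primal} are satisfied by construction. The real work, as in the (c1) case, is to exhibit a matching dual feasible solution and invoke strong duality.

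For the dual \eqref{eq:separation_dual}, the plan is to propose
\[
\overline{Q}(S) =
\begin{cases}
\overline{x}_i, & S = \{h,i\}, \ i\in N\backslash\{h\}, \\
2\overline{x}_h - \sum_{j\in N}\overline{x}_j, & S = \{h\}, \\
1 - \overline{x}_h, & S = \emptyset, \\
0, & \text{otherwise.}
\end{cases}
\]
Nonnegativity \eqref{eq:separation_dual_nonnegative} is precisely where hypothesis (c2) is used: the singleton $\{h\}$ receives weight $2\overline{x}_h - \sum_{j\in N}\overline{x}_j$, which is strictly positive by assumption, while $1-\overline{x}_h \ge 0$ because $\overline{x}\in[0,1]^n$. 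Checking \eqref{eq:separation_dual_one} is a direct telescoping: the pair-weights sum to $\sum_{i\neq h}\overline{x}_i$, plus the $\{h\}$-weight and the $\emptyset$-weight collapse to $1$. For \eqref{eq:separation_dual_x}, each $i\ne h$ is covered only by $\{h,i\}$, giving $\overline{x}_i$; for $i=h$, summing over all pairs containing $h$ together with the singleton $\{h\}$ yields $\sum_{i\ne h}\overline{x}_i + 2\overline{x}_h - \sum_{j\in N}\overline{x}_j = \overline{x}_h$.

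Finally, I will compute both objectives and show they coincide. The primal value at $\overline{\pi}$ is
\[
f(a_H)\overline{x}_h + \sum_{i\in\mathcal{I}_L}[f(a_L+a_H)-f(a_H)]\overline{x}_i + \sum_{i\in\mathcal{I}_H\backslash\{h\}}[f(2a_H)-f(a_H)]\overline{x}_i.
\]
The dual value is
\[
f(a_H)\overline{Q}(\{h\}) + f(a_L+a_H)\sum_{i\in\mathcal{I}_L}\overline{Q}(\{h,i\}) + f(2a_H)\sum_{i\in\mathcal{I}_H\backslash\{h\}}\overline{Q}(\{h,i\}) + f(0)\overline{Q}(\emptyset),
\]
which, after substituting the weights and regrouping the $f(a_H)\bigl(2\overline{x}_h - \sum_{j\in N}\overline{x}_j\bigr)$ term by splitting $\sum_{j\in N}\overline{x}_j$ over $\{h\}$, $\mathcal{I}_L$, and $\mathcal{I}_H\backslash\{h\}$, exactly reproduces the primal expression. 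Strong duality then certifies optimality of $\overline{\pi}$.

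I do not expect a genuine obstacle here; the proof is mechanical once (c2) is used to guarantee $\overline{Q}(\{h\})\ge 0$. The only subtlety worth noting is that, unlike in (c1), the role of $a_L$ and $a_H$ is swapped, so no additional appeal to Assumption~\ref{assumption} or concavity is required beyond what is already embedded in the validity of inequality~\eqref{eq:p2_EPI_form2}.
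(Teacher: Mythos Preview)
Your proposal is correct and follows essentially the same approach as the paper: the identical dual solution $\overline{Q}(\cdot)$ is constructed, its feasibility is verified using hypothesis (c2) for nonnegativity of $\overline{Q}(\{h\})$, and optimality of $\overline{\pi}$ is concluded via strong duality after matching the primal and dual objective values.
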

\begin{proof}
The proposed solution is feasible in \eqref{eq:separation_primal} due to the validity of inequality \eqref{eq:p2_EPI_form2} for $\mathcal{P}^2_2$. Similar to the proof of Lemma \ref{prop:separate_EPI-H}, we again construct a dual solution as the following: 
\[\overline{Q}(S) = 
\begin{cases}
 \overline{x}_i, & S = \{h,i\}, i\in N\backslash\{h\}, \\
2 \overline{x}_h - \sum_{j\in N} \overline{x}_j, & S = \{h\}, \\
1- \overline{x}_h, & S = \emptyset, \\
0, & \text{for all other $S\subseteq N$ with $|S|\leq 2$.} 
\end{cases}
\] 
First we show the feasibility of $\overline{Q}(\cdot)$. Given that $\overline{x}\in [0,1]^n$ and $2\overline{x}_l > \sum_{i\in N} \overline{x}_i$, constraint \eqref{eq:separation_dual_nonnegative} is satisfied. In addition, 
\[ \sum_{S:|S|\leq 2} \overline{Q}(S) = \sum_{i\in N\backslash\{h\}}\overline{x}_i + 2 \overline{x}_h - \sum_{j\in N} \overline{x}_j + 1- \overline{x}_h = 1,\] indicating that constraint \eqref{eq:separation_dual_one} is also satisfied. For any $i\in N\backslash \{h\}$, 
\[ \sum_{S:|S|\leq 2, S\ni i} \overline{Q}(S) = \overline{Q}(\{h,i\}) =  \overline{x}_i.\] Moreover, 
\[ \sum_{S:|S|\leq 2, S\ni h} \overline{Q}(S) = \sum_{i\in N\backslash\{h\}} \overline{Q}(\{h,i\}) + \overline{Q}(\{h\}) = \sum_{i\in N\backslash\{h\}}\overline{x}_i + 2 \overline{x}_h - \sum_{j\in N} \overline{x}_j = \overline{x}_h.\] Hence, constraints \eqref{eq:separation_dual_x} are satisfied. \\

The objective of \eqref{eq:separation_dual} evaluated at $\overline{Q}(\cdot)$ is 
\begin{align*}
& \quad \quad f(0)\overline{Q}(\emptyset) + f(a_H) \overline{Q}(\{h\}) + f(2a_H) \sum_{i\in \mathcal{I}_H\backslash\{h\}}\overline{Q}(\{h, i\}) + f(a_L+a_H)\sum_{i\in \mathcal{I}_L} \overline{Q}(\{h, i\}) \\
&=  f(a_H) \left(2\overline{x}_h - \overline{x}_h- \sum_{j\in \mathcal{I}_H\backslash\{h\}} \overline{x}_j - \sum_{j\in \mathcal{I}_L} \overline{x}_j\right) +  \sum_{i\in \mathcal{I}_H\backslash\{h\}}f(2a_H)\overline{x}_i + \sum_{i\in \mathcal{I}_L}f(a_L+a_H)\overline{x}_i \\
& = f(a_H)\overline{x}_h  + \sum_{i\in \mathcal{I}_L}[f(a_L+a_H)-f(a_H)]\overline{x}_i +  \sum_{i\in \mathcal{I}_H\backslash\{h\}}[f(2a_H)-f(a_H)]\overline{x}_i\\
& = \overline{\pi}_0 + \sum_{i\in N} \overline{\pi}_i \overline{x}_i. 
\end{align*}
By strong duality, we conclude that $\overline{\pi}$ is optimal in problem \eqref{eq:separation_primal}. 
\end{proof}

Before characterizing the optimal solution to problem \eqref{eq:separation_primal} for category (c3) of $(\overline{w}, \overline{x})$, we state a useful lemma.

\begin{lemma}
\label{lemma:average_primal_dual_sol} (\citet{yu2017polyhedral} Lemma 4 and Proposition 5)
Suppose $N = \mathcal{I}_L$ or $N = \mathcal{I}_H$. In either case, we denote $a_L$, or $a_H$, by $\alpha$. Let any $(\overline{w},\overline{x})\in\mathbb{R}\times [0,1]^n$ that satisfies $\sum_{i\in N} \overline{x}_i \leq 2$ be given, in which $\overline{x}^{\max} = \max_{i\in N} \overline{x}_i$. If $2\overline{x}^{\max} < \sum_{i\in N}\overline{x}_i$, then 
\begin{equation}
\overline{\pi}_i = \begin{cases}
0, & i=0,\\
f(2\alpha)/2, & i\in N,
\end{cases} 
\end{equation} is an optimal solution to the primal problem \eqref{eq:separation_primal} associated with $\overline{x}$. There exists a corresponding optimal solution $\overline{Q}(S)$, for all $S\subseteq N$ with $|S|\leq 2$, to the dual problem \eqref{eq:separation_dual}; in particular, $\overline{Q}(\emptyset) = 1-\sum_{i\in N}\overline{x}_i /2$. 
\end{lemma}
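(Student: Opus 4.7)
}
The plan is to use strong LP duality: I will exhibit the stated $\overline{\pi}$ as a feasible primal solution, construct a matching dual feasible solution of the prescribed form, and then conclude optimality because the two objective values coincide. Since all items share the common weight $\alpha$, the argument reduces to showing that the vector $\overline{x}$ admits a certain decomposition into pair-indicators.

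For primal feasibility, I check the constraint \eqref{constr:primal} for $|S|\leq 2$. When $S=\emptyset$ the inequality reduces to $0\leq f(0)=0$; when $|S|=2$ the inequality reads $2\cdot f(2\alpha)/2 = f(2\alpha)$, which holds with equality; and when $|S|=\{i\}$ we need $f(2\alpha)/2 \leq f(\alpha)$, which is exactly the midpoint concavity inequality $f(\alpha)=f\bigl(\tfrac12\cdot 0 + \tfrac12\cdot 2\alpha\bigr)\geq \tfrac12 f(0)+\tfrac12 f(2\alpha)$ using $f(0)=0$. The primal objective value at $\overline{\pi}$ is therefore $\tfrac{f(2\alpha)}{2}\sum_{i\in N}\overline{x}_i$.

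For the dual, I set $\overline{Q}(\emptyset)=1-\sum_{i\in N}\overline{x}_i/2$, $\overline{Q}(\{i\})=0$ for every singleton, and put the remaining mass on pairs: $\overline{Q}(\{i,j\})=q_{ij}\geq 0$, chosen so that $\sum_{j\neq i}q_{ij}=\overline{x}_i$ for every $i$. Summing over $i$ forces $\sum_{i<j}q_{ij}=\tfrac12\sum_{i\in N}\overline{x}_i$, which together with $\sum_{i\in N}\overline{x}_i\leq 2$ makes $\overline{Q}(\emptyset)$ nonnegative and turns the normalization \eqref{eq:separation_dual_one} into an identity. The dual objective then equals $\overline{Q}(\emptyset)f(0)+f(2\alpha)\sum_{i<j}q_{ij}=\tfrac{f(2\alpha)}{2}\sum_{i\in N}\overline{x}_i$, matching the primal value. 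The main obstacle is therefore the combinatorial question: do nonnegative edge weights $q_{ij}$ with prescribed vertex sums $\overline{x}_i$ exist? I would prove existence via the hypothesis $2\overline{x}^{\max}<\sum_{i\in N}\overline{x}_i$. Let $m=\arg\max_i\overline{x}_i$; the condition says $\overline{x}_m<\sum_{j\neq m}\overline{x}_j$, and for any other $i$ one trivially has $\overline{x}_i\leq \overline{x}_m\leq \sum_{j\neq i}\overline{x}_j$. This is exactly the characterization of the cone generated by the pair-indicators $\{e^{\{i,j\}}:i<j\}$ in $\mathbb{R}^n_{\geq 0}$. An explicit construction can be obtained by induction on $n$: pair the largest and the second largest vertex, send $\min\{\overline{x}_1,\overline{x}_2\}$ (or a carefully chosen smaller amount that preserves the max-sum condition) along that edge, subtract from both endpoints, and iterate on the reduced vector, until only two or three positive entries remain, where the decomposition becomes elementary.

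Once the weights $q_{ij}$ are in hand, $\overline{Q}(\cdot)$ is dual feasible and the primal and dual objective values agree, so strong LP duality yields optimality of $\overline{\pi}$ in \eqref{eq:separation_primal}. Notice finally that the dual solution constructed automatically has $\overline{Q}(\emptyset)=1-\sum_{i\in N}\overline{x}_i/2$, as claimed. Since this statement is exactly Lemma~4 and Proposition~5 of \cite{yu2017polyhedral}, an alternative is simply to invoke that reference, but the sketch above is self-contained given Assumption-free use of concavity and the cone characterization of pair-indicator decompositions.
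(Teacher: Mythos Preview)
The paper does not supply its own proof of this lemma; it simply attributes the result to Lemma~4 and Proposition~5 of \cite{yu2017polyhedral} and moves on. So there is nothing in the paper to compare your argument against beyond the bare citation, and as you yourself note at the end, invoking that reference is exactly what the authors do.

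Your self-contained sketch is correct in substance. Primal feasibility follows from midpoint concavity as you state, and the dual construction reduces to the question of whether $\overline{x}$ lies in the conical hull of the pair-indicators $e^{\{i,j\}}$. Your claim that this cone is $\{x\in\mathbb{R}^n_{\ge 0}: 2x_i\le \sum_j x_j\ \text{for all }i\}$ is right and is precisely what the hypothesis $2\overline{x}^{\max}<\sum_i\overline{x}_i$ delivers. The only soft spot is the inductive construction paragraph: sending $\min\{\overline{x}_1,\overline{x}_2\}$ along the top edge can break the max-sum condition on the residual vector, and your parenthetical ``carefully chosen smaller amount'' is doing real work that you have not spelled out. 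If you want a fully explicit construction rather than an existence argument, it is cleaner to appeal to LP/Farkas duality for the cone (the dual cone has extreme rays $\mathbf{1}-2e_i$, which reproduce exactly the inequalities $2x_i\le\sum_j x_j$), or to cite the standard fractional degree-sequence characterization. Either way, once the $q_{ij}$ exist the primal and dual objectives match and strong duality finishes the proof.
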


\begin{proposition} (c3) 
\label{prop:separate_super_average}
If $2\overline{x}_h < \sum_{i\in \mathcal{I}_H} \overline{x}_i$ and $2\overline{x}_l < \sum_{i\in \mathcal{I}_L} \overline{x}_i$, then 
\[ \overline{\pi}_i = \begin{cases} 0, & i = 0, \\
f(2a_L)/2, & i \in \mathcal{I}_L, \\
f(2a_H)/2, & i\in\mathcal{I}_H, 
\end{cases} \] is an optimal solution to problem \eqref{eq:separation_primal} associated with $\overline{x}$. This  solution corresponds to the super-average inequality \eqref{eq:p2_super_average}.
\end{proposition}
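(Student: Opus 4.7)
The plan is to mirror the structure of Propositions \ref{prop:separate_EPI-L} and \ref{prop:separate_EPI-H}: verify that the proposed $\overline{\pi}$ is primal feasible, exhibit a matching dual feasible solution, and invoke strong duality. Primal feasibility is immediate from Proposition \ref{prop:super-average}, which establishes validity of the super-average inequality \eqref{eq:p2_super_average} for $\mathcal{P}^2_2$. The real work is producing a suitable dual solution $\overline{Q}(\cdot)$.

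My idea is to build $\overline{Q}$ by stitching together two independent dual certificates obtained from Lemma \ref{lemma:average_primal_dual_sol}. Precisely, under hypothesis (c3) we have $2\overline{x}_l < \sum_{i\in\mathcal{I}_L}\overline{x}_i$ and $2\overline{x}_h < \sum_{i\in\mathcal{I}_H}\overline{x}_i$, so Lemma \ref{lemma:average_primal_dual_sol} applies separately to the ground sets $\mathcal{I}_L$ (with weight $a_L$) and $\mathcal{I}_H$ (with weight $a_H$). This yields dual solutions $\overline{Q}^L(S)$ supported on $S\subseteq \mathcal{I}_L$, $|S|\leq 2$ with $\overline{Q}^L(\emptyset) = 1 - \tfrac{1}{2}\sum_{i\in\mathcal{I}_L}\overline{x}_i$, and analogously $\overline{Q}^H(S)$ supported on $S\subseteq \mathcal{I}_H$ with $\overline{Q}^H(\emptyset) = 1 - \tfrac{1}{2}\sum_{i\in\mathcal{I}_H}\overline{x}_i$. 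I then define, for the combined problem,
\[
\overline{Q}(S) = \begin{cases} \overline{Q}^L(S), & \emptyset \neq S \subseteq \mathcal{I}_L,\ |S|\leq 2,\\ \overline{Q}^H(S), & \emptyset\neq S \subseteq \mathcal{I}_H,\ |S|\leq 2,\\ 1 - \tfrac{1}{2}\sum_{i\in N}\overline{x}_i, & S = \emptyset,\\ 0, & S=\{i,j\},\ i\in\mathcal{I}_L,\ j\in\mathcal{I}_H. \end{cases}
\]
Nonnegativity of $\overline{Q}(\emptyset)$ follows from $\sum_{i\in N}\overline{x}_i\leq 2$, and the remaining components inherit nonnegativity from the two sub-certificates. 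Constraint \eqref{eq:separation_dual_x} for $i\in\mathcal{I}_L$ reduces to the corresponding constraint of the $\mathcal{I}_L$-subproblem (and symmetrically for $\mathcal{I}_H$) because all the mixed sets carry zero mass. Constraint \eqref{eq:separation_dual_one} reduces to the telescoping identity $(1-\overline{Q}^L(\emptyset)) + (1-\overline{Q}^H(\emptyset)) + \overline{Q}(\emptyset) = 1$, which I'd verify by direct substitution.

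Finally, I will match objective values. Using the standing assumption $f(0)=0$, the empty-set term contributes nothing, and the dual objective decouples as
\[
\sum_{\emptyset\neq S\subseteq\mathcal{I}_L}\overline{Q}^L(S)\,f\!\left(|S|a_L\right) + \sum_{\emptyset\neq S\subseteq\mathcal{I}_H}\overline{Q}^H(S)\,f\!\left(|S|a_H\right).
\]
By strong duality applied to each of the two sub-problems from Lemma \ref{lemma:average_primal_dual_sol}, these two sums equal $\tfrac{f(2a_L)}{2}\sum_{i\in\mathcal{I}_L}\overline{x}_i$ and $\tfrac{f(2a_H)}{2}\sum_{i\in\mathcal{I}_H}\overline{x}_i$ respectively, which together coincide with the primal objective $\overline{\pi}_0 + \sum_{i\in N}\overline{\pi}_i\overline{x}_i$. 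Strong duality of problem \eqref{eq:separation_primal} then yields the optimality of $\overline{\pi}$.

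I do not expect a serious obstacle here: the argument is essentially a block-diagonal combination of two applications of Lemma \ref{lemma:average_primal_dual_sol}. The only delicate point is bookkeeping the $\overline{Q}(\emptyset)$ term so that \eqref{eq:separation_dual_one} still balances after gluing the two sub-certificates together, which the choice $\overline{Q}(\emptyset)=1-\tfrac{1}{2}\sum_{i\in N}\overline{x}_i$ handles cleanly.
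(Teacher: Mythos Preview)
Your proposal is correct and follows essentially the same approach as the paper: both invoke Lemma \ref{lemma:average_primal_dual_sol} on $\mathcal{I}_L$ and $\mathcal{I}_H$ separately to obtain sub-certificates $\overline{Q}^L,\overline{Q}^H$, glue them together with $\overline{Q}(\emptyset)=1-\tfrac{1}{2}\sum_{i\in N}\overline{x}_i$ and zero mass on mixed pairs, and then verify dual feasibility and objective equality via $f(0)=0$ and strong duality. The only cosmetic difference is that the paper writes out the feasibility and objective checks in slightly more detail.
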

\begin{proof}
Feasibility of $\overline{\pi}$ follows from validity of inequality \eqref{eq:p2_super_average}. Next we construct a solution to problem  \eqref{eq:separation_dual}. Let $\overline{x}^L$ be a sub-vector of $\overline{x}$ that contains only $\overline{x}_i$, for all $i\in \mathcal{I}_L$, and we define $\overline{x}^H$ similarly. \\

By Lemma \ref{lemma:average_primal_dual_sol}, problem \eqref{eq:separation_primal} associated with $\overline{x}^J$ has optimal objective $f(2a_J)/2\cdot \sum_{i\in \mathcal{I}_J} \overline{x}_i$, for $J\in \{L, H\}$. Again for $J\in \{L, H\}$, an optimal dual solution in problem \eqref{eq:separation_dual} associated with $\overline{x}^J$, which we denote by $\overline{Q}^J(\cdot)$, attains the same objective. Given the feasibility of $\overline{Q}^J(S)$ in the corresponding dual problem \eqref{eq:separation_dual}, the following properties hold: 
\begin{itemize}
\item $\overline{Q}^J(S)\geq 0$ for any $S\subseteq \mathcal{I}_J$ such that $|S|\leq 2$, 
\item $\sum_{S\subseteq \mathcal{I}_J, |S|\leq 2} \overline{Q}^J(S) = 1$, 
\item $\sum_{S\subseteq \mathcal{I}_J, |S|\leq 2, S\ni i} \overline{Q}^J(S) = \overline{x}_i$ for any $i\in \mathcal{I}_J$,
\item $\overline{Q}^J(\emptyset) = 1-\sum_{i\in \mathcal{I}_J} \overline{x}_i/2$, 
\end{itemize} where $J\in \{L, H\}$. We claim that 
\[ \overline{Q}(S) = \begin{cases}
1-\sum_{i\in N} \overline{x}_i/2, & S = \emptyset \\
\overline{Q}^L(S), & S\subseteq \mathcal{I}_L, 1\leq |S|\leq 2 \\
\overline{Q}^H(S), & S\subseteq \mathcal{I}_H, 1\leq |S|\leq 2 \\
0, & \text{for all other $S\subseteq N$ with $|S|\leq 2$.} 
\end{cases} \] is optimal in the dual problem \eqref{eq:separation_dual} associated with $\overline{x}$. Since $\overline{x}\in[0,1]^n$ and $\sum_{i\in N} \overline{x}_i \leq 2$, $1-\sum_{i\in N} \overline{x}_i/2 \geq 0$. Given the non-negativity of $\overline{Q}^L(S)$ and $\overline{Q}^H(S)$, $\overline{Q}(S)$ satisfies constraint \eqref{eq:separation_dual_nonnegative}. Next, we check for constraint \eqref{eq:separation_dual_one}. 
\begin{align*}
\sum_{S:|S|\leq 2} \overline{Q}(S) & =  \overline{Q}(\emptyset) + \sum_{S\subseteq \mathcal{I}_L:1\leq |S|\leq 2} \overline{Q}^L(S) + \sum_{S\subseteq \mathcal{I}_H:1\leq |S|\leq 2} \overline{Q}^H(S) + 0 \\
& =  \overline{Q}(\emptyset) + \sum_{S\subseteq \mathcal{I}_L:|S|\leq 2} \overline{Q}^L(S) -  \overline{Q}^L(\emptyset) + \sum_{S\subseteq \mathcal{I}_H:|S|\leq 2} \overline{Q}^H(S) - \overline{Q}^H(\emptyset) \\
& = 1-\sum_{i\in N} \overline{x}_i/2  + (1- 1+\sum_{i\in \mathcal{I}_L} \overline{x}_i/2) + (1- 1+ \sum_{i\in \mathcal{I}_H} \overline{x}_i/2) \\
& = 1.
\end{align*}

For any $i\in \mathcal{I}_L$, 
\[ \sum_{S:|S|\leq 2, S\ni i} \overline{Q}(S) = \sum_{S\subseteq \mathcal{I}_L:|S|\leq 2, S\ni i} \overline{Q}^L(S) + 0 = \overline{x}_i; \] and for any $i\in \mathcal{I}_H$, 
\[ \sum_{S:|S|\leq 2, S\ni i} \overline{Q}(S) = \sum_{S\subseteq \mathcal{I}_H:|S|\leq 2, S\ni i} \overline{Q}^H(S) + 0 = \overline{x}_i. \] Hence, constraints \eqref{eq:separation_dual_x} are satisfied. \\

To show the optimality of $\overline{\pi}$, we note that the objective of \eqref{eq:separation_dual} evaluated at $\overline{Q}(\cdot)$ is 
\begin{align*}
& \quad \quad \sum_{S\subseteq N: |S|\leq 2} \overline{Q}(S) f\left(\sum_{i\in S}a_i\right) \\
& = \overline{Q}(\emptyset)\cdot f(0) + \sum_{S\subseteq N: 1\leq |S|\leq 2} \overline{Q}(S) f\left(\sum_{i\in S}a_i\right) \\ 
& = 0 + \sum_{S\subseteq \mathcal{I}_L: |S|\leq 2} \overline{Q}^L(S) f\left(\sum_{i\in S}a_L\right) -  \overline{Q}^L(\emptyset) \cdot f(0)+  \sum_{S\subseteq \mathcal{I}_H: |S|\leq 2} \overline{Q}^H(S) f\left(\sum_{i\in S}a_H\right) -  \overline{Q}^H(\emptyset) \cdot f(0) \\
& = f(2a_L)/2\cdot \sum_{i\in \mathcal{I}_L} \overline{x}_i + f(2a_H)/2\cdot \sum_{i\in \mathcal{I}_H} \overline{x}_i.
\end{align*} The last inequality holds due to the optimality of $\overline{Q}^L(S)$ and $\overline{Q}^H(S)$. This objective value coincides with the objective of \eqref{eq:separation_primal} at $\overline{\pi}$. By strong duality, $\overline{\pi}$ is optimal in problem \eqref{eq:separation_primal}. 
\end{proof}

We next show a lemma and its corollary, which are crucial to characterizing the optimal solution to the primal problem \eqref{eq:separation_primal} associated with $\overline{x}$ in category (c4).

\begin{lemma}
\label{lemma:h_average_sepa_subproblem_primal_bounded}
If $\overline{x}$ falls in category (c4), then problem \eqref{eq:h_average_sepa_subproblem_primal} is feasible with a bounded optimal objective. 
\end{lemma}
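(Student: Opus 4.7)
The plan is to establish the two properties separately: produce an explicit primal-feasible point to get feasibility, and exhibit an explicit dual-feasible solution to get boundedness by LP weak duality. This mirrors the strategy used throughout Section \ref{sect:p22_polar} in Propositions \ref{prop:separate_EPI-L}--\ref{prop:separate_super_average}, where primal/dual pairs with matching objective values certify optimality; here we only need the weaker half (one-sided feasibility on each side) because the lemma only asks for a finite optimum, not an explicit one.

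For primal feasibility of \eqref{eq:h_average_sepa_subproblem_primal}, I would take the coefficient vector of a valid inequality of $\conv{\mathcal{P}^2_2}$ whose structure matches the category (c4) regime, namely the higher-SI \eqref{eq:p2_higher_average}: lower items apart from $l$ share a common coefficient while higher items receive the average value $f(2a_H)/2$. Validity of \eqref{eq:p2_higher_average} over $\mathcal{P}^2_2$ is exactly the statement that it satisfies all constraints of type \eqref{constr:primal}, so restricting (or specializing) the vector to the variables of the subproblem yields a feasible point. For boundedness, I would dualize \eqref{eq:h_average_sepa_subproblem_primal} along the lines of \eqref{eq:separation_dual} and construct a non-negative mass $\overline{Q}(S)$ supported on a small family of subsets: the empty set, $\{l\}$, pairs $\{l,i\}$ for $i\in \mathcal{I}_H$, and pairs inside $\mathcal{I}_H$ used to absorb the residual higher-weighted marginals. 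The four inequalities defining (c4)---in particular $\sum_{i\in\mathcal{I}_L}\overline{x}_i \leq 2\overline{x}_l$ to make $\overline{Q}(\{l\})\geq 0$, $2\overline{x}_l\leq \sum_{i\in N}\overline{x}_i$ to keep $\overline{Q}(\emptyset)\geq 0$, and the excess-comparison $2\overline{x}_h-\sum_{i\in\mathcal{I}_H}\overline{x}_i\leq 2\overline{x}_l-\sum_{i\in\mathcal{I}_L}\overline{x}_i$ together with $2\overline{x}_h\leq\sum_{i\in N}\overline{x}_i$ to make the pairwise masses on $\mathcal{I}_H$ non-negative---are exactly the sign conditions needed. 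Verifying constraints \eqref{eq:separation_dual_x} and \eqref{eq:separation_dual_one} is then a direct algebraic accounting of marginals.

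The main obstacle will be choosing the correct support of $\overline{Q}(\cdot)$ so that the degree conditions \eqref{eq:separation_dual_x} can be solved with the non-negativity constraints \eqref{eq:separation_dual_nonnegative} holding under precisely the hypotheses of (c4). The asymmetry of (c4) (with $l$ playing a dominant role among the lower-weighted items while $h$ need not dominate among higher-weighted ones) suggests a decomposition in which the $\{l,i\}$ pairs handle the coupling between $l$ and each higher-weighted item, and Lemma \ref{lemma:average_primal_dual_sol} applied to $\mathcal{I}_H$ handles the residual higher-weighted marginals as in the proof of Proposition \ref{prop:separate_super_average}. Once the decomposition is fixed, weak duality immediately gives a finite upper bound on the primal objective, completing the proof.
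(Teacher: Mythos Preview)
Your high-level strategy---exhibit a feasible point for \eqref{eq:h_average_sepa_subproblem_primal} and then exhibit a feasible point for its dual to invoke weak duality---is a legitimate alternative. The paper does neither half the way you propose: it simply observes that $(y,r)=0$ is feasible and then argues by contradiction that no recession direction of the feasible cone has negative objective value, splitting into the cases $d_y<0$ and $d_y\ge 0$ and using the four (c4) inequalities one at a time. No dual is ever constructed for this lemma; the dual \eqref{eq:h_average_sepa_subproblem_dual} appears only afterwards in Corollary~\ref{coro:h_average_sepa_subproblem_feasible}, which is deduced \emph{from} Lemma~\ref{lemma:h_average_sepa_subproblem_primal_bounded} rather than the other way around.

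Your plan, as written, has two concrete gaps. First, when you actually dualize \eqref{eq:h_average_sepa_subproblem_primal} you will find (as in \eqref{eq:h_average_sepa_subproblem_dual}) that the only variables are $Q(\{l,j\})$ for $j\in\mathcal{I}_H$ and $Q(\{i,j\})$ for distinct $i,j\in\mathcal{I}_H$; there is no $Q(\emptyset)$ and no $Q(\{l\})$, so the sign arguments you attach to those masses are vacuous. Second, invoking Lemma~\ref{lemma:average_primal_dual_sol} on $\mathcal{I}_H$ requires $2\overline{x}_h<\sum_{i\in\mathcal{I}_H}\overline{x}_i$, which (c4) does \emph{not} ensure: (c4) only bounds the excess $2\overline{x}_h-\sum_{i\in\mathcal{I}_H}\overline{x}_i$ by $2\overline{x}_l-\sum_{i\in\mathcal{I}_L}\overline{x}_i$, which may be strictly positive. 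To make your route go through you would first have to choose the $Q(\{l,j\})$'s (summing to $2\overline{x}_l-\sum_{i\in\mathcal{I}_L}\overline{x}_i$) so that the residual degree sequence $d_j=\overline{x}_j-Q(\{l,j\})$ on $\mathcal{I}_H$ satisfies $2\max_j d_j\le\sum_j d_j$; a waterfilling allocation achieves this under exactly the (c4) hypotheses, but it is a more involved construction than the paper's short recession-direction contradiction. (Incidentally, for primal feasibility the origin $(y,r)=0$ already works; there is no need to invoke the higher-SI coefficients.)
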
 
\begingroup
\allowdisplaybreaks
\begin{subequations}
\label{eq:h_average_sepa_subproblem_primal}
\begin{alignat}{2}
\min \hspace{0.2cm} & \left(2\overline{x}_l-\sum_{i\in\mathcal{I}_L} \overline{x}_i\right)y+ \sum_{i\in\mathcal{I}_H} \overline{x}_ir_i&& \label{eq:h_average_sepa_subproblem_primal_obj}\\
\textrm{s.t.} \quad & y + r_i \geq 0 , && \text{ for all $i\in \mathcal{I}_H$, } \label{eq:h_average_sepa_subproblem_primal_yr}\\
& r_i + r_j \geq 0, && \text{ for all $i,j\in\mathcal{I}_H$ such that $i<j$.} \label{eq:h_average_sepa_subproblem_primal_rr}
\end{alignat}
\end{subequations}
\endgroup
\begin{proof}
We first note that $y=0$, $r_i = 0$ for all $i\in\mathcal{I}_H$ is a feasible solution, so it suffices to show that \eqref{eq:h_average_sepa_subproblem_primal} does not have a feasible and objective-improving ray. For a contradiction, we assume that such a ray, $d\in\mathbb{R}^{1+|\mathcal{I}_H|}$, exists. We denote its entries by $d_y$ and $d_{r_i}$ for $i\in\mathcal{I}_H$. Then given any feasible solution $\overline{y}$ and $\overline{r}_i$ where $i\in\mathcal{I}_H$, the following properties hold for any $\lambda\in\mathbb{R}_+$: 
\[\overline{y} + \overline{r}_i +\lambda (d_y +d_{r_i}) \geq 0 \text{ for all $i\in \mathcal{I}_H$; }\]
\[r_i + r_j +\lambda (d_{r_i} +d_{r_j})\geq 0  \text{ for all $i,j\in\mathcal{I}_H$ such that $i<j$;}\]
\[ \lambda\left[ \left(2\overline{x}_l-\sum_{i\in\mathcal{I}_L} \overline{x}_i\right)d_y+ \sum_{i\in\mathcal{I}_H} \overline{x}_i d_{r_i} \right] < 0.\] 
It follows that $d$ must satisfy 
\begin{equation}
\label{eq:ray_property_1}
d_y + d_{r_i} \geq 0, \text{ for all $i\in \mathcal{I}_H$,}
\end{equation}
\begin{equation}
\label{eq:ray_property_2}
d_{r_i} +d_{r_j} \geq 0, \text{ for all $i,j\in\mathcal{I}_H$ such that $i<j$,}
\end{equation}
\begin{equation}
\label{eq:ray_property_3}
\left(2\overline{x}_l-\sum_{i\in\mathcal{I}_L} \overline{x}_i\right)d_y+ \sum_{i\in\mathcal{I}_H} \overline{x}_i d_{r_i} < 0. 
\end{equation}
Since $2\overline{x}_l \geq \sum_{i\in\mathcal{I}_L}\overline{x}_i$ and $\overline{x}_i \geq 0$ for all $i\in \mathcal{I}_H$ by assumption, we infer from \eqref{eq:ray_property_3} that $d$ contains at least one strictly negative entry. \\

If $d_y <0$, then by \eqref{eq:ray_property_1}, $d_{r_i}>0$ for all $i\in\mathcal{I}_H$. In this case, 
\begingroup
\allowdisplaybreaks
\begin{align*}
0 & >  \left(2\overline{x}_l-\sum_{i\in\mathcal{I}_L} \overline{x}_i\right)d_y+ \sum_{i\in\mathcal{I}_H} \overline{x}_i d_{r_i} \\
&\geq \left(2\overline{x}_l-\sum_{i\in\mathcal{I}_L} \overline{x}_i\right)d_y+ \sum_{i\in\mathcal{I}_H} \overline{x}_i (-d_y)  \text{  because $d_{r_i} \geq -d_y$ by \eqref{eq:ray_property_1} }\\ 
& =  d_y \left(2\overline{x}_l-\sum_{i\in\mathcal{I}_L} \overline{x}_i - \sum_{i\in\mathcal{I}_H} \overline{x}_i\right) \\
& =  d_y \left(2\overline{x}_l-\sum_{i\in N} \overline{x}_i \right). \\
\end{align*}
\endgroup
However, $d_y \left(2\overline{x}_l-\sum_{i\in N} \overline{x}_i \right) \geq 0$ because $d_y <0$ and $2\overline{x}_l \leq \sum_{i\in N}\overline{x}_i$ by assumption. Therefore, this case is invalid. \\

The remaining case is $d_y \geq 0$. Given our observation that the ray $d$ contains at least one strictly negative entry, $d_{r_i} < 0$ for at least one $i\in \mathcal{I}_H$. In fact, due to \eqref{eq:ray_property_2}, there can be exactly one $i\in \mathcal{I}_H$ such that $d_{r_i} < 0$. We let this index be $J$, and abbreviate $d_{r_J}$ to be $d^*$. In this case, \eqref{eq:ray_property_3} implies that  
\begingroup
\allowdisplaybreaks
\begin{align*}
0 & >  \left(2\overline{x}_l-\sum_{i\in\mathcal{I}_L} \overline{x}_i\right)d_y+ \sum_{i\in\mathcal{I}_H} \overline{x}_i d_{r_i} \\
& \geq \left(2\overline{x}_l-\sum_{i\in\mathcal{I}_L} \overline{x}_i\right)(-d^*)+ \overline{x}_J d^* + \sum_{i\in\mathcal{I}_H\backslash \{J\}} \overline{x}_i (-d^*) \\
& \quad \quad \text{because $d_y \geq -d^*$ by \eqref{eq:ray_property_1}, and $d_{r_i} \geq -d^*$ for $i\in \mathcal{I}_H\backslash \{J\}$ by \eqref{eq:ray_property_2}, }\\ 
&\geq \left(2\overline{x}_l-\sum_{i\in\mathcal{I}_L} \overline{x}_i\right)(-d^*)+ \overline{x}_h d^* + \sum_{i\in\mathcal{I}_H\backslash \{h\}} \overline{x}_i (-d^*)  \text{  because $\overline{x}_h \geq \overline{x}_J$, }\\ 
& =  -d^* \left(2\overline{x}_l-\sum_{i\in\mathcal{I}_L} \overline{x}_i - \overline{x}_h + \sum_{i\in\mathcal{I}_H\backslash \{h\}} \overline{x}_i\right) \\
& =  -d^* \left(2\overline{x}_l-\sum_{i\in\mathcal{I}_L} \overline{x}_i - 2\overline{x}_h + \sum_{i\in\mathcal{I}_H} \overline{x}_i\right).
\end{align*}
\endgroup
Since $-d^*>0$ by construction and $2\overline{x}_h - \sum_{i\in \mathcal{I}_H} \overline{x}_i \leq 2\overline{x}_l - \sum_{i\in \mathcal{I}_L} \overline{x}_i$ by assumption, 
\[-d^* \left(2\overline{x}_l-\sum_{i\in\mathcal{I}_L} \overline{x}_i - 2\overline{x}_h + \sum_{i\in\mathcal{I}_H} \overline{x}_i\right) \geq 0. \] This again violates property \eqref{eq:ray_property_3}. \\

By contradiction, we have shown that \eqref{eq:h_average_sepa_subproblem_primal} does not contain a feasible, objective-improving ray. Hence we conclude that \eqref{eq:h_average_sepa_subproblem_primal} is feasible with a bounded optimal objective. 
\end{proof}

\begin{corollary}
\label{coro:h_average_sepa_subproblem_feasible}
If $\overline{x}$ falls under category (c4), then problem \eqref{eq:h_average_sepa_subproblem_dual} is feasible. 
\end{corollary}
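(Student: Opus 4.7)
The plan is to appeal directly to linear programming duality. Lemma \ref{lemma:h_average_sepa_subproblem_primal_bounded} has just established that, under the assumptions of category (c4), the primal problem \eqref{eq:h_average_sepa_subproblem_primal} admits a feasible solution (indeed $y=0$, $r_i=0$ for all $i\in\mathcal{I}_H$) and has a finite optimal objective value. Because \eqref{eq:h_average_sepa_subproblem_dual} is by construction the LP dual of \eqref{eq:h_average_sepa_subproblem_primal}, the strong duality theorem yields that \eqref{eq:h_average_sepa_subproblem_dual} is also feasible, with matching optimal objective value.

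Slightly more concretely, I would invoke the standard fourfold classification of primal--dual LP pairs: either (i) both are feasible with equal finite optima, (ii) the primal is unbounded and the dual infeasible, (iii) the primal is infeasible and the dual unbounded, or (iv) both are infeasible. Lemma \ref{lemma:h_average_sepa_subproblem_primal_bounded} rules out cases (ii), (iii), and (iv) in one stroke: it exhibits a feasible primal solution and shows that no feasible, objective-improving ray exists. Only case (i) remains, which in particular forces \eqref{eq:h_average_sepa_subproblem_dual} to be feasible.

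There is no substantive obstacle in this corollary; the argument is essentially a one-line citation of LP duality piggybacking on the work done in the preceding lemma. The only point requiring care is confirming that \eqref{eq:h_average_sepa_subproblem_dual} is indeed formulated as the LP dual of \eqref{eq:h_average_sepa_subproblem_primal} (with the expected correspondence between the constraints \eqref{eq:h_average_sepa_subproblem_primal_yr}--\eqref{eq:h_average_sepa_subproblem_primal_rr} and the dual variables), so that standard strong duality can be quoted verbatim; this is immediate from the way \eqref{eq:h_average_sepa_subproblem_dual} is defined in the paper.
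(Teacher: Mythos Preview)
Your proposal is correct and matches the paper's own proof essentially line for line: both argue that \eqref{eq:h_average_sepa_subproblem_dual} is the LP dual of \eqref{eq:h_average_sepa_subproblem_primal}, then cite Lemma~\ref{lemma:h_average_sepa_subproblem_primal_bounded} together with LP duality to conclude feasibility. The paper's proof additionally spells out the precise correspondence between primal constraints/variables and dual variables/constraints, which is exactly the ``only point requiring care'' you flagged.
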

\begingroup
\allowdisplaybreaks
\begin{subequations}
\label{eq:h_average_sepa_subproblem_dual}
\begin{alignat}{2}
\max \hspace{0.2cm} & 0 && \\
\textrm{s.t.} \quad & \sum_{i\in\mathcal{I}_H} Q(\{l,i\}) = 2\overline{x}_l-\sum_{i\in\mathcal{I}_L} \overline{x}_i, &&  \label{eq:h_average_sepa_subproblem_hsum} \\
& Q(\{l,j\}) + \sum_{i\in \mathcal{I}_H\backslash \{j\}} Q(\{i,j\}) = \overline{x}_j, && \text{ for all $j\in\mathcal{I}_H$,} \label{eq:h_average_sepa_subproblem_x} \\
& Q(\{l,j\}) \geq 0, && \text{ for all $j\in\mathcal{I}_H$,} \label{eq:h_average_sepa_subproblem_nonnegative1} \\
& Q(\{i,j\}) \geq 0, && \text{ for all $i,j\in\mathcal{I}_H$ such that $i<j$.} \label{eq:h_average_sepa_subproblem_nonnegative2}
\end{alignat}
\end{subequations}
\endgroup
\begin{proof}
Problem \eqref{eq:h_average_sepa_subproblem_dual} is the dual linear program of \eqref{eq:h_average_sepa_subproblem_primal}. The variables $Q(\{l,i\})$ for all $i\in\mathcal{I}_H$ correspond to the primal constraints \eqref{eq:h_average_sepa_subproblem_primal_yr}. Here $l$ is included solely as a placeholder to ensure notational consistency with the proof of Proposition \ref{prop:separate_higher_average}. The variables $Q(\{i,j\})$ for all $i,j\in\mathcal{I}_H$ with $i<j$ are the dual variables for constraints \eqref{eq:h_average_sepa_subproblem_primal_rr}. Constraint \eqref{eq:h_average_sepa_subproblem_hsum} corresponds to the primal variable $y$, and constraints \eqref{eq:h_average_sepa_subproblem_x} correspond to $r_i$ for $i\in \mathcal{I}_H$. This corollary follows from Lemma \ref{lemma:h_average_sepa_subproblem_primal_bounded}.
\end{proof}

\begin{proposition}
\label{prop:separate_higher_average} 
If $\overline{x}$ belongs to category (c4),
then 
\[ \overline{\pi}_i = \begin{cases} 0, & i = 0, \\
f(2a_H)/2, & i \in \mathcal{I}_H, \\
f(a_L + a_H) - f(2a_H)/2, & i = l, \\
f(2a_L) - f(a_L + a_H) + f(2a_H)/2, & i\in\mathcal{I}_L\backslash \{l\}, 
\end{cases} \] is an optimal solution to problem \eqref{eq:separation_primal} associated with $\overline{x}$. This optimal solution is the set of coefficients for the higher-SI  \eqref{eq:p2_higher_average}.
\end{proposition}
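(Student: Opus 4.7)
The plan is to follow the template set up in the proofs of Propositions \ref{prop:separate_EPI-L}, \ref{prop:separate_EPI-H}, and \ref{prop:separate_super_average}: first verify that the proposed $\overline{\pi}$ is primal-feasible in \eqref{eq:separation_primal}, then construct a feasible dual solution $\overline{Q}(\cdot)$ for \eqref{eq:separation_dual} whose objective matches the primal objective at $\overline{\pi}$, and conclude by strong duality. Primal feasibility is immediate because $\overline{\pi}$ is exactly the coefficient vector of the higher-SI \eqref{eq:p2_higher_average}, which is valid under Assumption \ref{assumption} for $i_0=0$. The substantive part is the dual construction.

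The shape of the higher-SI suggests where the dual mass should lie: pairs $\{i,l\}$ for $i\in\mathcal{I}_L\setminus\{l\}$ to absorb the $f(2a_L)$ contributions, pairs $\{l,j\}$ for $j\in\mathcal{I}_H$ to absorb the $f(a_L+a_H)$ contributions, pairs $\{i,j\}$ inside $\mathcal{I}_H$ to absorb the $f(2a_H)$ contributions, plus a $\overline{Q}(\emptyset)$ slack. I would invoke Corollary \ref{coro:h_average_sepa_subproblem_feasible} to obtain nonnegative values $\tilde Q(\{l,j\})$ for $j\in\mathcal{I}_H$ and $\tilde Q(\{i,j\})$ for $i<j$ in $\mathcal{I}_H$ satisfying \eqref{eq:h_average_sepa_subproblem_hsum}--\eqref{eq:h_average_sepa_subproblem_x}, then set $\overline{Q}(\{i,l\}) = \overline{x}_i$ for $i\in\mathcal{I}_L\setminus\{l\}$, $\overline{Q}(\{l,j\}) = \tilde Q(\{l,j\})$, $\overline{Q}(\{i,j\}) = \tilde Q(\{i,j\})$, $\overline{Q}(\{l\}) = 0$, $\overline{Q}(\emptyset) = 1 - \tfrac{1}{2}\sum_{i\in N}\overline{x}_i$, and every other $\overline{Q}(S)=0$.

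The verification that this $\overline{Q}(\cdot)$ is dual-feasible is now routine: nonnegativity follows from the corollary together with $\sum_{i}\overline{x}_i\leq 2$; the marginal constraint \eqref{eq:separation_dual_x} for $i\in\mathcal{I}_L\setminus\{l\}$ holds by construction; for $l$ it reduces, after using \eqref{eq:h_average_sepa_subproblem_hsum}, to the identity $\sum_{i\in\mathcal{I}_L\setminus\{l\}}\overline{x}_i + (2\overline{x}_l-\sum_{i\in\mathcal{I}_L}\overline{x}_i) = \overline{x}_l$; for $j\in\mathcal{I}_H$ it is exactly \eqref{eq:h_average_sepa_subproblem_x}; and the normalization \eqref{eq:separation_dual_one} follows by summing the marginal equations \eqref{eq:h_average_sepa_subproblem_x} over $\mathcal{I}_H$ (which doubles the inner $\mathcal{I}_H$-pair mass) and combining with the chosen $\overline{Q}(\emptyset)$. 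The objective match is then a coefficient-by-coefficient check: only four types of sets contribute, so one verifies that the coefficient of $f(2a_L)$ is $\sum_{i\in\mathcal{I}_L\setminus\{l\}}\overline{x}_i$, the coefficient of $f(a_L+a_H)$ is $2\overline{x}_l-\sum_{i\in\mathcal{I}_L}\overline{x}_i$, and the coefficient of $f(2a_H)$ is $\tfrac{1}{2}(\sum_{i\in N}\overline{x}_i - 2\overline{x}_l)$ on both the dual and the (regrouped) primal side.

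The only nontrivial obstacle is knowing a priori that the auxiliary dual \eqref{eq:h_average_sepa_subproblem_dual} is feasible; without this, the construction above collapses. This is precisely what Lemma \ref{lemma:h_average_sepa_subproblem_primal_bounded} and Corollary \ref{coro:h_average_sepa_subproblem_feasible} are designed to supply, by ruling out any objective-improving ray of the auxiliary primal \eqref{eq:h_average_sepa_subproblem_primal}; the case analysis on the sign of $d_y$ and on the unique negative $d_{r_i}$ there uses each of the four inequalities that define category (c4) in a nontrivial way, which is why this particular region of $\overline{x}$ is singled out. Once that feasibility is in hand, the proposition follows directly from the construction above and LP strong duality.
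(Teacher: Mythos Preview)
Your proposal is correct and follows essentially the same approach as the paper's proof: invoke Corollary~\ref{coro:h_average_sepa_subproblem_feasible} to obtain the auxiliary dual values on $\{l,j\}$ and $\{i,j\}$ pairs within $\mathcal{I}_H$, set $\overline{Q}(\{l,i\})=\overline{x}_i$ for $i\in\mathcal{I}_L\setminus\{l\}$ and $\overline{Q}(\emptyset)=1-\tfrac{1}{2}\sum_{i\in N}\overline{x}_i$, verify dual feasibility, and match objectives via strong duality. The coefficient identities you state for $f(2a_L)$, $f(a_L+a_H)$, and $f(2a_H)$ are exactly those the paper derives.
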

\begin{proof}
The proposed solution $\overline{\pi}_i$ is feasible, shown by the validity of inequality \eqref{eq:p2_higher_average} for $\mathcal{P}^2_2$. For its optimality, we construct a feasible solution to problem \eqref{eq:separation_dual} with the same objective value. Consider 
\[ \overline{Q}(S) = \begin{cases}
1-\sum_{i\in N} \overline{x}_i/2, & S = \emptyset, \\
\overline{x}_i, & S = \{l,i\}, i\in \mathcal{I}_L\backslash \{l\}, \\
\hat{Q}(S), & S  = \{l,i\}, i\in \mathcal{I}_H, \\
\hat{Q}(S), & S  = \{i,j\}, i,j\in \mathcal{I}_H, i<j, \\
0, & \text{for all other $S\subseteq N$ with $|S|\leq 2$,} 
\end{cases} \]
where $\hat{Q}(\cdot)$ is any feasible solution to problem \eqref{eq:h_average_sepa_subproblem_dual}. Such $\hat{Q}(\cdot)$ exists, as a result of Corollary \ref{coro:h_average_sepa_subproblem_feasible}. \\

We now show that $\overline{Q}(\cdot)$ is feasible to problem \eqref{eq:separation_dual} associated with $\overline{x}$. Since $\overline{x}\in[0,1]^n$ and $\sum_{i\in N} \overline{x}_i \leq 2$, $\overline{Q}(\emptyset)\geq 0$ and $\overline{Q}(\{l,i\}) \geq 0$ for all $i\in \mathcal{I}_L\backslash \{l\}$. By constraints \eqref{eq:h_average_sepa_subproblem_nonnegative1} and \eqref{eq:h_average_sepa_subproblem_nonnegative2}, $\hat{Q}(\cdot) \geq 0$.  Thus \eqref{eq:separation_dual_nonnegative} is satisfied by the proposed solution. \\

We observe that 
\[ \sum_{i\in \mathcal{I}_H} \hat{Q}(\{l,i\}) = 2\overline{x}_l - \sum_{i\in \mathcal{I}_L}\overline{x}_i, \] due to constraint \eqref{eq:h_average_sepa_subproblem_hsum}. Furthermore, 
\begingroup
\allowdisplaybreaks
\begin{align*}
\sum_{i,j\in \mathcal{I}_H, i< j} \overline{Q}(\{i,j\})
& = \sum_{j\in \mathcal{I}_H}\sum_{i\in \mathcal{I}_H\backslash\{j\}} \hat{Q}(\{i,j\})/2 \\
& =  \sum_{j\in \mathcal{I}_H} \hat{Q}(\{l,j\})/2 + \sum_{j\in \mathcal{I}_H}\sum_{i\in \mathcal{I}_H\backslash\{j\}} \hat{Q}(\{i,j\})/2 - \sum_{i\in \mathcal{I}_H} \hat{Q}(\{l,i\})/2 \\
& = \sum_{j\in \mathcal{I}_H} \left[ \hat{Q}(\{l,j\}) + \sum_{i\in \mathcal{I}_H\backslash\{j\}} \hat{Q}(\{i,j\}) \right]/2 - \sum_{i\in \mathcal{I}_H} \hat{Q}(\{l,i\})/2\\ 
& = \sum_{j\in \mathcal{I}_H} \overline{x}_j/2 - \left(\overline{x}_l - \sum_{i\in \mathcal{I}_L}\overline{x}_i/2 \right). 
\end{align*}
\endgroup The last equality follows from constraints \eqref{eq:h_average_sepa_subproblem_hsum} and \eqref{eq:h_average_sepa_subproblem_x}. With these observations, we deduce that

\begingroup
\allowdisplaybreaks
\begin{align*}
\sum_{S:|S|\leq 2} \overline{Q}(S) & =  \overline{Q}(\emptyset) + \sum_{i\in \mathcal{I}_L\backslash\{l\}} \overline{Q}(\{l,i\}) + \sum_{i\in \mathcal{I}_H} \overline{Q}(\{l,i\}) + \sum_{i,j\in \mathcal{I}_H, i< j} \overline{Q}(\{i,j\}) + 0 \\
& = 1-\sum_{i\in N} \overline{x}_i/2 + \sum_{i\in \mathcal{I}_L\backslash\{l\}} \overline{x}_i + 2\hat{x}_l - \sum_{i\in \mathcal{I}_L}\hat{x}_i +  \sum_{j\in \mathcal{I}_H} \overline{x}_j/2 - \left(\overline{x}_l - \sum_{i\in \mathcal{I}_L}\overline{x}_i/2 \right) \\
& = 1-\sum_{i\in N} \overline{x}_i/2 + \sum_{i\in \mathcal{I}_L} \overline{x}_i  - \sum_{i\in \mathcal{I}_L}\hat{x}_i +  \sum_{j\in \mathcal{I}_H} \overline{x}_j/2 + \sum_{i\in \mathcal{I}_L}\overline{x}_i/2  \\
& = 1-\sum_{i\in N} \overline{x}_i/2 + \sum_{i\in N}\overline{x}_i/2  \\
& = 1.
\end{align*}
\endgroup Therefore, constraint \eqref{eq:separation_dual_one} is also satisfied. \\

To check for \eqref{eq:separation_dual_x}, we note that for any $i\in \mathcal{I}_L\backslash \{l\}$, 
\[ \sum_{S:|S|\leq 2, S\ni i} \overline{Q}(S) = \overline{Q}(\{l,i\}) = \overline{x}_i, \] 
and 
\begingroup
\allowdisplaybreaks
\begin{align*}
\sum_{S:|S|\leq 2, S\ni l} \overline{Q}(S) & = \sum_{i\in \mathcal{I}_L\backslash \{l\}} \overline{Q}(\{l,i\}) + \sum_{i\in \mathcal{I}_H} \hat{Q}(\{l,i\})\\
&  =  \sum_{i\in \mathcal{I}_L\backslash \{l\}} \overline{x}_i + 2\overline{x}_l - \sum_{i\in \mathcal{I}_L} \overline{x}_i \quad \text{by \eqref{eq:h_average_sepa_subproblem_hsum},} \\
& = \overline{x}_l. 
\end{align*}
\endgroup
For any $i\in \mathcal{I}_H$, 
\[ \sum_{S:|S|\leq 2, S\ni i} \overline{Q}(S) = Q(\{l,i\}) + \sum_{j\in \mathcal{I}_H\backslash \{i\}} Q(\{i,j\}) = \overline{x}_i, \] which immediately follows from \eqref{eq:h_average_sepa_subproblem_x}.  \\

Now that we have shown feasibility of $\overline{Q}(\cdot)$ in problem \eqref{eq:separation_dual}, the remaining task is to examine its corresponding objective. 
\begingroup
\allowdisplaybreaks
\begin{align*}
& \sum_{S\subseteq N: |S|\leq 2} \overline{Q}(S) f\left(\sum_{i\in S}a_i\right) \\
& = \overline{Q}(\emptyset)\cdot f(0) + \sum_{i\in \mathcal{I}_L\backslash\{l\}} f(2a_L)\overline{Q}(\{l,i\}) + \sum_{i\in \mathcal{I}_H} f(a_L+a_H)\overline{Q}(\{l,i\}) + \sum_{i,j\in \mathcal{I}_H, i< j} f(2a_H)\overline{Q}(\{i,j\}) \\
& =  f(2a_L)\sum_{i\in \mathcal{I}_L\backslash\{l\}} \overline{x}_i  + f(a_L+a_H)\left(2\overline{x}_l - \sum_{i\in\mathcal{I}_L} \overline{x}_i\right) + f(2a_H)\left[\sum_{j\in \mathcal{I}_H} \overline{x}_j/2 - \left(\overline{x}_l - \sum_{i\in \mathcal{I}_L}\overline{x}_i/2 \right) \right]  \\
& =  f(2a_L)\sum_{i\in \mathcal{I}_L\backslash\{l\}} \overline{x}_i  + f(a_L+a_H)\left(\overline{x}_l - \sum_{i\in \mathcal{I}_L\backslash \{l\}}\overline{x}_i \right) + f(2a_H)\left[\sum_{i\in \mathcal{I}_H}\overline{x}_i/2 + \sum_{i\in \mathcal{I}_L\backslash\{l\}}\overline{x}_i/2 + \overline{x}_l/2 - \overline{x}_l\right]\\
& =  f(2a_L)\sum_{i\in \mathcal{I}_L\backslash\{l\}} \overline{x}_i  + f(a_L+a_H)\left(\overline{x}_l - \sum_{i\in \mathcal{I}_L\backslash \{l\}}\overline{x}_i \right) + \frac{f(2a_H)}{2}\left[\sum_{i\in \mathcal{I}_H}\overline{x}_i + \sum_{i\in \mathcal{I}_L\backslash\{l\}}\overline{x}_i - \overline{x}_l\right]\\
& = \left[f(a_L + a_H) - \frac{f(2a_H)}{2} \right]\overline{x}_l + \left[f(2a_L) - f(a_L + a_H) + \frac{f(2a_H)}{2} \right] \sum_{i\in \mathcal{I}_L\backslash\{l\}} \overline{x}_i + \frac{f(2a_H)}{2} \sum_{i \in \mathcal{I}_H}\overline{x}_i \\
& = \overline{\pi}_0 + \sum_{i\in N} \overline{\pi}_i\overline{x}_i. 
\end{align*}
\endgroup
By strong duality, $\overline{\pi}$ is optimal in problem \eqref{eq:separation_primal}. 
\end{proof}

\begin{proposition}
\label{prop:separate_lower_average} 
If $\overline{x}$ belongs to category (c5), 
then 
\[ \overline{\pi}_i = \begin{cases} 0, & i = 0, \\
f(2a_L)/2, & i \in \mathcal{I}_L, \\
f(a_L + a_H) - f(2a_L)/2, & i = h, \\
f(2a_H) - f(a_L + a_H) + f(2a_L)/2, & i\in\mathcal{I}_H\backslash \{h\}, 
\end{cases} \] is an optimal solution to problem \eqref{eq:separation_primal} associated with $\overline{x}$. This optimal solution is exactly the coefficients of the lower-SI \eqref{eq:p2_lower_average}.
\end{proposition}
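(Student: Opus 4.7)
(Proposition \ref{prop:separate_lower_average}) My plan is to mirror the argument used for Proposition \ref{prop:separate_higher_average}, interchanging the roles of $\mathcal{I}_L$ and $\mathcal{I}_H$ (and of $l$ and $h$). First, feasibility of $\overline{\pi}$ in problem \eqref{eq:separation_primal} is immediate from the validity of the lower-SI \eqref{eq:p2_lower_average}, which we have already established. Optimality will again be shown by exhibiting a dual-feasible $\overline{Q}(\cdot)$ whose objective in \eqref{eq:separation_dual} matches the primal objective at $\overline{\pi}$, and then invoking strong duality.

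The next step is to set up the symmetric counterpart of the auxiliary linear program \eqref{eq:h_average_sepa_subproblem_primal}. I would introduce variables $y$ and $r_i$ for $i \in \mathcal{I}_L$, constraints $y + r_i \geq 0$ for $i\in\mathcal{I}_L$ and $r_i + r_j \geq 0$ for $i<j$ in $\mathcal{I}_L$, and objective $\bigl(2\overline{x}_h - \sum_{i\in\mathcal{I}_H}\overline{x}_i\bigr)y + \sum_{i\in\mathcal{I}_L}\overline{x}_i r_i$. Repeating the ray-based argument of Lemma \ref{lemma:h_average_sepa_subproblem_primal_bounded}, now invoking the three defining inequalities of category (c5) in place of (c4), I would show that this program is feasible and bounded, hence its dual (the analog of \eqref{eq:h_average_sepa_subproblem_dual}) is feasible. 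Let $\hat{Q}(\cdot)$ denote any such dual-feasible solution, with $\hat{Q}(\{h,i\})$ for $i\in\mathcal{I}_L$ satisfying $\sum_{i\in\mathcal{I}_L}\hat{Q}(\{h,i\}) = 2\overline{x}_h - \sum_{i\in\mathcal{I}_H}\overline{x}_i$, and $\hat{Q}(\{h,j\}) + \sum_{i\in\mathcal{I}_L\setminus\{j\}}\hat{Q}(\{i,j\}) = \overline{x}_j$ for $j\in\mathcal{I}_L$.

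Using $\hat{Q}(\cdot)$, I then construct
\[
\overline{Q}(S) = \begin{cases}
1 - \sum_{i\in N}\overline{x}_i/2, & S = \emptyset, \\
\overline{x}_i, & S = \{h,i\},\ i\in \mathcal{I}_H\setminus\{h\}, \\
\hat{Q}(S), & S = \{h,i\},\ i\in \mathcal{I}_L, \\
\hat{Q}(S), & S = \{i,j\},\ i,j\in \mathcal{I}_L,\ i<j, \\
0, & \text{otherwise.}
\end{cases}
\]
Feasibility in \eqref{eq:separation_dual} will be verified entry by entry exactly as in the proof of Proposition \ref{prop:separate_higher_average}: nonnegativity of each component uses $\sum_{i\in N}\overline{x}_i\leq 2$ together with the nonnegativity constraints in the auxiliary dual; the identity $\sum_{S}\overline{Q}(S) = 1$ follows by telescoping $\sum_{i,j\in\mathcal{I}_L,i<j}\hat{Q}(\{i,j\}) = \sum_{j\in\mathcal{I}_L}\overline{x}_j/2 - (\overline{x}_h - \sum_{i\in\mathcal{I}_H}\overline{x}_i/2)$; and the marginal constraints $\sum_{S\ni i}\overline{Q}(S) = \overline{x}_i$ for $i\in N$ split into three easy cases depending on whether $i\in\mathcal{I}_H\setminus\{h\}$, $i = h$, or $i\in\mathcal{I}_L$, handled respectively by the definition of $\overline{Q}$, by the auxiliary constraint summing $\hat{Q}(\{h,i\})$ over $\mathcal{I}_L$, and by the auxiliary marginal constraint for $j\in\mathcal{I}_L$.

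Finally, computing the dual objective yields
\[
\sum_{S}\overline{Q}(S)f\!\left(\textstyle\sum_{i\in S}a_i\right) = f(2a_H)\sum_{i\in\mathcal{I}_H\setminus\{h\}}\overline{x}_i + f(a_L+a_H)\!\left(2\overline{x}_h - \sum_{i\in\mathcal{I}_H}\overline{x}_i\right) + f(2a_L)\!\left[\sum_{i\in\mathcal{I}_L}\overline{x}_i/2 - \overline{x}_h + \sum_{i\in\mathcal{I}_H}\overline{x}_i/2\right],
\]
which I would then algebraically reorganize into $\overline{\pi}_0 + \sum_{i\in N}\overline{\pi}_i\overline{x}_i$ using the stated $\overline{\pi}$; strong duality then gives the claim. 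The main obstacle is the ray analysis for the auxiliary program: I need to verify that the three inequalities defining (c5) suffice to rule out a feasible, objective-improving ray, and the case split on the sign of $d_y$ and of a single negative $d_{r_i}$ must be carried out carefully with the symmetric roles. Once that lemma is in hand, the rest of the proof is a routine bookkeeping exercise paralleling Proposition \ref{prop:separate_higher_average}.
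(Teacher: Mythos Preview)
Your proposal is correct and takes essentially the same approach as the paper, which proves the proposition by instructing the reader to reprove Lemma~\ref{lemma:h_average_sepa_subproblem_primal_bounded}, Corollary~\ref{coro:h_average_sepa_subproblem_feasible}, and Proposition~\ref{prop:separate_higher_average} after swapping $H\leftrightarrow L$ and $h\leftrightarrow l$. You have simply unfolded this symmetry argument in detail, and the auxiliary LP, the ray analysis under the three (c5) inequalities, the dual-feasible $\hat Q$, the construction of $\overline Q$, and the objective computation all match the symmetric counterparts exactly.
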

\begin{proof}
We can prove the counterparts of Lemma \ref{lemma:h_average_sepa_subproblem_primal_bounded} and Corollary \ref{coro:h_average_sepa_subproblem_feasible} for this case, by replacing $H$ by $L$, $h$ by $l$ in the notation. Then by switching notation again in the proof of Proposition \ref{prop:separate_higher_average}, we establish this proposition. 
\end{proof}

By now we have found the optimal solutions to problem \eqref{eq:separation_primal} associated with all possible $(\overline{w},\overline{x})\in\mathbb{R}\times[0,1]^n$ such that $\sum_{i=1}^n \overline{x}_i \leq 2$. These optimal solutions, or extreme rays in the polar of $\conv{\mathcal{P}^2_2}$, match the proposed inequalities, namely the super-average inequality \eqref{eq:p2_super_average}, the lifted-EPIs \eqref{eq:p2_EPI_form1}, \eqref{eq:p2_EPI_form2}, the lower-SI \eqref{eq:p2_lower_average}, and the higher-SI \eqref{eq:p2_higher_average}. Problem \eqref{eq:separation_primal} can also be thought of as the separation problem for any $(\overline{w},\overline{x})$, whose optimal solution is the most violated inequality at this point. We proceed to draw conclusions on the complete linear description of $\conv{\mathcal{P}^2_2}$ in Section \ref{sect:p22_final_conv}.

\subsection{Convex hull description of $\mathcal{P}^2_2$}
\label{sect:p22_final_conv}
In this subsection, we formalize the full linear characterization of $\conv{\mathcal{P}^2_2}$ in Theorem \ref{thm:conv}. After that, we make a remark on the separation of the proposed non-trivial inequalities. Depending on the sizes of $\mathcal{I}_L$ and $\mathcal{I}_H$, some of the five subsets for $(\overline{w}, \overline{x})\in\mathbb{R}\times [0,1]^n$ with $\sum_{i=1}^n \overline{x}_i \leq 2$ could be empty. Therefore, we may not always need the full set of proposed inequalities to define $\conv{\mathcal{P}^2_2}$. We then make a remark to specify these cases.

\begin{theorem}
\label{thm:conv}
Suppose Assumption \ref{assumption} holds for $i_0=0$. We let $\mathcal{S}$ be the set of $(w,x)\in\mathbb{R}^{n+1}$ constructed by the super-average inequality \eqref{eq:p2_super_average}, the lifted-EPIs \eqref{eq:p2_EPI_form1}, \eqref{eq:p2_EPI_form2}, the lower-SI \eqref{eq:p2_lower_average}, and the higher-SI \eqref{eq:p2_higher_average}, together with the trivial inequalities \eqref{eq:p2_trivial} and cardinality constraint \eqref{eq:p2_cardinality}. Then $\mathcal{S} = \conv{\mathcal{P}^2_2}$. 
\end{theorem}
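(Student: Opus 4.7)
The plan is to prove the two set inclusions defining $\mathcal{S} = \conv{\mathcal{P}^2_2}$ separately. The direction $\conv{\mathcal{P}^2_2} \subseteq \mathcal{S}$ is essentially bookkeeping: the super-average inequality (Proposition \ref{prop:super-average}), the lifted-EPIs (Corollary \ref{coro:lifted_EP_strength}), and the lower- and higher-SIs (Corollaries \ref{eq:l_sepa_strength} and \ref{eq:h_sepa_strength}) are all valid for $\mathcal{P}^2_2$, while the trivial bounds and cardinality constraint are valid by construction. Collectively these account for every defining inequality of $\mathcal{S}$, so any point of $\conv{\mathcal{P}^2_2}$ automatically lies in $\mathcal{S}$.

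For the reverse inclusion $\mathcal{S} \subseteq \conv{\mathcal{P}^2_2}$, the strategy is to exploit the polar characterization developed in Section \ref{sect:p22_polar}. Fix an arbitrary $(\overline{w}, \overline{x}) \in \mathcal{S}$. The trivial inequalities and cardinality constraint in $\mathcal{S}$ force $\overline{x} \in [0,1]^n$ with $\sum_{i\in N} \overline{x}_i \leq 2$, so $\overline{x}$ falls into exactly one of the pairwise disjoint categories (c1)--(c5). Since $(1,\mathbf{0})$ is the unique recession direction of the full-dimensional polyhedron $\conv{\mathcal{P}^2_2}$, every non-trivial facet has the form $w \geq \pi_0 + \pi^\top x$, and therefore $(\overline{w}, \overline{x}) \in \conv{\mathcal{P}^2_2}$ reduces to the statement that $\overline{w}$ dominates the optimal value of the separation linear program \eqref{eq:separation_primal} associated with $\overline{x}$.

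Propositions \ref{prop:separate_EPI-L}--\ref{prop:separate_lower_average} provide precisely what is needed: in each of the five categories, they exhibit an explicit primal-dual pair certifying an optimal solution to \eqref{eq:separation_primal}, and the resulting coefficient vector coincides with one of the five proposed non-trivial inequalities of $\mathcal{S}$. Because $(\overline{w}, \overline{x})$ satisfies every such inequality, the bound $\overline{w} \geq \pi_0^\ast + (\pi^\ast)^\top \overline{x}$ is immediate at that category's optimal $(\pi_0^\ast, \pi^\ast)$, so $\overline{w}$ dominates the separation optimum, placing $(\overline{w}, \overline{x}) \in \conv{\mathcal{P}^2_2}$.

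The genuine technical work---verifying that (c1)--(c5) partition the feasible $\overline{x}$-domain and building primal-dual pairs that close the LP duality gap in each category---has already been carried out in Section \ref{sect:p22_polar}, so the main obstacle has effectively been dispatched upstream. What remains for this proof is simply to glue those pieces together: pick the category of $\overline{x}$, invoke the corresponding proposition to obtain the separating inequality and its attainment of the separation optimum, and then invoke the defining membership of $(\overline{w}, \overline{x})$ in $\mathcal{S}$ to conclude. A short closing sentence observing that the trivial facets of $\conv{\mathcal{P}^2_2}$ are included in $\mathcal{S}$ explicitly finishes the argument.
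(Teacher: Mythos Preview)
Your proposal is correct and follows essentially the same approach as the paper: both rely on the polar/separation framework of Section~\ref{sect:p22_polar}, partition the admissible $\overline{x}$ into the five categories (c1)--(c5), and invoke Propositions~\ref{prop:separate_EPI-L}--\ref{prop:separate_lower_average} to identify the optimal separating inequality in each category as one of the listed cuts. The only difference is presentational---you frame the argument as a two-inclusion separation proof, while the paper phrases it as a facet enumeration---but the underlying logic and the technical inputs are identical.
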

\begin{proof}
Propositions \ref{prop:separate_EPI-L}, \ref{prop:separate_EPI-H}, \ref{prop:separate_super_average}, \ref{prop:separate_higher_average} and \ref{prop:separate_lower_average} prove that the set of all the non-trivial inequalities stated above contains all the facets of $\conv{\mathcal{P}^2_2}$. It follows that $\mathcal{S} = \conv{\mathcal{P}^2_2}$.
\end{proof}

Recall that any $\overline{x}\in [0,1]^n$ falls into one of the following categories: 
\begin{enumerate}
\item[(c1)] $2\overline{x}_l > \sum_{i\in N} \overline{x}_i$;
\item[(c2)] $2\overline{x}_h > \sum_{i\in N} \overline{x}_i$; 
\item[(c3)] $2\overline{x}_l < \sum_{i\in \mathcal{I}_L} \overline{x}_i$ and $2\overline{x}_h < \sum_{i\in \mathcal{I}_H} \overline{x}_i$; 
\item[(c4)] $\sum_{i\in \mathcal{I}_L} \overline{x}_i \leq 2\overline{x}_l \leq \sum_{i\in N} \overline{x}_i$, $2\overline{x}_h \leq \sum_{i\in N} \overline{x}_i$, and $2\overline{x}_h - \sum_{i\in \mathcal{I}_H} \overline{x}_i \leq 2\overline{x}_l - \sum_{i\in \mathcal{I}_L} \overline{x}_i$;
\item[(c5)] $\sum_{i\in \mathcal{I}_H} \overline{x}_i \leq 2\overline{x}_h \leq \sum_{i\in N} \overline{x}_i$, $2\overline{x}_l \leq \sum_{i\in N} \overline{x}_i$, and $2\overline{x}_l - \sum_{i\in \mathcal{I}_L} \overline{x}_i \leq 2\overline{x}_h - \sum_{i\in \mathcal{I}_H} \overline{x}_i$.
\end{enumerate} 

\begin{remark} Based on the discussion in Section \ref{sect:p22_polar}, when any $(\overline{w}, \overline{x})\in\mathbb{R}\times [0,1]^n$ falls under category (c1), the lifted-EPI \eqref{eq:p2_EPI_form1} is the most violated inequality at $(\overline{w},\overline{x})$ if a violation occurs. In particular, the most violated lifted-EPI has $l\in N$ as the first item in the permutation of $N$. When any given $(\overline{w}, \overline{x})$ falls in category (c2), the lifted-EPI \eqref{eq:p2_EPI_form2} is the most violated inequality at this point, with permutation $\delta$ such that $\delta_1 = h$. For any $(\overline{w}, \overline{x})\notin\conv{\mathcal{P}^2_2}$ that satisfies (c3), the super-average inequality \eqref{eq:p2_super_average} should have the highest violation among all the valid inequalities. Lastly, if $(\overline{w}, \overline{x})\notin\conv{\mathcal{P}^2_2}$ satisfies (c4) or (c5), then the most violated cut is the lower-SI \eqref{eq:p2_lower_average}, or the higher-SI \eqref{eq:p2_higher_average}, respectively. More specifically, the most violated lower-SI corresponds to the permutation $\delta$ such that $h$ is the first higher-weighted item. Similarly, the most violated higher-SI is obtained with permutation $\delta$ in which $l$ is the first lower-weighted item. 
\end{remark}

\begin{remark}
We note that any $(\overline{w}, \overline{x})\in\mathbb{R}\times [0,1]^n$ with $\sum_{i=1}^n \overline{x}_i \leq 2$ can only belong to category (c3) when $|\mathcal{I}_L|\geq 3$ and $|\mathcal{I}_H|\geq 3$; otherwise, either $2\overline{x}_l \geq \sum_{i\in \mathcal{I}_L} \overline{x}_i$ or $2\overline{x}_h \geq \sum_{i\in \mathcal{I}_H} \overline{x}_i$ must be true. Thus when either $|\mathcal{I}_L|\leq 2$, or $|\mathcal{I}_H|\leq 2$, the super-average inequality \eqref{eq:p2_super_average} is not needed in the full linear description of $\conv{\mathcal{P}^2_2}$. 
\end{remark}

\begin{remark}
Suppose $|\mathcal{I}_L| = 1$. We further assume that $(\overline{w}, \overline{x})$ satisfies $2\overline{x}_l \leq \sum_{i\in N} \overline{x}_i$, and $2\overline{x}_h \leq \sum_{i\in N} \overline{x}_i$. We observe that $\sum_{i\in \mathcal{I}_L} \overline{x}_i = \overline{x}_l \leq 2\overline{x}_l$. Also, $\sum_{i\in N}\overline{x}_i = \sum_{i\in\mathcal{I}_H}\overline{x}_i + \overline{x}_l$. Thus $2\overline{x}_h - \sum_{i\in\mathcal{I}_H}\overline{x}_i = 2\overline{x}_h - \sum_{i\in N}\overline{x}_i + \overline{x}_l \leq 0 + \overline{x}_l \leq 2\overline{x}_l - \sum_{i\in\mathcal{I}_L}\overline{x}_i$. These observations imply that the category (c5) is empty. Therefore, when $|\mathcal{I}_L| = 1$, the lower-SI \eqref{eq:p2_lower_average} is not necessary in the linear description of $\conv{\mathcal{P}^2_2}$. Similarly, when $|\mathcal{I}_H| = 1$, category (c4) is empty, and the higher-SI \eqref{eq:p2_higher_average} can be omitted from the linear description of $\conv{\mathcal{P}^2_2}$ while not affecting its completeness. 
\end{remark}

\section{Extensions}
\label{sect:ext}
The proposed inequalities for $\conv{\mathcal{P}^2_k}$ can be applied to problem \eqref{prob:cc_concave_sub_min} with more than two distinct weight values. Let $\mathcal{A}$ be the set of distinct weight values. We define $a_{\text{min}}=\min_{a\in \mathcal A}\{a\}$ and $a_{\text{max}}=\max_{a\in \mathcal A}\{a\} $. With any $a_H\in\mathcal{A}$ such that $a_{\text{min}} < a_H \leq a_{\text{max}}$, we construct a new weight vector $\hat{a}$ such that $\hat{a}_i = a_{\text{min}}$ if $a_i < a_H$, and $\hat{a}_i = a_H$ otherwise. This new weight vector contains two distinct weights $a_{\text{min}}$ and $a_H$. 

\begin{proposition}
\label{prop:multi1}
Let $w\geq c^\top x$ denote any valid inequality (e.g., lifted-EPI, lower-SI or higher-SI) for $\conv{\mathcal{P}^2_k}$ with respect to $\hat{a}$. If $f$ is monotone increasing, then $w\geq c^\top x$ is valid for $\conv{\mathcal{P}^m_k}$ that arises from the original multi-weighted problem. 
\end{proposition} 
\begin{proof}
For any $\overline{x}\in\{0,1\}^n$ with $\sum_{i=1}^n \overline{x}_i \leq k$, 
\[c^\top \overline{x}  \leq f(\hat{a}^\top \overline{x}) \leq f(a^\top \overline{x}), \] by validity of $w\geq c^\top x$ for the cardinality-constrained epigraph of $f(\hat{a}^\top x)$ and monotonicity of $f$. 
\end{proof}

It follows from Proposition \ref{prop:multi1} that valid inequalities can be derived similarly when $f$ is monotone decreasing. 

We next introduce another way to generate valid inequalities for $\conv{\mathcal{P}^m_k}$ when $m\geq 3$. Suppose this set is associated with a multi-weighted vector $a\in\mathbb{R}_+^n$ and a normalized concave submodular function $f(a^\top x)$. For any pair of distinct weights in $a$, say $\alpha_1$ and $\alpha_2$, we let $S = \{i\in [n]: a_i \in \{\alpha_1, \alpha_2\}\}$. Consider the case where $|S| \geq k$. Without loss of generality, we assume that the labeling of $[n]$ satisfies $S = [|S|]$. We then extend any valid inequality $w\geq \sum_{i\in S} c_ix_i$ for $\conv{\mathcal{P}^2_k(S)}$ (see \eqref{eq:EPI_base_set}) to the multi-weighted setting. Specifically, for any $i\in[n]\backslash S$, we define $\mathcal{T}_i := \max\{\sum_{j\in T}a_j : T\subseteq [i-1], |T| = k-1\}$. 
\begin{proposition}
\label{prop:multi2}
The inequality $w\geq \sum_{i\in S} c_ix_i + \sum_{i = |S|+1}^n [f(\mathcal{T}_i+a_i) - f(\mathcal{T}_i)]x_i$ is valid for $\conv{\mathcal{P}^m_k}$. 
\end{proposition} 
This proposition generalizes Proposition 11 in \citep{yu2017polyhedral}, which restricts $w\geq \sum_{i\in S} c_ix_i$ to be an EPI and derives ALIs. We omit its proof, because it follows similar arguments.  Here, the inequality $w\geq \sum_{i\in S} c_ix_i$ can be any lifted-EPI, lower-SI or higher-SI for $\conv{\mathcal{P}^2_k(S)}$. When it is a lifted-EPI, the resulting inequality is at least as strong as the corresponding ALI. This observation immediately follows from Corollary \ref{coro:lifted_vs_epi}. The inequalities described above can be used in a branch-and-cut framework when solving the original multi-weighted minimization problems. For certain multi-weighted problem \eqref{prob:cc_concave_sub_min}, our proposed inequalities defined for the subspace involving a pair of distinct weights are valid and even facet-defining for $\conv{\mathcal{P}^m_k}$, as demonstrated in the example below.

\exclude{
\begin{proof} \emph{(same proof technique as in Proposition 11 of \citep{yu2017polyhedral})} 
Consider the valid inequality for $\conv{\mathcal{P}^m_k}$ obtained via lifting $w\geq \sum_{i\in S} c_ix_i$ with respect to the natural order of $[n]$. The lifting coefficient of $j\in [n]\backslash S$ is 
\begingroup
\allowdisplaybreaks
\begin{equation*}
\begin{aligned}
\kappa_j := \min \hspace{0.2cm} & w - \sum_{i\in S} c_i x_i - \sum_{i = |S|+1}^{j-1} \kappa_i x_i&& \\
\textrm{s.t.} \quad & w\geq f\left(a_j + \sum_{i = 1}^{j-1} a_i x_i\right), &&\\
& \sum_{i=1}^{j-1} x_i \leq k-1, && \\
& x \in \{0,1\}^{j-1}. && 
\end{aligned}
\end{equation*}
\endgroup Suppose the optimal solution to this lifting problem is $x^*$, with support $T^*$. By definition, $\mathcal{T}_j\geq \sum_{i\in T^*} a_i$, and due to submodularity of $f(a^\top x)$, $f(\mathcal{T}_j+a_j) - f(\mathcal{T}_j) \leq f(\sum_{i\in T^*} a_i+a_j) - f(\sum_{i\in T^*} a_i)$. We know that $f(\sum_{i\in T^*} a_i) \geq \sum_{i\in S\cap T^*} c_i x_i - \sum_{i \in T^*\backslash S} \kappa_i x_i$. Therefore, $f(\sum_{i\in T^*} a_i+a_j) - f(\sum_{i\in T^*} a_i) \leq \kappa_j$. We conclude that $f(\mathcal{T}_j+a_j) - f(\mathcal{T}_j) \leq \kappa_j$ for all $j\in [n]\backslash S$, and the stated inequality is valid. 
\end{proof}
}

\begin{example}
Let $f(a^\top x) = 64-(a^\top x-8)^2$, $k=2$ and $a = [4,4,6,6,8]$. The inequality \[w \geq 32x_1+32x_2+28x_3+20x_4\] is a lower-SI for the convex hull of $\{(w,x)\in\mathbb{R}\times \{0,1\}^4: w\geq f(4x_1+4x_2+6x_3+6x_4), \mathbf{1}^\top x \leq 2\}$. This inequality is facet-defining for the original $\conv{\mathcal{P}^m_2}$.
\end{example}

We can also obtain strong formulations for mixed-binary conic optimization with our proposed inequalities. Consider the set \[S(F,\mathcal{K}) := \{(x, y)\in\mathbb{B}^n\times \mathbb{R}^m : \exists w\in \mathbb{R}_+ \text{ s.t. } w\geq F(x), {\mathbf{1}^\top x \leq k, } Ay+Bw\in\mathcal{K}\},\] where $\mathcal{K}$ is a convex cone that contains the origin, $F:\mathbb{B}^n\rightarrow \mathbb{R}_+$ is a nonnegative function, and $A,B$ are matrices of proper dimensions. A special case of this set is studied by \citet{atamturk2020submodularity}, in which the set captures a single second-order conic constraint and $F$ is the composition of a square root function and a nonnegative affine function. This mixed-binary set arises in chance-constrained programs and mean-risk minimization. The authors provide its convex hull description, which involves the convex hull of the epigraph of $F$. \citet{kilincc2020conic} extend this result to the general set $S(F,\mathcal{K})$. Based on their work, our proposed inequalities are strong valid inequalities for the convex hull of $S(F,\mathcal{K})$ under a cardinality constraint on $x$, when $F$ is any nonnegative concave function composed with a nonnegative affine function. Recall that $\mathcal{S}$ is the set constructed with our proposed inequalities and is equivalent to $\conv{\mathcal{P}^2_2}$ (see Theorem \ref{thm:conv}). When the affine function contains two weights and the cardinality bound is two, $\conv{S(F,\mathcal{K})} = \{(x, y)\in[0,1]^n\times \mathbb{R}^m : \exists w\in \mathbb{R}_+ \text{ s.t. } (w,x)\in\mathcal{S}, {\mathbf{1}^\top x \leq k, } Ay+Bw\in\mathcal{K}\}$.

{
\section{Computational Study}
\label{sect:comp}
In this section, we test the effectiveness of our proposed inequalities in a branch-and-cut algorithm. We consider instances of cardinality-constrained mean-risk minimization with correlated random variables \cite{atamturk2019lifted,atamturk2020submodularity}:  
\begin{equation}
\label{eq:mean_risk}
\min_{x \in \{0,1\}^n} \left\{-\mu^\top x + \Omega\sqrt{(x^\top Qx)} : \sum_{i=1}^nx_i \leq k\right\}. 
\end{equation}

Here, $Q$ is a positive semidefinite matrix, $\Omega$ is a constant parameter, and $k\in\mathbb{Z}_+$ is the cardinality upper bound. Problem \eqref{eq:mean_risk} can be interpreted as minimizing a stochastic objective over a discrete feasible set. Suppose that the losses on all the investments $i\in N$, denoted by $\tilde p$, are normal random variables with mean $\mu$ and covariance $Q$. Let $\Phi$ be the standard normal cumulative distribution function. We set $\Omega$ to be $\Phi^{-1}(\beta)$ where $0.5 < \beta <1$. Then problem \eqref{eq:mean_risk} is equivalent to the value-at-risk minimization problem $\min_{x \in \{0,1\}^n}\{z : \mathbb{P}\left( \tilde p^\top x \leq r \right) \geq \beta,  \sum_{i=1}^nx_i \leq k\}$ \cite{atamturk2008polymatroids,birge2011introduction,atamturk2019lifted,atamturk2020submodularity}. We denote a diagonal matrix with main diagonal in the vector form, $\nu$, by $\text{diag}(\nu)$. The covariance matrix $Q$ is commonly rewritten as the sum of $Q-\text{diag}(a)$ and $\text{diag}(a)$, such that $a\in\mathbb{R}_+^n$ and $Q-\text{diag}(a) \succeq 0$. Given that $x\in\{0,1\}^n$, the separable quadratic term $x^\top \text{diag}(a) x=a^\top x$. Therefore, problem \eqref{eq:mean_risk} has an equivalent formulation (SOCP): 
\begin{equation}
\label{eq:SOCP}
\min_{(w,y,z,x) \in \mathbb{R}_+^3\times \{0,1\}^n} \left\{-\mu^\top x + \Omega z : w \geq \sqrt{\sum_{i\in N} a_i x_i}, \sum_{i=1}^nx_i \leq k, y\geq \sqrt{x^\top (Q-\text{diag}(a)) x}, z^2\geq w^2 + y^2  \right\}. 
\end{equation}

When $a$ consists of two distinct weights, the proposed inequalities are directly applicable; this case will be discussed in Section \ref{sect:comp_two_weights}. When $a\in\mathbb{R}_+^n$ is a general vector, we may write $a$ as $a^{\text{two}}+a^{\text{res}}$ such that $a^{\text{two}}, a^{\text{res}} \in\mathbb{R}_+^n$, and $a^{\text{two}}$ contains two distinct weights. Problem \eqref{eq:SOCP} may be reformulated as 
\begin{align*}
\min_{(v,w,y,z,x) \in \mathbb{R}_+^4\times \{0,1\}^n} \{-\mu^\top x + \Omega z : & \: w \geq \sqrt{\sum_{i\in N} a^{\text{two}}_i x_i}, \sum_{i=1}^nx_i \leq k, \\
& \: v \geq \sqrt{\sum_{i\in N} a^{\text{res}}_i x_i}, y\geq \sqrt{x^\top (Q-\text{diag}(a)) x}, z^2\geq v^2 + w^2 + y^2  \}. 
\end{align*}
This case will be explored in Section \ref{sect:comp_multi_weights}. To maintain generality of the test instances of problem \eqref{eq:mean_risk}, we do not impose any assumption, such as Assumption \ref{assumption}, on the two weights $a_L$ and $a_H$ in addition to non-negativity. Therefore, we only incorporate lifted-EPIs (LEPIs) and lower-SIs (LSIs) in our branch-and-cut algorithm. We add one valid inequality after exploring every ten branch-and-bound nodes in the following way. At a fractional solution $(\overline{w},\overline{x})$, we generate an LEPI with respect to $\delta = (\delta_1, \delta_2,\dots, \delta_n)$, such that $\overline{x}_{\delta_1} \geq \overline{x}_{\delta_2} \geq \dots \geq \overline{x}_{\delta_n}$. Let $\overline{x}^H$ be the sub-vector of $\overline{x}$ that corresponds to all the higher-weighted items. With the descending order of $\overline{x}^H$ and $i_0 = k-1$, we construct the corresponding LSI. If the violation of LEPI at $(\overline{w},\overline{x})$ is higher than that of LSI, then the LEPI is added to update the relaxation problem. Otherwise, the LSI is added. We refer to this branch-and-cut algorithm as BC-LEPI-LSI. \\

To evaluate the effectiveness of the proposed inequalities, we test our method BC-LEPI-LSI against another branch-and-cut algorithm that incorporates the ALIs \cite{yu2017polyhedral} (see Corollary \ref{coro:lifted_vs_epi}). We add one ALI after exploring every ten branch-and-bound nodes, and such an ALI is constructed according to the descending order of $\overline{x}$. Moreover, we compare the computational performance of BC-LEPI-LSI against directly solving the SOCP using a mixed-integer SOCP solver. Later we refer to this method simply as SOCP.  \\

The experiments are executed on one thread of a Linux server with Intel Haswell E5-2680 processor at 2.5GHz and 128GB of RAM. All the solution methods are implemented in Python 3.6 and Gurobi Optimizer 9.5.1. The internal cut parameters are in the default setting. Multithreading, heuristics and concurrent MIP solver are disabled. The MIP optimality gap is at the default level of 0.01\%, and the time limit for each instance is set to one hour.

\subsection{$a$ with two weights} 
\label{sect:comp_two_weights}
Inspired by \cite{atamturk2020submodularity}, we generate the test instances in the following way. The covariance matrix $Q = Q_0 + \text{diag}(a)$, where $Q_0 = ZGG^\top Z^\top$ following a factor model. In particular, $G\in\mathbb{R}^{r\times r}$ with $G_{ij} \sim U[-1,1]$, and $Z\in\mathbb{R}^{n\times r}$ such that $Z_{ij} \sim U[0,1]$ with probability 0.2 and $Z_{ij} = 0$ otherwise. We compute $\overline{q} = \sum_{i=1}^n Q_{0ii}/n$, and generate $\hat a_i \sim U[0.2\overline{q}, \overline{q}]$. We then set the two weights $a_L = \min(\hat a)$ and $a_H = \text{median}(\hat a)$. The diagonal vector $a$ is constructed by letting $a_i = a_L$ when $\hat a_i < a_H$, and $a_i = a_H$ otherwise. We further generate $\mu_i \sim U[0.7\sqrt{Q_{ii}}, \sqrt{Q_{ii}}]$. In our experiments, we let $n=200$, $r=40$, $k\in \{5, 10, 15\}$, and $\Omega = \Phi^{-1}(\beta)$, with $\beta \in\{0.95, 0.975, 0.99\}$. \\

Table \ref{table:2_weight_result} summarizes the computational performance of BC-LEPI-LSI, BC-ALI and SOCP on problem \eqref{eq:mean_risk} in which diag($a$) contains two distinct weights. The first two columns report the risk tolerance parameter $\beta$ and the cardinality upper bound $k$. The fourth column reports the average running time in seconds. The next column lists the average end gaps, computed by (UB-LB)/UB$\times 100\%$ in which UB and LB are the best upper- and lower-bounds attained at the time limit. The average end gaps are computed across all the trials, including the instances solved to optimality. The sixth and the seventh columns present the average numbers of branch-and-bound nodes visited and the average numbers of cuts added. The statistics are averaged across five trials. Each superscript $^{i}$ means that out of the five trials, $i$ instances are solved within the time limit of one hour, and the remaining $5-i$ instances exceed the time limit. For BC-LEPI-LSI, the average number of total cuts is represented as $\text{m}^{\text{LEPI}} + \text{m}^{\text{LSI}} = \text{m}$ in each test case, where $\text{m}^{\text{LEPI}}$ is the average number of LEPIs added across five trials, and $\text{m}^{\text{LSI}}$ is that of LSIs.  \\

\begin{table}[h!]
\begin{center}
\small
{\caption{Computational performance of BC-LEPI-LSI, BC-ALI and SOCP on problem \eqref{eq:mean_risk} in which diag($a$) decomposed from the covariance matrix $Q$ contains two distinct weights. }}
\begin{tabular}{|c|c|c|c|c|c|c|}
\hline
$\beta$ & $k$ & method & time (s) & end gap & \# nodes & \# cuts \\
\hline
\multirow{9}{*}{0.95}   & \multirow{3}{*}{5}   & BC-LEPI-LSI & $42.7^5$	&	0.0\%	&	2112.0	&	119.4+72.4=191.8\\
                                   &                               & BC-ALI          & $219.9^5$	&	0.0\%	&	7376.4	&	731.2\\
                                   &                               & SOCP            & $1741.6^5$	&	0.0\%	&	92132.4	&	N/A\\
                                    \cline{2-7}
                                   & \multirow{3}{*}{10} & BC-LEPI-LSI & $37.4^5$	&	0.0\%	&	1734.6	&	147.0+14.0=161.0\\
                                   &                               & BC-ALI          & $99.9^5$	&	0.0\%	&	3791.6	&	376.8\\
                                   &                               & SOCP            & $2924.0^2$	&	2.8\%	&	73682.2	&	N/A\\    
                                    \cline{2-7}
                                   & \multirow{3}{*}{15} & BC-LEPI-LSI & $56.0^5$	&	0.0\%	&	2136.4	&	196.6+3.0=199.6\\
                                   &                               & BC-ALI          & $67.4^5$	&	0.0\%	&	2505.8	&	247.8\\
                                   &                               & SOCP            & $2664.1^2$	&	1.6\%	&	59483.4	&	N/A\\  
\hline    
\multirow{9}{*}{0.975} & \multirow{3}{*}{5}   & BC-LEPI-LSI & $109.6^5$	&	0.0\%	&	4764.8	&	280.2+169.6=449.8\\
                                   &                               & BC-ALI          & $608.0^5$	&	0.0\%	&	20233.6	&	2013.4\\
                                   &                               & SOCP            & $2773.5^3$	&	13.8\%	&	191009.0	&	N/A\\  
                                    \cline{2-7}
                                   & \multirow{3}{*}{10} & BC-LEPI-LSI & $258.5^5$	&	0.0\%	&	10196.6	&	773.4+231.2=1004.6\\
                                   &                               & BC-ALI          & $1337.4^4$	&	1.4\%	&	32009.8	&	3196.4\\
                                   &                               & SOCP            & $3316.4^1$	&	13.9\%	&	69974.2	&	N/A\\     
                                    \cline{2-7}
                                   & \multirow{3}{*}{15} & BC-LEPI-LSI & $80.8^5$	&	0.0\%	&	3219.6	&	312.0+6.0=318.0\\
                                   &                               & BC-ALI          & $250.3^5$	&	0.0\%	&	6880.4	&	685.4\\
                                   &                               & SOCP            & --$^0$	&	5.5\%	&	74772.0	&	N/A\\  
\hline  
\multirow{9}{*}{0.99}   & \multirow{3}{*}{5}   & BC-LEPI-LSI & $125.1^5$	&	0.0\%	&	5997.4	&	242.6+343.4=586.0\\
                                   &                               & BC-ALI          & $1284.5^5$	&	0.0\%	&	36510.2	&	3648.4\\
                                   &                               & SOCP            & $3476.2^1$	&	76.0\%	&	160858.0	&	N/A\\  
                                    \cline{2-7}
                                   & \multirow{3}{*}{10} & BC-LEPI-LSI & $339.4^5$	&	0.0\%	&	11414.8	&	633.8+496.8=1130.6\\
                                   &                               & BC-ALI          & $2413.9^3$	&	4.3\%	&	73554.6	&	7351.0\\
                                   &                               & SOCP            & --$^0$	&	26.8\%	&	72702.6	&	N/A\\     
                                    \cline{2-7}
                                   & \multirow{3}{*}{15} & BC-LEPI-LSI & $96.3^5$	&	0.0\%	&	3465.4	&	268.2+72.8=341.0\\
                                   &                               & BC-ALI          & $1440.1^5$	&	0.0\%	&	34640.0	&	3462.4\\
                                   &                               & SOCP            & --$^0$	&	10.6\%	&	57566.0	&	N/A\\  
\hline                        
\end{tabular}
\label{table:2_weight_result}
\end{center}
\end{table}

Our BC-LEPI-LSI algorithm outperforms BC-ALI and SOCP in all the test cases as shown in Table \ref{table:2_weight_result}. BC-LEPI-LSI solves all instances to optimality under six minutes on average. SOCP manages to solve all five instances in only one test case with $\beta = 0.95$ and $k=5$. BC-ALI fails to solve within the one-hour time limit in two test cases (i.e., $k=10$ and $\beta = 0.975, 0.99$ respectively) and has significantly longer average runtime than BC-LEPI-LSI. For instance, when $\beta = 0.99$ and $k=10$, the average runtime of BC-ALI is 34 minutes longer than that of BC-LEPI-LSI; SOCP fails to reach optimality in an hour in all five instances of this test case, with a large average optimality gap of 26.8\%. In general, BC-LEPI-LSI explores fewer branch-and-bound nodes than the other two methods and adds fewer cuts than BC-ALI. As $k$ increases, the number of LSIs being added decreases relative to the number of LEPIs.

\subsection{$a$ with multiple weights} 
\label{sect:comp_multi_weights}

In this section, we do not restrict the number of weights in vector $a$. We construct $Q_0$ and compute $\overline{q}$ the same way described in Section \ref{sect:comp_two_weights}. Then we generate $a$ with $a_i \sim U[0.2\overline{q}, \overline{q}]$ for all $i\in\{1,2,\dots, n\}$. Now given $Q = Q_0 + \text{diag}(a)$, we again let $\mu_i \sim U[0.7\sqrt{Q_{ii}}, \sqrt{Q_{ii}}]$ for all $i$. Next, we decompose $a$ into $a^{\text{two}}$ and $a^{\text{res}}$. We let $a_L = \min(a)$, $a_H = \text{median}(a)$ and let $a^{\text{two}}_i = a_L$ when $a_i < a_H$, and $a^{\text{two}}_i = a_H$ otherwise. As a result, $a^{\text{two}} \in\mathbb{R}_+^n$, and $a^{\text{res}} = a - a^{\text{two}}$ is a non-negative vector as well. We let $n=200$, $r=40$, $k\in \{5, 10, 15\}$, and $\Omega = \Phi^{-1}(\beta)$, with $\beta \in\{0.95, 0.975, 0.99\}$. We note that the ALIs are generated with respect to $a$, while LEPIs and LSIs are constructed with respect to $a^{\text{two}}$ in the branch-and-cut algorithms for this set of experiments. \\

\begin{table}[h!]
\begin{center}
\small
{\caption{Computational performance of BC-LEPI-LSI, BC-ALI and SOCP on problem \eqref{eq:mean_risk} with uniformly generated $a$. }}
\begin{tabular}{|c|c|c|c|c|c|c|}
\hline
$\beta$ & $k$ & method & time (s) & end gap & \# nodes & \# cuts \\
\hline
\multirow{9}{*}{0.95}   & \multirow{3}{*}{5}   & BC-LEPI-LSI & $171.9^5$	&	0.0\%	&	6295.2	&	394.6+140.2=534.8\\
                                   &                               & BC-ALI          & $244.1^5$	&	0.0\%	&	10723.2	&	1071.4\\
                                   &                               & SOCP            & $1702.1^5$	&	0.0\%	&	173698.4	&	N/A\\
                                    \cline{2-7}
                                   & \multirow{3}{*}{10} & BC-LEPI-LSI & $577.5^5$	&	0.0\%	&	15612.2	&	1251.0+248.6=1499.6\\
                                   &                               & BC-ALI          & $958.3^5$	&	0.0\%	&	26572.0	&	2656.6\\
                                   &                               & SOCP            & --$^0$	&	5.8\%	&	82144.0	&	N/A\\
                                    \cline{2-7}
                                   & \multirow{3}{*}{15} & BC-LEPI-LSI & $124.3^5$	&	0.0\%	&	3414.6	&	258.0+73.0=331.0\\
                                   &                               & BC-ALI          & $217.2^5$	&	0.0\%	&	5907.4	&	590.0\\
                                   &                               & SOCP            & $2789.6^2$	&	1.7\%	&	79930.6	&	N/A\\
\hline    
\multirow{9}{*}{0.975} & \multirow{3}{*}{5}   & BC-LEPI-LSI & $827.4^5$	&	0.0\%	&	20236.2	&	1176.2+705.2=1881.4\\
                                   &                               & BC-ALI          & $1254.1^5$	&	0.0\%	&	41768.4	&	4175.6\\
                                   &                               & SOCP            & $3349.5^1$	&	31.1\%	&	166471.6	&	N/A\\
                                    \cline{2-7}
                                   & \multirow{3}{*}{10} & BC-LEPI-LSI & $838.9^5$	&	0.0\%	&	21996.8	&	1498.6+648.8=2147.4\\
                                   &                               & BC-ALI          & $1187.4^4$	&	1.2\%	&	40896.2	&	4088.8\\
                                   &                               & SOCP            & --$^0$	&	12.9\%	&	74527.4	&	N/A\\  
                                    \cline{2-7}
                                   & \multirow{3}{*}{15} & BC-LEPI-LSI & $988.6^5$	&	0.0\%	&	19225.8	&	1575.0+324.2=1899.2\\
                                   &                               & BC-ALI          & $1912.7^3$	&	0.8\%	&	43951.8	&	4393.8\\
                                   &                               & SOCP            & --$^0$	&	6.9\%	&	52940.8	&	N/A\\
\hline  
\multirow{9}{*}{0.99}   & \multirow{3}{*}{5}   & BC-LEPI-LSI & $650.1^5$	&	0.0\%	&	20280.0	&	1175.6+715.4=1891.0\\
                                   &                               & BC-ALI          & $947.5^5$	&	0.0\%	&	38703.6	&	3869.4\\
                                   &                               & SOCP            & $3355.9^1$	&	72.3\%	&	259287.8	&	N/A\\
                                    \cline{2-7}
                                   & \multirow{3}{*}{10} & BC-LEPI-LSI & $1855.7^5$	&	0.0\%	&	38376.0	&	2946.4+842.4=3788.8\\
                                   &                               & BC-ALI          & $3068.4^1$	&	4.7\%	&	88635.6	&	8862.6\\
                                   &                               & SOCP            & --$^0$	&	30.8\%	&	75435.8	&	N/A\\
                                    \cline{2-7}
                                   & \multirow{3}{*}{15} & BC-LEPI-LSI & $1354.1^4$	&	0.6\%	&	24718.6	&	2269.6+184.4=2454.0\\
                                   &                               & BC-ALI          & $2434.1^2$	&	1.8\%	&	70845.8	&	7083.4\\
                                   &                               & SOCP            & --$^0$	&	10.5\%	&	65965.4	&	N/A\\
\hline                        
\end{tabular}
\label{table:multi_weight_result}
\end{center}
\end{table}

Table \ref{table:multi_weight_result} summarizes the computational performance of BC-LEPI-LSI, BC-ALI and SOCP on problem \eqref{eq:mean_risk} in which diag($a$) has no restriction on its number of weights. The layout of this table is consistent with Table \ref{table:2_weight_result}. In this set of experiments with general weight vector $a$, our BC-LEPI-LSI algorithm outperforms BC-ALI and SOCP in all the test cases. In Table \ref{table:multi_weight_result}, BC-LEPI-LSI solves to optimality in all but one test case with $\beta = 0.99$ and $k=15$. In this challenging case, BC-LEPI-LSI achieves a small end gap of 0.6\%. BC-ALI and SOCP have longer average running times than BC-LEPI-LSI and fail to solve in many test cases, especially with higher $\beta$ values. For example, when $\beta = 0.99$ and $k=10$, BC-LEPI-LSI solves all five instances with an average runtime of around 30 minutes, whereas BC-ALI fails to attain optimality in four out of five instances of this test case. SOCP fails in all five instances, resulting in a large average end gap of 30.8\%. Overall, the statistics in Table \ref{table:multi_weight_result} are higher than those in Table \ref{table:2_weight_result}, suggesting that the problem instances with general $a$ are computationally more difficult than the instances in which $a$ contains two distinct weights. As before, BC-LEPI-LSI explores fewer branch-and-bound nodes than the other two methods and adds fewer cuts than BC-ALI. The observation that the number of LSIs being added decreases relative to the number of LEPIs as $k$ increases continues to hold in this set of experiments. 
}

\section{Concluding Remarks}
\label{sect:conclude}
In this paper, we tackle the cardinality-constrained concave submodular minimization problem \eqref{prob:cc_concave_sub_min} with two distinct weights. We propose three classes of strong valid linear inequalities, namely the lifted-EPIs, the lower-SIs and the  higher-SIs, for the convex hull of the epigraph for the objective function with a cardinality constraint. These inequalities are computationally effective when incorporated in a branch-and-cut framework as demonstrated by our experiments on a cardinality-constrained mean-risk optimization problem. We further show that the proposed inequalities, together with a single additional inequality and trivial inequalities, fully describe $\conv{\mathcal{P}^2_k}$ when the cardinality upper bound $k$ is set to two. Moreover, the proposed inequalities give rise to valid inequalities for the multi-weighted instances and can be applied in mixed-binary conic optimization. Next we include a few final remarks about the future exploration directions and the associated challenges. \\

The characterization of $\conv{\mathcal{P}^2_2}$ in Section \ref{sect:conv} assumes that Assumption \ref{assumption} holds for $i_0=0$. The convex hull $\conv{\mathcal{P}^2_2}$ becomes more challenging to linearly describe when we lift this assumption, which we illustrate with the example below. 
\begin{example}
Suppose $f(a^\top x) = 64-(a^\top x-8)^2$, $k=2$ and $a = [2, 2, 5,5,5,5,5]$. Note that this function is normalized. In this example, $f(2+5) - f(2) > f(2\cdot 5)/2$, so Assumption  \ref{assumption} is violated. The inequality \[ w \geq -11 + 20x_1+39x_2+35x_3+35x_4+35x_5+35x_6+35x_7+35x_8 \] is facet-defining for $\conv{\mathcal{P}^2_k}$ because it is an extreme ray in its polar. Although $f(0)=0$, there still exists a non-zero constant term in this facet. Thus this inequality does not fall into any of the three classes of homogeneous inequalities we propose. It seems non-trivial to find an explicit specification for the constant term, as well as the remaining coefficients in relation to this constant. 
\end{example}

A natural next step from this paper is to examine $\conv{\mathcal{P}^m_k}$ where $m\geq 3$ or $k\geq 3$. When $k\geq 3$, $\conv{\mathcal{P}^2_k}$ has other types of facets in addition to the three classes of inequalities we propose. Below is an example of such facets. 
\begin{example}
Suppose $f(a^\top x) = 64-(a^\top x -8)^2$, $k=3$ and $a = [6,6,6,6,8,8,8]$. The inequality \[w\geq -\frac{20}{3}x_1- \frac{44}{3}x_2 - \frac{44}{3}x_3 - \frac{44}{3}x_4 - \frac{176}{3}x_5 - \frac{200}{3}x_6 - \frac{200}{3}x_7\] is an extreme ray in the polar of $\conv{\mathcal{P}^m_k}$, and thus a facet. However, this inequality does not belong to any  of the proposed classes of inequalities. 
\end{example}
Despite the challenge of fully characterizing $\conv{\mathcal{P}^m_k}$ for general $m$ and $k$, we may still obtain valid and even facet-defining inequalities for it, by further lifting the proposed inequalities.

\section*{Acknowledgements}
{We thank the editor and the reviewers for the helpful comments that improved this paper. In particular, we thank the reviewer for providing the example in Remark 2.} This research is supported, in part, by NSF grant 2007814 and ONR grant N00014-22-1-2602. This research is also supported in part through the computational resources and staff contributions provided for the Quest high performance computing facility at Northwestern University, which is jointly supported by the Office of the Provost, the Office for Research, and Northwestern University Information Technology.

\bibliography{concave_submodular}{}
\bibliographystyle{apalike}

\end{document}